 \tikzset{
  on each segment/.style={
    decorate,
    decoration={
      show path construction,
      moveto code={},
      lineto code={
        \path [#1]
        (\tikzinputsegmentfirst) -- (\tikzinputsegmentlast);
      },
      curveto code={
        \path [#1] (\tikzinputsegmentfirst)
        .. controls
        (\tikzinputsegmentsupporta) and (\tikzinputsegmentsupportb)
        ..
        (\tikzinputsegmentlast);
      },
      closepath code={
        \path [#1]
        (\tikzinputsegmentfirst) -- (\tikzinputsegmentlast);
      },
    },
  },
  mid arrow/.style={postaction={decorate,decoration={
        markings,
        mark=at position 0.6 with {\arrow[#1]{stealth}} 
      }}},
}
\numberwithin{figure}{section}
\newcommand{\checks}[1]{{\color{black}{#1}}} 
\newcommand{\checkLiu}[1]{{\color{red}{#1}}}
\newtheorem{theorem}{Theorem}[section]
\newtheorem{assumption}[theorem]{Assumption}
\newtheorem{lemma}[theorem]{Lemma}
\newtheorem{corollary}[theorem]{Corollary}
\newtheorem{main theorem}[theorem]{Main Theorem}
\newtheorem{proposition}[theorem]{Proposition}
\newtheorem{definition}[theorem]{Definition}
\newtheorem{remark}[theorem]{Remark}
\newtheorem{example}[theorem]{Example}
\newtheorem{notation}[theorem]{Notation}
\newtheorem{question}[theorem]{Question}
\newtheorem{fact}[theorem]{Fact}
\numberwithin{equation}{section}
\def\<{\langle} 
\def\>{\rangle} 
\def\NN{\mathbb{N}} 
\def\RR{\mathbb{R}} 
\def\II{\mathbb{I}}
\def\CC{\mathbb{C}}
\newcommand{\kk}{\mathds{k}} 
\newcommand{\Q}{\mathcal{Q}} 
\newcommand{\I}{\mathcal{I}} 
\newcommand{\Hom}{\mathrm{Hom}} %
\newcommand{\End}{\mathrm{End}} %
\def\s{\mathfrak{s}}
\def\t{\mathfrak{t}}
\def\itLamb{\mathit{\Lambda}}
\def\itPhi{\mathit{\Phi}}
\def\im{\mathrm{Im}}
\def\ker{\mathrm{Ker}}
\def\bfS{\mathbf{S}}
\def\bfW{\mathbf{W}}
\def\bfE{\pmb{E}}
\def\id{\mathbf{1}}
\def\ident{\mathrm{id}}
\def\scrN{\mathscr{N}\!\!or}
\def\scrA{\mathscr{A}}
\def\frakp{\mathfrak{p}}
\def\frakI{\mathfrak{I}}
\def\frakU{\mathfrak{U}}
\def\ideal{\mathfrak{i}}
\def\emb{\mathrm{emb}}
\def\dd{\mathrm{d}}
\def\rad{\mathrm{rad}}
\def\Leb{\mu_{\mathrm{L}}}
\def\rmL{\mathrm{L}}
\def\tri{\bigtriangleup}
\def\power{\mathrm{e}}
\def\rmi{\mathrm{i}}
\def\expaP{E_{\mathrm{pow}}}
\def\expaF{E_{\mathrm{Fou}}}
\def\expaSW{E_{\mathrm{S-W}}}
\def\ext{\dagger}
\def\compos{\ \lower-0.2ex\hbox{\tikz\draw (0pt, 0pt) circle (.1em);} \ }
\newcommand{\defines}{\it}
\newcommand{\normcat}{\mathsf{nor}}
\newcommand{\Ban}{\mathsf{Ban}}
\newcommand{\norm}[1]{\mathfrak{n}_{#1}}
\newcommand{\To}[1]{\mathop{-\!\!\!-\!\!\!-\!\!\!\longrightarrow}\limits^{#1}}
\newcommand{\bigoplusp}[2]{{\bigoplus_{#1}^{#2}{}_p}\ }
\newcommand{\w}[1]{\widehat{#1}}
\def\point{\checkLiu{}}
\begin{document}

\def\headertitle{Normed modules and the categorification of integrations, series expansions, and differentiations}

\title[\headertitle]{Normed modules and the categorification of integrations, series expansions, and differentiations}

\def\fstpage{1} 
\def\page{$\begin{matrix} {\color{white}0} \\ \thepage \end{matrix}$} 

\pagestyle{fancy}
\fancyhead[LO]{ }
\fancyhead[RO]{ }
\fancyhead[CO]{\ifthenelse{\value{page}=\fstpage}{\ }{\scriptsize{\headertitle}}}
\fancyhead[LE]{ }
\fancyhead[RE]{ }
\fancyhead[CE]{\scriptsize{Yu-Zhe LIU, Shengda LIU, Zhaoyong HUANG, $\&$ Panyue ZHOU}}
\fancyfoot[L]{ }
\fancyfoot[C]{\page} 
\fancyfoot[R]{ }
\renewcommand{\headrulewidth}{0.5pt} 

\thanks{{\bf MSC2020:}
16G10; 
46B99; 
46M40.  
}

\thanks{{\bf Keywords:} categorification; finite-dimensional algebras; Lebesgue integration; normed modules; Banach spaces}

\author{Yu-Zhe Liu}
\address{School of Mathematics and Statistics, Guizhou University, Guiyang 550025, Guizhou, P. R. China}
\email{liuyz@gzu.edu.cn / yzliu3@163.com}

\author{Shengda Liu}
\address{The State Key Laboratory of Multimodal Artificial Intelligence Systems, Institute of Automation,
         Chinese Academy of Sciences, Beijing 100190, P. R. China.}
\email{thinksheng@foxmail.com}

\author{Zhaoyong Huang$^{\ast}$}\thanks{$^{\ast}$Corresponding author.}
\address{School of Mathematics, Nanjing University, Nanjing 210093, Jiangsu, P. R. China}
\email{huangzy@nju.edu.cn}

\author{Panyue Zhou}
\address{School of Mathematics and Statistics, Changsha University of Science and Technology, Changsha 410114, Hunan, P. R. China}
\email{panyuezhou@163.com}






\maketitle

\vspace{-3mm}

\begin{abstract}
We explore the assignment of norms to $\mathit{\Lambda}$-modules over a finite-dimensional algebra $\mathit{\Lambda}$,
resulting in the establishment of normed $\mathit{\Lambda}$-modules. Our primary contribution lies in constructing
two new categories $\mathscr{N}\!\!or^p$ and $\mathscr{A}^p$, where
each object in $\mathscr{N}\!\!or^p$ is a normed $\mathit{\Lambda}$-module $N$ limited by a special element $v_N\in N$
and a special $\mathit{\Lambda}$-homomorphism $\delta_N: N^{\oplus 2^{\dim\mathit{\Lambda}}} \to N$,
the morphism in $\mathscr{N}\!\!or^p$ is a $\mathit{\Lambda}$-homomorphism $\theta: N\to M$ such that
$\theta(v_N) = v_M$ and $\theta\delta_N = \delta_M\theta^{\oplus 2^{\dim\mathit{\Lambda}}}$,
and $\mathscr{A}^p$ is a full subcategory of $\mathscr{N}\!\!or^p$ generated by all Banach modules.
By examining the objects and morphisms in these categories.
We establish a framework for understanding the categorification of integration, series expansions, and derivatives.
Furthermore, we obtain the Stone--Weierstrass approximation theorem in the sense of $\mathscr{A}^p$.
\end{abstract}

\vspace{-3mm}

\setcounter{tocdepth}{3}
\setcounter{secnumdepth}{3}
\tableofcontents


\section{Introduction}

Mathematical analysis, encompassing branches such as integrations, differentiations, and series expansions,
is an integral component of mathematics and serves as an indispensable tool in various scientific domains
including physics, engineering, and life sciences. Traditionally founded on the $\epsilon$--$\delta$
definition of limits and the theory of completeness of the real numbers, mathematical analysis provides
a rich and diverse array of research topics within its sub-disciplines.
However, adaptation to different applications often obscures a unified understanding of its branches and their interconnections.
For \checks{example}, Lebesgue integration, introduced by Henri Lebesgue \cite{L1928} in 1902, represents
a critical advancement in mathematical analysis.
Understanding Lebesgue's approach to integrability on the real line involves methodical and
incremental steps beginning with the definition of measurable sets and null sets, followed
by exploring measure convergence. The journey continues through the exploration of step functions
and simple functions, progressing to sequences of their convergence and culminating
in the sophisticated construction of spaces for integrable functions and consistent integration methods.
This path, while comprehensive, paves a detailed route to fully appreciate the depth of Lebesgue integration,
as discussed in foundational texts such as \cite{Burk2011} and \cite{HM2014}.
However, the intricate methodologies developed do not directly translate to other branches of analysis,
making it challenging to apply these achievements uniformly across the field.

Category theory has evolved far beyond its original scope, now permeating nearly all branches of mathematics.
Initially formulated by Eilenberg and Mac Lane in the mid-20th century within the realm of algebraic topology \cite{EM1945},
a category fundamentally consists of objects and morphisms.
This framework facilitates a systematic and structural approach to analyzing a wide range of mathematical entities,
from algebraic structures to complex topological spaces.
The true utility of category theory lies in its ability to abstractly model and examine mathematical concepts through functors and natural transformations.
Functors are the ``morphisms between categories'',
systematically relating the objects and morphisms of one category to those of another, thereby uncovering deep interconnections
within mathematical frameworks. Natural transformations extend this by mapping between functors themselves,
ensuring consistency across categorical representations. This level of abstraction proves invaluable
in various mathematical applications, including the categorical descriptions of integration
\cite{Sag1965ana, Guo2013, CL2018Cart-int, CL2018int} and differentiation
\cite{BCS2006diff, BCS2015diff, Lemay2019, GP2020, APL2021diff, IL2023ana, L2023},
the categorical semantics of Differential Linear Logic \cite{BCLS2020, BCS2006diff},
 the Taylor series within Cartesian Differential Categories \cite{L2024}, preliminary categorifications of automorphic
 forms and the analytic continuation of L-functions \cite{Lan1997}, as well as providing cohesive frameworks
 for tackling complex problems such as quotient spaces, direct products, completions, and duality.
Furthermore, recent research has begun to explore the synergy between category theory and mathematical analysis
in the context of artificial intelligence. These advancements leverage categorical structures to
enhance machine learning models and develop more abstract frameworks for AI algorithms \cite{cru2024}.
Additionally, categorical semantics are being applied to better understand and design AI systems,
providing a robust mathematical foundation for their development and analysis \cite{shi2021}.
Through category theory, mathematicians gain a powerful tool for unifying and elucidating the intricacies
of diverse mathematical concepts. Building on the foundational work of Leinster \cite{Lei2023FA},
we describe integration, series expansions, and differentiation using the unified category $\scrA^p$.
Note that the Rota--Baxter algebra \cite{Bax1960,Rot1969-1,Rot1969-2} provides another algebraic description of integration,
but it is different from the categorification of integration given by the category $\scrA^p$.

As the landscape of integration theory expands, so too does the exploration into its algebraic facets, marking a significant
evolution in the approach to integration. Algebraic approaches to integration can be traced back at least to Segal's work \cite{Sag1965ana}.
Building upon the foundational works of Escard\'o--Simpson \cite{ES2001ana}, Freyd \cite{F2008ana} and Leinster \cite{Lei2023FA}
constructed a special category $\scrA^p$, where $p$ is a real number at least $1$.
In this category, objects are triples consisting of a Banach space $V$, an element
$v$ in $V$ with $|v|\leq 1$, and a $\kk$-linear map $\delta: V\oplus_p V \to V$ that satisfies $\delta(v, v) = v$.
Here, the notation $V_1\oplus_pV_2$ represents the direct sum of two normed spaces $V_1$ and $V_2$, where the norm is defined as
$|(v_1, v_2)| = \left(\frac{1}{2}(|v_1|^p + |v_2|^p)\right)^{1/p}$.
Furthermore, Leinster established three significant results as follows:
\begin{itemize}
  \item[(1)] $(L_p([0,1]), 1, \gamma)$ is the initial object in $\scrA^p$,
    where $\gamma:L_p([0,1])\oplus_p L_p([0,1]) \to L_p([0,1])$ is a special $\kk$-linear map
    (indeed, $\gamma$ is the map $\gamma_{\frac{1}{2}}$ given in Corollary \ref{coro:main1}\point);
  \item[(2)] $(\RR,1,m)$ is an object in $\scrA^1$, where $m: \RR\oplus_1\RR \to \RR$ sends $(x,y)$ to $\frac{1}{2}(x+y)$;
  \item[(3)] there exists a unique morphism
    \[H: (L_1([0,1]), 1, \gamma) \to (\RR,1,m)\]
    in $\scrA^1$,
\end{itemize}
see \cite[Theorem 2.1 and Proposition 2.2]{Lei2023FA}.
The homomorphism $H: L_1([0,1]) \to \kk$ is a $\kk$-linear map sending any function $ f $ in $ L_1([0,1])$ to its Lebesgue integral, \checks{i.e.,}
\[ H(f) = (\rmL)\int_0^1 f \, \mathrm{d}\Leb, \]
where $\Leb$ denotes the Lebesgue measure on $ \mathbb{R} $. This profound relationship illustrates Lebesgue integrability and integration
are not merely abstract constructs; rather, they naturally emerge from the foundational principles of Banach spaces. Consequently,
it can be logically inferred that the categorification of Lebesgue integration is inherently connected to, and can be derived from,
the categorification of Banach spaces.
However, we have discovered that Leinster's work can be extended to a more general setting of finite-dimensional algebras,
and it encompasses not only definite and indefinite integrals, but also includes key areas of mathematical analysis such as weak derivatives,
series expansions, and the Stone-Weierstrass approximation theorem.

Building upon Leinster's foundational work, we extend his categorical framework to encompass finite-dimensional algebras,
thereby creating a more versatile and unified approach to integration theory. By incorporating normed modules over these algebras
into our analysis, we bridge the gap between algebraic structures and analytical methods. This extension allows us to
reinterpret classical concepts of integration, differentiation, and series expansions within a broader categorical context.
Furthermore, our approach facilitates the seamless integration of algebraic techniques with analytical processes,
offering a cohesive framework that enhances the applicability and depth of mathematical analysis.
This novel categorical perspective not only unifies disparate areas of analysis but also opens new avenues
for research and application in related scientific fields.

This study aims to explore and construct a comprehensive theoretical framework specifically tailored for normed modules
in finite-dimensional algebras. We introduce and dissect a novel category, denoted by $\scrN^p$,
alongside its fully characterized subcategory, $\scrA^p$. This research endeavors to systematically categorize normed modules
and their operations, aiming to enhance our understanding of fundamental mathematical procedures such as integration,
series expansions, and differentiation. The specific research questions addressed are:

\begin{question} \label{quest} \rm \
\begin{enumerate}[label=(\arabic*)]
\item \label{item first}
  {\it How does the new categorical framework improve our comprehension
  of norm structures within various normed modules over an algebra? }
\item \label{item second}
  {\it What contributions do morphisms in the subcategory $\scrA^p$
  make towards advancing classical integration techniques?}
\item \label{item third}
  {\it What implications does the categorification of normed modules hold for
  the broader mathematical analysis landscape and its practical applications?}
\end{enumerate}
\end{question}

The investigation of these questions not only broadens the scope of category theory in mathematical analysis
and abstract algebra but also introduces novel theoretical tools and perspectives, potentially benefiting
other disciplines such as physics and automation engineering. To comprehensively address the aforementioned questions,
we will delineate the following  key topics in subsequent sections.

Firstly, we introduce functions defined on a finite-dimensional algebra  \(\itLamb\), along with the norm defined on \(\itLamb\)
and any \(\itLamb\)-module \(M\). It is pertinent to note that all \(\itLamb\)-modules considered in this paper are left \(\itLamb\)-modules.
The specifics of these structures are elaborated in Subsection \ref{subsect:normed alg.1}\point~and \ref{subsect:normed mod.1}\point, respectively.
A pivotal motivation for us to introduce normed modules is the pursuit of an integration definition that
transcends the conventional reliance on $L_p$ spaces. This approach is rooted in the understanding that an equivalent definition of
$L_p$ spaces can emerge through the integration itself. However, as highlighted by Leinster, the notion of Lebesgue integrals
is intrinsically linked to Banach spaces. Consequently, our investigation also necessitates considering the completions of
normed finite-dimensional algebras and normed modules, see Subsections \ref{subsect:normed alg.2}\point~and \ref{subsect:normed mod.2}.

Secondly, for a special subset $\II_{\itLamb}$ of $\itLamb$, denoted $\II_{\itLamb} \subseteq \itLamb$, we construct the category $\scrN^p$ in Subsection \ref{sect:two cats.1}.
Its object has the form $(N,v,\delta)$, where $N$ is a normed $\itLamb$-module, $v$ is an element in $N$ satisfying $|v| \le \mu(\II_{\itLamb})$,
$\mu$ is an arbitrary measure defined on $\II_{\itLamb}$
and $\delta: N^{\oplus_p 2^n} \to N$ is a $\itLamb$-homomorphism sending $(v,\ldots, v)$ to $v$. The morphism $h: (N,v,\delta)\to(N',v',\delta')$
is induced by a special $\itLamb$-homomorphism $N \to N'$ satisfying $h\delta = \delta' (h^{\oplus_p 2^n})$.
Furthermore, we consider the full subcategory $\scrA^p$ of $\scrN^p$, where each object $(N,v,\delta)$ consists of a Banach $\itLamb$-module $N$,
an element $v\in N$ and a $\itLamb$-homomorphism $\delta:N^{\oplus_p 2^n} \to N$.

Thirdly, we investigate the set $\bfS_{\tau}(\II_{\itLamb})$ of elementary simple functions (a special step function defined on $\itLamb$),
where $\tau$ is a homomorphism between two $\kk$-algebras. We demonstrate its structure as a $\itLamb$-module (Lemma \ref{lemm:normed mod}\point).
Consequently, we obtain an object $(\bfS_{\tau}(\II_{\itLamb}), \id, \gamma_{\xi})$ (Lemma \ref{lemm:S in Np}\point) in $\scrN^p$
and an object $(\w{\bfS_{\tau}(\II_{\itLamb})}, \id, \w{\gamma}_{\xi})$ in $\scrA^p$, where $\w{\bfS_{\tau}(\II_{\itLamb})}$
is the completion of $\bfS_{\tau}(\II_{\itLamb})$ and $\w{\gamma}_{\xi}$ is induced by $\gamma_{\xi}$.

Fourthly, we prove our main result in Section \ref{sect:Ap-init obj}\point~to answer Question \ref{quest}.\ref{item first},
which provides a unique homomorphism from the initial object in $\scrA^p$ to any normed module
to describe the properties of normed representations of algebra.

\begin{theorem}[{\rm Theorem \ref{thm:main1}\point~and Remark \ref{rmk:main1}\point}] \label{thm:A}
The triple $(\bfS_{\tau}(\II_{\itLamb}), \id, \gamma_{\xi})$ is an $\scrA^p$-initial object in $\scrN^p$,
\checks{i.e.,} for any object $(N,v,\delta)$ in $\scrA^p$, there exists a unique morphism
\[h\in \Hom_{\scrN^p}((\bfS_{\tau}(\II_{\itLamb}), \id, \gamma_{\xi}), (N,v,\delta))\]
such that the diagram
\[\xymatrix@C=2cm{
  (\bfS_{\tau}(\II_{\itLamb}), \id, \gamma_{\xi}) \ar[r]^h \ar[d]_{\subseteq}
& (N,v,\delta) \\
  (\w{\bfS_{\tau}(\II_{\itLamb})}, \id, \w{\gamma}_{\xi}) \ar[ru]_{\w{h}}
& }\]
commutes, where $\w{h}$ is given by the completion of $h:\bfS_{\tau}(\II_{\itLamb})\to N$.
\end{theorem}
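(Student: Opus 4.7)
The plan is to adapt Leinster's argument for the initiality of $L_p([0,1])$ in $\scrA^p$ to the present $\itLamb$-module setting. The two defining conditions for a $\scrN^p$-morphism $h : (\bfS_{\tau}(\II_{\itLamb}), \id, \gamma_{\xi}) \to (N,v,\delta)$, namely $h(\id)=v$ and $h\gamma_{\xi}=\delta\, h^{\oplus_p 2^n}$ with $n=\dim\itLamb$, leave no freedom in the construction of $h$. The first fixes $h$ on the ``top'' characteristic function $\id$, while iterating the second with the tuple $(\id,\ldots,\id)$ and all subsequent $\gamma_{\xi}$-refinements forces the value of $h$ on the characteristic function of every dyadic piece of $\II_{\itLamb}$ obtained by iterated $2^n$-fold subdivision. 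I would first verify consistency of this recursion: since $(N,v,\delta)$ satisfies $\delta(v,\ldots,v)=v$, this aligns with the identity $\gamma_{\xi}(\id,\ldots,\id)=\id$ in $\bfS_{\tau}(\II_{\itLamb})$, so the two possible ways of reaching the same refined piece agree.

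Second, I would extend $h$ $\itLamb$-linearly from dyadic characteristic functions to all dyadic elementary simple functions and check that the result is a $\itLamb$-homomorphism satisfying $h\gamma_{\xi}=\delta h^{\oplus_p 2^n}$ on this submodule. Well-definedness reduces to the observation that any two finite expressions of the same simple function differ only by $\itLamb$-linear combinations of the single relation $\id=\gamma_{\xi}(\id,\ldots,\id)$, which was exactly what was exploited in step one. This produces a $\itLamb$-homomorphism defined on a submodule that is dense in $\bfS_{\tau}(\II_{\itLamb})$ for the $p$-norm.

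Third, I would establish a bound of the form $|h(s)|\le|v|\cdot|s|_p$ for every dyadic simple function $s$. The key step is an induction on refinement depth: since $\delta$ is a $\itLamb$-homomorphism $N^{\oplus_p 2^n}\to N$ and the $p$-direct-sum normalisation on $N^{\oplus_p 2^n}$ is designed so that the diagonal map $v\mapsto(v,\ldots,v)$ is norm-preserving, the bound propagates from one refinement level to the next with no growth of the constant. Because $N$ is Banach by the $\scrA^p$-hypothesis on the target, this bounded map extends uniquely and continuously to $\w{h} : \w{\bfS_{\tau}(\II_{\itLamb})}\to N$, and by density $\w{h}$ again satisfies $\w{h}(\id)=v$ and $\w{h}\,\w{\gamma}_{\xi}=\delta\,\w{h}^{\oplus_p 2^n}$, making it a $\scrA^p$-morphism and making the triangle in the statement commute by construction. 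Uniqueness of $h$ is then immediate from the forcing argument of steps one and two.

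The main obstacle, I expect, is step three: the proportionality $|h(s)|\le|v|\cdot|s|_p$ has to be stable under arbitrary $\gamma_{\xi}$-refinement, and this stability rests on the precise way the $p$-direct-sum norm on $N^{\oplus_p 2^n}$ has been normalised so as to exactly match a uniform $2^n$-fold refinement of the measure $\mu$ on $\II_{\itLamb}$. A secondary subtlety is checking that the $\tau$-twist implicit in the definition of $\bfS_{\tau}(\II_{\itLamb})$ interacts correctly with the $\itLamb$-action on $N$, but this should follow formally from the fact that every map involved in the recursion is $\itLamb$-linear.
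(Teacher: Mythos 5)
Your overall strategy coincides with the paper's for the finite stages: the paper also builds the morphism recursively on the dyadic filtration $E_0\subseteq E_1\subseteq\cdots\subseteq\bfS_{\tau}(\II_{\itLamb})$, setting $\theta_0(\id_{E_0})=v$ and $\theta_{u+1}=\delta\compos\theta_u^{\oplus 2^n}\compos\gamma_{\xi}^{-1}$, where the well-definedness and consistency you describe are packaged in Lemma \ref{lemm:inverse gamma} ($\gamma_{\xi}:E_u^{\oplus_p 2^n}\to E_{u+1}$ is a $\itLamb$-isomorphism); uniqueness is likewise obtained by showing $\theta|_{E_u}-\theta'|_{E_u}=0$ inductively. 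Where you diverge is the passage to the completion, and that is where your proposal has a genuine gap \emph{in this paper's setting}. Your step three rests on the uniform bound $|h(s)|\le |v|\cdot\Vert s\Vert_p$ propagating through refinement levels ``with no growth of the constant.'' That propagation requires $\delta$ to be a contraction (operator norm at most $1$), which is how Leinster's category is set up; but in the categories $\scrN^p$ and $\scrA^p$ defined here, $\delta:N^{\oplus_p 2^n}\to N$ is only required to be a $\itLamb$-homomorphism satisfying $\delta(v,\dots,v)=v$ and commuting with inverse limits, i.e.\ continuous. If $\Vert\delta\Vert=C>1$, each application of $\delta$ in the recursion $\theta_{u+1}=\delta\,\theta_u^{\oplus 2^n}\gamma_{\xi}^{-1}$ can multiply your constant by $C$, so after $u$ levels the bound degrades to $C^u$ and no uniform boundedness of $h$ on $\bigcup_u E_u$ follows. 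The normalisation factor $(\mu(\II)/\mu(\II_{\itLamb}))^n$ in the $\oplus_p$-norm controls the diagonal embedding, not $\Vert\delta\Vert$, so it cannot rescue the estimate.

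The paper avoids this by never proving a norm bound at all: it establishes $\underrightarrow{\lim}E_i\cong\w{\bfS_{\tau}(\II_{\itLamb})}$ in $\Ban(\itLamb)$ (Lemma \ref{lemm:limE}) and obtains $\w{h}$ from the universal property of that direct limit, then verifies $\w{h}\,\w{\gamma}_{\xi}=\delta\,\w{h}^{\oplus 2^n}$ by writing each element of $\w{\bfS_{\tau}(\II_{\itLamb})}{}^{\oplus_p 2^n}$ as a limit of tuples from the $E_{u}^{\oplus_p 2^n}$ and invoking the axiom (\ref{formula:comm}) that $\delta$ commutes with inverse limits. To repair your argument within this framework you would either need to add the contraction hypothesis on $\delta$ (thereby proving a weaker theorem than the one stated), or replace the boundedness-and-density extension by the direct-limit universal property as the paper does. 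Your remaining steps --- the forcing of $h$ on dyadic characteristic functions, the consistency check via $\delta(v,\dots,v)=v$ versus $\gamma_{\xi}(\id,\dots,\id)=\id$, the $\itLamb$-linearity of the extension, and the uniqueness --- all match the paper's Lemmas \ref{lemm:existence} and \ref{lemm:uniqueness} and are sound.
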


Sections \ref{sect:7}\point, \ref{sect:series}\point~and \ref{sect:deriva}\point~realize integrations, series expansions
and derivatives as three morphisms in $\scrA^1$.

In \checks{Section} \ref{sect:7}\point, we construct an object $(\kk,\mu(\II_{\itLamb}), m)$ in $\scrA^p$,
where $m: \kk^{\oplus_p 2^n} \to \kk$ is a $\itLamb$-homomorphism whose definition is given in this section.
Take $(N,v,\delta)=(\kk,\mu(\II_{\itLamb}), m)$ as in Theorem \ref{thm:A}\point, we obtain the following result to
answer Question \ref{quest}.\ref{item second},
which describes numerous integrations by using category $\scrA^p$ in a unified way since $\mu$ is an arbitrary measure.

\begin{theorem}[{\rm Theorem \ref{thm:main2}\point}]  \label{thm:B}
If $\kk=(\kk,|\cdot|,\preceq)$ is an extension of $\RR$, then there exists a unique $\itLamb$-homomorphism
$T: \bfS_{\tau}(\II_{\itLamb}) \to \kk$ such that
\[T: (\bfS_{\tau}(\II_{\itLamb}), \id, \gamma_{\xi}) \to (\kk, \mu(\II_{\itLamb}), m)\]
is a morphism in $\Hom_{\scrN^p}((\bfS_{\tau}(\II_{\itLamb}), \id, \gamma_{\xi}), (\kk, \mu(\II_{\itLamb}), m))$
and the diagram
\[\xymatrix@C=2cm{
  (\bfS_{\tau}(\II_{\itLamb}), \id, \gamma_{\xi}) \ar[r]^{T} \ar[d]_{\subseteq}
& (\kk, \mu(\II_{\itLamb}),m) \\
  (\w{\bfS_{\tau}(\II_{\itLamb})}, \id, \w{\gamma}_{\xi}) \ar[ru]_{\w{T}}
& }\]
commutes, where $\w{T}$ is the unique morphism lying in
$\Hom_{\scrA^p}((\w{\bfS_{\tau}(\II_{\itLamb})}, \id, \w{\gamma}_{\xi}), (\kk, \mu(\II_{\itLamb}), m))$.
Furthermore, if $p=1$, then we have the following three properties of $\w{T}$ by the direct limits
$\underrightarrow{\lim}T_i: \w{T}=\underrightarrow{\lim} E_i \to \kk$
{\rm(}The definitions of $E_i$ and $T_i$ are given in Notation \ref{notation}\point~and Section \ref{sect:7}\point, respectively{\rm)}:
\begin{itemize}
  \item[\rm(1)] {\rm(Formula (\ref{formula:Tu(1)}))} $\w{T}(\id) = \mu(\II_{\itLamb})$;
  \item[\rm(2)] {\rm(Lemma \ref{lemm:Tu}\point)} $\w{T}: \bfS_{\tau}(\II_{\itLamb}) \to \kk$
    is a homomorphism of $\itLamb$-modules;
  \item[\rm(3)] {\rm(Proposition \ref{prop:tri inq}\point)} $|\w{T}(f)|\le \w{T}(|f|)$.
\end{itemize}
\end{theorem}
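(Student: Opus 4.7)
The plan is to deduce Theorem \ref{thm:B} as a direct instance of Theorem \ref{thm:A} applied to the specific target $(N,v,\delta) = (\kk, \mu(\II_{\itLamb}), m)$. First I would verify that this triple is indeed an object of $\scrA^p$: by hypothesis $(\kk,|\cdot|,\preceq)$ is an extension of $\RR$ and is a Banach $\itLamb$-module (so its completion is itself), the element $\mu(\II_{\itLamb})$ lives in $\kk$, and the map $m:\kk^{\oplus_p 2^n}\to \kk$ constructed in Section \ref{sect:7} is a $\itLamb$-homomorphism sending $(\mu(\II_{\itLamb}),\ldots,\mu(\II_{\itLamb}))$ to $\mu(\II_{\itLamb})$. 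Once this is confirmed, Theorem \ref{thm:A} produces at once the unique morphism $T\in \Hom_{\scrN^p}((\bfS_{\tau}(\II_{\itLamb}),\id,\gamma_{\xi}),(\kk,\mu(\II_{\itLamb}),m))$ together with its completion $\w{T}$, and these fit into the commutative triangle asserted in the theorem. Uniqueness of $T$ is inherited from the uniqueness asserted in Theorem \ref{thm:A} and the fact that a morphism on $\bfS_{\tau}(\II_{\itLamb})$ extends at most one way to a continuous morphism on the completion.

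For the three supplementary properties in the $p=1$ case, I would exploit the direct-limit presentations $\w{\bfS_{\tau}(\II_{\itLamb})} = \underrightarrow{\lim}E_i$ and $\w{T} = \underrightarrow{\lim}T_i$ from Section \ref{sect:7}. Property (1), $\w{T}(\id)=\mu(\II_{\itLamb})$, is immediate from the defining condition $\theta(v_N)=v_M$ of a $\scrN^p$-morphism applied to $T$; the equality then transports to $\w{T}$ via the inclusion $\bfS_{\tau}(\II_{\itLamb})\subseteq \w{\bfS_{\tau}(\II_{\itLamb})}$. Property (2) is essentially built into the construction: $T$ is a $\itLamb$-homomorphism because it is a morphism in $\scrN^p$, and this is inherited by $\w{T}$ through the continuity of the $\itLamb$-action on the completion.

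The main obstacle is property (3), the triangle-type inequality $|\w{T}(f)|\le \w{T}(|f|)$. I would first establish it on each level $E_i$ of elementary simple functions: such an $f$ has a canonical decomposition $\sum_j c_j \chi_{A_j}$ on a suitable partition of $\II_{\itLamb}$, and the formula for $T_i$ determined by $m$ evaluates to $T_i(f)=\sum_j c_j\,\mu(A_j)$; the ordinary triangle inequality in $(\kk,|\cdot|)$ then yields
\[|T_i(f)| \;\le\; \sum_j |c_j|\,\mu(A_j) \;=\; T_i(|f|).\]
Next I would push the inequality through the direct limit. The subtle point is to check that the map $f\mapsto |f|$ is well defined and continuous (in fact Lipschitz via the reverse triangle inequality) on $\w{\bfS_{\tau}(\II_{\itLamb})}$, so it sends Cauchy sequences to Cauchy sequences and commutes with the colimit; combined with the continuity of $\w{T}$ and of $|\cdot|$ on $\kk$, the inequality propagates to all of the completion. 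This continuity/Lipschitz step for the absolute value on the normed $\itLamb$-module is the only genuinely analytic piece of the argument, the rest being a formal consequence of the universal property furnished by Theorem \ref{thm:A}.
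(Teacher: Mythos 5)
Your proposal is correct and follows essentially the same route as the paper: verify that $(\kk,\mu(\II_{\itLamb}),m)$ is an object of $\scrA^p$ (the paper's Lemmas \ref{lemm:Tu} and \ref{lemm:mu}), invoke the initiality of $(\w{\bfS_{\tau}(\II_{\itLamb})},\id,\w{\gamma}_{\xi})$ from Theorem \ref{thm:main1} for existence and uniqueness, and obtain the three supplementary properties from the explicit formula for $T_u$ on each $E_u$ followed by passage to the direct limit $\w{T}=\underrightarrow{\lim}T_i$. Your extra attention to the continuity of $f\mapsto|f|$ when pushing the inequality $|T_u(f)|\le T_u(|f|)$ through the completion is a point the paper leaves implicit, but it does not change the argument.
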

The morphism $\w{T}$ provides a categorification of integration, and we denote
\begin{align} \label{int}
 \w{T}(f) =: (\scrA^1)\int_{\II_{\itLamb}} f \dd\mu.
\end{align}
The above (1), (2), and (3) show that
\[ (\scrA^1)\int_{\II_{\itLamb}} \id \dd\mu = \mu(\II_{\itLamb}) , \]
\begin{align}\label{Lambda-linear}
 (\scrA^1)\int_{\II_{\itLamb}} (\lambda_1 f_1 + \lambda_2 f_2) \dd\mu
= \lambda_1 \cdot (\scrA^1)\int_{\II_{\itLamb}} f_1 \dd\mu
+ \lambda_2 \cdot (\scrA^1)\int_{\II_{\itLamb}} f_2 \dd\mu \ \ (\lambda_1,\lambda_2\in \itLamb),
\end{align}
and
\[ \bigg|(\scrA^1)\int_{\II_{\itLamb}} f \dd\mu \bigg|
\le (\scrA^1)\int_{\II_{\itLamb}} |f| \dd\mu, \]
respectively.

Let $\kk[X_1, \cdots, X_N]$ ($=\kk[\pmb{X}]$ for short) be the $N$ variables polynomial ring
over a field $\kk$ with $N\ge \dim_{\kk}\itLamb = n$.
Then $\kk[\pmb{X}]$ can be seen as a normed left $\itLamb$-module,
where the norm $\Vert\cdot\Vert_{\kk[\pmb{X}]}$ is either (\ref{formula: poly norm 1})
or (\ref{formula: poly norm 2}).
In Section \ref{sect:series}, we get two corollaries as follows to answer Question \ref{quest}.\ref{item third}.

\begin{corollary} \label{coro:C}  Let $\scrA^p$ satisfy $p=1$.
\begin{itemize}
  \item[{\rm(1)}] {\rm(Corollary \ref{coro:power series}\point/Weierstrass Approximation Theorem)}
    If $N=n$ and $\Vert\cdot\Vert_{\kk[\pmb{X}]}$ is defined by (\ref{formula: poly norm 1}),
    then the unique morphism in
    \[\w{\expaP} \in
      \Hom_{\scrA^1}(
        (\w{\bfS_{\tau}(\II_{\itLamb})}, \id, \gamma_{\xi}),
        (\w{\kk[\pmb{X}]}, \id, \w{\gamma_{\xi}}|_{\w{\kk[\pmb{X}]}})
        )\]
    shows that for any function $f \in \w{\bfS_{\tau}(\II_{\itLamb})}$,
    there exists a sequence $\{P_i\}_{i\in\NN}$ of polynomials such that
    \[ \w{\expaP}(f) = \underleftarrow{\lim} P_i. \]

  \item[{\rm(2)}] {\rm (Corollary \ref{coro:Fourier series}\point)}
    If $\kk=\CC$, $N=2n$ and $\Vert\cdot\Vert_{\kk[\pmb{X}]}$
    is defined by (\ref{formula: poly norm 2}), then the unique morphism in
    \[ \w{\expaF} \in
       \Hom_{\scrA^1}(
         (\bfS_{\tau}(\II_{\itLamb}), \id, \gamma_{\xi}),
         (\w{\CC[\power^{\pm2\pi\rmi\pmb{X}}]}, \id,
           \w{\gamma_{\xi}}|_{\w{\CC[\power^{\pm2\pi\rmi\pmb{X}}] }})
         ) \]
    shows that for any function $f \in \w{\bfS_{\tau}(\II_{\itLamb})}$,
    there exists a sequence $\{P_i\}_{i\in\NN}$ of triangulated polynomials such that
    \[ \w{\expaF}(f) = \underleftarrow{\lim} P_i. \]
\end{itemize}
\end{corollary}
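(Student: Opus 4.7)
The plan is to deduce both statements directly from Theorem \ref{thm:A} by exhibiting, in each case, a specific object of $\scrA^1$ whose underlying Banach $\itLamb$-module is \emph{by construction} the closure of a space of polynomials (respectively, of trigonometric polynomials). Once Theorem \ref{thm:A} supplies the unique morphism into this object, the advertised approximation conclusion reduces to the definition of completion.

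First I would verify that in part (1) the pair $(\kk[\pmb{X}], \Vert\cdot\Vert_{\kk[\pmb{X}]})$ with the norm (\ref{formula: poly norm 1}) is a normed left $\itLamb$-module, using the $\itLamb$-action specified in Section \ref{sect:series} (which is available precisely because $N=n=\dim_{\kk}\itLamb$). Its completion $\w{\kk[\pmb{X}]}$ is then a Banach $\itLamb$-module, i.e.\ a candidate object of $\scrA^1$. The critical step is to show that $\w{\gamma_{\xi}}$ restricts to a $\itLamb$-homomorphism $\w{\gamma_{\xi}}|_{\w{\kk[\pmb{X}]}}\colon \w{\kk[\pmb{X}]}^{\oplus_1 2^n}\to \w{\kk[\pmb{X}]}$ and that $\gamma_{\xi}(\id,\ldots,\id)=\id$. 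This is the main obstacle: unwinding the explicit form of $\gamma_{\xi}$ from Subsection \ref{sect:two cats.1}, one sees that it acts by affine rescaling/translation of the variable (with coefficients determined by $\xi$), so that it genuinely sends $\kk[\pmb{X}]\to \kk[\pmb{X}]$; what remains is a norm-continuity check so that this restriction passes to the completion. The same reasoning, applied to the Laurent monomials $\power^{\pm 2\pi\rmi X_j}$ in part (2), shows that $\gamma_{\xi}$ preserves $\CC[\power^{\pm 2\pi\rmi\pmb{X}}]$ (since the affine substitution $X\mapsto aX+b$ multiplies $\power^{2\pi\rmi X}$ by the constant $\power^{2\pi\rmi b}$ and raises it to the power $a$, preserving the Laurent-polynomial form when $a\in\ZZ$, which is the case handed down by the dyadic construction of $\xi$).

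Once $(\w{\kk[\pmb{X}]}, \id, \w{\gamma_{\xi}}|_{\w{\kk[\pmb{X}]}})$ is confirmed to be an object of $\scrA^1$, Theorem \ref{thm:A} produces a unique morphism
$\w{\expaP}\in \Hom_{\scrA^1}((\w{\bfS_{\tau}(\II_{\itLamb})}, \id, \w{\gamma_{\xi}}), (\w{\kk[\pmb{X}]}, \id, \w{\gamma_{\xi}}|_{\w{\kk[\pmb{X}]}}))$
making the commutative triangle of Theorem \ref{thm:A} hold. For any $f\in \w{\bfS_{\tau}(\II_{\itLamb})}$ the image $\w{\expaP}(f)$ lies in $\w{\kk[\pmb{X}]}$, and by definition of completion every element of $\w{\kk[\pmb{X}]}$ is the norm-limit of a sequence in $\kk[\pmb{X}]$; choosing such a sequence $\{P_i\}_{i\in\NN}$ gives $\w{\expaP}(f)=\underleftarrow{\lim}P_i$, which is the stated Weierstrass approximation. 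Part (2) is the verbatim analogue with $\kk[\pmb{X}]$ replaced by $\CC[\power^{\pm 2\pi\rmi\pmb{X}}]$ and the norm (\ref{formula: poly norm 1}) replaced by (\ref{formula: poly norm 2}); the unique morphism provided by Theorem \ref{thm:A} is $\w{\expaF}$, and approximation by trigonometric polynomials is again a restatement of what it means to lie in the completion.
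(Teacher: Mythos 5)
Your overall strategy is exactly the paper's: verify that $\kk[\pmb{X}]$ (resp.\ $\CC[\power^{\pm2\pi\rmi\pmb{X}}]$) with the norm (\ref{formula: poly norm 1}) (resp.\ (\ref{formula: poly norm 2})) is a normed left $\itLamb$-module (Lemmas \ref{lemm:power series} and \ref{lemm:Fourier series}), promote the completion to an object of $\scrA^1$, invoke the initial-object theorem to get the unique morphisms $\w{\expaP}$ and $\w{\expaF}$, and read off the approximation statement from the definition of completion. That last reduction --- ``every element of $\w{\kk[\pmb{X}]}$ is by definition a limit of polynomials'' --- is precisely how the paper extracts Corollary \ref{coro:C} from Corollaries \ref{coro:power series} and \ref{coro:Fourier series}.

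However, the step you single out as ``the main obstacle'' is argued incorrectly, and as stated it would fail. You claim that $\gamma_{\xi}$ ``genuinely sends $\kk[\pmb{X}]\to\kk[\pmb{X}]$'' because on each sub-box it acts by an affine change of variable. The affine substitution does preserve polynomials \emph{on each individual piece}, but $\gamma_{\xi}$ is the juxtaposition map: it glues $2^n$ \emph{different} inputs over the $2^n$ sub-boxes of $\II_{\itLamb}$, and the result of gluing distinct polynomials is a piecewise polynomial, not an element of $\kk[\pmb{X}]$. So there is no restriction $\gamma_{\xi}\colon \kk[\pmb{X}]^{\oplus_1 2^n}\to\kk[\pmb{X}]$ at the uncompleted level, and no amount of norm-continuity will transport a map that does not exist. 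The correct repair --- which the paper leaves implicit here but makes explicit in Example \ref{examp:Taylor}, where it is shown that $\kk[x]$ is dense in $\w{\bfS}$ --- is that under the integral norm (\ref{formula: poly norm 1}) the polynomials are dense in $\w{\bfS_{\tau}(\II_{\itLamb})}$, so $\w{\kk[\pmb{X}]}$ coincides with the full completion; in particular it contains all (limits of) piecewise polynomials and is therefore automatically stable under $\w{\gamma}_{\xi}$, which is what the notation $\w{\gamma_{\xi}}|_{\w{\kk[\pmb{X}]}}$ presupposes. The same density argument, not the integrality of the dyadic scaling factor, is what makes part (2) work for $\CC[\power^{\pm2\pi\rmi\pmb{X}}]$. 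Once this is fixed the rest of your argument goes through as in the paper.
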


\noindent
Furthermore, we show the Stone--Weierstrass approximation theorem in Subsection \ref{subsect:S-W thm}\point,
see Corollary \ref{coro:S-W approx}.

\begin{corollary}[{Corollary \ref{coro:S-W approx}\point,
Stone$-$Weierstrass Approximation Theorem}]
There exists a unique morphism
\[ \expaSW: (\bfS_{\tau}(\II_{\itLamb}), \id, \gamma_{\xi})
\to (\bfW, \id, \w{\gamma_{\xi\dagger}}) \]
in $\Hom_{\scrN^1}((\bfS_{\tau}(\II_{\itLamb}), \id, \w{\gamma}_{\xi}),
(\bfW, \id, \w{\gamma_{\xi\dagger}}) )$ such that the diagram
\[\xymatrix@C=2cm{
  (\bfS_{\tau}(\II_{\itLamb}), \id, \gamma_{\xi}) \ar[r]^{\expaSW} \ar[d]_{\subseteq}
& (\bfW, \id, \w{\gamma_{\xi\dagger}}) \\
  (\w{\bfS_{\tau}(\II_{\itLamb})}, \id, \w{\gamma}_{\xi}) \ar[ru]_{\w{\expaSW}}
& }\]
commutes, where the definition of $\bfW$ is a direct limit defined in Subsection \ref{subsect:S-W thm}\point;
and $\w{\expaSW}$ is the unique extension of $\expaSW$ lying in $\Hom_{\scrA^1}((\w{\bfS_{\tau}(\II_{\itLamb})}, \id, \w{\gamma}_{\xi}),$
$(\bfW, \id, \w{\gamma_{\xi\dagger}}) )$.
\end{corollary}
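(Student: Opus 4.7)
The plan is to reduce this corollary to an application of Theorem~\ref{thm:A} with $p=1$, taking $(N,v,\delta)=(\bfW,\id,\w{\gamma_{\xi\dagger}})$. The first step is therefore to verify that this triple is a bona fide object of $\scrA^1$. Since $\bfW$ is constructed in Subsection~\ref{subsect:S-W thm} as a direct limit of Banach $\itLamb$-modules, I would first check that the direct-limit topology makes $\bfW$ itself a Banach $\itLamb$-module (this amounts to verifying that the transition maps in the direct system are isometric $\itLamb$-homomorphisms, so that the limit inherits both the $\itLamb$-action and the norm-completeness). Next I would confirm that the distinguished element $\id\in\bfW$ satisfies $|\id|\le \mu(\II_{\itLamb})$ and that $\w{\gamma_{\xi\dagger}}:\bfW^{\oplus_1 2^n}\to\bfW$ is a $\itLamb$-homomorphism with $\w{\gamma_{\xi\dagger}}(\id,\ldots,\id)=\id$; these should follow by taking the direct limit of the analogous relations proved at each stage of the system defining $\bfW$, since $\w{\gamma_{\xi\dagger}}$ is by construction the limit of the $\gamma_{\xi\dagger}$-type maps on each component.

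Once $(\bfW,\id,\w{\gamma_{\xi\dagger}})\in\scrA^1$ is established, Theorem~\ref{thm:A} immediately supplies a unique morphism
\[\expaSW\in\Hom_{\scrN^1}\bigl((\bfS_{\tau}(\II_{\itLamb}),\id,\gamma_{\xi}),(\bfW,\id,\w{\gamma_{\xi\dagger}})\bigr),\]
namely the one characterized by $\expaSW(\id)=\id$ and compatibility with the $\delta$-operations. Both existence and uniqueness are then inherited from the $\scrA^1$-initiality of $(\bfS_{\tau}(\II_{\itLamb}),\id,\gamma_{\xi})$ inside $\scrN^1$, so no new combinatorics is needed at this step.

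The extension $\w{\expaSW}$ is obtained from the universal property of Banach completion: since $\bfS_{\tau}(\II_{\itLamb})$ is dense in $\w{\bfS_{\tau}(\II_{\itLamb})}$ and $\bfW$ is complete, the $\itLamb$-homomorphism $\expaSW:\bfS_{\tau}(\II_{\itLamb})\to\bfW$ extends uniquely to a continuous $\itLamb$-homomorphism $\w{\expaSW}:\w{\bfS_{\tau}(\II_{\itLamb})}\to\bfW$; one then checks that $\w{\expaSW}$ still sends $\id$ to $\id$ and intertwines $\w{\gamma}_{\xi}$ with $\w{\gamma_{\xi\dagger}}$ by continuity, upgrading it to an $\scrA^1$-morphism. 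Uniqueness of $\w{\expaSW}$ in $\Hom_{\scrA^1}$ again follows from the $\scrA^1$-initial property coupled with density of $\bfS_{\tau}(\II_{\itLamb})$ in its completion, which also forces the stated diagram to commute.

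The main obstacle I anticipate is the first step, namely proving that $(\bfW,\id,\w{\gamma_{\xi\dagger}})$ is actually an $\scrA^1$-object, i.e., that the direct limit defining $\bfW$ is carried out in a category where the norm, the completeness, the $\itLamb$-action, and the compatibility $\w{\gamma_{\xi\dagger}}(\id,\ldots,\id)=\id$ are all preserved simultaneously. The Stone--Weierstrass content of the corollary is encoded precisely in the choice of $\bfW$: the direct system must be rich enough (e.g.\ containing polynomial and triangulated-polynomial subalgebras after the $\dagger$-closure) so that density of $\expaSW(\bfS_{\tau}(\II_{\itLamb}))$ in $\bfW$ yields uniform approximation of a general element of $\w{\bfS_{\tau}(\II_{\itLamb})}$ by images of concrete approximators. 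Once the categorical bookkeeping of Theorem~\ref{thm:A} is in place, however, the approximation-theoretic core is packaged inside the construction of $\bfW$, and the corollary itself reduces to the universal-property argument sketched above.
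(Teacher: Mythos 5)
Your overall skeleton---verify that $(\bfW, \id, \w{\gamma_{\xi\dagger}})$ is an object of $\scrA^1$ and then invoke the $\scrA^1$-initiality of $(\bfS_{\tau}(\II_{\itLamb}), \id, \gamma_{\xi})$ (Theorem \ref{thm:main1})---is exactly the reduction the paper intends, and your second and third paragraphs (existence, uniqueness, and the extension $\w{\expaSW}$ by density) are fine. The problem is in the first step, which is the only step with real content, and there your plan diverges from the paper and contains a sub-step that does not work. You propose to get completeness of $\bfW$ from the fact that $\bfW=\underrightarrow{\lim}\bfW_u$ is a direct limit along isometric $\itLamb$-embeddings. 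But a direct limit (increasing union) of Banach modules along isometric inclusions is in general \emph{not} complete---the paper's own chain $E_0\subseteq E_1\subseteq\cdots$ with union $\bfS_{\tau}(\II_{\itLamb})$, which is explicitly not complete, is a counterexample to the principle you invoke. Likewise, $\w{\gamma_{\xi\dagger}}$ is not ``the limit of $\gamma_{\xi\dagger}$-type maps on each component'': the stages $\bfW_u$ are defined as images $\w{\gamma}_{\xi}|_{\bfW_{u-1}}(\bfW_{u-1}^{\oplus_p 2^n})$, and there is no stage-wise structure map to pass to the limit.

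The idea the paper actually uses, and which is missing from your proposal, is a sandwich argument exploiting the defining hypothesis on $\bfW_0$, namely that $\w{\bfW_0}\cong\w{\bfS_{\tau}(\II_{\itLamb})}$ as $\itLamb$-modules preserving $\id$. From $\bfW_0\subseteq\bfW_u\subseteq\bfW\subseteq\w{\bfS_{\tau}(\II_{\itLamb})}$ one gets
\[ \w{\bfS_{\tau}(\II_{\itLamb})}\cong\w{\bfW}_0\subseteq\w{\bfW}\subseteq\w{\bfS_{\tau}(\II_{\itLamb})}, \]
which forces a $\itLamb$-isomorphism $\w{\bfS_{\tau}(\II_{\itLamb})}\cong\bfW$ preserving $\id$ (Lemma \ref{lemma:SW1}); this is what delivers completeness of the target, not any general preservation of completeness under colimits. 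The structure map $\w{\gamma_{\xi\dagger}}$ is then obtained by transport of structure along this isomorphism, $\w{\gamma}_{\xi\ext}:=\expaSW\compos\w{\gamma}_{\xi}\compos(\expaSW^{\oplus 2^n})^{-1}$ (Lemma \ref{lemma:SW2}), which makes the intertwining square and the identity $\w{\gamma}_{\xi\ext}(\id,\ldots,\id)=\id$ automatic. Once these two lemmas are in place, the corollary follows from Theorem \ref{thm:main1} exactly as you describe; without them, your verification that the target lies in $\scrA^1$ has a genuine gap.
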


In Section \ref{sect:deriva}\point, we recall some works of Leinster and Meckes.
Based on their work, we show the following theorem.
\begin{theorem} \label{thm:1.6}
Let $p=1$, $\itLamb=\RR=\kk$, $\tau=\ident: \RR \to \RR, x\mapsto x$, $\II_{\itLamb}=[0,1]$ and $\xi=\frac{1}{2}$;
and, for simplification, we write $\w{\bfS}:=\w{\bfS_{\tau}(\II_{\itLamb})}$.
\begin{itemize}
  \item[{\rm(1)}] {\rm(Theorem \ref{thm:diff}\point)}
    \begin{itemize}
      \item[{\rm(i)}] A morphism in $\Hom_{\scrA^1}((\w{\bfS},\id, \w{\gamma}_{\frac{1}{2}}), (N,v,\delta))$ is zero if and only if $v=0$.
      \item[{\rm(ii)}] Furthermore, there is no morphism in $\scrA^1$ starting with $(\w{\bfS},\id, \w{\gamma}_{\frac{1}{2}})$ such that
          this morphism sends any almost everywhere differentiable function $f(x)$ to its weak derivative $\frac{\dd f}{\dd x}$.
    \end{itemize}

  \item[{\rm(2)}] {\rm(Theorem \ref{thm:diff-non initial}\point)}
    The differentiation $D$ is a morphism in $\scrA^p$ ending with the initial object $(\w{\bfS}, \id, \w{\gamma}_{\frac{1}{2}})$.
\end{itemize}
\end{theorem}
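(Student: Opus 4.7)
The forward direction is immediate: any morphism $h\in\Hom_{\scrA^1}((\w{\bfS},\id,\w{\gamma}_{\frac{1}{2}}),(N,v,\delta))$ satisfies $h(\id)=v$, so $h=0$ forces $v=0$. For the converse I invoke Theorem~\ref{thm:A}, which under the hypotheses $\itLamb=\kk=\RR$, $\II_{\itLamb}=[0,1]$, $\xi=1/2$ identifies $(\w{\bfS},\id,\w{\gamma}_{\frac{1}{2}})$ as an initial object of $\scrA^1$. When $v=0$, the zero $\itLamb$-homomorphism $0:\w{\bfS}\to N$ satisfies $0(\id)=0=v$ and $0\circ\w{\gamma}_{\frac{1}{2}}=0=\delta\circ 0^{\oplus_1 2}$, so it is a morphism in $\scrA^1$. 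By the uniqueness clause of Theorem~\ref{thm:A}, $h$ coincides with this zero morphism.

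\textbf{Part (1)(ii).} I argue by contradiction. Suppose $h$ is a morphism in $\scrA^1$ out of $(\w{\bfS},\id,\w{\gamma}_{\frac{1}{2}})$ which agrees with weak differentiation on a.e.\ differentiable functions. The element $\id\in\w{\bfS}$ is the class of the constant function $1$ on $[0,1]$, whose weak derivative is $0$; hence the target datum satisfies $v=h(\id)=0$. Part~(1)(i) then forces $h=0$, contradicting the fact that $h$ should send $x\mapsto x$ to the nonzero constant function $1$.

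\textbf{Part (2).} I will produce an object $(N,v,\delta)\in\scrA^1$ together with a morphism $D:(N,v,\delta)\to(\w{\bfS},\id,\w{\gamma}_{\frac{1}{2}})$ realizing differentiation; effectively $D$ inverts the integration process that builds $\w{\bfS}$. Let $N$ consist of the absolutely continuous $F:[0,1]\to\RR$ with $F(0)=0$, equipped with the norm $\|F\|_N:=\|F'\|_{L^1}$; then $N$ is a Banach space and $D:N\to\w{\bfS}$ is an isometric $\RR$-linear bijection. Set $v(x):=x$, so $\|v\|_N=1=\mu([0,1])$ and $D(v)=\id$, and define
\[
\delta(F,G)(x)=
\begin{cases}
\tfrac{1}{2}\bigl(F(2x)-F(0)\bigr), & x\in[0,\tfrac{1}{2}], \\[2pt]
\tfrac{1}{2}\bigl(F(1)-F(0)\bigr)+\tfrac{1}{2}\bigl(G(2x-1)-G(0)\bigr), & x\in[\tfrac{1}{2},1].
\end{cases}
\]
Piecewise differentiation yields $D\circ\delta=\w{\gamma}_{\frac{1}{2}}\circ(D\oplus_1 D)$; together with the direct checks $\delta(v,v)=v$ and $\|\delta(F,G)\|_N=\tfrac{1}{2}(\|F\|_N+\|G\|_N)$ this shows $(N,v,\delta)\in\scrA^1$ and that $D$ is the required morphism.

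The principal obstacle is Part~(2), specifically the choice of norm on $N$. Natural alternatives such as the graph norm $\|F\|_{L^1}+\|F'\|_{L^1}$ on $W^{1,1}([0,1])$ make $D$ bounded but destroy the contraction estimate $\|\delta(F,G)\|_N\le\tfrac{1}{2}(\|F\|_N+\|G\|_N)$, because the boundary evaluations $F(0),F(1),G(0)$ occurring in $\delta$ are only controlled via the Sobolev embedding into $C([0,1])$. The remedy above -- imposing $F(0)=0$ and using the homogeneous seminorm $\|F'\|_{L^1}$ as the norm -- removes those boundary contributions and upgrades $D$ to an isometric isomorphism, making all three axioms of an $\scrA^1$-object transparent.
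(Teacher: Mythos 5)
Your parts (1)(i) and (1)(ii) follow the paper's argument exactly: the forward direction of (i) is $v=h(\id)=0$, the converse uses that the zero map is a competing morphism out of the initial object and invokes uniqueness, and (ii) derives $v=D(\id)=\frac{\dd}{\dd x}\id=0$ from the fact that $\id$ is the constant function $1$, then applies (i) to get the contradiction. No comment needed there.

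Part (2) is where you genuinely diverge, and your route is arguably the sounder one. The paper (Theorem \ref{thm:diff-non initial}) takes the domain object to be $(\w{\bfS},\ident,\w{\kappa})$, i.e.\ it reuses the Banach module $\w{\bfS}\cong L^1([0,1])$ itself, equips it with the distinguished element $\ident(x)=x$ and the accumulate-and-juxtapose map $\w{\kappa}(F_1,F_2)$ built from $\tfrac12 F_1(2x)$ and $\tfrac12(F_1(1)+F_2(2x-1))$, and then asserts that $D$ sends ``each element $f\in\w{\bfS}$ to its weak derivative.'' This is formally awkward: not every class in $L^1$ has an $L^1$ weak derivative, and $\w{\kappa}$ involves the point evaluation $F_1(1)$, which is not well defined on $L^1$ equivalence classes. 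You instead build the domain as the space $N$ of absolutely continuous $F$ with $F(0)=0$ under the homogeneous norm $\|F'\|_{L^1}$, which makes $D$ a genuine isometric isomorphism onto $\w{\bfS}$ and in effect transports the initial-object structure backward along integration; your verifications of $\delta(v,v)=v$, the norm identity, and $D\circ\delta=\w{\gamma}_{\frac12}\circ(D\oplus_1 D)$ are all correct. What the paper's choice buys is continuity with Leinster's $(C_*([0,1]),\ident,\kappa)$ from Proposition \ref{prop-9.1}; what yours buys is a domain on which differentiation and the structure map are actually everywhere defined. The only small omission is that you do not explicitly verify the inverse-limit commutativity condition (\ref{formula:comm}) for your $\delta$, but this is immediate from the boundedness (indeed the exact norm identity) you establish, so it is not a substantive gap.
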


Recall that any function $f:\II_{\itLamb}=[0,1] \to \RR$
in $\bfS$ is a {\defines step function},
\checks{i.e.,} there is a dissection $[0,1] = \bigcup_{i=1}^t \II_i$ of $[0,1]$ with
$\II_i \cap \II_j = \varnothing$ for any $1\le i\ne j\le t$,
such that each $f|_{\II_i}(x)$ is a constant in $\RR$.
Then $\frac{\dd f}{\dd x}$ almost everywhere equals to $0$.
It follows that if the completion $D$ of $\frac{\dd}{\dd x}$
sends every function in $\w{\bfS}$ to zero. Furthermore, if $D$ is a morphism in
$\Hom_{\scrA^1}((\w{\bfS},\id, \w{\gamma}_{\frac{1}{2}}), (N,v,\delta))$,
then we have $v = D(\id) = 0$, it follows that $D=0$ by Theorem \ref{thm:1.6}\point~(1) (i),
this is a contradiction.
Therefore, Theorem \ref{thm:1.6}\point~(1) (i) shows that
differentiation, i.e., the homomorphism $D$,
is not a morphism in $\scrA^1$ with domain the initial object of $\scrA^1$.
Then we obtain the statement (ii) of Theorem \ref{thm:1.6}\point~(1).
Naturally, we would ask whether $D$ can be characterized by $\scrA^p$?
To do this, we prove Theorem \ref{thm:1.6}\point~(2) in Subsection \ref{subsect:deri 2}\point,
and show that $D$ is a morphism in a category $\scrA^p$.

Finally, we provide some applications for our main results in Section \ref{sect:9}.
In Subsection \ref{subsect:app1}\point, we assume $\kk=\RR$, $(\itLamb, \prec, \Vert\cdot\Vert_{\itLamb}) = (\RR,\le,|\cdot|)$,
$B_{\RR}=\{1\}$, $\norm{}: B_{\RR}\to \{1\}\subseteq\RR^{\ge 0}$,
$\II_{\RR}=[0,1]$, $\xi=\frac{1}{2}$, $\kappa_0(x)=\frac{x}{2}$, $\kappa_1(x)=\frac{x+1}{2}$ and
$\tau=\text{id}_{\RR}:\RR\to \RR$, and let $\Leb$ be the Lebesgue measure.
Then (\ref{int}) is a Lebesgue integration
\[(\scrA^1)\int_{\II_{\RR}=[0,1]} f\dd\Leb = (\rmL)\int_0^1 f \dd\Leb, \]
and (\ref{Lambda-linear}) shows that Lebesgue integration is $\RR$-linear.
This result provides a categorification of Lebesgue integration.
In Subsection \ref{subsect:app2}\point, we provide two examples for Corollary \ref{coro:C}\point~to
show that the Taylor series and Fourier series can be realized as two morphisms in $\scrA^1$ with domain the initial object.

\section{Preliminaries} \label{sec:preliminaries}

In this section we recall some concepts in the category theory and representation theory of algebras, including limits in the category theory
(cf. \cite[Chapter 5]{R1979} and \cite[Chapter III, pages 62--74]{L2013}),
$\kk$-algebras (cf. \cite[Chapter I]{ASS2006}),
some methods to establish topologies on algebras
(cf. \cite[Chapter 10]{AM2018}).
These concepts are familiar to algebraists but may not be as familiar to those in the field of analysts.

\subsection{Categories and limits} \label{sect:lim compl}

Recall that a {\defines category} $\mathcal{C}$ consists of three ingredients: a class of {\defines objects},
a set $\Hom_{\mathcal{C}}(X,Y)$ of {\defines morphisms} for any objects $X$ and $Y$ in $\mathcal{C}$,
and the {\defines composition} $\Hom_{\mathcal{C}}(X,Y)\times\Hom_{\mathcal{C}}(Y,Z)\to\Hom_{\mathcal{C}}(X,Z)$,
denoted by
\[ (f:X\to Y, g:Y\to Z) \mapsto gf: X\to Z, \]
for any objects $X$, $Y$ and $Z$ in $\mathcal{C}$. These ingredients are subject to the following axioms:
\begin{itemize}
  \item[(1)] the $\Hom$ sets are pairwise disjoint;
  \item[(2)] for any object $X$, the {\defines identity morphism} $1_X: X\to X$ in $\Hom_{\mathcal{C}}(X,X)$ exists;
  \item[(3)] the composition is associative: given morphisms $\xymatrix{U\ar[r]^{f} & V\ar[r]^{g} & W\ar[r]^{h} & X,}$
    we have \[ h(gf) = (hg)f. \]
\end{itemize}

Next, we review the limits in the category theory.

\begin{definition}[cf. {\cite[Chapter 5, Section 5.2]{R1979}}] \rm
Let $\frakI=(\frakI,\preceq)$ be a partially ordered set, and let $\mathcal{C}$ be a category.
A {\defines direct system} in $\mathcal{C}$ over $\frakI$ is an ordered pair $((M_i)_{i\in\frakI}, (\varphi_{ij})_{i\preceq j})$,
where $(M_i)_{i\in\frakI}$ is an indexed family of objects in $\mathcal{C}$
and $(\varphi_{ij}:M_i\to M_j)_{i\preceq j}$ is an indexed family of morphisms for which $\varphi_{ii}=1_{M_i}$ for all $i$,
such that the following diagram
\[\xymatrix{
M_i \ar[rd]_{\varphi_{ij}} \ar[rr]^{\varphi_{ik}} & & M_k \\
 & M_j \ar[ru]_{\varphi_{jk}} &
}\]
commutes whenever $i\preceq j\preceq k$. Furthermore, for the above direct system $((M_i)_{i\in\frakI}, (\varphi_{ij})_{i\preceq j})$,
the {\defines direct limit} (also called {\defines inductive limit} or {\defines colimit})
is an object, say $\underrightarrow{\lim} M_i$, and {\defines insertion morphisms}
$(\alpha_i: M_i \to \underrightarrow{\lim} M_i)_{i\in\frakI}$ such that
\begin{itemize}
  \item[(1)] $\alpha_j\varphi_{ij}=\alpha_i$ whenever $i\preceq j$;
  \item[(2)] for any object $X$ in $\mathcal{C}$ such that
    there are given morphisms $f_i: M_i \to X$ satisfying $f_j\varphi_{ij}=f_i$ for all $i\preceq j$,
    there exists a unique morphism $\theta: \underrightarrow{\lim} M_i \to X$ making the following diagram
    \[ \xymatrix{
        \underrightarrow{\lim} M_i \ar@{-->}[rr]^{\theta}_{(\exists!)} & & X \\
        & M_i \ar[lu]^{\alpha_i} \ar[ru]_{f_i} \ar[d]^(0.35){\varphi_{ij}}_(0.35){(i\preceq j)}  & \\
        & M_j \ar@/^1pc/[luu]^{\alpha_j} \ar@/_1pc/[ruu]_{f_j}  &
      }\]
commutes.
\end{itemize}
\end{definition}

\begin{example} \rm
Let $\{x_n\}_{n\in\NN^+}$ be a monotonically increasing sequence of real numbers,
and let $\RR$ be the partially ordered category $(\RR,\le)$, in which the elements are real numbers
and the morphisms are of the form $\le_{r_2r_1}: r_1 \to r_2$ ($r_2\le r_1$).
If $\{x_n\}_{n\in\NN^+}$ has limit $x$ as in analysis, i.e., for any $\epsilon>0$,
there exists $N\in\NN^+$ such that $|x_n-x|<\epsilon$ holds for all $n>N$,
then $x=\underrightarrow{\lim} x_n$.
Indeed, for any $x'\in\RR$ such that the morphisms $(\alpha_i=\le_{x_ix'} :x_i\to x')_{i\in\NN^+}$ exist,
there is a morphism $\theta=\le_{xx'}:x\to x'$ such that the following diagram
\[ \xymatrix@C=1.3cm@R=1.3cm{
   x \ar@{-->}[rr]^{\theta=\le_{xx'}} & & x' \\
   & x_i \ar[lu]^(0.3){\le_{x_ix}} \ar[ru]_(0.3){\le_{x_ix'}} \ar[d]^(0.35){\le_{x_ix_j}}  & \\
   & x_j \ar@/^1.3pc/[luu]^{\le_{x_jx}} \ar@/_1.3pc/[ruu]_{\le_{x_jx'}}  &
}\]
commutes. It is clear that the morphism $\theta$ is unique in this example.
Furthermore, $x\le x'$ holds because if $x'<x$ then we can find some $x_t$
such that $x_t>x'$, i.e., $\alpha_t\in \Hom_{(\RR,\le)}(x',x_t)=\varnothing$, this is a contradiction.
\end{example}

\begin{definition}[cf. {\cite[Chapter 5, Section 5.2]{R1979}}] \rm \label{exp:colim}
Let $\frakI=(\frakI,\preceq)$ be a partially ordered set, and let $\mathcal{C}$ be a category.
An {\defines inverse system} in $\mathcal{C}$ over $\frakI$ is an ordered pair $((M_i)_{i\in\frakI}, (\psi_{ij})_{j\preceq i})$,
where $(M_i)_{i\in\frakI}$ is an indexed family of objects in $\mathcal{C}$
and $(\psi_{ij}:M_j\to M_i)_{j\preceq i}$ is an indexed family of morphisms for which $\psi_{ii}=1_{M_i}$ for all $i$,
such that the following diagram
\[\xymatrix{
M_i \ar@{<-}[rd]_{\psi_{ij}} \ar@{<-}[rr]^{\psi_{ik}} & & M_k \\
 & M_j \ar@{<-}[ru]_{\psi_{jk}} &
}\]
commutes whenever $i\preceq j\preceq k$. Furthermore, for the above direct system $((M_i)_{i\in\frakI}, (\psi_{ij})_{j\preceq i})$,
the {\defines inverse limit} (also called {\defines projective limit} or {\defines limit})
is an object, say $\underleftarrow{\lim} M_i$, and {\defines projects morphisms}
$(\alpha_i: \underleftarrow{\lim}M_i \to  M_i)_{i\in\frakI}$ such that
\begin{itemize}
  \item[(1)] $\psi_{ji}\alpha_j=\alpha_i$ whenever $i\preceq j$;
  \item[(2)] for any object $X$ in $\mathcal{C}$ such that
    there are given morphisms $f_i: X\to M_i$ satisfying $\psi_{ji}f_j=f_i$ for all $i\preceq j$,
    there exists a unique morphism $\vartheta: X\to\underrightarrow{\lim} M_i$ making the following diagram
    \[ \xymatrix{
        \underleftarrow{\lim} M_i \ar@{<--}[rr]^{\vartheta}_{(\exists!)} & & X \\
        & M_i \ar@{<-}[lu]^{\alpha_i} \ar@{<-}[ru]_{f_i} \ar@{<-}[d]^(0.35){\psi_{ij}}_(0.35){(i\preceq j)}  & \\
        & M_j \ar@{<-}@/^1pc/[luu]^{\alpha_j} \ar@{<-}@/_1pc/[ruu]_{f_j}  &
      }\]
commutes.
\end{itemize}
\end{definition}

\begin{example} \rm \label{exp:lim}
Let $\{x_n\}_{n\in\NN^+}$ be a monotonically decreasing sequence of real numbers,
and let $\RR$ be the partially ordered category $(\RR,\le)$.
If $\{x_n\}_{n\in\NN^+}$ has limit $x$ as in analysis,
then we have $x=\underleftarrow{\lim} x_n$ by a way similar to that in Example \ref{exp:colim}.
\end{example}

\subsection{\texorpdfstring{$\kk$-algebras and their completions}{}}

Let $\kk$ be a field. In this subsection we recall the definitions of $\kk$-algebras and the completions of $\kk$-algebras.
All concepts in this subsection are parallel to those in \cite[Chapter 10, Section 10.1]{AM2018}
which extracts some important results about the completions of Abelian groups.

\subsubsection{\texorpdfstring{$\kk$-algebras}{}}

\begin{definition} \rm
A {\defines $\kk$-algebra} $A$ defined over $\kk$ is both a ring and a $\kk$-vector space such that
\[ k(aa') = (ka)a' = a(ka'). \]
In particular,
\begin{itemize}
  \item[(1)] if $A$ is a commutative ring, \checks{i.e.,} $a_1a_2=a_2a_1$ holds for all $a_1,a_2\in A$,
    then we call that $A$ is {\defines commutative}, otherwise, we call that it is {\defines non-commutative};
  \item[(2)] if the $\kk$-dimension $\dim_{\kk}A$ of $A$,
    i.e., the dimension of $\itLamb$ as a $\kk$-vector space,
    is finite, then we call that $A$ is a {\defines finite-dimensional $\kk$-algebra},
    otherwise, we call that it is {\defines an infinite-dimensional $\kk$-algebra}.
\end{itemize}
\end{definition}

In this paper, we do not require the commutativity of $\kk$-algebras,
but we always suppose that every $\kk$-algebra in our paper is a finite-dimensional $\kk$-algebra with identity $1$.

Recall that an {\defines idempotent} of a $\kk$-algebra $A$ is an element $e$ in $A$ such that $e^2=e$.
Obviously, $0$ and $1$ are idempotents. If an idempotent $e$ has a decomposition
\[ e = e'+e'' \]
such that
\begin{itemize}
  \item[(1)] $e'$ and $e''$ are non-zero idempotents;
  \item[(2)] $e'$ and $e''$ are {\defines orthogonal}, \checks{i.e.,} $e'e''=0=e''e'$,
\end{itemize}
then we call $e$ {\defines decomposable}.
We call $e$ a {\defines primitive idempotent} if it is not decomposable.
Furthermore, one can prove that $1$ has a decomposition
\[ 1 = e_1 + e_2 + \cdots + e_t \]
such that all $e_i$ are primitive idempotents and
$e_ie_j=0$ holds for all $i\ne j$,
we call $\{e_1,\ldots, e_t\}$ a {\defines complete set of primitive orthogonal idempotents},
see \cite[Chapter I, Page 18]{ASS2006}.

Let $e_1$, $\ldots$, $e_t$ be the complete set of primitive orthogonal idempotents.
Then $A$ has a decomposition $A=\bigoplus_{i=1}^t Ae_i$, where
each direct summand $Ae_i$ is an indecomposable left $A$-module.
We say $A$ is {\defines basic} if $Ae_i\not\cong Ae_j$ for all $1\le i\ne j\le t$.

\begin{example} \rm \label{exp:k-alg}
The set $\mathbf{M}_n(\kk)$ of all $n\times n$ matrices over $\kk$,
the polynomial ring $\kk[x_1, \cdots, x_n]$, and the field $\kk$ itself are $\kk$-algebras.
In particular, $\mathbf{M}_n(\kk)$ and $\kk$ are finite-dimensional,
and $\kk[x_1, \cdots, x_n]$ is infinite-dimensional.
\end{example}

Recall that a quiver is a quadruple $\Q=(\Q_0, \Q_1, \s, \t)$ where $\Q_0$ is the set of vertices,
$\Q_1$ is the set of arrows, and $\s,\t:\Q_1\to\Q_0$ are functions respectively sending each arrow to its starting point and ending point.
Then any vertex $v\in\Q_0$ can be seen as a path on $\Q$ whose length is zero,
and any arrow $\alpha\in\Q_1$ can be seen as a path on $\Q$ whose length is one.
A path $\wp$ of length $l$, denoted $\ell(\wp)$, is the composition $\alpha_l\cdots\alpha_2\alpha_1$ of arrows $\alpha_1$, $\ldots$, $\alpha_l$,
where $\t(\alpha_i)=\s(\alpha_{i+1})$ for all $1\le i< l$.
Then, naturally, we define the composition of two paths $\wp_1 = \alpha_l \cdots \alpha_1$ and $\wp_2 = \beta_{\ell} \cdots \beta_1$ as:
\[ \wp_2\wp_1 = \beta_{\ell} \cdots \beta_1 \alpha_l \cdots \alpha_1 \]
provided that the ending point $\t(\wp_1)$ of $\wp_1$ coincides with the starting point $\s(\wp_2)$ of $\wp_2$,
otherwise (i.e., $\t(\wp_1) \ne \s(\wp_2)$), then the composition is defined to be zero.
Consequently, let $\Q_l$ be the set of all paths of length $l$.
Then $\kk\Q := \mathrm{span}_{\kk}(\bigcup_{l \ge 0} \Q_l)$,
known as the {\defines path algebra} of $\Q$, is a $\kk$-algebra whose multiplication
is defined as follows:
\[ \kk\Q \times \kk\Q \to \kk\Q\ \text{via}\ (k_1\wp_1,k_2\wp_2)\mapsto
\begin{cases}
k_1k_2\cdot \wp_2\wp_1, & \text{ if } \t(\wp_1)=\s(\wp_2);  \\
\ \ \ \ \ \  0, & \text{ otherwise.}
\end{cases}
\]
The following result shows that we can describe all finite-dimensional $\kk$-algebras using quivers,
see \cite[page 43]{Ringel1984} and \cite[Theorem 1.9]{ARS1995}.
The idea of such a graphical representation seems to go back to
Gabriel \cite{Gab1960}, Grothendieck \cite{Gro1957}, and Thrall \cite{Thra1947},
but it became widespread in the early seventies, mainly due to Gabriel \cite{Gab1972, Gab1973}.

\begin{theorem}[{Gabriel \cite[Chapter II, Theorem 3.7]{ASS2006}}]
\label{Gab's thm}
For any finite-dimensional $\kk$-algebra $A$, there is a finite quiver $\Q$,
i.e., the vertex set and arrow set are finite sets, and an admissible
ideal\footnote{An admissible ideal $\I$ of $\kk\Q$ is an ideal such that
$R_{\Q}^m \subseteq\I \subseteq R_{\Q}^2$ holds for some $m\ge 2$, see \cite[Chapter II, Section II.1, page 53]{ASS2006},
where $R_{\Q}^t$ is the ideal of $\kk\Q$ generated by all paths of length $\ge t$.}
$\I$  of $\kk\Q$ such that
the module category of $A$ is equivalent to that of $\kk\Q/\I$.
Furthermore, if $A$ is basic, we have $A\cong\kk\Q/\I$.
\end{theorem}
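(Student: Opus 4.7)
The plan is to follow Gabriel's classical three-stage strategy: first reduce to the basic case via Morita equivalence, then construct a surjective algebra homomorphism $\phi:\kk\Q\to A$ whose quiver $\Q$ is read off from the radical filtration of $A$, and finally verify that $\I:=\ker\phi$ is admissible.

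First, I would reduce to the basic case. Let $\{e_1,\ldots,e_t\}$ be a complete set of primitive orthogonal idempotents of $A$, group them according to the isomorphism classes of the indecomposable projectives $Ae_i$, and pick one representative from each class. Letting $e$ be the sum of these chosen idempotents, $A^b:=eAe$ is a basic finite-dimensional $\kk$-algebra, and the functor $e(-)$ yields a Morita equivalence $\modcat A\simeq\modcat A^b$. Hence it suffices to produce a presentation $A^b\cong\kk\Q/\I$ for $A^b$; the module-category equivalence in the general statement then follows by transporting through the Morita equivalence.

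Next, assume $A$ is basic with primitive orthogonal idempotents $e_1,\ldots,e_n$. I would define $\Q=\Q_A$ by taking $\Q_0=\{1,\ldots,n\}$ and declaring the number of arrows from $i$ to $j$ to be $\dim_{\kk} e_j(\rad A/\rad^2 A)e_i$. For each arrow $\alpha:i\to j$ pick a lift $a_\alpha\in e_j(\rad A)e_i$ so that the collection of residues forms a basis of $e_j(\rad A/\rad^2 A)e_i$, and define an algebra homomorphism $\phi:\kk\Q_A\to A$ by $e_i\mapsto e_i$ and $\alpha\mapsto a_\alpha$. The crucial step is surjectivity: since $A$ is basic, $A/\rad A\cong \kk^n$ is spanned by the $e_i$, so $\phi$ is surjective modulo $\rad A$; an induction on $k$ using $\rad^{k+1}A=(\rad A)(\rad^k A)$ together with Nakayama's lemma shows that each layer $\rad^k A/\rad^{k+1}A$ is spanned by images of length-$k$ paths, and since $\rad A$ is nilpotent in the finite-dimensional setting this upgrades to full surjectivity of $\phi$.

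Finally, set $\I:=\ker\phi$. For $R_\Q^m\subseteq\I$: pick $m$ with $\rad^m A=0$, and observe $\phi(R_\Q^k)\subseteq\rad^k A$ for every $k$, so paths of length $\geq m$ all lie in $\I$. For $\I\subseteq R_\Q^2$: any $x\in\kk\Q_A$ can be written as $x=\sum c_v e_v+\sum c_\alpha\alpha+r$ with $r\in R_\Q^2$, and if $\phi(x)=0$, then projecting to $A/\rad A$ forces each $c_v=0$ by linear independence of the $e_i$, and projecting further to $A/\rad^2 A$ forces each $c_\alpha=0$ by the basis property of the lifts, so $x=r\in R_\Q^2$. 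Combined with surjectivity this gives the isomorphism $A\cong\kk\Q/\I$ in the basic case. The main obstacle I anticipate is the careful bookkeeping of the radical filtration: both the inductive surjectivity argument and the inclusion $\I\subseteq R_\Q^2$ rest on the coherence between the chosen lifts $a_\alpha$ and the associated graded algebra $\bigoplus_{k\ge 0}\rad^k A/\rad^{k+1}A$, and the non-canonicity of these lifts is why $\I$ itself is not unique, although its admissibility is.
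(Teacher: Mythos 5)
The paper does not actually prove this theorem: it is quoted from Assem--Simson--Skowro\'{n}ski with a citation, so there is no in-paper argument to compare against. Your proposal is the standard textbook proof (Morita reduction to the basic case, quiver read off from $\rad A/\rad^2 A$, surjection $\phi:\kk\Q\to A$ obtained by lifting arrows, admissibility of $\ker\phi$ via the two projections along the radical filtration), and its overall structure is sound.

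One step deserves flagging, because it is where your argument silently uses a hypothesis that the paper's transcription of the theorem omits: the claim that ``since $A$ is basic, $A/\rad A\cong\kk^n$ is spanned by the $e_i$'' is false over a general field. For a basic algebra one only gets that $A/\rad A$ is a product of finite-dimensional division algebras over $\kk$; identifying it with $\kk^n$ requires $\kk$ to be algebraically closed (or at least a splitting field for $A$), which is exactly the standing hypothesis in the cited Theorem II.3.7 of Assem--Simson--Skowro\'{n}ski. Without it the statement itself fails: $\CC$ viewed as a $2$-dimensional basic $\RR$-algebra is not isomorphic to $\RR\Q/\I$ for any finite quiver $\Q$ and admissible ideal $\I$, since the only candidate quiver has one vertex and no arrows and yields $\RR$. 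This affects precisely your surjectivity-modulo-$\rad A$ step and the linear-independence argument in $\I\subseteq R_{\Q}^2$; both are correct once the algebraically closed hypothesis is made explicit. The remainder --- the inductive spanning of $\rad^k A/\rad^{k+1}A$ by images of length-$k$ paths, nilpotency of the radical giving $R_{\Q}^m\subseteq\I$, and the two-step projection giving $\I\subseteq R_{\Q}^2$ --- is the standard argument and is fine.
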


\begin{remark} \rm
We provide a remark for the isomorphism $A\cong\kk\Q/\I$ given in Theorem \ref{Gab's thm}\point~
here: the existence of the quiver $\Q$ is unique if $A$ is basic and $\I$ is admissible;
the definition of admissible can be found in \cite[Chapter I, Section I.6]{ASS2006}.
\end{remark}

\subsubsection{\texorpdfstring{Topologies on $\kk$-algebras}{}}
Now we recall the topologies of $\kk$-algebras $A$ (not necessarily basic or finite-dimensional).
Let $\ideal(A)$ be the set of all ideals of $A$, which forms a partially ordered set $\ideal(A) = (\ideal(A), \preceq)$
with the partial order defined by the inclusion, i.e., for any $A_1, A_2\in\ideal(A)$, we have
\[ A_1\preceq A_2 \text{ if and only if } A_1 \subseteq A_2. \]
Notice that $A$ has two trivial ideal $0$ and $A$, then we have $\ideal(A)\ne\varnothing$
and have a descending chain $A_0=A \succeq A_1=0 \succeq A_2=0 \succeq \cdots$.
Thus, there is at least one descending chain of ideals.
Let $\mathcal{J}$ be a descending chain
\begin{center}
  $A_0 = A \succeq A_1 \succeq A_2 \succeq \cdots$
\end{center}
of ideals.
We say a subset $U$ of $A$ satisfies the {\defines N-condition}, if it meets the following criteria:
\begin{itemize}
  \item[(N1)] $U$ contains the zero of $A$;
  \item[(N2)] there exists some $j\in\NN$ such that $U\supseteq A_j$.
\end{itemize}
Furthermore, we denote by $\frakU_A(0)$ the set of all subsets satisfying the $N$-condition,
which forms a partially ordered set with the partial order $\preceq$ given by $\subseteq$.

\begin{lemma} \label{lemm:top}
The set $\frakU_A(0)$ is a topology defined on $A$, in other words,
it satisfies the following four conditions.
\begin{itemize}
  \item[\rm(1)] For any $U \in \frakU_A(0)$, we have $0\in U$.
  \item[\rm(2)] $\frakU_A(0)$ is closed under finite intersection, \checks{i.e.,} for any $U_1, \ldots, U_t \in \frakU_A(0)$,
    we have $\bigcap_{1\le j\le t} U_j \in \frakU_A(0)$.
  \item[\rm(3)] If $U\in\frakU_A(0)$ and $U\subseteq V\subseteq A$, then $V\in \frakU_A(0)$.
  \item[\rm(4)] If $U\in\frakU_A(0)$, then there is a set $V\in\frakU_A(0)$ such that $V\subseteq U$
   and $U-y:=\{u-y\mid u\in U\}\in \frakU_A(0)$ for all $y\in V$.
\end{itemize}
\end{lemma}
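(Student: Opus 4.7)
The plan is to verify each of the four axioms directly from the N-condition, using only the fact that $\mathcal{J}$ is a descending chain of ideals. Conditions (1), (2), and (3) should be immediate, and I expect the only place requiring any genuine thought is condition (4), where the ideal structure (not just the fact that we have a filter base) actually enters.

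First I would dispose of (1) by noting that it is literally axiom (N1). For (2), given $U_1,\dots,U_t \in \frakU_A(0)$ with $U_i \supseteq A_{j_i}$, I would set $j = \max\{j_1,\dots,j_t\}$; since $\mathcal{J}$ is descending we have $A_j \subseteq A_{j_i}$ for every $i$, hence $\bigcap_i U_i \supseteq A_j$, and $0 \in \bigcap_i U_i$ since $0$ lies in each $U_i$. For (3), if $U \in \frakU_A(0)$ with $U \supseteq A_j$ and $U \subseteq V$, then $V$ inherits both $0 \in V$ and $V \supseteq A_j$.

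The substantive step is (4). Given $U \in \frakU_A(0)$ with $U \supseteq A_j$, I would propose $V := A_j$. Then $V \in \frakU_A(0)$ since $A_j$ is itself an ideal (so $0 \in A_j$) and trivially $V \supseteq A_j$; moreover $V \subseteq U$ by choice of $j$. It remains to check that for every $y \in V = A_j$ the translate $U - y$ lies in $\frakU_A(0)$. Writing $u = y \in A_j \subseteq U$ gives $0 = u - y \in U - y$, so (N1) holds for $U-y$. For (N2), take any $a \in A_j$; since $A_j$ is an additive subgroup, $a + y \in A_j \subseteq U$, so $a = (a+y) - y \in U - y$, proving $U - y \supseteq A_j$.

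The one place to be careful is that axiom (4) is the translation-invariance axiom for a topological group, so the proof must use that each $A_j$ is closed under addition and contains $0$; that is where the hypothesis that the elements of $\mathcal{J}$ are ideals (and not merely arbitrary subsets) is essential. Beyond this observation, the argument is formal and the lemma should follow cleanly; I do not anticipate any genuine obstacle.
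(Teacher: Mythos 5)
Your proof is correct and follows essentially the same route as the paper: (1)--(3) read off directly from the N-condition, and for (4) you take $V$ to be the ideal $A_j$ contained in $U$ and verify $U-y\supseteq A_j$ via $a=(a+y)-y$, exactly as the paper does. One small point in your favor: in (2) you correctly use $j=\max\{j_1,\dots,j_t\}$ (the intersection of the $A_{j_i}$ in a descending chain is the one with the largest index), whereas the paper writes $A_{\min\{j_1,j_2\}}$, which appears to be a slip.
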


\begin{proof}
First, (1) is trivial by the condition (N1).

Second, for arbitrary two subset $U_1$ and $U_2$, there are $A_{j_1}$ and $A_{j_2}$ such that
$U_1\supseteq A_{j_1}$ and $U_2\supset A_{j_2}$. Then $U_1\cap U_2\supseteq A_{j_1}\cap A_{j_2}$.
By the definition of $A_j$, we have $A_{j_1}\cap A_{j_2} = A_{\min\{j_1, j_2\}}$,
\checks{i.e.,} \[U_1\cap U_2\supseteq A_{\min\{j_1,j_2\}}. \]
Since $0\in U_1\cap U_2$ trivially, we have $U_1\cap U_2\in \frakU_A(0)$.
By induction, we obtain (2).

Third, assume $U\in\frakU_A(0)$ and $U\subseteq V\subseteq A$.  By the definition of $\frakU_A(0)$,
we have  $0\in U$ and $U\supseteq A_j$ for some $j$.
Thus, $0\in V$ and $V\supseteq A_j$, so we obtain (3).

Finally, for each $U\in\frakU_A(0)$, we can find $V$ in the following way.
There exists an index $\jmath$ such that $U\not\supseteq A_{\jmath-1}$ and
$U \supseteq A_{\jmath} \supseteq  A_{\jmath+1} \supseteq \cdots$.
Take $V = \bigcap_{j\le\jmath} A_j$ ($= A_{\jmath} \subseteq U$).
For any $y\in V$, we have (N1), \checks{i.e.,} $0=y-y\in U-y=\{u-y\mid u\in U\}$ by $y\in V\subseteq U$;
and have (N2) since $a=(a+y)-y$ holds for any $a\in V$ and $a+y\in V$.
Then we obtain $U-y\in \frakU_A(0)$, \checks{i.e.,} (4) holds.
\end{proof}

\begin{definition} \rm \label{def:J-top}
The set $\frakU_A(0)$ is called the {\defines $\mathcal{J}$-topology} of $A$.
Furthermore, we can define open sets on $A$.
\begin{itemize}
\item[(1)] The subset in $\frakU_A(0)$ is called a {\defines neighborhood} of $0$.
For any $U\in\frakU_A(0)$, the union $\bigcup_V V$ of all subsets $V$ given in
Lemma \ref{lemm:top}\point~(4) is called the {\defines interior} of $U$ and denote $\bigcup_V V$ by $U^{\circ}$.

\item[(2)] A neighborhood $U$ is called {\defines open} if $U=U^{\circ}$.
An {\defines open set} $O$ defined on $A$ is one of the following cases:
\begin{itemize}
  \item[(a)] $O$ equals either $A$ or $\varnothing$;
  \item[(b)] $O$ is the intersection of a finite number of open neighborhoods;
  \item[(c)] $O$ is the union of any number of open neighborhoods.
\end{itemize}
\end{itemize}
\end{definition}

It induces the definitions of continuous homomorphisms of $\kk$-algebras.

\begin{definition} \rm
Let $A_1$ and $A_2$ be two $\kk$-algebras, and let $\mathcal{J}_1$ and $\mathcal{J}_2$
be two descending chains of ideals in $A_1$ and $A_2$, respectively.
Let $\frakU_{A_1}(0)$ and $\frakU_{A_2}(0)$ be the $\mathcal{J}_1$-topology and
$\mathcal{J}_2$-topology given by $\mathcal{J}_1$ and $\mathcal{J}_2$, respectively.
A homomorphism $h:A_1\to A_2$ of $\kk$-algebras is called {\defines continuous} if
the preimage of an arbitrary open set on $A_2$ is an open set on $A_1$.
\end{definition}

\begin{lemma} \label{lemm:contin}
Let $A$ be a $\kk$-algebra with a $\mathcal{J}$-topology.
Then the addition $+: A\times A\to A$ and each $\kk$-linear transformation $h_{\lambda}: A\to A$
defined by $a \mapsto \lambda a$ $(\lambda\in A)$ are continuous.
\end{lemma}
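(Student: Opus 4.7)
The plan is to verify continuity by checking preimages of basic open sets, leveraging the algebraic structure of the chain $\mathcal{J}$. The essential fact is that each $A_j$ is a two-sided ideal, hence both an additive subgroup (so $A_j + A_j \subseteq A_j$) and stable under left multiplication by any element of $A$ (so $\lambda A_j \subseteq A_j$). These two containments will turn both continuity checks into one-line verifications.

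First I would reduce continuity to a local statement at each point. Given any open set $O$ on $A$ and a point $a \in O$, Definition \ref{def:J-top} lets one find an open neighborhood $U \in \frakU_A(0)$ with $a + U \subseteq O$, and by condition (N2) there is some $A_j$ with $A_j \subseteq U$, so $a + A_j \subseteq O$. Hence $\{a + A_j\}_{j \in \NN}$ serves as a neighborhood basis at $a$, and for each of $+$ and $h_\lambda$ it suffices to produce, for every basic neighborhood in the codomain, a basic neighborhood in the domain mapping into it.

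For the addition $+ \colon A \times A \to A$, I would fix $(a_1, a_2) \in A \times A$ together with a basic neighborhood $a_1 + a_2 + A_j$ of $a_1 + a_2$. Taking the product neighborhood $(a_1 + A_j) \times (a_2 + A_j)$, the additive-subgroup property of $A_j$ gives
\[ (a_1 + A_j) + (a_2 + A_j) = a_1 + a_2 + (A_j + A_j) = a_1 + a_2 + A_j, \]
so addition maps this product exactly into the prescribed neighborhood. For $h_\lambda$, given a basic neighborhood $\lambda a + A_j$ of $\lambda a$, the ideal inclusion $\lambda A_j \subseteq A_j$ yields
\[ h_\lambda(a + A_j) = \lambda a + \lambda A_j \subseteq \lambda a + A_j, \]
so $a + A_j$ is a basic neighborhood of $a$ mapping into the target.

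The main obstacle will be reconciling the somewhat non-standard ``pointed'' open-set definition of Definition \ref{def:J-top}, in which nontrivial open sets are built from neighborhoods of $0$, with the translation-invariant ring topology one expects here. I would address this up front by showing that each translate $a + A_j$ is in fact open --- either by applying Lemma \ref{lemm:top}(4) to identify $A_j$ as the interior of any $U \supseteq A_j$ and then shifting, or by reading Definition \ref{def:J-top} as shorthand for the translation-invariant topology generated by $\frakU_A(0)$ --- after which the two containments displayed above immediately yield continuity of $+$ in the product topology and of $h_\lambda$ in the topology on $A$.
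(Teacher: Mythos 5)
Your proposal is correct, and the algebraic core is the same as the paper's: everything comes down to $A_j+A_j\subseteq A_j$ and (for $h_\lambda$) $\lambda A_j\subseteq A_j$, with the product chain $A_j\times A_j$ furnishing the neighborhoods of $(0,0)$ in $A\times A$. The packaging differs, though. The paper never leaves the base point: it takes an open neighborhood $U$ of $0$, forms $\widetilde U=+^{-1}(U)$, and verifies directly that $\widetilde U$ satisfies (N1), (N2) and $\widetilde U^{\circ}=\widetilde U$, the last step being exactly your translation computation carried out for points $(y_1,y_2)\in\widetilde U$; so the paper's ``interior'' check is your pointwise neighborhood-basis argument in disguise. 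Your version is more transparent and, importantly, more complete in two respects: (i) you actually prove continuity of $h_\lambda$ via $h_\lambda(a+A_j)=\lambda a+\lambda A_j\subseteq \lambda a+A_j$, whereas the paper only asserts that it ``can be given by $\mathrm{id}_A$''; and (ii) you confront head-on the fact that Definition \ref{def:J-top} only declares sets built from neighborhoods of $0$ to be open, so translates $a+A_j$ with $a\notin A_j$ are not literally open under that definition. Your resolution (read the definition as generating the translation-invariant topology, or reduce to the interior condition of Lemma \ref{lemm:top}(4)) is the right fix, and it is also what makes the paper's own use of $U+(z_1+z_2)\in\frakU_A(0)$ legitimate. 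One small caveat: $A_{\jmath}$ is a set witnessing Lemma \ref{lemm:top}(4), not necessarily the whole interior $U^{\circ}$ (which is a union of all such witnesses), but this does not affect your argument since you only need one basic neighborhood inside the preimage at each point.
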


\begin{proof}
It is obvious that $\text{id}_A=h_1: A\to A$ via $a\mapsto a$ is continuous.
The continuity of $h_{\lambda}$ can be given by $\text{id}_A$.

Let $\mathcal{J}=$
$$A=A_0 \succeq A_1 \succeq A_2 \succeq \cdots.$$
For any open neighborhood $U$ of $0$, its preimage is
\[+^{-1}(U)=\{ (x_1, x_2)\mid x_1+x_2\in U \} =: \widetilde{U}. \]
We need to show that $\widetilde{U}\in\frakU_{A\times A}((0,0))$ and $\widetilde{U}^{\circ}= \widetilde{U}$
in the case for $A\times A$ being a $\kk$-algebra,
where the descending chain, say $\mathcal{J}_{A\times A}$, of $A\times A$ is induced by $\mathcal{J}$ as follows.
\[ A\times A = A_0 \times A_0 \succeq A_1\times A_1 \succeq A_2\times A_2 \succeq \cdots.\]
First of all, the zero element of $A\times A$ is $(0,0)$ which satisfies $0\in U$ and $0+0=0\in U$, then $(0,0)\in\widetilde{U}$.

Secondly, since $U$ is a neighborhood of $0$, there exists an ideal $A_j$ of $\mathcal{J}$ such that $U\supseteq A_j$.
Then for any $x_1, x_2 \in A_j$, we have $x_1+x_2\in A_j\subseteq U$, \checks{i.e.,} $(x_1, x_2) \in \widetilde{U}$.
It follows that $A_j\times A_j \subseteq \widetilde{U}$. We obtain $\widetilde{U}\in\frakU_{A\times A}((0,0))$.

Thirdly, for any $(y_1,y_2)\in\widetilde{U}$, we have $y_1+y_2\in U$ by the definition of $\widetilde{U}$, then,
\[(0,0) = (y_1-y_1, y_2-y_2) \in \widetilde{U}-(y_1,y_2) = \{ (x_1-y_1,x_2-y_2) \mid x_1+x_2\in U \}, \]
\checks{i.e.,} (N1) holds.
On the other hand, for any $(z_1,z_2)\in A_j\times A_j$, we have
\[(z_1,z_2)=((z_1+y_1)-y_1, (z_2+y_2)-y_2).\]
Note that $z_1+y_1+z_2+y_2 = (y_1+y_2)+(z_1+z_2)$ is an element lying in $U+(z_1+z_2)$.
Since $U$ is open, we have
\[U+(z_1+z_2) = U^{\circ}-(-(z_1+z_2)) = \{u+(z_1+z_2) \mid u\in U\} \in \frakU_A(0) \]
by Lemma \ref{lemm:top}\point~(4) and Definition \ref{def:J-top}\point,
\checks{i.e.,} $U+(z_1+z_2)$ is a set satisfying Lemma \ref{lemm:top}\point~(4). Then
\[U^{\circ} = \bigcup_{
    {V \subseteq U,\ V \text{ satisfies}}
 \atop {\text{Lemma \ref{lemm:top}\point~(4)}}
}V \supseteq U+(z_1+z_2), \]
and so, we obtain $(y_1+y_2)+(z_1+z_2) \in U+(z_1+z_2) \subseteq U^{\circ}$, \checks{i.e.,} $(y_1+y_2)+(z_1+z_2)\in U$.
Thus, $(z_1,z_2) \in \widetilde{U}$. It follows that $A_j\times A_j\subseteq \widetilde{U}-(y_1,y_2)$, and thus (N2) holds.
Therefore, $\widetilde{U}-(y_1,y_2)\in \frakU_{A\times A}((0,0))$.
In summary, we have that $\widetilde{U}$ satisfies Lemma \ref{lemm:top}\point~(4),
and so, by Definition \ref{def:J-top}\point, it is clear that $\widetilde{U}^{\circ}=\widetilde{U}$.
\end{proof}

\begin{definition}[{cf. \cite[Chapter 10, page 101]{AM2018}}] \rm
A {\defines topological $\kk$-algebra} is a $\kk$-algebra  equipped with a topology such that
the addition $+:A\times A\to A$ and each $\kk$-linear transformation $-h_1:A\to A$ via $a\mapsto -a$ are continuous.
\end{definition}

The following result is a consequence of Lemma \ref{lemm:contin}.

\begin{proposition}
Given an arbitrary $\kk$-algebra $A$ and its descending chain $\mathcal{J}$ of ideals.
Then $A$ becomes a topological $\kk$-algebra with the $\mathcal{J}$-topology $\frakU_A(0)$.
\end{proposition}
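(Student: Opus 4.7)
The plan is to deduce the proposition essentially as a direct corollary of Lemma \ref{lemm:contin} by unpacking the definition of topological $\kk$-algebra given just above. That definition requires exactly two things: continuity of the addition $+: A \times A \to A$, where $A \times A$ carries the product topology, and continuity of the negation $-h_1: A \to A$, $a \mapsto -a$. Both ingredients are already delivered by Lemma \ref{lemm:contin}.

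Concretely, I would first equip $A \times A$ with the topology coming from the componentwise descending chain $\mathcal{J}_{A \times A} = (A_j \times A_j)_{j \ge 0}$ of ideals of $A \times A$, exactly as in the proof of Lemma \ref{lemm:contin}. This is a legitimate $\mathcal{J}_{A \times A}$-topology because the $A_j \times A_j$ form a descending chain of two-sided ideals of the product algebra, so Lemma \ref{lemm:top} applies and produces $\frakU_{A \times A}((0,0))$. The lemma then yields continuity of $+$ verbatim. For negation, I would specialize the family $h_\lambda: A \to A$, $a \mapsto \lambda a$, from Lemma \ref{lemm:contin} to the element $\lambda = -1 \in A$; the resulting map $h_{-1}$ coincides pointwise with $-h_1$, so continuity is immediate and the second axiom of the definition is verified.

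If one furthermore wishes to verify continuity of the ring multiplication $m: A \times A \to A$, which is the usual stronger convention for a topological $\kk$-algebra and is tacitly in play in later sections, the same method applies. Given an open neighborhood $U$ of $0$ containing some $A_j$ and a point $(y_1, y_2) \in m^{-1}(U)$, so that $y_1 y_2 \in U$, the ideal expansion
\[
(y_1 + a_1)(y_2 + a_2) = y_1 y_2 + y_1 a_2 + a_1 y_2 + a_1 a_2 \in y_1 y_2 + A_j
\]
for $a_1, a_2 \in A_j$ (using that $A_j$ is a two-sided ideal and $A_j^2 \subseteq A_j$), combined with the interior clause of Definition \ref{def:J-top} and Lemma \ref{lemm:top}(4), exhibits a $\mathcal{J}_{A \times A}$-open neighborhood of $(y_1, y_2)$ contained in $m^{-1}(U)$.

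The main point of care across the entire argument is the product-topology bookkeeping already performed inside the proof of Lemma \ref{lemm:contin}: one must confirm that the preimage under a structure map of an open neighborhood of $0$ is itself open and translate-invariant on its interior. Once that identification is in hand, no genuine analytic obstacle remains, since every requirement reduces to the two-sided ideal property of the $A_j$ together with the already-established structure of the $\mathcal{J}$-topology. The conclusion is therefore a transparent repackaging of Lemmas \ref{lemm:top} and \ref{lemm:contin}.
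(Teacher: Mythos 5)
Your proposal is correct and takes essentially the same route as the paper, which states the proposition as a direct consequence of Lemma \ref{lemm:contin} with no further argument: continuity of addition is taken verbatim from that lemma, and negation is the special case $h_{-1}$ of the maps $h_{\lambda}$, exactly as you observe. Your additional verification of the continuity of multiplication goes beyond what the paper's definition of a topological $\kk$-algebra requires (only $+$ and $-h_1$), so it is a harmless extra rather than a needed step.
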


In this paper, we refer to $A$ as a {\defines $\mathcal{J}$-topological $\kk$-algebra}.

\subsubsection{\texorpdfstring{Completions induced by $\mathcal{J}$-topologies}{}}

Assume that $|\cdot|:\kk\to\RR^{\ge 0}$ is a norm defined on the field $\kk$ in this subsection, i.e., $|\cdot|$ is a map satisfying
\begin{itemize}
  \item[(1)] $|k|=0$ if and only if $k=0$;
  \item[(2)] $|k_1k_2|=|k_1||k_2|$ holds for all $k_1,k_2\in\kk$;
  \item[(3)] and the triangle inequality $|k_1+k_2|\le |k_1|+|k_2|$ holds for all $k_1,k_2\in\kk$.
\end{itemize}
Then $\{\mathfrak{B}_r=\{a\in\kk\mid |a|<r\} \mid r\in\RR^+\}$ induces a standard topology $\frakU_{\kk}(0)$ on $\kk$
whose elements are called the {\defines neighborhoods} of $0\in\kk$.

Let $A$ be a $\mathcal{J}$-topological $\kk$-algebra whose dimension is finite and let $B_A=\{b_1,\ldots, b_n\}$ be a basis of $A$.
Then, naturally, we can define the Cauchy sequence by the $\mathcal{J}$-topology.
More precisely, a sequence $\{x_i\}_{i\in\NN}$ in $A$ is called a
{\defines $\mathcal{J}$-Cauchy sequence} if for any $U$, lying in $\frakU_A(0)$, containing some subset
$\sum_{i=1}^n \mathfrak{u}_ib_i$ of $A$ with $\mathfrak{u}_i\in\frakU_{\kk}(0)$ ($1\le i\le n$),
there is $m\in\NN$ such that $x_s-x_t\in U$ holds for all $s,t\ge m$.
Two $\mathcal{J}$-Cauchy sequences $\{x_i\}_{i\in\NN}$ and $\{y_i\}_{i\in\NN}$ are called
{\defines equivalent}, denoted by $\{x_i\}_{i\in\NN}$ $\sim$ $\{y_i\}_{i\in\NN}$,
if for any $U\in\frakU_A(0)$, there is an integer $m\in\NN$ such that $x_i-y_i\in U$ holds for all $i\ge m$.
It is easy to see that ``$\sim$'' is an equivalence relation.
We use $[\{x_i\}_{i\in\NN}]$ to denote the equivalence class containing $\{x_i\}_{i\in\NN}$,
and use $\mathfrak{C}_{\mathcal{J}}(A)$ to denote the set of all equivalence classes of $\mathcal{J}$-Cauchy sequences.
Then we have three families of $A$-homomorphisms:
\begin{itemize}
  \item[(1)] $(\varphi_{ji}: A/A_j \to A/A_i)_{j\ge i}$, where
    all $\varphi_{ji}$ are naturally induced by $A_i\supseteq A_j$;
  \item[(2)] $(p_i:\mathfrak{C}_{\mathcal{J}}(A) \to A/A_i)_{i\in\NN}$,
    where $p_i(x_0, \ldots, x_{i-1}, x_i, x_{i+1}, \ldots ) = x_i+A_i$
    ($p_i$ is called the $i$-th {\defines projection});
  \item[(3)] $(u_i: A/A_i \to \mathfrak{C}_{\mathcal{J}}(A))_{i\in\NN},$
    where $u_i(a+A_i) = (0,\ldots, \mathop{0}\limits^{i-1}, a, \mathop{0}\limits^{i+1}, 0 \ldots)$
    ($u_i$ is called the $i$-th {\defines injection}).
\end{itemize}
Let $\mathcal{X}$ be the category whose object set is $\{A/A_i\mid i\in\NN\}\cup\{\mathfrak{C}_{\mathcal{J}}(A)\}$
and morphism set is the collection of all $A$-homomorphisms as above.
Then we obtain the following commutative diagram
    \[ \xymatrix{
        \mathfrak{C}_{\mathcal{J}}(A) \ar@{<--}[rr]^{u_h}_{(\exists!)} & & A/A_h \\
        & A/A_i \ar@{<-}[lu]^{p_i}
                \ar@{<-}[ru]_{\varphi_{hi}}
                \ar@{<-}[d]^(0.35){\varphi_{ji}}_(0.35){(i\le j)}
        & \\
        & A/A_j. \ar@{<-}@/^1pc/[luu]^{p_j}
                \ar@{<-}@/_1pc/[ruu]_{\varphi_{hj}}
        &
      }\]
It follows from the above construction that the following proposition holds.

\begin{proposition}[{cf. \cite[Chapter 10, page 103]{AM2018}}]
Using the notations as above, we have
\[\underleftarrow{\lim} A/A_i \cong \mathfrak{C}_{\mathcal{J}}(A). \]
\end{proposition}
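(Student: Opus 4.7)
The plan is to identify $\mathfrak{C}_{\mathcal{J}}(A)$ with $\underleftarrow{\lim} A/A_i$ by verifying that it, together with the family of projections $(p_i)_{i\in\NN}$ introduced just above the statement, satisfies the universal property of the inverse limit from Definition~\ref{exp:colim}; the isomorphism then follows from the uniqueness of inverse limits. The main ingredients are the compatibility $\varphi_{ji}p_j = p_i$ and the construction of a unique mediating morphism out of any cone over the system $(A/A_i, \varphi_{ji})$.

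First I would make sense of $p_i$ on equivalence classes rather than on raw sequences. Since $A_i$ itself belongs to $\frakU_A(0)$ (it satisfies (N1) trivially and (N2) with $j = i$), the $\mathcal{J}$-Cauchy condition applied to the neighborhood $A_i$ produces an integer $m$ such that $x_s - x_t \in A_i$ for all $s,t \ge m$. Hence the coset $x_n + A_i \in A/A_i$ stabilizes for $n \ge m$, and I take $p_i([\{x_n\}])$ to be this eventual value; the relation $\sim$ is designed precisely so this is independent of representative. The compatibility $\varphi_{ji}\circ p_j = p_i$ for $i \preceq j$ is then immediate from the definition of $\varphi_{ji}$ as the natural projection $A/A_j \twoheadrightarrow A/A_i$.

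Next I would establish the universal property. Given an object $X$ with morphisms $f_i : X \to A/A_i$ satisfying $\varphi_{ji}f_j = f_i$ for $i \preceq j$, I construct $\vartheta : X \to \mathfrak{C}_{\mathcal{J}}(A)$ as follows: for each $x \in X$ and each $i$, choose a lift $a_i \in A$ of $f_i(x) \in A/A_i$. The cone condition forces $a_j - a_i \in A_i$ whenever $i \preceq j$, so given an arbitrary $U \in \frakU_A(0)$ containing some $A_m$ by (N2), one has $a_s - a_t \in A_{\min\{s,t\}} \subseteq A_m \subseteq U$ for all $s,t \ge m$. Thus $\{a_i\}$ is $\mathcal{J}$-Cauchy, and I set $\vartheta(x) := [\{a_i\}]$. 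Independence of the choice of lifts follows because two lifts differ by elements of $A_i$ in the $i$-th entry, producing $\sim$-equivalent sequences. By construction $p_i\vartheta = f_i$. For uniqueness, if $\vartheta'$ is another morphism with $p_i\vartheta' = f_i$ for all $i$, then for each $x \in X$ the sequences $\vartheta(x)$ and $\vartheta'(x)$ have identical eventual cosets modulo every $A_i$, so their difference lies in $A_i$ eventually for every $i$; this is exactly the definition of $\sim$-equivalence to the zero class, hence $\vartheta'(x) = \vartheta(x)$ in $\mathfrak{C}_{\mathcal{J}}(A)$.

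The main obstacle I anticipate is bridging two formulations of ``Cauchy'' that are in play: the excerpt phrases the $\mathcal{J}$-Cauchy condition in terms of neighborhoods of the form $\sum_{i=1}^n \mathfrak{u}_i b_i$ built from $\frakU_{\kk}(0)$ and a fixed basis $B_A$, whereas the proof proceeds most naturally with the simpler condition $x_s - x_t \in A_i$. I would therefore need a short lemma showing that these two conditions are equivalent: every ideal $A_i$ lies in $\frakU_A(0)$, and conversely every neighborhood of the stated shape contains some $A_m$ because the $\mathfrak{u}_i$ are neighborhoods of $0 \in \kk$ and the filtration descends through the $\kk$-algebra structure. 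Once this equivalence is in hand, the construction of $\vartheta$ and the verification of the universal property are routine, and the isomorphism $\underleftarrow{\lim} A/A_i \cong \mathfrak{C}_{\mathcal{J}}(A)$ follows.
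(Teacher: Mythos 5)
Your proposal is correct, and it in fact supplies more than the paper does: the paper offers no proof at all, merely asserting that the proposition ``follows from the above construction'' of the families $(\varphi_{ji})$, $(p_i)$, $(u_i)$ and citing Atiyah--Macdonald. Your verification of the universal property is the standard argument that this citation implicitly relies on, and you handle correctly the two points where the paper's setup is loose. First, the paper defines $p_i$ on raw sequences by extracting the $i$-th term, which is neither well defined on $\sim$-classes nor compatible with the $\varphi_{ji}$; your replacement of $p_i([\{x_n\}])$ by the eventual stabilized coset $x_n + A_i$ (which exists because $A_i \in \frakU_A(0)$) is the right repair and is surely what is intended. Second, you correctly flag that the paper's definition of $\mathcal{J}$-Cauchy is phrased via basis neighborhoods $\sum_{i=1}^n \mathfrak{u}_i b_i$ rather than via the ideals $A_j$ directly, so a short equivalence lemma is needed before the clean condition $x_s - x_t \in A_j$ can be used; this is a genuine gap in the paper's exposition rather than in your argument. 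The construction of the mediating morphism $\vartheta$ from a cone $(f_i)$ by choosing compatible lifts, the check that the resulting sequence is $\mathcal{J}$-Cauchy, and the uniqueness argument via the injectivity of $[\{x_n\}] \mapsto (p_i([\{x_n\}]))_i$ are all sound. The only caveat is that the target $X$ of the universal property should be taken in the same category ($A$-modules, say) in which the paper intends the limit, but this does not affect the substance of the argument.
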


We write $\w{A}:=\mathfrak{C}_{\mathcal{J}}(A)$ and call it the {\defines completion} of $A$.
We say that $A$ is {\defines complete} if $\w{A}=A$.
In particular, if $A=\kk$, then the descending chain $\mathcal{J}:$
\[ A_0=\kk \succeq A_1=0 \]
induces a $\mathcal{J}$-topology
\[\frakU_{\kk}(0) = \{ \text{the neighborhood of } 0 \} \]
of $\kk$. In this case, the $\mathcal{J}$-Cauchy sequence coincides with the usual Cauchy sequence.

\begin{proposition} \label{prop:compl1}
Let $A$ be a basic finite-dimensional $\kk$-algebra and let $\mathcal{J}$ be the descending chain
\[ A_0 = A = \rad^0 A \succeq  A_1=\rad A  \succeq A_2=\rad^2 A \succeq \cdots\checks{.}  \]
Then $A$ is complete {\rm(}in the sense of $\mathcal{J}$-topology{\rm)} if and only if $\kk$ is complete.
\end{proposition}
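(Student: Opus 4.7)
The plan is to exploit Gabriel's presentation of $A$ to reduce the $\mathcal{J}$-Cauchy condition on $A$ to a coordinatewise Cauchy condition in $\kk$, and then to read off completeness of $A$ from completeness of $\kk$.

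First, I would invoke Gabriel's theorem (Theorem~\ref{Gab's thm}) to write $A\cong\kk\Q/\I$ with $\I$ an admissible ideal of $\kk\Q$. Admissibility gives $R_{\Q}^m\subseteq\I$ for some $m\ge 2$, so the Jacobson radical satisfies $\rad^m A=0$ and the chain $\mathcal{J}$ stabilizes: $A_i=0$ for all $i\ge m$. I then fix a $\kk$-basis $B_A=\{b_1,\ldots,b_n\}$ of $A$ (for instance, representatives of paths modulo $\I$), so that each $x\in A$ has a unique expansion $x=\sum_{j=1}^{n}c_j(x)\,b_j$ with $c_j(x)\in\kk$.

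Second, I would translate the Cauchy condition through the basis. Because $A_m=0$, every $U\in\frakU_A(0)$ automatically contains $A_m$, and the genuine content of the Cauchy condition lies in the coordinate ``boxes'' $V_{\mathfrak{u}}:=\sum_{j=1}^{n}\mathfrak{u}_j b_j$ with $\mathfrak{u}_j\in\frakU_{\kk}(0)$. Since each box $V_{\mathfrak{u}}$ is itself a valid test set and any $U$ containing such a box is subsumed by the box condition, a routine bookkeeping shows that $\{x_i\}_{i\in\NN}\subseteq A$ is $\mathcal{J}$-Cauchy if and only if each coordinate sequence $\{c_j(x_i)\}_{i\in\NN}$ is Cauchy in $\kk$; similarly $\{x_i\}\sim\{y_i\}$ if and only if $c_j(x_i)-c_j(y_i)\to 0$ in $\kk$ for every $j\in\{1,\ldots,n\}$.

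Third, I would close the argument from both directions. If $\kk$ is complete, then for any $\mathcal{J}$-Cauchy sequence $\{x_i\}$ the scalars $c_j^*:=\lim_i c_j(x_i)\in\kk$ exist, and $y:=\sum_{j=1}^{n}c_j^*\,b_j\in A$ witnesses $\{x_i\}\sim\{y,y,\ldots\}$, so $\w{A}=A$. Conversely, if $\kk$ is not complete, any non-convergent Cauchy sequence $\{c_i\}\subseteq\kk$ yields a $\mathcal{J}$-Cauchy sequence $\{c_ib_1\}_{i\in\NN}$ in $A$ which cannot be equivalent to a constant sequence $\{y\}$, because such an equivalence would force $c_1(y)=\lim_i c_i$ to exist in $\kk$. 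The main obstacle I expect is the second step: spelling out carefully that the hybrid definition of $\mathcal{J}$-Cauchy sequence (which mixes the $\mathcal{J}$-topology of $A$ with the metric topology of $\kk$ via boxes $V_{\mathfrak{u}}$) collapses cleanly to coordinatewise Cauchy once $\rad^m A=0$, and verifying that the identification between $\mathcal{J}$-limits in $A$ and $n$-tuples of $\kk$-limits is compatible with the equivalence relation used to define $\w{A}$.
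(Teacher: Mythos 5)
Your proposal is correct and follows essentially the same route as the paper's proof: both use Gabriel's presentation $A\cong\kk\Q/\I$ to get a finite basis and the stabilization $\rad^{m}A=0$, reduce $\mathcal{J}$-Cauchyness to coordinatewise Cauchyness in $\kk$, and handle the converse by lifting a non-convergent Cauchy sequence from $\kk$ into $A$ along a single basis element (the paper uses a maximal-length path $\wp\in\rad^{L}A$ where you use $b_1$, an inessential difference). Your explicit attention to compatibility with the equivalence relation defining $\w{A}$ is a welcome extra precision, but the argument is the same.
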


\begin{proof}
Let $A$ be a basic finite-dimensional $\kk$-algebra. Then, by Theorem \ref{Gab's thm}\point,
there is a finite quiver $\Q$ and an ideal $\I$ of $\kk\Q$ such that
\[A\cong \kk\Q/\I = \bigoplus_{l\in\NN} \kk\Q_l.\]
Thus, up to isomorphism, each element $a\in A$ can be written as $\sum_{j=1}^{n} k_j\wp_j$,
where $n$ is the dimension of $A$, $k_u\in\kk$ and $\wp_u$ is a path on $\Q$.

Assume that $\kk$ is complete. Since $A$ is finite-dimensional,
we have $\rad^lA = \mathrm{span}_{\kk}\{\Q_i \mid i\ge l\}$.
Thus, $\rad^{L+1} A=0$, where $L=\max_{\wp\in\Q_{\ge 0}} \ell(\wp)$, \checks{i.e.,}
\begin{center}
  $\mathcal{J}=$
  \ \
  $A \succeq  \rad A  \succeq \rad^2 A \succeq \cdots \rad^L A \succeq 0 \succeq 0 \succeq \cdots$.
\end{center}
Let $\{x_i = \sum_{j=1}^n k_{ij}\wp_j\}_{i\in\NN}$ be a $\mathcal{J}$-Cauchy sequence in $A$.
Take
\[U=\bigg\{\sum\nolimits_{\ell(\wp)=L} k_{\wp}\wp \mid k_{\wp}
\text{ lie in some neighborhood in } \frakU_{\kk}(0)\bigg\} \ \ (\supsetneq \rad^{L+1}A=0).\]
Then, there is $N(U)\in\NN$ such that
\[x_s-x_t = \sum_{j=1}^n (k_{sj}-k_{tj})\wp_j \in \rad^L A \text{ holds for all } s,t\ge N(U).\]
Thus, $k_{sj}-k_{tj}$ lies in some neighborhood in $\frakU_{\kk}(0)$, and so, for all $i$, $\{k_{ij}\}_{i\in\NN}$ is a Cauchy sequence in $\kk$.
Then it is clear that $A$ is complete.

Conversely, if $A$ is complete, we assume that $\kk$ is not complete,
and $\w{\kk}$ is the completion of $\kk$. Then we have a natural $\kk$-linear embedding
$\mathfrak{e}:\kk\to\w{\kk}$ sending $k\in\kk$ to $\{k_i\}_{i\in\NN}$, where $k_1=k_2=\cdots=k$.
Then there is a Cauchy sequence $\{x_i\}_{i\in\NN}\in\w{\kk}\backslash \mathfrak{e}(\kk)$.
Consider the sequence $\{x_i\cdot\wp\}_{i\in\NN}$ in $A$, where $\wp\in\rad^L A$ is a path of length $L$.
Then $\{x_i\cdot\wp\}_{i\in\NN}$ is a $\mathcal{J}$-Cauchy sequence in $A$. However, we have
$\{x_i\cdot\wp\}_{i\in\NN} \in \w{A}\backslash A $ in this case, which contradicts that $A$ is complete.
\end{proof}

\subsection{\texorpdfstring{The total order of $\kk$-algebras}{}}

Recall that a field $\kk$  equipped with a total order $\preceq$ is an {\defines ordered field}
if it satisfies the following four conditions:
\begin{itemize}
  \item[(1)] for any $a,b\in\kk$, either $a\preceq b$, $b\preceq a$ or $a=b$ holds;
  \item[(2)] if $a\preceq b$, $b\preceq c$, then $a\preceq c$;
  \item[(3)] if $a\preceq b$, then $a+c\preceq b+c$ for all $c\in\kk$;
  \item[(4)] if $a\preceq b$ and $0\preceq c$, then $ac\preceq bc$.
\end{itemize}
In order to give the definition of integration defined on a finite-dimensional $\kk$-algebra $\itLamb$,
we need to assume that $\kk$ is a field with the total order $\preceq$.
However, it is well-known that $\kk$ might not always be an ordered field,
as the case for $\kk$ being the complex field $\mathbb{C}$.
Interestingly, for our purposes, the existence of such a total order is not a prerequisite.
We only require that the finite-dimensional $\kk$-algebra involved in our study,
encompasses certain partially ordered subsets. Specifically, the subset $\II_{\itLamb}$ outlined in
Subsection \ref{subsect:normed alg.3}\point~is sufficient. For the sake of simplicity,
we assume that $\kk$ is fully ordered, although this assumption does not sacrifice generality.
This simplification aids in our definition of integration within the context of category theory.

\begin{remark} \rm
We provide a remark to show that if $\kk$ is total ordered,
then any finite-dimensional $\kk$-algebra $\itLamb$ can be endowed with a total order.
Let $B_{\itLamb}=\{b_i \mid 1\le i\le n\}$ be a $\kk$-basis of $\itLamb$.
If $B_{\itLamb}$ is totally ordered (assuming $b_i\preceq b_j$ if and only if $i\le j$),
then we can define a total order for $\itLamb$ as follows.

{\bf Step 1.}  For any two elements $a, a'\in \itLamb$,
we define $a\prec_p a'$ if and only if $\varphi(a)<\varphi(a')$,
where $\varphi$ is a map $\varphi: \itLamb \to \RR^{\ge 0}$
(for example, $\varphi$ is the norm $\Vert\cdot\Vert_p$ defined in Section \ref{sect:normed alg}\point).

{\bf Step 2.} Assume $a=\sum_{i=1}^m k_ib_i$ and $a'=\sum_{i=1}^m k_i'b_i$ ($0\le m\le n$) such that $k_i=k_i'$ holds for all $i<m$.
If $\varphi(a) = \varphi(a')$, then we define $a\preceq_p a'$ if and only if $k_m\preceq k_m'$.
\end{remark}

\section{\texorpdfstring{Normed $\kk$-algebras}{}} \label{sect:normed alg}

In this section, let $\itLamb$ be a finite-dimensional $\kk$-algebra with a $\kk$-basis $B_{\itLamb}=\{b_i\mid 1\le i\le n\}$.
Then any element $a\in\itLamb$ is of the form $a = \sum_{i=1}^n k_ib_i$.
In this section, we define some algebraic structures on $\itLamb$.

\subsection{\texorpdfstring{Norms of $\kk$-algebras}{}} \label{subsect:normed alg.1}
~{For a map $\norm{}: B_{\itLamb} \to \RR^+$}
and any $p\ge 1$, we have $||\cdot||_p: \itLamb\to\RR^{\ge 0}$ as the function
\begin{align}\label{formula:norm}
  \Vert a \Vert_p = \Big\Vert \sum_{i=1}^n k_ib_i \Big\Vert_p
  := \big( (|k_1|\norm{}(b_1))^p + \cdots + (|k_n|\norm{}(b_n))^p \big)^{\frac{1}{p}}.
\end{align}

\begin{proposition} \label{prop:triple}
Any triple $(\itLamb, \norm{}, \Vert\cdot\Vert_p)$ {\rm(}=$\itLamb$ for short{\rm)} is a normed $\kk$-vector space.
\end{proposition}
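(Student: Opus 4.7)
The plan is to verify the three axioms of a norm on the $\kk$-vector space $\itLamb$: non-degeneracy, absolute homogeneity, and the triangle inequality. Throughout, I will identify an element $a=\sum_{i=1}^n k_i b_i\in\itLamb$ with the coordinate vector $(k_1\norm{}(b_1),\ldots,k_n\norm{}(b_n))\in\kk^n$, which allows me to transport the problem onto $\RR^n$ via the norm $|\cdot|:\kk\to\RR^{\ge 0}$. Note that the hypothesis $\norm{}(b_i)\in\RR^+$ is crucial here, since it guarantees each weight is strictly positive.

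First, I would dispatch non-degeneracy. If $a=0$, then all $k_i=0$, so every term in (\ref{formula:norm}) vanishes and $\Vert a\Vert_p=0$. Conversely, if $\Vert a\Vert_p=0$, then $(|k_i|\norm{}(b_i))^p=0$ for each $i$ because the sum is a sum of non-negative reals; since $\norm{}(b_i)>0$, this forces $|k_i|=0$, hence $k_i=0$, hence $a=0$. Second, for homogeneity I would observe that for any $k\in\kk$ we have $ka=\sum_i(kk_i)b_i$ and $|kk_i|=|k||k_i|$ (using the multiplicativity axiom for the field norm recalled earlier in the paper); factoring $|k|^p$ out of the sum and taking the $p$-th root yields $\Vert ka\Vert_p=|k|\,\Vert a\Vert_p$.

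The main step is the triangle inequality, which I expect to be the only nontrivial piece. For $a=\sum_i k_i b_i$ and $a'=\sum_i k_i' b_i$ I would write
\[
\Vert a+a'\Vert_p=\Big(\sum_{i=1}^n (|k_i+k_i'|\norm{}(b_i))^p\Big)^{1/p}
\le\Big(\sum_{i=1}^n ((|k_i|+|k_i'|)\norm{}(b_i))^p\Big)^{1/p},
\]
where the inequality uses the triangle inequality for $|\cdot|$ on $\kk$ together with the monotonicity of $t\mapsto t^p$ on $[0,\infty)$. Setting $x_i:=|k_i|\norm{}(b_i)\ge 0$ and $y_i:=|k_i'|\norm{}(b_i)\ge 0$, the right-hand side becomes the $\ell^p$-norm of $(x_i+y_i)_{i=1}^n$ in $\RR^n$, and the classical Minkowski inequality on $\RR^n$ gives
\[
\Big(\sum_{i=1}^n (x_i+y_i)^p\Big)^{1/p}\le\Big(\sum_{i=1}^n x_i^p\Big)^{1/p}+\Big(\sum_{i=1}^n y_i^p\Big)^{1/p}=\Vert a\Vert_p+\Vert a'\Vert_p.
\]
Combining these two inequalities yields the triangle inequality. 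The only real obstacle is Minkowski, but it is a standard fact for $p\ge 1$, and the hypothesis $p\ge 1$ stated before (\ref{formula:norm}) is exactly what we need. Together with non-degeneracy and homogeneity, this shows $(\itLamb,\norm{},\Vert\cdot\Vert_p)$ is a normed $\kk$-vector space.
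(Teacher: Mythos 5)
Your proposal is correct and follows essentially the same route as the paper's proof: verify non-degeneracy via positivity of the weights $\norm{}(b_i)$, pull $|k|^p$ out of the sum for homogeneity, and reduce the triangle inequality to the discrete Minkowski inequality on $\RR^n$. If anything, your treatment of the triangle inequality is slightly more careful than the paper's, since you explicitly separate the step $|k_i+k_i'|\le |k_i|+|k_i'|$ (with monotonicity of $t\mapsto t^p$) from the application of Minkowski, whereas the paper conflates these two steps in a single chain of (in)equalities.
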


\begin{proof}
First of all, for any $a=\sum_{i=1}^n k_ib_i\in\itLamb$, we have $\Vert a \Vert_p\ge 0$
because $\norm{}(b_i)>0$ and $|k_i|\ge 0$ ($1\le i\le n$).
In particular, if $\Vert a \Vert_p=0$, then
\begin{center}
  $(|k_1|\norm{}(b_1))^p + \cdots + (|k_n|\norm{}(b_n))^p=0$.
\end{center}
Since $|k_i|\norm{}(b_i)\ge 0$ and $\norm{}(b_i)>0$ hold for all $1\le i\le n$,
we obtain $|k_i|\norm{}(b_i)=0$, and so $k_i=0$.
Thus, $a=\sum_{i=1}^n 0b_i=0$. Then it is easy to see that $\Vert a\Vert_p=0$ if and only if $a=0$.

Next, for any $k\in \kk$ and $a=\sum_{i=1}^n k_ib_i\in\itLamb$, we have
\begin{align}
      \Vert ka\Vert_p
  & = \Vert k(k_1b_1+\cdots+k_nb_n)\Vert_p  \nonumber \\
  & = \Big(\sum_{i=1}^n (|kk_i|\norm{}(b_i))^p\Big)^{\frac{1}{p}}
    = \Big(\sum_{i=1}^n |k|^p(|k_i|\norm{}(b_i))^p\Big)^{\frac{1}{p}} \nonumber \\
  & = |k|\Big(\sum_{i=1}^n(|k_i|\norm{}(b_i))^p\Big)^{\frac{1}{p}} = |k|\cdot \Vert a\Vert_p \nonumber.
\end{align}

Finally, we prove the triangle inequality $\Vert a+a'\Vert_p\le \Vert a\Vert_p + \Vert a'\Vert_p$
for arbitrary two elements $a=\sum_{i=1}^n k_ib_i$ and $a'=\sum_{i=1}^n k_i'b_i$.
It can be induced by the discrete Minkowski inequality $(\sum_{i=1}^n x_i^p)^{\frac{1}{p}} + (\sum_{i=1}^n y_i^p)^{\frac{1}{p}}
\ge (\sum_{i=1}^n (x_i+y_i)^p)^{\frac{1}{p}}$ as follows:
\begin{align}
      \Vert a \Vert_p + \Vert a' \Vert_p
 &  = \Big(\sum_{i=1}^n(|k_i| \norm{}(b_i))^p \Big)^{\frac{1}{p}} +
      \Big(\sum_{i=1}^n(|k_i'|\norm{}(b_i))^p \Big)^{\frac{1}{p}} \nonumber \\
 &\ge \Big(\sum_{i=1}^n(|k_i| \norm{}(b_i)+k_i'\norm{}(b_i))^p \Big)^{\frac{1}{p}} \nonumber \\
 &  = \Big(\sum_{i=1}^n(|k_i +k_i'|\norm{}(b_i))^p \Big)^{\frac{1}{p}} = \Vert a+a'\Vert_p \nonumber.
\end{align}
Therefore, $(\itLamb, \norm{}, \Vert\cdot\Vert_p)$ is a normed space.
\end{proof}

\begin{definition} \rm
A {\defines normed $\kk$-algebra} is a triple $(\itLamb, \norm{}, \Vert\cdot\Vert_p)$,
where $\norm{}: B_{\itLamb} \to \RR^+$ and $\Vert\cdot\Vert_p: \itLamb\to\RR^{\ge0}$
are called the {\defines normed basis function} and {\defines norm} of $\itLamb$, respectively.
\end{definition}

\subsection{\texorpdfstring{Completions of normed $\kk$-algebras}{}} \label{subsect:normed alg.2}

We can define open neighborhoods $B(0,r)$ of $0$ for any normed $\kk$-algebra $(\itLamb, \norm{}, \Vert\cdot\Vert_p)$ by
\[ B(0,r) := \{ a\in \itLamb \mid \Vert a \Vert_p < r \}.  \]
Let $\frakU_{\itLamb}^B(0)$ be the class of all subsets $U$ of $\itLamb$ satisfying the following conditions.
\begin{itemize}
  \item[(1)] $U$ is the intersection of a finite number of $B(0,r)$;
  \item[(2)] $U$ is the union of any number of $B(0,r)$.
\end{itemize}
Then $\frakU_{\itLamb}^B(0)$ is a topology defined on $\itLamb$, called the $\Vert\cdot\Vert_p$-topology
and we can define Cauchy sequences called $\Vert\cdot\Vert_p$-Cauchy sequences by the above topology.

Recall that $\itLamb$ has a $\mathcal{J}$-topology $\frakU_{\itLamb}(0)$ given by the descending chain
$$\itLamb=\rad^0\itLamb \succeq \rad^1\itLamb \succeq \rad^2\itLamb \succeq \cdots.$$
Thus, we obtain two completions $\w{\itLamb}{}^B$ and $\w{\itLamb}$ by the $\Vert\cdot\Vert_p$-topology and the $\mathcal{J}$-topology, respectively.
The following lemma establishes the relation between $\w{\itLamb}{}^B$ and $\w{\itLamb}$ in the case for $\kk$ being complete.

\begin{proposition} \label{prop:compl2}
Assume that $\kk$ is complete.
Let $\itLamb=(\itLamb, \norm{}, \Vert\cdot\Vert_p)$ be an $n$-dimensional normed $\kk$-algebra with the $\mathcal{J}$-topology
$\frakU_{\itLamb}(0)$ given by $\itLamb=\rad^0\itLamb \succeq \rad^1\itLamb \succeq \rad^2\itLamb \succeq \cdots$
{\rm(}$\Vert\cdot\Vert_p$ is the norm defined on $\itLamb$ given in Proposition \ref{prop:triple}\point{\rm)}.
Then $\w{\itLamb}{}^B = \w{\itLamb}$.
\end{proposition}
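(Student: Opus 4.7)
The plan is to prove both completions reduce to $\itLamb$ itself, from which $\w{\itLamb}{}^B = \w{\itLamb}$ will follow immediately. The key observation is that $\itLamb$ is finite-dimensional over the complete field $\kk$, so every reasonable topology on $\itLamb$ should collapse to the coordinatewise topology induced by the basis $B_\itLamb$.

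First I would exploit finite-dimensionality to note that the Jacobson radical $\rad\itLamb$ is nilpotent, so there exists $L\ge 0$ with $\rad^{L+1}\itLamb=0$. Hence the descending chain $\mathcal{J}$ stabilises at $0$, and for any choice of neighbourhoods $\mathfrak{u}_i\in\frakU_\kk(0)$ the set $U=\sum_{i=1}^n \mathfrak{u}_ib_i$ automatically lies in $\frakU_\itLamb(0)$ because it contains $0$ and $\rad^{L+1}\itLamb=0$. Plugging such a $U$ into the definition of $\mathcal{J}$-Cauchy and using the uniqueness of coordinate expansions in the basis $B_\itLamb$, I would then show that a sequence $\{x_i=\sum_{j=1}^n k_{ij}b_j\}_{i\in\NN}$ is $\mathcal{J}$-Cauchy if and only if each coordinate sequence $\{k_{ij}\}_i$ is Cauchy in $\kk$. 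This mirrors the argument in the proof of Proposition \ref{prop:compl1}\point, but now applies to arbitrary (not necessarily basic) finite-dimensional $\itLamb$ because the chain terminates.

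Next I would carry out the analogous analysis for the $\Vert\cdot\Vert_p$-topology. The definition \eqref{formula:norm} yields the two-sided comparison
\[
|k_{sj}-k_{tj}|\,\norm{}(b_j)\;\le\;\Vert x_s-x_t\Vert_p\;\le\;\Big(\sum_{j=1}^n (|k_{sj}-k_{tj}|\norm{}(b_j))^p\Big)^{1/p},
\]
which shows that $\Vert x_s-x_t\Vert_p\to 0$ is equivalent to each $|k_{sj}-k_{tj}|\to 0$. Consequently $\Vert\cdot\Vert_p$-Cauchy sequences also coincide exactly with coordinatewise Cauchy sequences, and one checks similarly that the two equivalence relations agree.

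Finally, the completeness hypothesis on $\kk$ ensures that every coordinate Cauchy sequence $\{k_{ij}\}_i$ converges to some $k_j\in\kk$, so $x_i\to \sum_j k_jb_j\in\itLamb$ in both topologies. Therefore $\itLamb$ is already complete with respect to both the $\mathcal{J}$-topology and the $\Vert\cdot\Vert_p$-topology, giving $\w{\itLamb}=\itLamb=\w{\itLamb}{}^B$. I expect the main obstacle to be the careful bookkeeping in the first step: the sets in $\frakU_\itLamb(0)$ are only required to contain \emph{some} $\rad^j\itLamb$, so without the nilpotency of $\rad\itLamb$ one could not force the $\mathcal{J}$-topology to resolve the individual coordinates; once this subtlety is handled, the rest of the argument is routine.
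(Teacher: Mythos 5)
Your proposal is correct and follows essentially the same route as the paper: both arguments reduce the two Cauchy conditions to coordinatewise Cauchyness in the basis $B_{\itLamb}$ and conclude from the completeness of $\kk$ that $\w{\itLamb}{}^B=\itLamb=\w{\itLamb}$. The one genuine difference is that the paper simply invokes Proposition \ref{prop:compl1} (and an argument ``similar to'' it), even though that proposition is stated only for \emph{basic} algebras via the presentation $A\cong\kk\Q/\I$; you instead derive the stabilisation of the chain $\mathcal{J}$ directly from the nilpotency of $\rad\itLamb$, which is valid for any finite-dimensional algebra and so covers the non-basic case that the statement of Proposition \ref{prop:compl2} permits. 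This makes your version slightly more self-contained and more careful on that point, at the cost of redoing the coordinatewise analysis that the paper delegates to the earlier proposition.
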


\begin{proof}
Similar to Proposition \ref{prop:compl1}\point~we can show that $\w{\itLamb}{}^B=\itLamb$ (i.e., $\itLamb$ is complete) if and only if $\w{\kk}=\kk$.
By using Proposition \ref{prop:compl1}\point~again, we have that $\w{\itLamb}=\itLamb$ if and only if $\w{\kk}=\kk$. Then
$\w{\kk}=\kk$ if and only if $\w{\itLamb}{}^B = \itLamb = \w{\itLamb}$.
Equivalently,
\[\w{\itLamb}{}^B = \bigg(\w{\sum_{i=1}^n\kk b_i}\bigg)^B = \sum_{i=1}^n\w{\kk} b_i = \w{\sum_{i=1}^n\kk b_i} = \w{\itLamb}. \]
\end{proof}

\begin{remark} \rm
\begin{itemize}
\item[]
\item[(1)] Note that the norms defined on $\itLamb$ is not unique. In Section \ref{sect:normed mod}\point,
we will introduce normed $\itLamb$-modules $N$ over any finite-dimensional normed $\kk$-algebra $\itLamb$.
In this case, we need a homomorphism $\tau:\itLamb\to\itLamb'$ between two
finite-dimensional normed $\kk$-algebras $\itLamb$ and $\itLamb'$,
and the norms $\Vert\cdot\Vert$ and $\Vert\cdot\Vert'$ respectively defined on $\itLamb$ and $\itLamb'$
may not necessarily be of the form $\Vert\cdot\Vert_p$.
\item[(2)] If $\itLamb=\kk$ and $\norm{}(1)=1$, then the norm $\Vert\cdot\Vert_p$ given in Proposition \ref{prop:triple}\point~
is the norm $|\cdot|$, i.e, $\Vert a\Vert_p = (|a|^p)^{\frac{1}{p}} = |a|$.
\end{itemize}
\end{remark}

\subsection{\texorpdfstring{Elementary simple functions}{}}
\label{subsect:normed alg.3}

Let $\II$ be a subset of $\kk$. Denote $\II_{\itLamb}$ by the subset
  \[ \bigg\{ \sum_{i=1}^nk_ib_i\mid k_i\in\II \bigg\}
    \mathop{\longleftrightarrow}\limits^{1-1}
    \prod_{i=1}^n(\II\times\{b_i\})\]
of $\itLamb$. A {\defines function defined on $\II_{\itLamb}$} is a map $f: \II_{\itLamb}\to\kk$.
Since $(\itLamb, \norm{}, \Vert\cdot\Vert_p)$ is a normed space,
$\itLamb$ is also a topological space induced by the norm $\Vert\cdot\Vert_p$, and so is $\II_{\itLamb}$.
Thus, we can define an open set for every subset of $\itLamb$, including $\II_{\itLamb}$.
The function $f$ is said to be {\defines continuous} if the preimage of any open subset of $\kk$ is an open set of $\II_{\itLamb}$.

Let $\II:=[a,b]_{\kk}$ be a fully ordered subset of $\kk$ whose minimal element and maximal element are $a$ and $b$, respectively.
In our paper, we assume that $\kk$ and $[a,b]_{\kk}$ are infinite sets
and consider only the case for $\II=[a,b]_{\kk}$ with $a\prec b$
such that there exists an element $\xi$ with $a\prec \xi\prec b$ and the order-preserving bijections
$\kappa_a:\II\to[a,\xi]_{\kk}$ and $\kappa_b:\II\to[\xi,b]_{\kk}$ exist
(for example, the case of the cardinal number of $\II$ is either $\aleph_0$ or $\aleph_1$).

An {\defines elementary simple function} on $\II_{\itLamb}$ is a finite sum
\[\sum_{i=1}^t k_i\id_{I_i},\]
where
\begin{itemize}
  \item[(1)] for any $1\le i\le t$, $k_i\in\kk$;
  \item[(2)] $I_i=I_{i1}\times \cdots \times I_{in}$, and, for any $1\le j\le n$, $I_{ij}$ is a subset of $\II$ which is one of the following forms
    \begin{itemize}
      \item[(a)] $(c_{ij},d_{ij})_{\kk}:=\{k\in\kk \mid c_{ij}\prec k\prec d_{ij}\}$,
      \item[(b)] $[c_{ij},d_{ij})_{\kk}:=\{k\in\kk \mid c_{ij}\preceq k\prec d_{ij}\}$,
      \item[(c)] $(c_{ij},d_{ij}]_{\kk}:=\{k\in\kk \mid c_{ij}\prec k\preceq d_{ij}\}$,
      \item[(d)] $[c_{ij},d_{ij}]_{\kk}:=\{k\in\kk \mid c_{ij}\preceq k\preceq d_{ij}\}$,
    \end{itemize}
    where $a\preceq c_{ij} \prec d_{ij} \preceq b$;
  \item[(3)] and $\id_{I_i}$ is the function $I_i \to \{1\}$ such that $I_i\cap I_j=\varnothing$ holds for all $1\le i\ne j\le t$.
\end{itemize}
We denote $\bfS(\II_{\itLamb})$ by the set of all elementary simple functions.
Then $\bfS(\II_{\itLamb})$ is a $\kk$-vector space,
and $\bfS(\II_{\itLamb})$ induces the direct sum $\bfS(\II_{\itLamb})^{\oplus 2^n}$ whose element can be seen as the sequence
\[\bigg\{f_{(\delta_1, \ldots, \delta_n)}\bigg(\sum_{i=1}^n k_ib_i\bigg)\bigg\}_{
    (\delta_1, \ldots, \delta_{n})\in \{a,b\}\times\cdots\times\{a,b\}
    }
   =: \pmb{f}(k_1,\ldots,k_{n}), \]
$\sum_{i=1}^n k_ib_i$ is written as $(k_1,\ldots,k_n)$ since $\{b_1\mid 1\le i\le n\}=B_{\itLamb}$
is the $\kk$-basis of $\itLamb$. Then we can characterize $\bfS(\II_{\itLamb})$ together with two further pieces of data:
the function $\id_{\II_{\itLamb}}: \II_{\itLamb}\to\{1\}$, 
and the map
\begin{align}\label{map:gamma}
  \gamma_{\xi}: \bfS(\II_{\itLamb})^{\oplus 2^n} \to \bfS(\II_{\itLamb}),
\end{align}
called the {\defines juxtaposition map}, sending $\pmb{f}$ to the function
\[\gamma_{\xi}(\pmb{f}) (k_1,\ldots,k_n)
 = \sum_{(\delta_1, \ldots, \delta_n)}\id_{\kappa_{\delta_1}(\II)\times \cdots\times
   \kappa_{\delta_n}(\II)}\cdot f_{(\delta_1, \ldots, \delta_n)} (\kappa_{\delta_1}^{-1}(k_1),\ldots,\kappa_{\delta_n}^{-1}(k_n)),\]
\begin{center}
  ($k_1\ne\xi$, $\ldots$, $k_n\ne\xi$),
\end{center}
where $\xi$ is an element with $a\prec \xi\prec b$ such that the order-preserving bijections
\begin{center}
$\kappa_a:\II\to[a,\xi]_{\kk}$ and $\kappa_b:\II\to[\xi,b]_{\kk}$
\end{center}
exist.

\begin{example} \rm
(1) Take $\itLamb$ be the $\kk$-algebra whose dimension is $2$, and assume that $\{b_1, b_2\}$ is a basis of $\itLamb$.
Then $\II_{\itLamb} \cong_{\kk} [a,b]_{\kk} b_1 \times [a,b]_{\kk} b_2$.
For any element
\[\pmb{f}=(f_{(a,a)}, f_{(b,a)}, f_{(a,b)}, f_{(b,b)}) \in \bfS(\II_{\itLamb})^{\oplus 4},\]
where $f_{(\delta_1,\delta_2)}: \II_{\itLamb} \to \kk$ is a function in $\bfS(\II_{\itLamb})$
sending each $k_1b_1+k_2b_2$ to the element $f_{(\delta_1,\delta_2)}(k_1,k_2)$ in $\kk$,
$(\delta_1,\delta_2) \in \{a,b\}\times\{a,b\} = \{(a,a), (b,a), (a,b), (b,b)\}$,
$\gamma_{\xi}$ juxtaposes $f_{(a,a)}$, $f_{(b,a)}$, $f_{(a,b)}$ and $f_{(b,b)}$ into a new function
\begin{align*}
   & \gamma_{\xi}(f_{(a,a)}, f_{(b,a)}, f_{(a,b)}, f_{(b,b)})(k_1,k_2) \\
=\ & \tilde{f}_{(a,a)}(k_1,k_2) + \tilde{f}_{(b,a)}(k_1,k_2)
 + \tilde{f}_{(a,b)}(k_1,k_2) + \tilde{f}_{(b,b)}(k_1,k_2)
\end{align*}
as shown in Figure \ref{fig:jux.map}\point, where
\begin{align*}
  \tilde{f}_{(a,a)}(k_1,k_2)
& = \id_{[a,\xi)\times [a,\xi)}\cdot f_{(a,a)}(\kappa_a^{-1}(k_1), \kappa_a^{-1}(k_2)), \\
  \tilde{f}_{(b,a)}(k_1,k_2)
& = \id_{(\xi,b]\times [a,\xi)}\cdot f_{(b,a)}(\kappa_b^{-1}(k_1), \kappa_a^{-1}(k_2)), \\
  \tilde{f}_{(a,b)}(k_1,k_2)
& = \id_{[a,\xi)\times (\xi,b]}\cdot f_{(a,b)}(\kappa_a^{-1}(k_1), \kappa_b^{-1}(k_2)), \\
  \tilde{f}_{(b,b)}(k_1,k_2)
& = \id_{(\xi,b]\times (\xi,b]}\cdot f_{(b,b)}(\kappa_b^{-1}(k_1), \kappa_b^{-1}(k_2)). \\
\end{align*}

\begin{figure}[htbp]
\begin{center}
\definecolor{pistachio}{rgb}{0.75,1,0.75}
\begin{tikzpicture}[scale=1.2]
\draw [pistachio][line width=8pt][shift={(0, 3)}] (-3,0) -- (3,0);
\draw [pistachio][line width=8pt][shift={(0,-3)}] (-3,0) -- (3,0);
\draw [pistachio][line width=8pt][shift={( 3,0)}] (0,-3) -- (0,3);
\draw [pistachio][line width=8pt][shift={(-3,0)}] (0,-3) -- (0,3);
\fill [pistachio][shift={(-3,-3)}]
     (-1.25,-1.25) -- ( 1.25,-1.25) -- ( 1.25, 1.25) -- (-1.25, 1.25) -- (-1.25,-1.25);
\fill [pistachio][shift={( 3,-3)}]
     (-1.25,-1.25) -- ( 1.25,-1.25) -- ( 1.25, 1.25) -- (-1.25, 1.25) -- (-1.25,-1.25);
\fill [pistachio][shift={( 3, 3)}]
     (-1.25,-1.25) -- ( 1.25,-1.25) -- ( 1.25, 1.25) -- (-1.25, 1.25) -- (-1.25,-1.25);
\fill [pistachio][shift={(-3, 3)}]
     (-1.25,-1.25) -- ( 1.25,-1.25) -- ( 1.25, 1.25) -- (-1.25, 1.25) -- (-1.25,-1.25);
\fill [pistachio][shift={(-3,-3)}]
     (-1,-0.3) -- (-1,-1) -- (-2.5,-0.66)
     node[left,opacity=0.25]{$\pmb{\bfS(\II_{\itLamb})^{\oplus 4}}$}
     node[left,black]{$\bfS(\II_{\itLamb})^{\oplus 4}$};
\fill [pink]
     (-1.25,-1.25) -- ( 1.25,-1.25) -- ( 1.25, 1.25) -- (-1.25, 1.25) -- (-1.25,-1.25);
\fill [pink] (1,-0.3) -- (1,-1) -- (5,-0.66)
     node[right,opacity=0.25]{$\pmb{\bfS(\II_A)}$}
     node[right,black]{$\bfS(\II_A)$};
\fill[left color=   red!37, right color=   red!17] (-1  ,-1  ) -- (-0.3,-1  ) -- (-0.3,-0.3) -- (-1  ,-0.3);
\fill[ top color=  blue!37,bottom color=  blue!17] (-0.3,-1  ) -- ( 1  ,-1  ) -- ( 1  ,-0.3) -- (-0.3,-0.3);
\fill[ top color=orange!37,bottom color=orange!17] (-0.3,-0.3) -- ( 1  ,-0.3) -- ( 1  , 1  ) -- (-0.3, 1  );
\fill[left color=violet!37, right color=violet!17] (-0.3,-0.3) -- (-1  ,-0.3) -- (-1  , 1  ) -- (-0.3, 1  );
\draw [black][line width=1pt]
     (-1,-1) node[ left]{$a$} node[below]{$a$}
  -- ( 1,-1) node[right]{$b$}
  -- ( 1, 1)
  -- (-1, 1) node[above]{$b$}
  -- (-1,-1);
\draw [black][line width=1pt][shift={(-3, 3)}][fill=violet!25]
     (-1,-1) node[ left]{$a$} node[below]{$a$}
  -- ( 1,-1) node[right]{$b$}
  -- ( 1, 1)
  -- (-1, 1) node[above]{$b$}
  -- (-1,-1);
\draw [black][line width=1pt][shift={( 3, 3)}][fill=orange!25]
     (-1,-1) node[ left]{$a$} node[below]{$a$}
  -- ( 1,-1) node[right]{$b$}
  -- ( 1, 1)
  -- (-1, 1) node[above]{$b$}
  -- (-1,-1);
\draw [black][line width=1pt][shift={( 3,-3)}][fill=blue!25]
     (-1,-1) node[ left]{$a$} node[below]{$a$}
  -- ( 1,-1) node[right]{$b$}
  -- ( 1, 1)
  -- (-1, 1) node[above]{$b$}
  -- (-1,-1);
\draw [black][line width=1pt][shift={(-3,-3)}][fill=red!25]
     (-1,-1) node[ left]{$a$} node[below]{$a$}
  -- ( 1,-1) node[right]{$b$}
  -- ( 1, 1)
  -- (-1, 1) node[above]{$b$}
  -- (-1,-1);
\draw[shift={( 0, 3)}][<->] (-1.8,0)--(1.8,0);
\draw[shift={( 0, 3)}]      (0,0) node[above]{$\oplus$};
\draw[shift={( 0,-3)}][<->] (-1.8,0)--(1.8,0);
\draw[shift={( 0,-3)}]      (0,0) node[below]{$\oplus$};
\draw[shift={(-3, 0)}][<->] (0,-1.8)--(0,1.8);
\draw[shift={(-3, 0)}]      (0,0) node[ left]{$\oplus$};
\draw[shift={( 3, 0)}][<->] (0,-1.8)--(0,1.8);
\draw[shift={( 3, 0)}]      (0,0) node[right]{$\oplus$};
\draw[white][line width=1pt] (-0.3,-1)--(-0.3, 1);
\draw[white][line width=1pt] (-1,-0.3)--( 1,-0.3);
\draw[cyan]
  (-0.3,-1) node{$\bullet$} node[below]{$\xi$}
  (-1,-0.3) node{$\bullet$} node[ left]{$\xi$};
\draw[red] (-2,-2) -- (-0.3,-0.3);
\draw[red] (-2.5,-2.0) node[above]{$\kappa_a$};
\draw[red] (-2.0,-2.5) node[right]{$\kappa_a$};
\draw[red][postaction={on each segment={mid arrow = red}}] (-4,-2) -- (-1  ,-0.3);
\draw[red][postaction={on each segment={mid arrow = red}}] (-2,-4) -- (-0.3,-1  );
\draw[red][shift={(-3,-3)}] (0,0) node{$f_{(a,a)}$};
\draw[red][shift={(-0.6,-0.6)}] (0,0) node{\scriptsize$\tilde{f}_{(a,a)}$};
\draw[blue] ( 2,-2) -- (-0.3,-0.3);
\draw[blue] ( 2.5,-2.0) node[above]{$\kappa_b$};
\draw[blue] ( 2.0,-2.5) node[ left]{$\kappa_a$};
\draw[blue][postaction={on each segment={mid arrow = blue}}] ( 4,-2) -- ( 1  ,-0.3);
\draw[blue][postaction={on each segment={mid arrow = blue}}] ( 2,-4) -- (-0.3,-1  );
\draw[blue][shift={( 3,-3)}] (0,0) node{$f_{(b,a)}$};
\draw[blue][shift={( 0.3,-0.6)}] (0,0) node{\scriptsize$\tilde{f}_{(b,a)}$};
\draw[orange] ( 2, 2) -- (-0.3,-0.3);
\draw[orange] ( 2.5, 2.0) node[below]{$\kappa_b$};
\draw[orange] ( 2.0, 2.5) node[ left]{$\kappa_b$};
\draw[orange][postaction={on each segment={mid arrow = orange}}] ( 4, 2) -- ( 1  ,-0.3);
\draw[orange][postaction={on each segment={mid arrow = orange}}] ( 2, 4) -- (-0.3, 1  );
\draw[orange][shift={( 3, 3)}] (0,0) node{$f_{(b,b)}$};
\draw[orange][shift={( 0.3, 0.3)}] (0,0) node{\scriptsize$\tilde{f}_{(b,b)}$};
\draw[violet] (-2, 2) -- (-0.3,-0.3);
\draw[violet] (-2.5, 2.0) node[below]{$\kappa_a$};
\draw[violet] (-2.0, 2.5) node[right]{$\kappa_b$};
\draw[violet][postaction={on each segment={mid arrow = violet}}] (-4, 2) -- (-1  ,-0.3);
\draw[violet][postaction={on each segment={mid arrow = violet}}] (-2, 4) -- (-0.3, 1  );
\draw[violet][shift={(-3, 3)}] (0,0) node{$f_{(a,b)}$};
\draw[violet][shift={(-0.6, 0.3)}] (0,0) node{\scriptsize$\tilde{f}_{(a,b)}$};
\end{tikzpicture}
\caption{Juxtaposition map}
\label{fig:jux.map}
\end{center}
\end{figure}

(2) This example is used to establish the relation between Banach space and Lebesgue intersections in \cite{Lei2023FA}.
Take $\kk=\RR$, $\II=[0,1]$, $\xi=\frac{1}{2}$, $\itLamb=\RR$
and the order-preserving bijections $\kappa_{0}:\II=[0,1]\to \kk=\RR$ and $\kappa_{1}:\II=[0,1]\to \kk=\RR$
are given by $x\mapsto\frac{x}{2}$ and $\frac{1+x}{2}$, respectively.
Then $\bfS(\II_{\RR}) = \bfS([0,1])$ is a normed space together with two further pieces of data:
the function $\id_{[0,1]}:[0,1]\to\{1\}$ and the juxtaposition map
\[\gamma_{\frac{1}{2}}: \bfS([0,1])\oplus\bfS([0,1]) \to \bfS([0,1])\]
sending $(f_1, f_2)$ to the following function
\begin{align}
     \gamma_{\frac{1}{2}}(f_1,f_2)(x)
 & = \id_{\kappa_{0}([0,1))}\cdot f_1({\kappa_{0}^{-1}(x)})
   + \id_{\kappa_{1}((0,1])}\cdot f_1({\kappa_{1}^{-1}(x)})
     \nonumber \\
 & = \begin{cases}
       f_1(2x)   & x\in \kappa_{0}([0,1)) = [0,\tfrac{1}{2});\\
       f_2(2x-1) & x\in \kappa_{1}((0,1]) = (\tfrac{1}{2},1].
     \end{cases}
     \nonumber
\end{align}
\end{example}

\begin{lemma}\label{lemm:gamma linear}
The map $\gamma_{\xi}$ is a $\kk$-linear map.
\end{lemma}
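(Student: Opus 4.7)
The plan is to verify directly, from the pointwise defining formula of $\gamma_{\xi}$, that it respects addition and scalar multiplication in $\bfS(\II_{\itLamb})^{\oplus 2^n}$. The key observation is that on the right-hand side of (\ref{map:gamma}), the indicator functions $\id_{\kappa_{\delta_1}(\II)\times\cdots\times\kappa_{\delta_n}(\II)}$ and the inverses $\kappa_{\delta_j}^{-1}$ are \emph{fixed} data that do not depend on $\pmb{f}$; only the coordinate functions $f_{(\delta_1,\ldots,\delta_n)}$ vary. Thus linearity will reduce to the pointwise linearity of function evaluation together with the linearity of the direct-sum structure on $\bfS(\II_{\itLamb})^{\oplus 2^n}$.

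First, I would fix two elements
\[
\pmb{f}=\bigl(f_{(\delta_1,\ldots,\delta_n)}\bigr)_{(\delta_1,\ldots,\delta_n)\in\{a,b\}^n},\qquad
\pmb{g}=\bigl(g_{(\delta_1,\ldots,\delta_n)}\bigr)_{(\delta_1,\ldots,\delta_n)\in\{a,b\}^n}
\]
in $\bfS(\II_{\itLamb})^{\oplus 2^n}$, and a scalar $k\in\kk$. By the definition of the direct sum of $\kk$-vector spaces, $\pmb{f}+\pmb{g}$ has coordinates $f_{(\delta_1,\ldots,\delta_n)}+g_{(\delta_1,\ldots,\delta_n)}$ and $k\pmb{f}$ has coordinates $kf_{(\delta_1,\ldots,\delta_n)}$. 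Substituting these into the defining formula (\ref{map:gamma}) of $\gamma_{\xi}$ and evaluating at an arbitrary point $(k_1,\ldots,k_n)\in\II_{\itLamb}$ with $k_j\ne\xi$ for all $j$, I would use that $\bfS(\II_{\itLamb})$ is a $\kk$-vector space under pointwise operations to split
\[
(f_{(\delta_1,\ldots,\delta_n)}+g_{(\delta_1,\ldots,\delta_n)})\bigl(\kappa_{\delta_1}^{-1}(k_1),\ldots,\kappa_{\delta_n}^{-1}(k_n)\bigr)
\]
as the sum of the two corresponding evaluations, and similarly pull the scalar $k$ out of $(kf_{(\delta_1,\ldots,\delta_n)})(\cdots)$.

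Rearranging the finite sum over the $2^n$ tuples $(\delta_1,\ldots,\delta_n)\in\{a,b\}^n$ then yields
\[
\gamma_{\xi}(\pmb{f}+\pmb{g})(k_1,\ldots,k_n)=\gamma_{\xi}(\pmb{f})(k_1,\ldots,k_n)+\gamma_{\xi}(\pmb{g})(k_1,\ldots,k_n)
\]
and
\[
\gamma_{\xi}(k\pmb{f})(k_1,\ldots,k_n)=k\cdot\gamma_{\xi}(\pmb{f})(k_1,\ldots,k_n).
\]
Since these identities hold at every $(k_1,\ldots,k_n)$ outside the exceptional hyperplanes $k_j=\xi$ (which form a negligible set in the sense used to specify functions in $\bfS(\II_{\itLamb})$, as already accepted in the definition of $\gamma_{\xi}$), the corresponding identities hold as elements of $\bfS(\II_{\itLamb})$.

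The calculation is essentially bookkeeping, so there is no real obstacle; the only place a reader could stumble is on the status of the points where some $k_j=\xi$. I would handle this with a short remark noting that $\gamma_{\xi}(\pmb{f})$ is specified, as in the original definition, only off the union of hyperplanes $\{k_j=\xi\}$, so equality of functions in $\bfS(\II_{\itLamb})$ is tested on the complement of this set, where the pointwise argument above is valid.
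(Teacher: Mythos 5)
Your proposal is correct and follows essentially the same route as the paper's proof: a direct pointwise computation from the defining formula of $\gamma_{\xi}$, using that the indicator functions and the maps $\kappa_{\delta_j}^{-1}$ are fixed while only the coordinate functions vary, so additivity and scalar homogeneity distribute across the finite sum over $\{a,b\}^n$. Your extra remark about the exceptional set where some $k_j=\xi$ is a small clarification the paper omits, but it does not change the argument.
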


\begin{proof}
Take $a,b\in\kk$, $f,g\in\bfS(\II_{\itLamb})$ and
let $(k_i)_i$, $\id$ and $(\delta_i)_i$ be
the element $(k_1,\ldots,k_n)$ in $\bfS(\II_{\itLamb})^{\oplus 2^n}$,
the identity function $\id_{\kappa_{\delta_1}(\II)\times\cdots\times\kappa_{\delta_n}(\II)}$
and the $n$-multiple $(\delta_1\times\cdots\times\delta_n)$, respectively.
Then
\begin{align}
  \gamma_{\xi}(af+bg)((k_i)_i)
& = \sum_{(\delta_i)_i}\id\cdot (af+bg)_{(\delta_i)_i} ((\kappa_{\delta_i}^{-1}(k_i))_{i}) \nonumber \\
& = \sum_{(\delta_i)_i} \big( \id \cdot af_{(\delta_i)_i} ((\kappa_{\delta_i}^{-1}(k_i))_{i})
    + \id \cdot bg_{(\delta_i)_i} ((\kappa_{\delta_i}^{-1}(k_i))_{i}) \big) \nonumber \\
& = a \sum_{(\delta_i)_i} \id \cdot f_{(\delta_i)_i} ((\kappa_{\delta_i}^{-1}(k_i))_{i})
    + b \sum_{(\delta_i)_i} \id \cdot g_{(\delta_i)_i} ((\kappa_{\delta_i}^{-1}(k_i))_{i}) \nonumber \\
& = a\gamma_{\xi}(f)((k_i)_i) + b\gamma_{\xi}(g)((k_i)_i).   \nonumber
\end{align}
Thus, $\gamma_{\xi}$ is a $\kk$-linear map.
\end{proof}


\section{\texorpdfstring{Normed modules over $\kk$-algebras}{}} \label{sect:normed mod}

Let $\II$ be a subset of the field $\kk=(\kk,\preceq)$ with totally ordered $\preceq$.
Then $\II$ is also a totally ordered set. For simplicity,
we denote by $[x,y]_{\kk}$ the set of all elements $k\in\kk$ with $x\preceq k\preceq y$, \checks{i.e.,}
\[[x,y]_{\kk} := \{ k\in\kk \mid x\preceq k\preceq y \}. \]
In particular, if $x=y$ then $[x,y]_{\kk}=\{x\}=\{y\}$ is a set containing only one element.

In this section, we introduce the category $\scrN^p$, which is used to explore the categorification of integration.

\subsection{\texorpdfstring{Norms of $\itLamb$-modules}{}} \label{subsect:normed mod.1}

Recall that a {\defines left $A$-module} (=$A$-module for short) over a $\kk$-algebra $A$ is a $\kk$-vector space $V$
with a $\kk$-linear map $h: A\to \End_{\kk}V$ sending $a$ to $h_a$.
Thus, $h$ provides a right action $A\times V \to V$,
$(a,v)\mapsto va:=h_a(v)$ which satisfies the following properties:
\begin{itemize}
  \item[(1)] $a(v+v')=av+av'$ for any $v,v'\in V$ and $a\in A$;
  \item[(2)] $(a+a')v=av+a'v$ for any $v\in V$ and $a,a'\in A$;
  \item[(3)] $a'(av)=(a'a)v$ for any $v\in V$ and $a,a'\in A$;
  \item[(4)] $1v=v$ for any $v\in V$;
  \item[(5)] $(ka)v=k(av)=a(kv)$ for any $v\in V$, $a\in A$ and $k\in\kk$.
\end{itemize}
Take $A=\itLamb$ to be the normed $\kk$-algebra whose norm $\Vert\cdot\Vert_p:\itLamb\to\RR^+$ given by (\ref{formula:norm}),
where the $\kk$-basis of $\itLamb$ is $B_{\itLamb} = \{b_i \mid 1\le i\le n=\dim_{\kk}\itLamb\}$.

\begin{definition} \rm \label{def:normed mod}
%
Let $\tau:\itLamb\to \kk$ be a homomorphism between two normed $\kk$-algebras
$(\itLamb, \Vert\cdot\Vert_p)$ and $(\kk, |\cdot|)$.
A {\defines $\tau$-normed $\itLamb$-module} is a $\itLamb$-module $M$ with a norm $\Vert\cdot\Vert: M\to\RR^{\ge 0}$ such that
\begin{align}\label{formula:normedmod}
  \Vert am\Vert = |\tau(a)| \cdot\Vert m\Vert \text{ holds for all } a\in\kk \text{ and } m\in M.
\end{align}
Thus, each normed $\itLamb$-module can be seen as a triple $(M, h, \Vert\cdot\Vert)$ of the $\kk$-vector space $M$,
the $\kk$-linear map $h: M \to \End_{\kk}M$ and a norm $\Vert\cdot\Vert: M \to \RR^{\ge 0}$.
For simplification, $\tau$-normed modules are called {\it normed modules}.
\end{definition}

The norms of $\itLamb$-modules yield the following fact.

\begin{fact} \rm \
\begin{itemize}
  \item[(1)] Note that $\Vert\cdot\Vert_p$ defined by (\ref{formula:norm}) is the norm of $\itLamb$ as a $\kk$-vector space.
      It is easy to see that $\itLamb$ is also a left $\itLamb$-module, called the {\defines regular module}, where the scalar multiplication
      is given by the multiplication $\itLamb \times \itLamb \to \itLamb, (a,x)\mapsto ax$ of $\itLamb$ as a finite-dimensional $\kk$-algebra.
      Thus, it is natural to ask whether $\Vert\cdot\Vert_p$ is a norm of $\itLamb$ as a $\itLamb$-module.
      Indeed, the norm of $\itLamb$ as a finite-dimensional $\kk$-algebra {may not be equal to} the norm $\Vert\cdot\Vert$ of $\itLamb$ as a regular module.
      However, if $\itLamb$ as the left $\itLamb$-module defined by
      \begin{align}\label{formula:Lambda}
        \itLamb \times \itLamb \to \itLamb, (a, x) \mapsto a \star x:= \tau(a)x,
      \end{align}
      where $\tau(a)x$ is defined by the scalar multiplication of $\itLamb$ as the $\kk$-vector space ${_\kk}\itLamb$,
      then, for any $x = \sum_{i=1}^n k_ib_i\in\itLamb$, we obtain
      \begin{align}
          \Vert a\star x\Vert_p
      & = \bigg\Vert \tau(a)\sum_{i=1}^n k_ib_i \bigg\Vert_p
        = \bigg(\sum_{i=1}^n |\tau(a)k_i|^p\norm{}(b_i)^p\bigg)^{\frac{1}{p}} \nonumber \\
      & = |\tau(a)|\bigg(\sum_{i=1}^n |k_i|^p\norm{}(b_i)^p\bigg)^{\frac{1}{p}}
        = |\tau(a)| \Vert x \Vert_p. \nonumber
      \end{align}
    To be more precise, $\itLamb$ is a $(\itLamb, \itLamb)$-bimodule with two norms,
    and $\itLamb$ is a normed module satisfying Definition \ref{def:normed mod}\point~when it is considered as a module defined in (\ref{formula:Lambda}).

  \item[(2)] For any $\itLamb$-homomorphism $f:M\to N$ of two $\itLamb$-modules $M$ and $N$,
    if $M$ and $N$ are normed $\itLamb$-modules, \checks{i.e.,} $M=(M,h_M,\Vert\cdot\Vert_M)$ and $N=(N,h_N,\Vert\cdot\Vert_N)$, then we have
    \[\Vert f(am)\Vert_N = \Vert af(m)\Vert_N = |\tau(a)| \cdot \Vert f(m)\Vert_N \]
\end{itemize}
\end{fact}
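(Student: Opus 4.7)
The plan is to prove both parts of the Fact by direct computation, since no deep structural results are needed—each claim is essentially an unwinding of the definitions already in place. For part~(1), I would begin with an arbitrary element $x = \sum_{i=1}^n k_i b_i \in \itLamb$ and any $a\in\itLamb$, and expand $a \star x = \tau(a) x$ via (\ref{formula:Lambda}) into its basis form $\sum_{i=1}^n (\tau(a) k_i) b_i$. Substituting into the definition (\ref{formula:norm}) of $\Vert\cdot\Vert_p$ yields $\Vert a\star x\Vert_p = \bigl(\sum_{i=1}^n (|\tau(a) k_i|\,\norm{}(b_i))^p\bigr)^{1/p}$; then the multiplicativity $|\tau(a) k_i| = |\tau(a)|\cdot|k_i|$ of the norm on $\kk$ lets me factor $|\tau(a)|^p$ out of the sum. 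Taking the $p$-th root reproduces $|\tau(a)|\cdot\Vert x\Vert_p$, which is precisely the homogeneity condition (\ref{formula:normedmod}) required of a normed module. The embedded remark contrasting the twisted action $\star$ with the genuine ring multiplication is not part of the proof proper—it merely records the bookkeeping point that without twisting by $\tau$, the analogous identity $\Vert ax\Vert_p = |\tau(a)|\cdot\Vert x\Vert_p$ generally fails, which is why the regular module structure on $\itLamb$ is not the one used to view $\itLamb$ as a normed module.

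For part~(2) the argument is even more immediate: since $f\colon M\to N$ is a $\itLamb$-homomorphism, the defining axiom $f(am) = a f(m)$ for $a\in\itLamb$ and $m\in M$ takes care of the first equality in the displayed chain. For the second equality, I would apply the normed-module axiom (\ref{formula:normedmod}) for $N$ to the element $f(m)\in N$ and the scalar $a\in\itLamb$, obtaining $\Vert a f(m)\Vert_N = |\tau(a)|\cdot\Vert f(m)\Vert_N$. Concatenating the two equalities produces the three expressions in the statement.

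There is no substantive obstacle in either computation; the Fact functions as a coherence check that the definitions introduced in Section~\ref{sect:normed alg} and Subsection~\ref{subsect:normed mod.1} interact correctly. The only subtlety worth flagging is the conceptual one underlying part~(1): the underlying set $\itLamb$ simultaneously carries two module structures—one as a $\kk$-vector space $_{\kk}\itLamb$ on which $\Vert\cdot\Vert_p$ was originally defined, and one as a left $\itLamb$-module on itself—and the homogeneity relation (\ref{formula:normedmod}) holds only after the ring multiplication is replaced by the $\tau$-twisted action $a\star x = \tau(a) x$. Once the two structures are kept carefully apart, both verifications reduce to one line of algebra apiece.
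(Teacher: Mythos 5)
Your proposal is correct and follows essentially the same route as the paper: the paper's own verification of part~(1) is exactly the displayed basis expansion of $\tau(a)x$ followed by factoring $|\tau(a)|$ out of the $p$-sum via multiplicativity of $|\cdot|$ on $\kk$, and part~(2) is likewise the one-line concatenation of the $\itLamb$-linearity of $f$ with the homogeneity axiom (\ref{formula:normedmod}) for $N$. Your flag about keeping the $\tau$-twisted action $a\star x=\tau(a)x$ distinct from the genuine ring multiplication is precisely the point the paper is making with this Fact.
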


\begin{example} \label{exp:1} \rm
Let $\itLamb=\left(\begin{smallmatrix}
\kk & 0 \\
\kk & \kk
\end{smallmatrix}\right).$
Then a $\kk$-basis of $\itLamb$ is $B_{\itLamb}=\{\bfE_{11}, \bfE_{21}, \bfE_{22}\}$,
where $\bfE_{11} = \big({}_0^1\ {}_0^0\big)$, $\bfE_{21} = \big({}_1^0\ {}_0^0\big)$, $\bfE_{22} = \big({}_0^0\ {}_1^0\big)$.
Take $\norm{}$ be the map $B_{\itLamb}\to\RR^+$ defined by $\norm{}(\bfE_{11})=\norm{}(\bfE_{21})=\norm{}(\bfE_{22})=1$,
then for any element $x=\big({}_{k_{21}}^{k_{11}}\ {}_{k_{22}}^0\big)$ in $\itLamb$,
we have $\Vert x\Vert_p= (|k_{11}|^p+|k_{21}|^p+|k_{22}|^p)^{\frac{1}{p}}$.
There are three indecomposable $\itLamb$-modules up to $\itLamb$-isomorphisms:
\[ P(1)=\left(\begin{smallmatrix}
\kk & 0 \\
\kk & 0
\end{smallmatrix}\right)
\cong
\left(\begin{smallmatrix}
0 & \kk \\
0 & \kk
\end{smallmatrix}\right),
P(2)=\left(\begin{smallmatrix}
 0 & 0 \\
 0 & \kk
\end{smallmatrix}\right),
\]
\[\text{and the cokernel }
\mathrm{coker}\left(P(2)\to P(1)\cdot\left(\begin{smallmatrix}
 0 & 1 \\
 1 & 0
\end{smallmatrix}\right)\right).\]
Then each $\itLamb$-module $M$ is isomorphic to the direct sum
$P(1)^{\oplus t_1}\oplus P(2)^{\oplus t_2}\oplus (P(1)/P(2))^{\oplus t_3}$ for some $t_1, t_2, t_3\in\NN$.
Assume that $M=(M,h_M,\Vert\cdot\Vert_M)$ and $N=(N,h_N,\Vert\cdot\Vert_N)$ are two normed $\itLamb$-modules.
Then, naturally, $M\oplus N$ is also a $\itLamb$-module, where the left $\itLamb$-action is the map
\[h_M\oplus h_N:= \left(
\begin{smallmatrix}
h_M & 0\\
0 & h_N
\end{smallmatrix}\right):
\itLamb \times M\oplus N \to M\oplus N\]
which sends $(a,({}_n^m))$ to
\begin{center}
  $\left(\begin{smallmatrix} h_M & 0\\ 0 & h_N \end{smallmatrix}\right)({}_n^m) = \Big({}_{(h_N)_a(n)}^{(h_M)_a(m)}\Big) = ({_{an}^{am}})$.
\end{center}
Furthermore, we can use the $\tau$-norms of $M$ and $N$, \checks{i.e.,} $\Vert\cdot\Vert_{M}$ and $\Vert\cdot\Vert_{N}$,
to define a $\tau$-norm $\Vert\cdot\Vert_{M\oplus N}$ of $M\oplus N$ by
\begin{center}
  $\Vert(m,n)\Vert_{M\oplus N} := \left(|k|(\Vert m\Vert_{M}^p + \Vert n\Vert_{N}^p)\right)^{\frac{1}{p}}$ for given $k\in\kk\backslash\{0\}$.
\end{center}
Then we have
\begin{align}
     \Vert a(m,n)\Vert_{M\oplus N}
 & = \left(|k|(\Vert am\Vert_{M}^p + \Vert an\Vert_{N}^p)\right)^{\frac{1}{p}}
   = \left(|k|(|\tau(a)|^p\Vert m\Vert_{M}^p + |\tau(a)|^p\Vert n\Vert_{N}^p)\right)^{\frac{1}{p}} \nonumber  \\
 & = |\tau(a)| \left(|k|(\Vert m\Vert_{M}^p + \Vert n\Vert_{N}^p)\right)^{\frac{1}{p}}
   = |\tau(a)| \Vert (m,n)\Vert_{M\oplus N} \nonumber
\end{align}
for any $a\in\itLamb$.
\end{example}

\begin{example} \label{exp:2} \rm
The quiver of the $\kk$-algebra $\itLamb$ given in Example \ref{exp:1}\point~is $\Q= 1 \To{\alpha} 2$.
By representation theory, all $\itLamb$-modules $M$ can be represented by $M_1 \To{\varphi_a} M_2$,
where $M_1$ and $M_2$ are two $\kk$-vector spaces and $\varphi_a$ is a $\kk$-linear map.
Indeed, the identity element of $\itLamb$ is $\bfE=\bfE_{11}+\bfE_{22}$,
where $\{\bfE_{11},\bfE_{22}\}$ is the complete set of primitive orthogonal idempotents.
Thus, $M$, as a $\kk$-vector space, has a decomposition $M = \bfE_{11}M \oplus \bfE_{22}M$
(because $\bfE_{11}\bfE_{22}=0$ yields $\bfE_{11}M\cap\bfE_{22}M = 0$).
For any $a=k_{11}\bfE_{11}+k_{22}\bfE_{22}+k_{21}\bfE_{21}$ and $m\in M$, we have
\begin{align}
am & = (k_{11}\bfE_{11} + k_{22}\bfE_{22} + k_{21}\bfE_{21}) (\bfE_{11}m + \bfE_{22}m) \nonumber \\
   & = k_{11}\bfE_{11}(\bfE_{11}m) + k_{22}\bfE_{22}(\bfE_{22}m) + k_{21}\bfE_{21}(\bfE_{11}m) \nonumber \\
   & = k_{11}(h_M)_{\bfE_{11}}(\bfE_{11}m) + k_{22}(h_M)_{\bfE_{22}}(\bfE_{22}m) + k_{21}(h_M)_{\bfE_{21}}(\bfE_{11}m) \nonumber \\
   & = (h_M)_{\bfE_{11}}(k_{11}\bfE_{11}m) + (h_M)_{k_{22}\bfE_{22}}(\bfE_{22}m) + (h_M)_{\bfE_{21}}(k_{21}\bfE_{11}m), \label{formula:action}
\end{align}
where
\begin{itemize}
  \item[(a)] $h_M: \itLamb\to \End_{\kk}M$ is a homomorphism of $\kk$-algebras sending $a$ to $(h_M)_a$,
    which satisfies $\id_M=(h_M)_{\bfE} = (h_M)_{\bfE_{11}}+(h_M)_{\bfE_{22}}$;
  \item[(b)] $(h_M)_{\bfE_{ii}} = \id_{\bfE_{ii}M}$ ($i=1,2$);
  \item[(c)] $(h_M)_{\bfE_{12}}: \bfE_{11}M \to \bfE_{22}M$ is a $\kk$-linear map (this is equivalent to (\ref{formula:action})).
\end{itemize}
Therefore, we obtain that the representation corresponding to $M=\bfE_{11}M \oplus \bfE_{22}M$ is
\[\bfE_{11}M \To{\bfE_{21}} \bfE_{22}M.\]
Generally, $M_1 \To{\varphi_a} M_2$ corresponds to the module $M_1\oplus M_2$, where the $\itLamb$-action
$\itLamb \times M_1\oplus M_2 \to M_1\oplus M_2$ is defined by
\begin{center}
  $\bfE_{11}(m_1,m_2)=(m_1,0)$, $\bfE_{22}(m_1,m_2)=(0,m_2)$,
\end{center}
\begin{center}
  and $\bfE_{12}(m_1,m_2)=(0,\varphi_{\alpha}(m_1))$.
\end{center}
Without loss of generality, for any representation $M_1 \To{\varphi_a} M_2$ of $\Q$,
assume that $M_1=\kk^{\oplus t_1}$, $M_2=\kk^{\oplus t_2}$ and $\varphi_a\in\mathbf{Mat}_{t_2\times t_1}(\kk)$ (up to $\itLamb$-isomorphism),
and for any $i=1,2$, $M_i$ is a normed space equipping with the norm $\Vert\cdot\Vert_{M_i}: M_i=\kk^{\oplus t_i}\to\RR^+$
sending $m_i=(m_{ij})_{1\le j\le t_i}$ to $\left(\sum_{j=1}^{t_i} |m_{ij}|^p\right)^{\frac{1}{p}}$.
Then we can define a norm $\Vert\cdot\Vert_{M_1\oplus M_2}$ by
\[ \Vert(m_1,m_2)\Vert_{M_1\oplus M_2} = (|k|(\Vert m_1 \Vert_{M_1}^p + \Vert m_2 \Vert_{M_2}^p))^{\frac{1}{p}}, \]
where $k$ is a given element in $\kk\backslash\{0\}$.
The direct sum $\oplus$ of $\kk$-vector spaces is the $p$ powers of the norm preserving in the case for $k=1$,
\checks{i.e.,} $\Vert(m_1,m_2)\Vert_{M_1\oplus M_2}^p = \Vert m_1\Vert_{M_1}^p + \Vert m_2\Vert_{M_2}^p$.
Furthermore, if $\Vert\cdot\Vert_{M_1}$ and $\Vert\cdot\Vert_{M_2}$ are $\tau$-norms of $M_1$ and $M_2$, respectively,
then, for any $a\in\itLamb$, we have
\begin{align*}
    \Vert a(m_1,m_2)\Vert_{M_1\oplus M_2}
& = \big(|k|(\Vert am_1 \Vert_{M_1}^p + \Vert am_2 \Vert_{M_2}^p)\big)^{\frac{1}{p}} \\
& = \big(|k|(|\tau(a)|^p\Vert m_1 \Vert_{M_1}^p + |\tau(a)|^p \Vert m_2 \Vert_{M_2}^p)\big)^{\frac{1}{p}} \\
& = |\tau(a)| \big(|k|(\Vert m_1 \Vert_{M_1}^p + \Vert m_2 \Vert_{M_2}^p )\big)^{\frac{1}{p}} \\
& = |\tau(a)| \Vert a(m_1,m_2)\Vert_{M_1\oplus M_2}.
\end{align*}
\end{example}

\subsection{\texorpdfstring{Completions of normed $\itLamb$-modules}{}} \label{subsect:normed mod.2}

Let $N=(N,h,\Vert\cdot\Vert)$ be a normed $\itLamb$-module. In this part we construct its completion.
For us, we need only the completion in the finite-dimensional $\kk$-algebra $\itLamb$ case.
Otherwise, there is at least one $\itLamb$-module which is not complete,
for instance, $\itLamb$ is a non-complete $\itLamb$-module.
Therefore, we assume that $\kk$ is complete in this subsection  by Propositions \ref{prop:compl1}\point~and \ref{prop:compl2}.

Similar to finite-dimensional $\kk$-algebras, we can define open neighborhoods $B(0,r)$ of $0$
for any normed $\itLamb$-module $N=(N,h,\Vert\cdot\Vert)$ by
\[ B(0,r) := \{ x\in N \mid \Vert x \Vert < r \}.  \]
Let $\frakU_{N}^B(0)$ be the class of all subsets $U$ of $N$ satisfying the following conditions.
\begin{itemize}
  \item[(1)] $U$ is the intersection of a finite number of $B(0,r)$;
  \item[(2)] $U$ is the union of any number of $B(0,r)$.
\end{itemize}
Then  $\frakU_{N}^B(0)$ is a topology defined on $N$,
and we can define the Cauchy sequence by the above topology.

\begin{lemma} \label{lemm:C(N)}
Let $\mathfrak{C}^{*}(N)$ be the set of all Cauchy sequences in the normed $\itLamb$-module
$N=(N,h,\Vert\cdot\Vert)$. Then $\mathfrak{C}^{*}(N)$ is a $\itLamb$-module.
\end{lemma}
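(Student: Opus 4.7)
The plan is to promote the componentwise vector space and $\itLamb$-action on sequences to $\mathfrak{C}^{*}(N)$ and verify that these operations preserve the Cauchy property, after which the module axioms transfer pointwise from $N$.

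First I would fix notation: for Cauchy sequences $\{x_i\}_{i\in\NN}, \{y_i\}_{i\in\NN}\in\mathfrak{C}^{*}(N)$, $a\in\itLamb$ and $k\in\kk$, declare
\[
\{x_i\}+\{y_i\} := \{x_i+y_i\},\qquad k\{x_i\} := \{kx_i\},\qquad a\{x_i\} := \{ax_i\}.
\]
The substantive content is closure: I need to show each of these three sequences is again Cauchy with respect to $\frakU_N^B(0)$. For addition this is immediate from the triangle inequality in the norm $\Vert\cdot\Vert$, since
\[
\Vert(x_s+y_s)-(x_t+y_t)\Vert \le \Vert x_s-x_t\Vert + \Vert y_s-y_t\Vert,
\]
so given any ball $B(0,r)$ one chooses the common index from balls $B(0,r/2)$ for $\{x_i\}$ and $\{y_i\}$. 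For $\kk$-scalar multiplication, the homogeneity $\Vert kx\Vert = |k|\,\Vert x\Vert$ of the underlying normed vector space gives $\Vert kx_s-kx_t\Vert = |k|\Vert x_s-x_t\Vert$, and the case $k=0$ is trivial.

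The step I expect to be the main (though still light) obstacle is the $\itLamb$-action, because here one must actually invoke Definition \ref{def:normed mod}, i.e.\ the defining identity $\Vert ax\Vert = |\tau(a)|\,\Vert x\Vert$ of a $\tau$-normed $\itLamb$-module. Using it,
\[
\Vert ax_s-ax_t\Vert = \Vert a(x_s-x_t)\Vert = |\tau(a)|\cdot\Vert x_s-x_t\Vert,
\]
so $\{ax_i\}$ is Cauchy (trivially if $\tau(a)=0$, otherwise by shrinking the target radius to $r/|\tau(a)|$). This is the only place the module-norm hypothesis genuinely enters; without it one could not absorb a fixed algebra scalar into the Cauchy estimate.

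Finally I would check the five $\itLamb$-module axioms (Section \ref{sect:normed mod} items (1)--(5)) for $\mathfrak{C}^{*}(N)$. Each of these is an identity of sequences whose $i$-th entry is exactly the corresponding axiom for $N$; for instance $(a+a')\{x_i\} = \{(a+a')x_i\} = \{ax_i+a'x_i\} = a\{x_i\}+a'\{x_i\}$, and analogously for $a(a'\{x_i\}) = (aa')\{x_i\}$, $1\cdot\{x_i\}=\{x_i\}$, and compatibility with $\kk$. Hence $\mathfrak{C}^{*}(N)$ is a $\itLamb$-module, completing the proof. (If the author later wishes to pass to $\w{N}$, the equivalence relation ``$\sim$'' from Subsection \ref{subsect:normed alg.2} is respected by all three operations by the same estimates, so the quotient inherits the structure automatically.)
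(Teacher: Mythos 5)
Your proposal is correct and follows essentially the same route as the paper: both define addition, $\kk$-scalar multiplication, and the $\itLamb$-action componentwise on sequences and then observe that the module axioms transfer pointwise from $N$. The only difference is that you explicitly verify closure of $\mathfrak{C}^{*}(N)$ under these operations (using the triangle inequality and the identity $\Vert ax\Vert = |\tau(a)|\,\Vert x\Vert$), a step the paper's proof leaves implicit; your version is therefore slightly more complete but not a different argument.
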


\begin{proof}
First of all, $\mathfrak{C}^{*}(N)$ is a $\kk$-vector space whose addition and $\kk$-action are given by
\[\{x_i\}_{i\in\NN}+\{y_i\}_{i\in\NN} = \{x_i+y_i\}_{i\in\NN}
\ \ (\forall \{x_i\}_{i\in\NN}, \{y_i\}_{i\in\NN} \in \mathfrak{C}^{*}(N)) \]
\[ \text{ and } k \{x_i\}_{i\in\NN} = \{k x_i\}_{i\in\NN} \ \ (\forall k\in\kk),  \]
respectively. Furthermore, define
\[ \itLamb \times \mathfrak{C}^{*}(N) \to \mathfrak{C}^{*}(N),
\ (a, \{x_i\}_{i\in\NN}) \mapsto a\cdot\{x_i\}_{i\in\NN}:=\{a\cdot x_i\}_{i\in\NN}, \]
where $a\cdot x_i = h_a(x_i)$. Then $\mathfrak{C}^{*}(N)$ is a $\itLamb$-module.
\end{proof}

Two Cauchy sequences $\{x_i\}_{i\in\NN}$ and $\{y_i\}_{i\in\NN}$ in $N$ are called {\defines equivalent},
denoted by $\{x_i\}_{i\in\NN}\sim \{y_i\}_{i\in\NN}$, if for any $U\in\frakU_N^B(0)$,
there is $r\in\NN$ such that $x_s-x_t\in U$ holds for all $s,t\ge r$.
It is easy to see that ``$\sim$'' is an equivalence relation.
Let $[\{x_i\}_{i\in\NN}]$ be the equivalent class of Cauchy sequences containing $\{x_i\}_{i\in\NN}$
and let $\mathfrak{C}(N)$ be the set of all equivalent classes.
We naturally obtain a map
\[h:\mathfrak{C}^{*}(N) \to \mathfrak{C}(N),\ \ \{x_i\}_{i\in\NN} \mapsto [\{x_i\}_{i\in\NN}].\]
We can show that $\mathfrak{C}(N)$ is a $\itLamb$-module
by using an argument similar to that in the proof of Lemma \ref{lemm:C(N)}\point, and further obtain
$\ker(h: \mathfrak{C}^{*}(N) \to \mathfrak{C}(N)) = [\{0\}_{i\in\NN}]$.
Thus we have
\[ \mathfrak{C}(N) \cong \mathfrak{C}^{*}(N)/[\{0\}_{i\in\NN}]. \]
Then $\mathfrak{C}(N)$ is complete, and we call it the {\defines completion} of $N$.
We use $\w{N}$ to denote the completion $\mathfrak{C}(N)$ of $N$.
The $\itLamb$-module $\w{N}$ is a normed $\itLamb$-module, where the norm defined on $\w{N}$
is induced by the norm $\Vert\cdot\Vert: N\to\RR^{\ge 0}$ defined on $N$.

\begin{definition} \rm \label{def:Banach mods}
Assume that $\itLamb$ is complete. A normed $\itLamb$-module $N$ is called a {\defines Banach $\itLamb$-module}
if $\w{N}=N$ (i.e.{,} $N$ is complete).
\end{definition}

\subsection{
\texorpdfstring{$\sigma$-algebras and
the elementary simple function set $\bfS_{\tau}(\II_{\itLamb})$}{}}
\label{subsect:normed mod.3}

\begin{lemma}
Take $\tau$ to be a homomorphism of $\kk$-algebras $\tau: \itLamb\to \kk$.
Then the elementary simple function set $\bfS(\II_{\itLamb})$ with the above homomorphism $\tau$,
denoted by $\bfS_{\tau}(\II_{\itLamb})$, is a $\itLamb$-module,
where the $\itLamb$-action $\itLamb\times \bfS(\II_{\itLamb})\to \bfS(\II_{\itLamb})$ is given by
\begin{center}
  $(a, f=\sum_{i=1}^t k_i\id_{I_i}) \mapsto af:= \sum_{i=1}^t \tau(a)k_i\id_{I_i}$.
\end{center}
\end{lemma}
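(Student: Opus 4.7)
The plan is to verify directly the five module axioms listed in the definition of a left $\itLamb$-module recalled at the start of Section~\ref{sect:normed mod}, using only the fact that $\tau:\itLamb\to\kk$ is a homomorphism of $\kk$-algebras and that $\bfS(\II_{\itLamb})$ is already a $\kk$-vector space. The proposed action
\[
a\cdot f \;=\; \sum_{i=1}^{t}\tau(a)k_i\,\id_{I_i}
\]
is nothing but the $\kk$-scalar multiplication by the element $\tau(a)\in\kk$ on the $\kk$-vector space $\bfS(\II_{\itLamb})$. So the first step is simply to observe this identification: if we write $\lambda_a: \bfS(\II_{\itLamb})\to\bfS(\II_{\itLamb})$ for multiplication by $\tau(a)\in\kk$, then $a\cdot f=\lambda_a(f)$. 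This immediately takes care of well-definedness (the action does not depend on the choice of representation $\sum k_i\id_{I_i}$, since two such representations determine the same element of the $\kk$-vector space and $\lambda_a$ is $\kk$-linear), and it also shows that each $\lambda_a$ is a $\kk$-endomorphism of $\bfS(\II_{\itLamb})$.

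Next, I would check the five axioms by transferring each identity to one about $\tau$. Axiom (1) is the $\kk$-linearity of $\lambda_a$, which is automatic. For (2), $(a+a')\cdot f = \tau(a+a')f = (\tau(a)+\tau(a'))f = \tau(a)f + \tau(a')f = a\cdot f + a'\cdot f$, using additivity of $\tau$. For (3), $a'\cdot(a\cdot f) = \tau(a')\bigl(\tau(a)f\bigr) = \bigl(\tau(a')\tau(a)\bigr)f = \tau(a'a)f = (a'a)\cdot f$, using multiplicativity of $\tau$. For (4), $1_\itLamb\cdot f = \tau(1_\itLamb)f = 1_\kk f = f$, using that $\tau$ preserves the identity. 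For (5), $(ka)\cdot f = \tau(ka)f = (k\tau(a))f = k(\tau(a)f) = k(a\cdot f)$, using $\kk$-linearity of $\tau$; the other equality $k(a\cdot f) = a\cdot(kf)$ is again $\kk$-linearity of $\lambda_a$. Equivalently, the whole verification can be packaged as: the $\kk$-algebra homomorphism $\tau:\itLamb\to\kk$ together with the $\kk$-vector space structure on $\bfS(\II_{\itLamb})$ (inducing the structural map $\kk\to\End_\kk(\bfS(\II_{\itLamb}))$) yields a composite $\kk$-algebra homomorphism $\itLamb\to\End_\kk(\bfS(\II_{\itLamb}))$, $a\mapsto\lambda_a$, which is precisely the data of a left $\itLamb$-module structure.

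There is essentially no obstacle here; the only point that needs care is the well-definedness of the formula $af=\sum\tau(a)k_i\id_{I_i}$ under changes of the formal presentation $f=\sum k_i\id_{I_i}$, and this is resolved once one views the formula intrinsically as $\kk$-scaling by $\tau(a)$ on the already-constructed $\kk$-vector space $\bfS(\II_{\itLamb})$. So the proof will be short, reducing every $\itLamb$-module axiom to the corresponding property of the algebra homomorphism $\tau$.
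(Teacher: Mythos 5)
Your proposal is correct and follows essentially the same route as the paper: the paper's proof also verifies the five left-module axioms one by one, reducing each to additivity, multiplicativity, unitality, and $\kk$-linearity of $\tau$. Your repackaging of the action as the composite $\kk$-algebra homomorphism $\itLamb\to\kk\to\End_{\kk}(\bfS(\II_{\itLamb}))$ is a tidy way to say the same thing, and your remark on well-definedness is a small point the paper leaves implicit.
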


\begin{proof}
For all $a\in\itLamb$, $a'\in\itLamb$, $k\in\kk$, $f=\sum_i k_i\id_{I_i}\in\bfS(\II_{\itLamb})$
and $f'=\sum_j k_j'\id_{I_j'}\in\bfS(\II_{\itLamb})$,
the following conditions are satisfied:
\begin{itemize}
  \item[(1)] 
    $a(f+f')=af+af'$ (trivial);
  \item[(2)] 
    $(a+a')f=af+a'f$ (trivial);
  \item[(3)] $(aa')f= a(a'f)$ because
    \begin{align}
      (aa')f & =(aa')\sum\nolimits_i k_i\id_{I_i}
               = \sum\nolimits_i \tau(aa')k_i\id_{I_i}
               = \sum\nolimits_i \tau(a)\tau(a')k_i\id_{I_i} \nonumber \\
             & = a\sum\nolimits_i\tau(a')k_i\id_{I_i}
               = a(a'\sum\nolimits_ik_i\id_{I_i}) = a(a'f) \nonumber
    \end{align}
  \item[(4)] $1f=f$ (trivial);
  \item[(5)] We have
             \begin{itemize}
               \item $(ka)f=(ka)\sum_i k_i\id_{I_i} = \sum_i \tau(ka)(k_i\id_{I_i})$,
               \item $k(af)=k(a\sum_i k_i\id_{I_i}) = k\sum_i \tau(a)k_i\id_{I_i} = \sum_i k(\tau(a)(k_i\id_{I_i}))$,
               \item and $a(kf)=a\sum_i k(k_i\id_{I_i}) = \sum_i \tau(a)(k(k_i\id_{I_i}))$.
             \end{itemize}
    Since $\tau$ is a homomorphism of $\kk$-algebras, we have
    \[\tau(ka)(k_i\id_{I_i})=k(\tau(a)(k_i\id_{I_i}))=\sum\nolimits_i\tau(a)(k(k_i\id_{I_i}))
      = \sum\nolimits_i kk_i\tau(a)\id_{I_i}, \]
    for all $i$. Then $(ka)f=k(af)=a(kf)$.
\end{itemize}
\end{proof}

Now, we introduce a norm for $\bfS_{\tau}(\II_{\itLamb})$ such that it is a normed $\itLamb$-module.
To do this, we first recall the definition of $\sigma$-algebras.
The main use of $\sigma$-algebras is in the definition of measures.
It is important in mathematical analysis and probability theory.
In mathematical analysis, it is the foundation for Lebesgue integration,
and in probability theory, it is interpreted as the collection of events that can be assigned probabilities.
See for example \cite[Page 12]{Eric2009}, \cite[Page 10]{K2001} and \cite[Page 8]{Rudin1987}.

\begin{definition} \rm
Let $S$ be a set and let $P(S)$ be the set of all subsets of $S$, which is called the power set of $S$.
A {\defines $\sigma$-algebra} is a subset $\mathcal{A}$ of $P(S)$ satisfying the following conditions:
\begin{itemize}
  \item[(1)] $\varnothing$ and $S$ lie in $\mathcal{A}$;
  \item[(2)] for any $X\in\mathcal{A}$, the complement set $X^c := S\backslash X$ of $X$ lies in $\mathcal{A}$;
  \item[(3)] for any $X_1,\ldots, X_n\checks{,} \ldots\in \mathcal{A}$, the union $\bigcup_{i=1}^{\infty} X_i$ is an element in $\mathcal{A}$.
\end{itemize}
For a class $\mathcal{C}$ of some sets lying in $P(S)$,
we call $\mathcal{A}$ a {\defines $\sigma$-algebra generated by $\mathcal{C}$}
if $\mathcal{A}$ is the minimal $\sigma$-algebra containing $\mathcal{C}$.
\end{definition}

Let $\Sigma_{\kk}$ be the $\sigma$-algebra generated by $\{(a,b)_{\kk}, [a,b)_{\kk}, (a,b]_{\kk}, [a,b]_{\kk} \mid a\preceq b\}$,
and let $\mu: \Sigma_{\kk}\to\RR^{\ge0}$ be a {\defines measure} such that $\mu(\{k\})=0$ holds for any $k\in\kk$,
\checks{i.e.,} $\mu$ is a function satisfying the following conditions:
\begin{itemize}
  \item[(1)] $\mu(\varnothing)=0$;
  \item[(2)] $\mu(\bigcup_{i\in\NN}X_i) = \sum_{i\in\NN} \mu(X_i)$ holds for all sets $X_1,X_2,\ldots$ satisfying $X_i\cap X_j=\varnothing$ ($i\ne j$).
\end{itemize}

Any two functions $f$ and $g$ in $\bfS(\II_{\itLamb})$ are called {\defines equivalent} if
\begin{center}
  $\mu(\{\pmb{k}=(k_1,\ldots,k_{n}) \in \kk^{\oplus n} \mid f(\pmb{k})\ne g(\pmb{k}) \})=0$.
\end{center}
The equivalent class containing $f$ is written as $[f]$. Then we obtain an epimorphism
\[ \bfS(\II_{\itLamb}) \to \overline{\bfS(\II_{\itLamb})} := \{[f] \mid f\in\bfS(\II_{\itLamb})\}\]
sending each function to its equivalent classes. It is easy to see that the kernel of the above epimorphism is $[0]$.
Then we have
\[ \overline{\bfS(\II_{\itLamb})} \cong \bfS(\II_{\itLamb})/[0]. \]
For simplification, we do not differentiate between two equivalent functions under the above isomorphism.
Therefore, we treat $\bfS(\II_{\itLamb})$ and the quotient $\overline{\bfS(\II_{\itLamb})}$ equivalently.

\begin{lemma} \label{lemm:normed mod}
Let $\tau:\itLamb\to\kk$ be a homomorphism between two $\kk$-algebras.
Then the $\itLamb$-module $\bfS_{\tau}(\II_{\itLamb})$ with the map
\[
  \Vert\cdot\Vert_p: \bfS_{\tau}(\II_{\itLamb}) \to \RR^{\ge 0}, \
  f=\sum_{i=1}^t k_i\id_{I_i} \mapsto \bigg(\sum_{i=1}^t (|k_i|\mu(I_i))^p\bigg)^{\frac{1}{p}}
\]
is normed.
\end{lemma}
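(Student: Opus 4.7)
The plan is to verify the four defining properties of a $\tau$-norm on $\bfS_{\tau}(\II_{\itLamb})$ in the following order: well-definedness (independence of representation and compatibility with the equivalence relation given by $\mu$), absolute homogeneity under $\kk$-scalars, positivity ($\Vert f\Vert_p=0\Leftrightarrow [f]=0$), the triangle inequality, and finally the $\tau$-twisted identity $\Vert af\Vert_p = |\tau(a)|\cdot\Vert f\Vert_p$ for $a\in\itLamb$.

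First I would address well-definedness, which is not obvious because an elementary simple function $f=\sum_{i=1}^t k_i\id_{I_i}$ admits many representations (one may split any box $I_i$ into finitely many sub-boxes, or drop a summand whose coefficient is zero). The key observation is that if $I = I'\sqcup I''$ with $I', I''\in\Sigma_{\kk}^{\oplus n}$, then by the finite additivity of $\mu$, $(|k|\mu(I))^p = (|k|(\mu(I')+\mu(I'')))^p$; however, one needs to compare this with $(|k|\mu(I'))^p + (|k|\mu(I''))^p$. In general these differ unless $p=1$, so the definition must be taken on the \emph{canonical} finest-refinement representation of $f$ (or equivalently on any representation in which the supports $I_i$ are chosen so that their $n$-dimensional projections yield a pairwise disjoint product partition). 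I would therefore recast the definition: for $f\in\bfS_{\tau}(\II_{\itLamb})$, choose a common refinement $\{J_\ell\}_{\ell=1}^{s}$ of $\{I_i\}$ and write $f=\sum_\ell c_\ell \id_{J_\ell}$ with the $J_\ell$ pairwise disjoint, then set $\Vert f\Vert_p = (\sum_\ell (|c_\ell|\mu(J_\ell))^p)^{1/p}$. Independence of the choice of refinement then follows because refining further replaces one term $(|c|\mu(J))^p$ by $\sum_r (|c|\mu(J_r))^p$ where $J=\bigsqcup_r J_r$; this changes the value unless the $J_r$ are chosen such that the original $I_i$ are already pairwise disjoint. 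I would carefully state that the norm is well-defined on the equivalence class $[f]$ by noting $\mu(\{k\})=0$ implies the modification of $f$ on any null set does not affect the finite sum $\sum(|c_\ell|\mu(J_\ell))^p$.

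Once well-definedness is secured on a disjoint-support representation $f=\sum_{\ell=1}^s c_\ell\id_{J_\ell}$, absolute homogeneity is immediate: $kf=\sum_\ell kc_\ell\id_{J_\ell}$ gives $\Vert kf\Vert_p = |k|\Vert f\Vert_p$. Positivity is equally direct: $\Vert f\Vert_p=0$ forces $|c_\ell|\mu(J_\ell)=0$ for every $\ell$, hence either $c_\ell=0$ or $\mu(J_\ell)=0$; in either case $c_\ell\id_{J_\ell}$ is a $\mu$-a.e.\ zero function, so $f\sim 0$, i.e.\ $[f]=[0]$. For the $\tau$-twisted identity, $af=\sum_\ell \tau(a)c_\ell\id_{J_\ell}$ yields $\Vert af\Vert_p = |\tau(a)|\Vert f\Vert_p$ by exactly the computation used in Fact~3.7(1) of the paper.

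The main obstacle is the triangle inequality. To prove $\Vert f+g\Vert_p\le \Vert f\Vert_p + \Vert g\Vert_p$, I would pass to a common disjoint refinement $\{J_\ell\}_{\ell=1}^s$ of the supports of both $f$ and $g$, writing $f=\sum_\ell c_\ell\id_{J_\ell}$ and $g=\sum_\ell d_\ell\id_{J_\ell}$ (allowing $c_\ell$ or $d_\ell$ to be zero). Then $f+g=\sum_\ell(c_\ell+d_\ell)\id_{J_\ell}$ and
\[
  \Vert f+g\Vert_p = \Bigl(\sum_{\ell=1}^s (|c_\ell+d_\ell|\mu(J_\ell))^p\Bigr)^{1/p}
  \le \Bigl(\sum_{\ell=1}^s (|c_\ell|\mu(J_\ell) + |d_\ell|\mu(J_\ell))^p\Bigr)^{1/p},
\]
using the triangle inequality of $|\cdot|$ on $\kk$ inside each summand together with monotonicity of $x\mapsto x^p$ on $\RR^{\ge0}$. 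The discrete Minkowski inequality applied to the non-negative vectors $(|c_\ell|\mu(J_\ell))_\ell$ and $(|d_\ell|\mu(J_\ell))_\ell$ then yields the bound $\Vert f\Vert_p+\Vert g\Vert_p$, exactly as in the proof of Proposition~3.2. The delicate point I expect to grind on is justifying that the common refinement exists within the class of products of intervals in $\Sigma_\kk$; this follows because the intersections and set-theoretic differences of box-intervals decompose into finite unions of boxes, after which the finite additivity of $\mu$ absorbs the bookkeeping.
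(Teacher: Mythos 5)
Your overall route is the same as the paper's: check the norm axioms one by one, with homogeneity, the $\tau$-twisted identity $\Vert af\Vert_p=|\tau(a)|\,\Vert f\Vert_p$, and positivity modulo $\mu$-null sets handled by the same direct computations, and the triangle inequality reduced to the discrete Minkowski inequality. Your triangle inequality is in fact a cleaner variant: you pass to a common disjoint refinement $\{J_\ell\}$, apply $|c_\ell+d_\ell|\le|c_\ell|+|d_\ell|$ termwise, and then invoke Minkowski on the two vectors $(|c_\ell|\mu(J_\ell))_\ell$ and $(|d_\ell|\mu(J_\ell))_\ell$; the paper instead decomposes $f+g$ over the three families $I_i\setminus\bigcup_j I_j'$, $I_j'\setminus\bigcup_i I_i$, $I_i\cap I_j'$ and supplements Minkowski with the superadditivity $\mu(X\cup Y)^p\ge\mu(X)^p+\mu(Y)^p$. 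Both arguments reduce to the same inequality; yours keeps the bookkeeping lighter and handles the overlap region more directly.

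The genuine gap is in your well-definedness step, which you rightly make the foundation of everything else but do not actually close. You correctly observe that for $p>1$ the quantity $\bigl(\sum_i(|k_i|\mu(I_i))^p\bigr)^{1/p}$ is not invariant under refining the partition: splitting $I=I'\sqcup I''$ replaces $(|k|(\mu(I')+\mu(I'')))^p$ by $(|k|\mu(I'))^p+(|k|\mu(I''))^p$, and these differ. But your proposed remedy --- evaluate the formula on a ``canonical finest refinement'' --- does not exist: any box can be subdivided further, and your own sentence concedes that each further subdivision changes the value. So as written the map is defined on representations rather than on functions, and your step~1 does not establish otherwise; since the later steps (homogeneity, positivity, the triangle inequality on a common refinement) all presuppose that the refined and unrefined computations agree, the hole propagates. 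To repair it you must either (i) restrict to $p=1$, where the formula is additive under refinement and hence genuinely representation-independent (this is the only case the paper subsequently uses for integration), or (ii) replace $(|k_i|\mu(I_i))^p$ by $|k_i|^p\mu(I_i)$, i.e.\ take $\Vert f\Vert_p=\bigl(\sum_i|k_i|^p\mu(I_i)\bigr)^{1/p}$, the $L_p$ seminorm of the simple function, which is manifestly independent of the representation; all your subsequent steps go through verbatim with that formula. The paper itself sidesteps the issue by silently computing with one fixed representation, so your instinct to worry was sound --- but the worry has to be resolved, not merely recorded.
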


\begin{proof}
Let $f$ be an arbitrary function lying in $\bfS(\II_{\itLamb})$.
It is trivial that $\Vert f\Vert_p$ is non-negative.
Let $a$ be an arbitrary element in $\itLamb$ and assume $f=\sum_{i=1}^t k_i\id_{I_i}$. We have
\begin{align}
\Vert af\Vert_p & = \Big\Vert \sum_{i=1}^t \tau(a)k_i\id_{I_i}\Big\Vert_p
                  = \Big(\sum_{i=1}^t |\tau(a)k_i|^p\mu(\id_{I_i})^p\Big)^{\frac{1}{p}} \nonumber \\
                & = |\tau(a)|\cdot\Big(\sum_{i=1}^t |k_i|^p\mu(\id_{I_i})^p\Big)^{\frac{1}{p}}
                  = |\tau(a)|\cdot\Vert f\Vert_p \nonumber
\end{align}
which satisfies the formula (\ref{formula:normedmod}).
In particular, if $\Vert f\Vert_p=0$, then so is $(|k_i|\mu(I_i))^p = 0$ for all $i$,
and we have $|k_i|=0$ in the case for $\mu(I_i)\ne 0$.
If $\mu(I_j)=0$ holds for some $j\in J$ $(\subseteq \{1,2,\ldots, t\})$, then we have
$f = \sum_{j\in J} k_j\id_{I_j}$. Clearly,
\[ \mu(\{ x \in \II_{\itLamb} \mid f(x) \ne 0 \}) = \sum_{j\in J}\mu(I_j) = 0, \]
\checks{i.e.,} $f=0$ in treating $\bfS(\II_{\itLamb})$ and the quotient $\overline{\bfS(\II_{\itLamb})}$ equivalently.
Thus, $\Vert f\Vert_p=0$ if and only if $f=0$.

Next, we prove the triangle inequality. For two arbitrary functions $f=\sum_i k_i\id_{I_i}$ and $g=\sum_j l_j\id_{I_j'}$,
we have
\begin{align}\label{formula:f+g}
  f+g = \sum\nolimits_i k_i\id_{I_i \backslash \bigcup\nolimits_jI_j'}
      + \sum\nolimits_j l_j\id_{I_j'\backslash \bigcup\nolimits_i I_i}
      + \sum_{I_i\cap I_j'\ne\varnothing} (k_i\id_{I_i\cap I_j'}+l_j\id_{I_i\cap I_j'})
\end{align}
by $I_i\cap I_{\imath}=\varnothing$ ($\forall i\ne \imath$) and $I_j'\cap I_{\jmath}'=\varnothing$ ($\forall j\ne \jmath$).
Then we can compute the norm of $f+g$ by (\ref{formula:f+g}) as the following formula:
\[\Vert f+g \Vert_p = ({R+G+B)^{\frac{1}{p}}}, \]
where
\begin{align}
 R & = \sum\nolimits_i |k_i|^p\mu\big(I_i \big\backslash \bigcup\nolimits_j I_j'\big)^p; \nonumber \\
 G & = \sum\nolimits_j |l_j|^p\mu\big(I_j'\big\backslash \bigcup\nolimits_i I_i \big)^p; \nonumber \\
 B & =  \sum_{I_i\cap I_j'\ne\varnothing} (|k_i|^p+|l_j|^p)\mu(I_i\cap I_j')^p. \nonumber
\end{align}
On the other hand, we have the following inequality by the discrete Minkowski inequality:
\begin{align}
      \Vert f \Vert_p + \Vert g \Vert_p
  = & \bigg(\sum\nolimits_i |k_i|^p\mu(I_i)^p\bigg)^{\frac{1}{p}}
      + \bigg(\sum\nolimits_j |l_i|^p\mu(I_i')^p\bigg)^{\frac{1}{p}} \nonumber \\
\ge & \bigg(\sum\nolimits_i |k_i|^p\mu(I_i)^p + \sum\nolimits_j |l_i|^p\mu(I_i')^p\bigg)^{\frac{1}{p}} =: \mathfrak{S}.
\end{align}
Since, by the definition of measure, $\mu(X\cup Y)=\mu(X)+\mu(Y)$ holds for any $X, Y$ with $X\cap Y=\varnothing$,
we obtain
\begin{align}\label{formula:mu-addi}
  \mu(X\cup Y)^p \ge \mu(X)^p+\mu(Y)^p,
\end{align}
then
\[\mu(I_i)^p\ge\mu\big(I_i\backslash\bigcup\nolimits_j I_j'\big)^p + \mu\big(I_i\cap\bigcup\nolimits_j I_j'\big)^p. \]
Thus,
\begin{align} \label{formula:red}
        \sum\nolimits_i |k_i|^p\mu(I_i)^p
\ge\ &  \sum\nolimits_i |k_i|^p\mu\big(I_i\backslash\bigcup\nolimits_j I_j'\big)^p
        + \sum\nolimits_i |k_i|^p\mu\big(I_i\cap\bigcup\nolimits_j I_j'\big)^p
        \nonumber \\
  =\ &  R
        + \sum\nolimits_i |k_i|^p
        \bigg(\mathop{\sum\nolimits_{j}}\limits_{I_i\cap I_j'\ne\varnothing}\mu(I_i \cap I_j')\bigg)^p
        \nonumber \\
  \mathop{\ge}\limits^{(\ref{formula:mu-addi})}
     &  R + \sum_{I_i\cap I_j'\ne\varnothing} |k_i|^p\mu(I_i\cap I_j')^p.
\end{align}
Similarly,
\begin{align} \label{formula:green}
       \sum\nolimits_j |l_j|^p\mu(I_j')^p
\ge G + \sum_{I_j'\cap I_i\ne\varnothing} |l_j|^p\mu(I_j'\cap I_i)^p.
\end{align}
Notice that
\[\sum_{I_i\cap I_j'\ne\varnothing} |k_i|^p\mu(I_i\cap I_j')^p
+ \sum_{I_j'\cap I_i\ne\varnothing} |l_j|^p\mu(I_j'\cap I_i)^p
= \sum_{I_i\cap I_j'\ne\varnothing} (|k_i|^p+|l_j|^p)\mu(I_i\cap I_j')^p
= B, \]
then (\ref{formula:red})$+$(\ref{formula:green}) induces
$\mathfrak{S}^p \ge {R+G+B}$.
Thus, the triangle inequality $\Vert f \Vert_p+\Vert g\Vert_p \ge \Vert f+g \Vert_p$ holds.
\end{proof}

\section{\texorpdfstring{The categories $\scrN^p$ and $\scrA^p$}{}} \label{sect:two cats}

Recall that a measure defined on $\Sigma_{\kk}$ is a countable additive function $\mu:\Sigma_{\kk}\to\RR^{\ge 0}$ with $\mu(\varnothing)=0$.
Naturally, it induces a measure, still written as $\mu$, defined on some $\sigma$-algebra of $\itLamb$ such that,
for any $\sum_{i=1}^n I_i b_i$ ($I_i\in\Sigma_{\kk}$ is measurable), the equation $\mu(\sum_{i=1}^n I_i b_i) = \prod_{i=1}^n \mu(I_i)$ holds.

Let $\dim_{\kk}\itLamb=n$, and let $N$ be a normed $\itLamb$-module equipped with two additional pieces of data:
an element $v\in N$ such that $\Vert v\Vert\le \mu(\II_{\itLamb})$, and a continuous $\itLamb$-homomorphism
$\delta: N^{\oplus_p 2^n} \to N$. Here, $\oplus_p$ denotes the direct sum of $2^n$ normed $\itLamb$-modules $X_1, \ldots, X_{2^n}$
with the norm defined as follows:
\[ \Vert\cdot\Vert_p: \bigoplusp{i=1}{2^n} X_i \to \RR^{\ge0},
   (x_1,x_2,\ldots, x_{2^n})
     \mapsto
       \bigg(
             \bigg(\frac{\mu(\II)}{\mu(\II_{\itLamb})}\bigg)^n
             \sum_{i=1}^{2^n}\Vert x_i\Vert^p
       \bigg)^{\tfrac{1}{p}}.\]

\subsection{\texorpdfstring{The categories $\scrN^p$ and $\scrA^p$}{}} \label{sect:two cats.1}
Let $\scrN^p$ be a class of triples which are of the form $(N, v, \delta)$,
where $N$ is a normed $\itLamb$-module, $v\in N$ is an element with $\Vert v\Vert_p \le \mu(\II_{\itLamb})$
and $\delta: N^{\oplus_p 2^n}\to N$ is a $\itLamb$-homomorphism satisfying $\delta(v,v,\ldots,v)=v$
such that for any Cauchy sequence $\{x_i\}_{i\in\NN} \in \w{N^{\oplus_p 2^n}} \cong \w{N}^{\oplus_p 2^n}$,
the commutativity
\begin{align}\label{formula:comm}
 \underleftarrow{\lim}\delta(x_i) = \delta(\underleftarrow{\lim}x_i)
\end{align}
of the inverse limit and the $\itLamb$-homomorphism holds.
For any two triples $(N,v,\delta)$ and $(N',v',\delta')$ in $\scrN^p$,
we define the morphism $(N,v,\delta) \to (N',v',\delta')$ to be the $\itLamb$-homomorphism $\theta: N\to N'$ with $\theta(v)=v'$
such that the following diagram
\[\xymatrix@R=1.5cm@C=1.5cm{
  N^{\oplus_p 2^n} \ar[r]^{\delta}
  \ar[d]_{ \theta^{\oplus 2^n}
          =\left(\begin{smallmatrix}
          \theta &&\\
          &\ddots&\\
          && \theta
          \end{smallmatrix}\right)_{2^n\times 2^n}}
& N \ar[d]^{\theta\ } \\
  N'^{\oplus_p 2^n} \ar[r]_{\delta'}
& N'
}\]
commutes, \checks{i.e.,} for any $(v_1,\ldots, v_{2^n})\in N^{\oplus_p 2^n}$,
$\theta(\delta(v_1,\ldots,v_{2^n}))=\delta'(\theta(v_1),\ldots,\theta(v_{2^n}))$.
Then it is easy to check that $\scrN^p$ is a category.

\begin{lemma} \label{lemm:S in Np pre}
Let
\begin{itemize}
  \item[\rm (1)] $\xi$ be an element in $\II=[a,b]_{\kk}$ with $a\prec \xi\prec b$ such that the order-preserving bijections
  $\kappa_a:\II\to[a,\xi]_{\kk}$ and $\kappa_b:\II\to[\xi,b]_{\kk}$ exist,
  \item[\rm (2)] $\id$ be the identity function $\id_{\II_{\itLamb}}: \II_{\itLamb} \to \{1\}$,
  \item[\rm (3)] $\gamma_{\xi}$ be the map given in (\ref{map:gamma}),
  \item[\rm (4)] $\tau:\itLamb\to\kk$ be the homomorphism of $\kk$-algebras given in Lemma \ref{lemm:normed mod}.
\end{itemize}
Then the following statements hold.
\begin{itemize}
  \item[\rm (a)] $\gamma_{\xi}(\id, \id, \cdots, \id) = \id$;
  \item[\rm (b)] $\gamma_{\xi}$ is a $\itLamb$-homomorphism.
\end{itemize}
\end{lemma}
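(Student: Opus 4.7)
The plan is to verify both claims by direct computation from the definition of $\gamma_{\xi}$ in (\ref{map:gamma}), leveraging the quotient $\overline{\bfS(\II_{\itLamb})}$ conventions established in Subsection \ref{subsect:normed mod.3}.

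For part (a), I would substitute $f_{(\delta_1,\ldots,\delta_n)} = \id_{\II_{\itLamb}}$ for every tuple $(\delta_1,\ldots,\delta_n) \in \{a,b\}^n$ into the defining formula. Since $\id_{\II_{\itLamb}}$ takes the constant value $1$, the formula collapses to
\[ \gamma_{\xi}(\id,\ldots,\id)(k_1,\ldots,k_n) = \sum_{(\delta_1,\ldots,\delta_n)} \id_{\kappa_{\delta_1}(\II)\times\cdots\times\kappa_{\delta_n}(\II)}(k_1,\ldots,k_n). \]
Next I would use that $\II = \kappa_a(\II)\cup\kappa_b(\II) = [a,\xi]_{\kk}\cup[\xi,b]_{\kk}$ with overlap $\{\xi\}$, a single point of measure zero. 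Taking products over the $n$ coordinates yields an almost-everywhere disjoint covering of $\II_{\itLamb}$ by the $2^n$ products $\kappa_{\delta_1}(\II)\times\cdots\times\kappa_{\delta_n}(\II)$, since the only overlaps occur on the finite union of hyperplanes $\{k_i = \xi\}$, each of measure zero. Hence the sum of indicator functions equals $\id_{\II_{\itLamb}}$ almost everywhere, and so equals $\id_{\II_{\itLamb}}$ in the quotient $\overline{\bfS(\II_{\itLamb})}$.

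For part (b), I would reduce $\itLamb$-linearity to the already-proved $\kk$-linearity in Lemma \ref{lemm:gamma linear}. The crucial observation is that the $\itLamb$-action on $\bfS_{\tau}(\II_{\itLamb})$ factors through $\tau:\itLamb\to\kk$: by definition $a\cdot f = \tau(a)f$, viewing the right-hand side as ordinary $\kk$-scalar multiplication. Since the direct sum $\bfS(\II_{\itLamb})^{\oplus 2^n}$ carries the componentwise $\itLamb$-action, we also have $a\cdot\pmb{f} = \tau(a)\pmb{f}$. Applying $\gamma_{\xi}$ and invoking Lemma \ref{lemm:gamma linear} then gives
\[ \gamma_{\xi}(a\cdot\pmb{f}) = \gamma_{\xi}(\tau(a)\pmb{f}) = \tau(a)\gamma_{\xi}(\pmb{f}) = a\cdot\gamma_{\xi}(\pmb{f}), \]
and additivity is immediate from $\kk$-linearity. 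Since $\gamma_{\xi}$ takes values in $\bfS_{\tau}(\II_{\itLamb})$, this establishes that $\gamma_{\xi}$ is a $\itLamb$-homomorphism.

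The only subtle point, and the one requiring care rather than real difficulty, is the measure-zero issue in part (a): the hyperplanes $\{k_i = \xi\}$ lie in the overlaps $\kappa_a(\II)\cap\kappa_b(\II)$, so the sum of indicators is technically $2$ there rather than $1$. However, since $\mu(\{\xi\}) = 0$ by assumption on $\mu$ and $\bfS(\II_{\itLamb})$ is treated modulo equivalence on measure-zero sets, this discrepancy disappears in $\overline{\bfS(\II_{\itLamb})}$. Part (b) is then essentially a bookkeeping observation once one notes the $\tau$-factorization of the $\itLamb$-action.
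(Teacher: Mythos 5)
Your proposal is correct and follows essentially the same route as the paper: part (a) by direct substitution into the definition of $\gamma_{\xi}$, and part (b) by observing that the $\itLamb$-action factors through $\tau$ and reducing to the $\kk$-linearity of Lemma \ref{lemm:gamma linear}. Your treatment of (a) is in fact more careful than the paper's one-line assertion, since you explicitly address the measure-zero overlap on the hyperplanes $\{k_i=\xi\}$ and the passage to $\overline{\bfS(\II_{\itLamb})}$.
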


First, we provide a remark for the above lemma.

\begin{remark} \rm
Indeed, $(\bfS_{\tau}(\II_{\itLamb}), \id, \gamma_{\xi})$ is an object in the category $\scrN^p$.
However, Lemma \ref{lemm:S in Np pre}\point~points out that $\gamma_{\xi}(\id, \id, \cdots, \id) = \id$
and $\gamma_{\xi}$ is a $\itLamb$-homomorphism.
Thus, we need to show that the commutativity of the inverse limit and $\gamma_{\xi}$ holds.
We will prove this result in the following content, as shown in Lemma \ref{lemm:S in Np}.
\end{remark}

Next, we prove Lemma \ref{lemm:S in Np pre}.

\begin{proof}
(a) We have that $\bfS_{\tau}(\II_{\itLamb})$ is a normed $\itLamb$-module by Lemma \ref{lemm:normed mod}\point,
and $\gamma_{\xi}$ is a $\kk$-linear map by Lemma \ref{lemm:gamma linear}.
The formula $\gamma_{\xi}(\id,\ldots,\id)=\id$ can be directly induced by the definition of $\gamma_{\xi}$.

(b) Take $\lambda\in\itLamb$, $f\in\bfS(\II_{\itLamb})$ and let $(k_i)_i$, $\id$ and $(\delta_i)_i$ be
an arbitrary element $(k_1,\ldots,k_n)$ in $\bfS(\II_{\itLamb})^{\oplus 2^n}$,
the identity function $\id_{\kappa_{\delta_1}(\II)\times\cdots\times\kappa_{\delta_n}(\II)}$
and the $n$-multiple $(\delta_1\times\cdots\times\delta_n)$, respectively.
Then we have
\begin{align}
   & \gamma_{\xi}(\lambda\cdot f)((k_i)_i) \nonumber \\
=\ & \sum_{(\delta_i)_i}\id\cdot (\tau(\lambda)f)_{(\delta_i)_i} ((\kappa_{\delta_i}^{-1}(k_i))_{i}) \nonumber \\
=\ & \tau(\lambda)\gamma_{\xi}(f)((k_i)_i)
 \ \ (\text{similar to Lemma \ref{lemm:gamma linear}}\point)  \nonumber \\
=\ & \lambda\cdot \gamma_{\xi}(f)((k_i)_i).  \nonumber
\end{align}
Thus $\gamma_{\xi}$ is a $\itLamb$-homomorphism.
\end{proof}

Let $\scrA^p$ denote a class of triples which are of the form $(\w{N}, v, \w{\delta})$,
where $\w{N}$ is a Banach $\itLamb$-module (see Definition \ref{def:Banach mods}\point),
$v\in \w{N}$ is an element with $\Vert v\Vert \le \mu(\II_{\itLamb})$
and $\w{\delta}: \w{N}^{\oplus_p 2^n}\to \w{N}$ is a $\itLamb$-homomorphism satisfying $\w{\delta}(v,v,\ldots,v)=v$.
Obviously, $\scrA^p$ is a full subcategory of $\scrN^p$.

\subsection{\texorpdfstring{The triple $(\bfS_{\tau}(\II_{\itLamb}), \id, \gamma_{\xi})$}{} }

Let $(N,v,\delta)$ be an object in $\scrN^p$ and $\w{N}$ the completion of the $\itLamb$-module $N$.
Then $\w{N}$, as a $\kk$-vector space, is a Banach space which is a Banach $\itLamb$-module.
And, naturally, we obtain the $\itLamb$-homomorphism
\[\w{\delta}: \w{N}^{\oplus_p 2^n} \to \w{N}\]
induced by the $\itLamb$-homomorphism $\delta$. Furthermore, we have that $(\w{N}, v,\w{\delta})$
is also an object in $\scrN^p$, and there is a naturally embedding morphism
\[ \emb: (N,v,\delta) \hookrightarrow (\w{N}, v,\w{\delta}) \]
which is induced by $N \subseteq \w{N}$.

\begin{notation} \rm \label{notation}
Keep the notations $\xi=:\xi_{11}$, $\kappa_a$, $\kappa_b$,
$\id$, $\gamma_{\xi}$ and $\tau$ as in Lemma \ref{lemm:S in Np pre}.
Then $\xi_{11}$ divides $\II=:\II^{(01)}$ into two subsets $[a,\xi_{11}]_{\kk}=:\II^{(11)}$ and $[\xi_{11},b]_{\kk}=:\II^{(12)}$.
Next, let $\xi_{22}=\xi_{11}$ $(=\xi)$, and denote by $\xi_{21}$ and $\xi_{23}$ the two elements in $\II_{\itLamb}$ such that
\begin{itemize}
  \item $a\prec\xi_{21}=\kappa_a\kappa_a(b) = \kappa_a\kappa_b(a) = \kappa_b\kappa_a(a) = \kappa_a(\xi_{11})\prec\xi_{22}$;
  \item $\xi_{22}\prec\xi_{23}=\kappa_b\kappa_b(a)=\kappa_b\kappa_a(b)=\kappa_b\kappa_a(b) = \kappa_b({\xi_{11}})\prec b$.
\end{itemize}
Then $\II$ is divided into four subsets which are of the form
$\II^{(2\ t+1}) = [\xi_{2t}, \xi_{2\ t+1}]_{\kk}$ ($0\le t\le 3$)
by $a=\xi_{20} \prec \xi_{21} \prec \xi_{22} \prec \xi_{23} \prec \xi_{24}=b$.
Repeating the above step $t$ times, we obtain a sequence of $2^t-1$ elements lying in $\II_{\itLamb}$
\[a=\xi_{t0} \prec \xi_{t1} \prec \xi_{t2} \prec \cdots \prec \xi_{t2^t}=b, \]
all $2^t$ subsets which are of the form $\II^{(t\ s+1)}=[\xi_{ts}, \xi_{t\ s+1}]_{\kk}$,
and $2^t$ order-preserving bijections $\kappa_{\xi_{ts}}: \II^{(t\ s+1)} \to \II^{(01)}$.

For any family of subsets $(\II^{(u_iv_i)})_{1\le i\le n}$ ($1\le v_i\le 2^{u_i}$),
we denote by $\id_{(u_iv_i)_i}$ the function
\[ \id_{(u_iv_i)_i} := \id_{\II_{\itLamb}}\Big|{}_{\prod_{i=1}^{n}\II^{(u_iv_i)}}:
\ \II_{\itLamb} \to \{0,1\},\
x \mapsto \begin{cases}
            1, & x\in \prod_{i=1}^{n}\II^{(u_iv_i)}; \\
            0, & \text{otherwise},
          \end{cases} \]
where $\II^{(u_iv_i)} \cong \II^{(u_iv_i)}\times\{b_i\}\subseteq \II_{\itLamb}$ holds for all $i$
and $B_{\itLamb}=\{b_i \mid 1\le i\le n\}$ is the $\kk$-basis of $\itLamb$.

Let $E_{u}$ be the set of all step functions constant on each of $\prod_{i=1}^{n}\II^{(u_iv_i)}$
($1\le v_i\le 2^{u_i}$ for all $i$),
\checks{i.e.,} every step function in $E_{u}$ is of the form
\[\sum_{(u_iv_i)_i} k_{(u_iv_i)_i}\id_{(u_iv_i)_i},\]
where each $k_{(u_iv_i)_i}$ lies in $\kk$, the number of summands is $(2^u)^n=2^{un}$,
and each $(u_iv_i)_i$ corresponds to the Cartesian product $\prod_{i=1}^{n}\II^{(u_iv_i)}$.
Then it is easy to check that each $E_u$ is a normed submodule of $\bfS(\II_{\itLamb})$,
and $E_{u}\subseteq E_{u+1}$ because each step function constant on each of $\II^{(uv)}$
is equivalent to a step function constant on each of $\II^{(u+1\ v)}$. Thus,
\[\kk \cong E_{0} \subseteq E_{1} \subseteq \ldots \subseteq E_{t} \subseteq \ldots
\subseteq \bfS(\II_{\itLamb}) \subseteq \w{\bfS(\II_{\itLamb})}.\]
Moreover, for any $\II^{(uv)}=[\xi_{u\ v-1}, \xi_{uv}]_{\kk}$, we have two cases
(i) $\xi_{uv} \preceq \xi$ and (ii) $\xi \preceq \xi_{u\ v-1}$ by the definition of $E_u$.
Therefore, we obtain a map
\[\frakp: \{\II^{(uv)} \mid u\in \NN\} \to \{a,b\}, \
\II^{(uv)} \mapsto \begin{cases}
a, & \II^{(uv)} \text{ lies in case (i); } \\
b, & \II^{(uv)} \text{ lies in case (ii).}
\end{cases}
\]
\end{notation}

Now we use the above map to prove the following lemma.

\begin{lemma} \label{lemm:inverse gamma}
The map $\gamma_{\xi}: \bfS(\II_{\itLamb})^{\oplus_p 2^n} \to \bfS(\II_{\itLamb})$ induces the following $\kk$-linear map
\[\gamma_{\xi}:E_u^{\oplus_p 2^n} \mathop{\longrightarrow}\limits^{\cong} E_{u+1}\]
which is an isomorphism of $\itLamb$-modules.
\end{lemma}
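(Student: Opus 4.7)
The strategy is to (i) verify that $\gamma_{\xi}$ restricts to a well-defined $\kk$-linear map $E_u^{\oplus_p 2^n}\to E_{u+1}$, (ii) construct an explicit $\kk$-linear inverse, and (iii) promote the resulting bijection to an isomorphism of $\itLamb$-modules via the $\itLamb$-linearity statement in Lemma \ref{lemm:S in Np pre}\point.

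The key geometric observation, extracted from Notation \ref{notation}\point, is that the order-preserving bijections $\kappa_a:\II\to\II^{(11)}=[a,\xi_{11}]_{\kk}$ and $\kappa_b:\II\to\II^{(12)}=[\xi_{11},b]_{\kk}$ carry the level-$u$ subdivision of $\II$ isomorphically onto the level-$(u+1)$ subdivisions of $\II^{(11)}$ and $\II^{(12)}$ respectively: $\kappa_a$ sends $\II^{(u,v)}$ onto $\II^{(u+1,v)}$, and $\kappa_b$ sends $\II^{(u,v)}$ onto $\II^{(u+1,2^u+v)}$. Using this, for any tuple $\pmb{f}=(f_{(\delta_i)_i})_{(\delta_i)_i\in\{a,b\}^n}\in E_u^{\oplus_p 2^n}$, the function $\gamma_{\xi}(\pmb{f})$ agrees on the ``quadrant'' $\prod_{i=1}^n\kappa_{\delta_i}(\II)$ with $f_{(\delta_i)_i}$ pulled back by $(\kappa_{\delta_i}^{-1})_i$, which is constant on each product of level-$(u+1)$ cells contained in that quadrant. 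Since the $2^n$ quadrants partition $\II_{\itLamb}$ up to a $\mu$-null boundary, it follows that $\gamma_{\xi}(\pmb{f})\in E_{u+1}$.

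For the inverse, I would define, for each $g\in E_{u+1}$ and each $(\delta_i)_i\in\{a,b\}^n$,
\[ g_{(\delta_i)_i}(k_1,\ldots,k_n)\ :=\ g\bigl(\kappa_{\delta_1}(k_1),\ldots,\kappa_{\delta_n}(k_n)\bigr). \]
The same level-correspondence shows $g_{(\delta_i)_i}\in E_u$, and the assignment $g\mapsto(g_{(\delta_i)_i})_{(\delta_i)_i}$ is evidently $\kk$-linear. Mutual invertibility is a direct substitution: for $(k_1,\ldots,k_n)$ in the quadrant indexed by $(\delta_i)_i$ the composite $\gamma_{\xi}\circ(g\mapsto(g_{(\delta_i)_i})_{(\delta_i)_i})$ returns $g(k_1,\ldots,k_n)$, while conversely the $(\delta_i)_i$-component of $\gamma_{\xi}(\pmb{f})$ pulled back by $(\kappa_{\delta_i})_i$ recovers $f_{(\delta_i)_i}$.

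The $\itLamb$-module structure then comes for free: Lemma \ref{lemm:S in Np pre}\point~(b) shows $\gamma_{\xi}$ is a $\itLamb$-homomorphism on the ambient space $\bfS_{\tau}(\II_{\itLamb})^{\oplus_p 2^n}$, and each $E_u$ is closed under the $\itLamb$-action (since scalar multiplication by $\tau(a)$ preserves the property of being constant on level-$u$ cells), so the restriction is a $\itLamb$-isomorphism. The main obstacle is purely notational bookkeeping around the $2^n$ quadrants and the relabelling $\II^{(u,v)}\leftrightarrow\II^{(u+1,v)},\II^{(u+1,2^u+v)}$ induced by $\kappa_a,\kappa_b$; boundary points where some $k_i=\xi_{11}$ form a measure-zero set and are absorbed by the identification $\bfS(\II_{\itLamb})\cong\overline{\bfS(\II_{\itLamb})}$ from Subsection \ref{subsect:normed mod.3}\point, so they raise no genuine difficulty.
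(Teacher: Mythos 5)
Your proposal is correct and follows essentially the same route as the paper: the paper also checks $\itLamb$-linearity by restriction and proves bijectivity by exhibiting exactly your quadrant-wise pullback $g\mapsto\bigl(g\circ(\kappa_{\delta_1},\ldots,\kappa_{\delta_n})\bigr)_{(\delta_i)_i}$ as a preimage construction (it asserts $\ker\gamma_{\xi}=0$ separately rather than verifying both composites, but the content is the same). Your explicit treatment of the level-correspondence $\II^{(u,v)}\leftrightarrow\II^{(u+1,\cdot)}$ and of the measure-zero boundary is, if anything, slightly more careful than the paper's.
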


\begin{proof}
The $\kk$-vector space $E_u$ is a $\itLamb$-module, where $\itLamb\times E_u\to E_u$ is defined by
\begin{center}
$(a,f=\sum_i 1\cdot\id_{I_i}) \mapsto a\cdot f = \sum_i \tau(a)\cdot \id_{I_i}$.
\end{center}
Then it is easy to see that $\gamma_{\xi}$ is a $\itLamb$-homomorphism.
Since $\ker(\gamma_{\xi}) = 0$, we have that $\gamma_{\xi}$ is injective.
Next, we prove that it is also surjective.

Any step function $f: \kk^{\oplus n} \to \kk$ lying in $E_{u+1}$ can be written as
\begin{align}
  f(k_1,\ldots,k_n)
= \sum_{(u_iv_i)_i} f_i 
= \sum_{ (\omega_1, \ldots, \omega_n) \in \{a,b\}\times\cdots\times \{a,b\} } f_{(\omega_1, \ldots, \omega_n)}
\nonumber
\end{align}
where
\begin{itemize}
  \item $f_i=k_{(u_iv_i)_i}\id_{(u_iv_i)_i}$;
  \item \[ f_{(\omega_1,\ldots,\omega_n)}(k_1,\ldots,k_n) = \sum_{ \Pi_{i=1}^{n} \frakp(\II^{(u_iv_i)}) = (\omega_1, \ldots, \omega_n) }  f_i,\]
    thus, the number of all summands of it is $(2^u)^n=2^{un}$;
  \item the number of all summands of $\sum\limits_{ (\omega_1, \ldots, \omega_n) \in \{a,b\}\times\cdots\times \{a,b\} } f_{(\omega_1,...,\omega_n)}$ is $2^n$
    (thus the number of all summands of $\sum\limits_{(u_iv_i)_i} f_i$ is $2^{un}\cdot 2^n = 2^{(u+1)n}$).
\end{itemize}
Then
\[\tilde{f}_{(\omega_1,\ldots,\omega_n)}(k_1,\ldots,k_n) =
f_{(\omega_1,\ldots,\omega_n)}(\kappa_{\omega_1}^{-1}(k_1),\ldots, \kappa_{\omega_n}^{-1}(k_n))\in E_u,\]
and $\gamma_{\xi}$ sends $\{f_{(\omega_1,\ldots, \omega_n)}\}_{(\omega_1,\ldots, \omega_n)\in \{a,b\}\times\cdots\times \{a,b\}}$
to $f$ by the definition of $\gamma_{\xi}$, see (\ref{map:gamma}).
We obtain that $\gamma_{\xi}$ is surjective. Therefore, $\gamma_{\xi}$ is a $\itLamb$-isomorphism.
\end{proof}

By Lemma \ref{lemm:inverse gamma}\point, the following result holds.

\begin{lemma} \label{lemm:S in Np}
The commutativity of the inverse limit and the map
$\w{\gamma}_{\xi}: \w{\bfS_{\tau}(\II_{\itLamb})}{}^{\oplus_p 2^n} \to \w{\bfS_{\tau}(\II_{\itLamb})}$
induced by the completion of $\bfS_{\tau}(\II_{\itLamb})$ holds,
\checks{i.e.,} for any sequence $\{\pmb{f}_i\}_{i\in\NN^+}$ in $\w{\bfS_{\tau}(\II_{\itLamb})}{}^{\oplus_p 2^n}$,
if its inverse limit exists, then we have
\[\w{\gamma}_{\xi}(\underleftarrow{\lim}\pmb{f}_i) = \underleftarrow{\lim}\w{\gamma}_{\xi}(\pmb{f}_i).\]
Furthermore, $(\bfS_{\tau}(\II_{\itLamb}), \id, \gamma_{\xi})$ is an object in $\scrN^p$.
\end{lemma}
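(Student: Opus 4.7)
The plan is to reduce the identity $\w{\gamma}_\xi(\underleftarrow{\lim}\pmb{f}_i) = \underleftarrow{\lim}\w{\gamma}_\xi(\pmb{f}_i)$ to the \emph{continuity} of $\w{\gamma}_\xi$. In the Banach setting of $\scrA^p$, inverse limits of the kind considered here (cf.\ Example~\ref{exp:lim}\point) coincide with norm-limits of the underlying Cauchy sequences, and a continuous $\itLamb$-linear map automatically commutes with such limits. The task thus reduces to showing that $\gamma_\xi : \bfS_{\tau}(\II_{\itLamb})^{\oplus_p 2^n} \to \bfS_{\tau}(\II_{\itLamb})$ is bounded with respect to the norms from Lemma~\ref{lemm:normed mod}\point~and the $\oplus_p$-norm introduced above, and then extending to $\w{\gamma}_\xi$ by the universal property of completions, using the identification $\w{\bfS_{\tau}(\II_{\itLamb})^{\oplus_p 2^n}} \cong \w{\bfS_{\tau}(\II_{\itLamb})}{}^{\oplus_p 2^n}$.

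For boundedness I would argue directly from the definition of $\gamma_\xi$. Writing each component as $f_{(\omega)} = \sum_i k_i^{(\omega)} \id_{I_i^{(\omega)}}$ with $I_i^{(\omega)} = \prod_{j=1}^n I_{ij}^{(\omega)}$, the explicit form of $\gamma_\xi$ yields $\gamma_\xi(\pmb{f}) = \sum_{\omega \in \{a,b\}^n} \sum_i k_i^{(\omega)} \id_{\prod_j \kappa_{\omega_j}(I_{ij}^{(\omega)})}$, whose summands have pairwise disjoint supports across distinct $\omega$. Since each $\kappa_{\omega_j} : \II \to \kappa_{\omega_j}(\II)$ is an order-preserving bijection contracting $\mu$ by the ratio $c_{\omega_j} := \mu(\kappa_{\omega_j}(\II))/\mu(\II) \le 1$, Lemma~\ref{lemm:normed mod}\point~gives
\[
\|\gamma_\xi(\pmb{f})\|_p^p = \sum_{\omega} \bigg(\prod_{j=1}^n c_{\omega_j}\bigg)^p \|f_{(\omega)}\|_p^p,
\]
which is dominated by a constant multiple of $\|\pmb{f}\|_p^p = (\mu(\II)/\mu(\II_{\itLamb}))^n \sum_\omega \|f_{(\omega)}\|_p^p$. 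This yields boundedness, hence continuity; together with the $\itLamb$-linearity from Lemma~\ref{lemm:S in Np pre}\point, it extends uniquely to a continuous $\itLamb$-homomorphism $\w{\gamma}_\xi$ on the completion.

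Once continuity is in hand, the commutativity is immediate: if $\pmb{f}_i \to \pmb{f}$ in norm, then $\w{\gamma}_\xi(\pmb{f}_i) \to \w{\gamma}_\xi(\pmb{f})$, proving the first assertion. For the claim that $(\bfS_{\tau}(\II_{\itLamb}), \id, \gamma_\xi) \in \scrN^p$, Lemma~\ref{lemm:S in Np pre}\point~supplies $\gamma_\xi(\id, \ldots, \id) = \id$ and the $\itLamb$-linearity, while Lemma~\ref{lemm:normed mod}\point~applied to $\id = \id_{\II_{\itLamb}}$ gives $\|\id\|_p = \mu(\II_{\itLamb})$, so the required bound $\|\id\|_p \le \mu(\II_{\itLamb})$ holds with equality.

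The main obstacle I anticipate is the bookkeeping of measure-theoretic scaling factors: the constant $(\mu(\II)/\mu(\II_{\itLamb}))^n$ in the $\oplus_p$-norm must interact with the compounded contractions $\prod_j c_{\omega_j}^p$ over the $2^n$ quadrants so that $\gamma_\xi|_{E_u^{\oplus 2^n}}$ is (at worst) uniformly bounded and, ideally, an isometry onto $E_{u+1}$, consistent with the $\itLamb$-isomorphism of Lemma~\ref{lemm:inverse gamma}\point. This is direct when $\xi$ splits $\II$ into halves of equal $\mu$-measure; for a general measure one only needs $\max_\omega \prod_j c_{\omega_j}^p$ to be controlled by $(\mu(\II)/\mu(\II_{\itLamb}))^n$ up to a structural constant, after which continuity --- and hence the lemma --- follows.
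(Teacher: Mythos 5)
Your overall strategy --- reduce the commutation with limits to norm-continuity of $\gamma_{\xi}$, extend to the completion, and read off the object axioms from Lemmas \ref{lemm:S in Np pre}\point~and \ref{lemm:normed mod}\point~--- is reasonable, and the ``furthermore'' part ($\gamma_{\xi}(\id,\ldots,\id)=\id$, $\itLamb$-linearity, and $\Vert\id\Vert_p=\mu(\II_{\itLamb})$) is correct and consistent with the paper. But the key estimate has a genuine gap. Your formula
\[
\Vert\gamma_{\xi}(\pmb{f})\Vert_p^p=\sum_{\omega}\Big(\prod_{j=1}^n c_{\omega_j}\Big)^p\Vert f_{(\omega)}\Vert_p^p
\]
presupposes that each $\kappa_{\omega_j}$ rescales the measure uniformly, i.e.\ that $\mu(\kappa_{\omega_j}(I))=c_{\omega_j}\,\mu(I)$ for \emph{every} subinterval $I$ with a single constant $c_{\omega_j}=\mu(\kappa_{\omega_j}(\II))/\mu(\II)$. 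The paper assumes only that $\mu$ is an arbitrary countably additive measure on $\Sigma_{\kk}$ vanishing on singletons and that the $\kappa$'s are order-preserving bijections; no such homogeneity is available. Without it there is no uniform bound $\mu(\kappa_{\omega_j}(I))\le C\,\mu(I)$ over all $I$ (take $\mu$ heavily concentrated near $a$, so that a set of tiny $\mu$-measure is carried onto a set of large $\mu$-measure), hence $\gamma_{\xi}$ need not be bounded on $\bfS_{\tau}(\II_{\itLamb})^{\oplus_p 2^n}$, and the continuity argument --- and with it the extension to $\w{\gamma}_{\xi}$ and the commutation with limits --- collapses. Your closing remark gestures at this difficulty but still assumes the constants $c_{\omega_j}$ exist.

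For comparison, the paper's proof takes a different and much shorter route: it invokes Lemma \ref{lemm:inverse gamma}\point, which exhibits $\gamma_{\xi}$ as a level-wise $\itLamb$-isomorphism $E_u^{\oplus_p 2^n}\to E_{u+1}$ between the finite-dimensional pieces of the filtration, concludes that $\w{\gamma}_{\xi}$ is a $\itLamb$-isomorphism of the completions, and asserts the commutativity with inverse limits from that; no explicit norm estimate is attempted (the continuity of $\w{\gamma}_{\xi}$ and of its inverse is left implicit). To salvage your quantitative approach you would either have to restrict to measures compatible with the $\kappa$'s (as in the Lebesgue case $\xi=\tfrac{1}{2}$, $\kappa_0(x)=x/2$, $\kappa_1(x)=(x+1)/2$, where your computation is exact and $\gamma_{\xi}$ is an isometry up to the normalizing factor), or else replace the boundedness claim by the structural isomorphism argument of Lemma \ref{lemm:inverse gamma}\point.
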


\begin{proof}
Since $\gamma_{\xi}$ is a $\itLamb$-isomorphism, it is clear that $\w{\gamma}_{\xi}$ is also a $\itLamb$-isomorphism.
Then, the commutativity of the inverse limit and the map $\w{\gamma}_{\xi}$ holds.
Thus, for any sequence $\{\pmb{f}_i\}_{i\in\NN^+}$ in $\bfS_{\tau}(\II_{\itLamb}){}^{\oplus_p 2^n}$, if its inverse limit exists,
then this inverse limit is also an element in $\w{\bfS_{\tau}(\II_{\itLamb})}{}^{\oplus_p 2^n}$, and so
\[ \gamma_{\xi} (\underleftarrow{\lim} \pmb{f}_i)
 = \w{\gamma}_{\xi} (\underleftarrow{\lim} \pmb{f}_i)
 \mathop{=}^{\spadesuit} \underleftarrow{\lim} \w{\gamma}_{\xi} (\pmb{f}_i)
 = \underleftarrow{\lim} \gamma_{\xi}(\pmb{f}_i), \]
where $\spadesuit$ holds since $\w{\gamma}_{\xi}$ is a $\itLamb$-isomorphism (see Lemma \ref{lemm:inverse gamma}\point).
Therefore, by Lemma \ref{lemm:S in Np pre}\point, $(\bfS_{\tau}(\II_{\itLamb}), \id, \gamma_{\xi})$ is an object in $\scrN^p$.
\end{proof}

\subsection{\texorpdfstring{$\w{\bfS_{\tau}(\II_{\itLamb})}$ is a direct limit}{}} \label{sssect:S=direclim}

Let $\normcat\itLamb$ be the category of normed $\itLamb$-modules and $\itLamb$-homomorphisms between them.
Then it is easy to check that all $E_u$ are objects in $\normcat\itLamb$.
Furthermore, for any $u\le v$, we have a $\itLamb$-homomorphism $\varphi_{uv}: E_u\to E_v$ which is induced by $E_u\subseteq E_v$.
Thus we obtain a direct system $((E_i)_{i\in\NN}, (\varphi_{uv})_{u\le v})$ in $\normcat\itLamb$ over $\NN$.
Let $\Ban(\itLamb)$ be the category of Banach $\itLamb$-modules and continuous $\itLamb$-homomorphisms between them.
Then $\Ban(\itLamb)$ is a full subcategory of $\normcat(\itLamb)$,
and so, naturally, we obtain a direct system $((E_i)_{i\in\NN}, (\varphi_{uv})_{u\le v})$ in $\Ban(\itLamb)$
if $\itLamb$ is a complete $\kk$-algebra.

The following lemma establishes the relation between $E_n$ and $\bfS(\II_{\itLamb})$.

\begin{lemma} \label{lemm:limE}
Let $\itLamb$ be a complete $\kk$-algebra.
Consider the category $\Ban(\itLamb)$ and take $(\alpha_i: E_i \to \w{\bfS_{\tau}(\II_{\itLamb})})_{i\in\NN}$,
where every $\alpha_i$ is the embedding given by $E_i\subseteq \w{\bfS_{\tau}(\II_{\itLamb})}$.
Then $\underrightarrow{\lim} E_i \cong \w{\bfS_{\tau}(\II_{\itLamb})}$.
\end{lemma}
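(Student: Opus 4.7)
The plan is to verify that $\w{\bfS_{\tau}(\II_{\itLamb})}$ together with the family of inclusions $(\alpha_i)_{i\in\NN}$ satisfies the two axioms of a direct limit in $\Ban(\itLamb)$. The compatibility axiom $\alpha_j\varphi_{ij}=\alpha_i$ for $i\le j$ is immediate because every arrow in sight is a subspace inclusion. Hence the entire content of the lemma lies in verifying the universal property, and this reduces to a single analytic statement that I take as the key step:
\[
  \bigcup\nolimits_{i\in\NN} E_i \ \text{is dense in}\ \w{\bfS_{\tau}(\II_{\itLamb})}.
\]

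I would first prove this density claim, which is the main step. Since $\bfS_{\tau}(\II_{\itLamb})$ is dense in its completion by construction, it suffices to approximate, in the norm $\Vert\cdot\Vert_p$ of Lemma \ref{lemm:normed mod}\point, an arbitrary elementary simple function $f=\sum_{j=1}^{t} k_j\id_{I_j}$ by step functions from some $E_u$. Using subadditivity of $\Vert\cdot\Vert_p$ and the product structure $I_j\cong\prod_{\ell=1}^{n} I_{j\ell}$ inherited from $\II_{\itLamb}$, the problem reduces to the one-dimensional statement that for every interval $I\subseteq\II$ and every $\varepsilon>0$ there exist $u\in\NN$ and a finite union $U$ of level-$u$ dyadic intervals $\II^{(uv)}$ with $\mu(I\triangle U)<\varepsilon$. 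This is established by iterating the bijections $\kappa_a,\kappa_b$ to obtain partitions of $\II$ of arbitrarily small mesh measured by $\mu$, then taking $U$ to be the union of those $\II^{(uv)}$ contained in $I$; the symmetric difference $I\triangle U$ is then concentrated in at most two boundary cells whose $\mu$-measure vanishes as $u\to\infty$. Multiplicativity of $\mu$ on Cartesian products promotes this one-variable estimate to the required $p$-norm bound $\Vert f-g\Vert_p<\varepsilon$ for a suitable $g\in E_u$.

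Once density is in hand, the universal property follows from the universal property of completions. Given a Banach $\itLamb$-module $X$ and a compatible family $(f_i\colon E_i\to X)_{i\in\NN}$ of continuous $\itLamb$-homomorphisms (i.e.\ $f_j\varphi_{ij}=f_i$ whenever $i\le j$), the common extension $f\colon\bigcup_{i\in\NN}E_i\to X$ with $f|_{E_i}=f_i$ is well-defined and $\itLamb$-linear by compatibility, and is continuous because the inclusions $\varphi_{ij}$ are isometric and the compatible morphisms have uniformly controlled operator norms in the categorical setup of $\Ban(\itLamb)$. Hence $f$ extends uniquely to a continuous $\itLamb$-homomorphism $\theta\colon\w{\bfS_{\tau}(\II_{\itLamb})}\to X$ with $\theta\alpha_i=f_i$ for all $i$; uniqueness of $\theta$ is forced by density combined with continuity. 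This yields the desired isomorphism $\underrightarrow{\lim}E_i\cong\w{\bfS_{\tau}(\II_{\itLamb})}$.

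The hardest point will be the density argument, and specifically the quantitative claim that the dyadic partitions of $\II$ generated by iterating $\kappa_a,\kappa_b$ become $\mu$-fine as the level $u$ grows. In Leinster's prototype $\kk=\RR$, $\II=[0,1]$, $\xi=\tfrac{1}{2}$, $\kappa_0(x)=x/2$, $\kappa_1(x)=(x+1)/2$ one has $\mu(\II^{(uv)})=2^{-u}$ by direct computation, so the statement is transparent; in the general axiomatic framework here, one must invoke the implicit compatibility between the measure $\mu$ and the splittings induced by $\xi$, $\kappa_a$, $\kappa_b$. The remaining parts of the argument are routine completion-theoretic bookkeeping.
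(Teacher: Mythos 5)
Your proposal is correct and follows essentially the same route as the paper: both reduce the lemma to the density of $\bigcup_{i\in\NN} E_i$ in $\w{\bfS_{\tau}(\II_{\itLamb})}$ and then verify the universal property by extending a compatible family $(f_i)$ along this dense union, with uniqueness forced by continuity. The only difference is one of detail --- the paper merely asserts the existence of an approximating sequence from $\bigcup_i E_i$ for each element of the completion, whereas you sketch the dyadic-approximation argument behind that assertion.
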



\begin{proof}
Let $X$ be an arbitrary object in $\normcat\itLamb$ such that there is $(f_i: E_i\to X)_{i\in\NN}$ satisfying
$f_i\varphi_{ij}=f_j$ for all $i\le j$.
Then we can find the $\itLamb$-homomorphism $\theta: \w{\bfS_{\tau}(\II_{\itLamb})} \to X$ in the following way.

For any $x\in \w{\bfS_{\tau}(\II_{\itLamb})}$, there exists a sequence $\{x_t\}_{t\in\NN}$ in $\bigcup_{i}E_i$
such that $\{\Vert x_t - x\Vert_p\}_{t}$ is a monotonically decreasing sequence of positive real numbers.
Then we have $\underleftarrow{\lim} \{\Vert x_t - x\Vert_p\}_{t} = 0$ by Example \ref{exp:lim}\point~
which induces $\underleftarrow{\lim} x_t = x.$
Since $\alpha_i$, $\alpha_j$ and $\varphi_{ij}$ ($\forall i\le j$) are $\itLamb$-homomorphisms induced by $\subseteq$
(thus they are $\kk$-linear maps induced by $\subseteq$)
and every $x_t$ has a preimage in some $E_{u(t)}$,
then $\itLamb$-homomorphisms $(f_i)_{i\in\NN}$ send $\{x_t\}_{t\in\NN}$ to a sequence $\{f_{u(t)}(x_t)\}_{t\in\NN}$ in $X$.
By the completeness of $X$, $\underleftarrow{\lim} f_{u(t)}(x_t) \in X$ holds. Define
\[\theta(x) = \underleftarrow{\lim} f_{u(t)}(x_t) = \underleftarrow{\lim} f|_{E_{u(t)}}(x_t)
   = \underleftarrow{\lim} f(x_t), \]
where $f$ is the map $\underleftarrow{\lim}E_u \to X$ induced by the direct limit of $((E_i)_{i\in\NN}, (\varphi_{uv})_{u\le v})$.
Then one can check that $\theta$ is well-defined and is a $\itLamb$-homomorphism making the following diagram commute.
\[\xymatrix{
   \w{\bfS_{\tau}(\II_{\itLamb})} \ar@{-->}[rr]^{\theta} & & X \\
 & E_i \ar[lu]^{\alpha_i} \ar[ru]_{f_i} \ar[d]^(0.35){\varphi_{ij}}_(0.35){(i\preceq j)}  & \\
 & E_j \ar@/^1pc/[luu]^{\alpha_j} \ar@/_1pc/[ruu]_{f_j}  &
}\]

Next, we show that the existence of $\theta$ is unique.
Assume that $\theta'$ is also a $\itLamb$-homomorphism with $\theta'\alpha_i=f_i$ for all $i$.
Note that all morphisms in $\Ban(\itLamb)$ are continuous,
which ensures the commutativity $\underleftarrow{\lim}\vartheta(x_i) = \vartheta(\underleftarrow{\lim}x_i)$
between the inverse limit and any morphism $\vartheta$ starting from $\w{\bfS_{\tau}(\II_{\itLamb})}$.
Then for any $x\in\w{\bfS_{\tau}(\II_{\itLamb})}$,
taking the sequence $\{x_t\}_{t\in\NN}$ in $\bigcup_iE_i$ satisfying $\underleftarrow{\lim} x_t = x$,
we have \[   \theta'(x)=\theta'\big(\underleftarrow{\lim}\alpha_i( x_t)\big)
           = \underleftarrow{\lim}\theta'(\alpha_i( x_t))
           = \underleftarrow{\lim} f_i(x_t)
           = \underleftarrow{\lim}\theta (\alpha_i(x_t))
           = \theta \big(\underleftarrow{\lim}\alpha_i(x_t)\big) = \theta(x),\]
\checks{i.e.,} $\theta=\theta'$. Therefore, by the definition of direct limits, we have
$\underrightarrow{\lim} E_i \cong \w{\bfS_{\tau}(\II_{\itLamb})}$.
\end{proof}

\section{\texorpdfstring{The $\scrA^p$-initial object in $\scrN^p$}{}} \label{sect:Ap-init obj}

Let $\mathcal{C}$ be a category.
Recall that an object $O$ in $\mathcal{C}$ is an {\defines initial object}
if for any object $Y$, we have $\Hom_{\mathcal{C}}(O, Y)$ contains only one morphism,
\checks{i.e.,} there is a unique morphism $O\to Y$ in $\mathcal{C}$.
Obviously, if $\mathcal{C}$ has initial objects, then the initial object is unique up to isomorphism,
see \cite[Chapter 5, Lemma 5.3]{R1979}.
Let $\mathcal{D}$ be a full subcategory of $\mathcal{C}$.
An object $C \in \mathcal{C}$ is called a {\defines $\mathcal{D}$-initial object}
if for any $D\in\mathcal{D}$, there is a unique morphism $h\in \Hom_{\mathcal{C}}(C,D)$ such that
the following diagram
\[
\xymatrix@C=1.5cm{
 C \ar[r]^{h} \ar[d]_{\subseteq} & D \\
 D' \ar[ru]_{h'} &
}
\]
commutes, where $D'$ is an initial object in $\mathcal{D}$ and $h'$ is a morphism in $\mathcal{D}$.
See \cite[Page 216]{R1979}.
It is trivial that an initial object in $\mathcal{C}$ is a $\mathcal{C}$-initial object.

Let $\itLamb$ be a complete $\kk$-algebra. In this section,
we show that $(\w{\bfS_{\tau}(\II_{\itLamb})}, \id, \w{\gamma}_{\xi})$ is an $\scrA^p$-initial object in $\scrN^p$.
The proof is divided into two parts: (1) there is at least one morphism from
$(\w{\bfS_{\tau}(\II_{\itLamb})}, \id, \w{\gamma}_{\xi})$ to any object in $\scrA^p$;
(2) the above morphism is unique.

\subsection{
\texorpdfstring{The existence of morphism from $(\w{\bfS_{\tau}(\II_{\itLamb})}, \id, \w{\gamma}_{\xi})$}{}
}

In this subsection we show that $\Hom_{\scrA^p}((\w{\bfS_{\tau}(\II_{\itLamb})}, \id, \w{\gamma}_{\xi}), (V, v, \delta))$
is not empty for every object $(V, v, \delta)$ in $\scrA^p$.

\begin{lemma} \label{lemm:existence}
For any object $(V, v, \delta) \in \scrA^p$, we have
\[\Hom_{\scrA^p}((\w{\bfS_{\tau}(\II_{\itLamb})}, \id, \w{\gamma}_{\xi}), (V, v, \delta))\ne \varnothing.\]
\end{lemma}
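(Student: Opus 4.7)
The plan is to construct the required morphism as the direct-limit extension of a compatible family of $\itLamb$-homomorphisms $h_u\colon E_u \to V$, exploiting the identification $\w{\bfS_{\tau}(\II_{\itLamb})} \cong \underrightarrow{\lim}\, E_u$ established in Lemma \ref{lemm:limE}\point~and the $\itLamb$-isomorphism $\gamma_{\xi}\colon E_u^{\oplus_p 2^n} \to E_{u+1}$ from Lemma \ref{lemm:inverse gamma}\point, which supplies the recursion.

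I would first define $h_0\colon E_0 \to V$ by $h_0(k\cdot\id) := k v$. Since $E_0 \cong \kk\cdot\id$ carries the $\itLamb$-action $a\cdot\id = \tau(a)\id$ and $V$ is a $\tau$-normed $\itLamb$-module, $h_0$ is a $\itLamb$-homomorphism. Inductively, set
\[ h_{u+1} := \delta \circ h_u^{\oplus 2^n} \circ \gamma_{\xi}^{-1}, \]
which is a $\itLamb$-homomorphism as a composition of such.

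The first key step is to verify $h_{u+1}|_{E_u} = h_u$, proved by induction on $u$. For the base case, any $k\id \in E_0$ decomposes in $E_1$ as $\gamma_{\xi}(k\id,\ldots,k\id)$, so $h_1(k\id) = \delta(kv,\ldots,kv) = k\,\delta(v,\ldots,v) = kv = h_0(k\id)$, using $\kk$-linearity of $\delta$ and the axiom $\delta(v,\ldots,v)=v$. For the inductive step, given $f \in E_u$, write $\gamma_{\xi}^{-1}(f) = (\tilde g_1, \ldots, \tilde g_{2^n}) \in E_u^{\oplus_p 2^n}$ with $f$ viewed in $E_{u+1}$; the crucial observation is that each $\tilde g_i$ already lies in $E_{u-1} \subseteq E_u$, because restricting a level-$u$ step function to a depth-one subcube and stretching back by $\kappa_{\delta_i}^{-1}$ produces a level-$(u-1)$ step function. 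Hence the inductive hypothesis $h_u|_{E_{u-1}} = h_{u-1}$ yields $h_{u+1}(f) = \delta(h_u(\tilde g_1),\ldots, h_u(\tilde g_{2^n})) = \delta(h_{u-1}(\tilde g_1),\ldots) = h_u(f)$ via the analogous decomposition at level $u$.

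Given compatibility, the cocone $(h_u)_u$ factors uniquely through a $\itLamb$-homomorphism $h\colon \w{\bfS_{\tau}(\II_{\itLamb})} \to V$ by Lemma \ref{lemm:limE}\point. It remains to check that $h$ lies in $\scrA^p$: (i) $h(\id)=v$ is immediate from $h_0$; (ii) $h \circ \w{\gamma}_{\xi} = \delta \circ h^{\oplus 2^n}$ holds on each $E_u^{\oplus_p 2^n}$ by construction and extends to the completion via the commutativity of $\w{\gamma}_{\xi}$ and $\delta$ with inverse limits (Lemma \ref{lemm:S in Np}\point~together with the continuity axiom on $\delta$ imposed in the definition of $\scrN^p$). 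The principal obstacle I anticipate is showing each $h_u$ is continuous uniformly in $u$, so that Lemma \ref{lemm:limE}\point~delivers a genuinely continuous morphism on the Banach completion rather than only an algebraic one; this forces one to exploit the weighting factor $(\mu(\II)/\mu(\II_{\itLamb}))^n$ built into the $\oplus_p$ norm, which is calibrated precisely so that $\gamma_{\xi}^{-1}$ is norm-preserving, thereby letting the inductive estimate $\Vert h_u(f)\Vert \le \Vert f\Vert_p$ (with base case from $\Vert v\Vert \le \mu(\II_{\itLamb})$) propagate through $\delta$ undistorted.
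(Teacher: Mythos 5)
Your proposal is correct and follows essentially the same route as the paper: the same base map $E_0\to V$, $k\id\mapsto kv$, the same recursion $h_{u+1}=\delta\circ h_u^{\oplus 2^n}\circ\gamma_{\xi}^{-1}$, the same compatibility check via the observation that $\gamma_{\xi}^{-1}$ carries $E_u$ into $E_{u-1}^{\oplus_p 2^n}$, and the same passage to the direct limit $\underrightarrow{\lim}E_i\cong\w{\bfS_{\tau}(\II_{\itLamb})}$ followed by verification of $h(\id)=v$ and the commuting square using (\ref{formula:comm}). Your closing remark on the uniform norm estimate $\Vert h_u(f)\Vert\le\Vert f\Vert_p$ is a reasonable point that the paper leaves implicit in Lemma \ref{lemm:limE}\point, but it does not change the argument.
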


\begin{proof}
For each $u\in\NN$, consider the map $\theta_u: E_u\to V$ as follows:
\begin{itemize}
  \item[(i)]
    $\theta_0: E_0 \to V$ is a map induced by the $\kk$-linear map $\kk\to V$ sending $1$ to $v$ (note that $E_0\cong\kk$).
    Then one can check that $\theta$ is a $\itLamb$-homomorphism.
  \item[(ii)]
    $\theta_{u+1}$ is induced by $\theta_{u}$ through the composition
    \[\theta_{u+1}:= \bigg( E_{u+1}
      \To{\gamma_{\xi}^{-1}} E_u^{\oplus_p 2^n}
      \To{\theta_u^{\oplus 2^n}} V^{\oplus_p 2^n}
      \To{\delta} V \bigg),\]
    where the inverse $\gamma_{\xi}^{-1}$ of the map $\gamma_{\xi}$ is given in Lemma \ref{lemm:inverse gamma}.
\end{itemize}
Notice that $\gamma_{\xi}^{-1}(f) \in E_{u-1}$ for any $f\in E_u\subseteq E_{u+1}$, then for the case $u=0$,
we have that $f = k\id_{E_0}$ is a constant defined on $E_0$, and
\[ \theta_1(f) = \delta (\theta_0^{\oplus 2^n} (\gamma_{\xi}^{-1}(f)))
=\delta(\theta_0(k\id_{E_0}), \theta_0(k\id_{E_0}), \ldots,  \theta_0(k\id_{E_0})) = kv, \]
\checks{i.e.,} $\theta_1$ is an extension of $\theta_0$. It yields $\theta_1(\id_{E_1})=v$ by $\theta_0(\id_{E_0})=v$ (see (i)).
Furthermore, we can check that $\theta_{u+1}$ is an extension of $\theta_u$ and
\begin{align}\label{formula:theta(1)=v}
 \theta_{u}(\id_{E_u}) = v\ (\forall u\in \NN)
\end{align}
by induction, \checks{i.e.,} the following diagram
\[\xymatrix{
   \underrightarrow{\lim}E_i  & & V \\
 & E_{u} \ar@{^(->}[lu]^(0.35){\alpha_u} \ar[ru]_{\theta_u} \ar@{^(->}[d]|{\alpha_{u\ u+1}}  & \\
 & E_{u+1} \ar@{^(->}@/^1pc/[luu]^{\alpha_{u+1}} \ar@/_1pc/[ruu]_{\theta_{u+1}}  &
}\]
commutes, where $\alpha_i: E_i\to \underrightarrow{\lim}E_i$ and $\alpha_{ij}: E_i\to E_j$ ($i\le j$)
are the embeddings induced by $E_i\subseteq \underrightarrow{\lim}E_i$ and $E_i\subseteq E_j$, respectively.
Then, for any $i\le j$, there is a unique $\itLamb$-homomorphism $\theta$ such that the  following diagram
\[\xymatrix{
   \underrightarrow{\lim}E_i \ar@{-->}[rr]^{\theta} & & V \\
 & E_{i} \ar@{^(->}[lu]^(0.35){\alpha_i} \ar[ru]_{\theta_i} \ar@{^(->}[d]|{\alpha_{ij}}  & \\
 & E_{j} \ar@{^(->}@/^1pc/[luu]^{\alpha_j} \ar@/_1pc/[ruu]_{\theta_j}  &
}\]
commutes. By Lemma \ref{lemm:limE}\point, we have that
$\theta: \underrightarrow{\lim}E_i\cong \w{\bfS_{\tau}(\II_{\itLamb})} \to V$ is a $\itLamb$-homomorphism in
$\Hom_{\itLamb}(\w{\bfS_{\tau}(\II_{\itLamb})}, V)$.

Next, we prove that $\theta$ is a morphism in $\scrN^p$.
First of all, 
we have
\[\theta(\id) 
= \underleftarrow{\lim}\theta|_{E_i}(\id_{E_i})
= \underleftarrow{\lim}\theta(\alpha_i(\id_{E_i})) = \underleftarrow{\lim} \theta_i(\id_{E_i})
\mathop{=\!=\!=}\limits^{(\ref{formula:theta(1)=v})} \underleftarrow{\lim} v = v. \]
In the following, we show that the following diagram
\begin{align}\label{mainresult:diagram}
\xymatrix{
  \w{\bfS_{\tau}(\II_{\itLamb})}{}^{\oplus_p 2^n}
  \ar[r]^{\gamma_{\xi}}
  \ar[d]_{\theta^{\oplus 2^n}}
& \w{\bfS_{\tau}(\II_{\itLamb})}
  \ar[d]^{\theta} \\
  V^{\oplus_p 2^n}
  \ar[r]_{\delta} & V
}
\end{align}
commutes. Notice that each $\pmb{f}=(f_1,\ldots, f_{2^n})\in \w{\bfS_{\tau}(\II_{\itLamb})}{}^{\oplus_p 2^n}$
can be seen as the inverse limit $\underleftarrow{\lim}\pmb{f}_i$ of
some sequence $\{\pmb{f}_i=(f_{1i},\ldots, f_{2^ni})\}_{i\in\NN}$ in $\bigcup_{u\in\NN}E_u^{\oplus_p 2^n}$,
where $f_{ji} \in E_{u_i}$ ($1\le j\le 2^n$), $u_i \in \NN$, such that for any $i\le j$, we have $u_i \le u_j$.
Thus, naturally, we need to consider the following diagram:
\[\xymatrix{
  E_{u_i}^{\oplus_p 2^n}
  \ar[r]^{\gamma_{\xi}\big|_{E_{u_i}^{\oplus_p 2^n}}}_{\cong}
  \ar@{^(->}[d]_{e_{u_i}^{\oplus 2^n}}
  \ar@/_4pc/[dd]_{\theta_{u_i}^{\oplus 2^n}}
& E_{u_i+1}
  \ar@{^(->}[d]^{e_{u_i+1}}
  \ar@/^4pc/[dd]^{\theta_{u_i}}
\\
  \w{\bfS_{\tau}(\II_{\itLamb})}{}^{\oplus_p 2^n}
  \ar[r]^{\gamma_{\xi}}
  \ar[d]_{\theta^{\oplus 2^n}}
& \w{\bfS_{\tau}(\II_{\itLamb})}
  \ar[d]^{\theta}
\\
  V^{\oplus_p 2^n}
  \ar[r]_{\delta} & V,
}\]
where $\big(e_{u_i}: E_{u_i} \to \w{\bfS_{\tau}(\II_{\itLamb})}\big)$
is the embedding induced by $E_{u_i}\subseteq \w{\bfS_{\tau}(\II_{\itLamb})}$.
Since
\begin{align*}
    \theta(\gamma_{\xi}(\pmb{f}))
& 
  = \underleftarrow{\lim}\ \theta(\gamma_{\xi}( e^{\oplus 2^n}_{u_i}(\pmb{f}_i)))
    && \\
& = \underleftarrow{\lim}\ \theta(e_{u_i+1}(\gamma_{\xi}\big|_{E^{\oplus_p 2^n}}(\pmb{f}_i)))
    &&
        (
        \gamma_{\xi} e^{\oplus 2^n}_{u_i} = e_{u_i+1} \gamma_{\xi}\big|_{E^{\oplus_p 2^n}}) \\
& = \underleftarrow{\lim}\ \theta_{u_i}(\gamma_{\xi}\big|_{E^{\oplus_p 2^n}}(\pmb{f}_i))
    &&
        (
        \theta e =\theta_{u_i}) \\
& = \underleftarrow{\lim}\ \delta(\theta_{u_i}^{\oplus 2^n}(\pmb{f}_i))
    &&
        (
        \theta_{u_i} \gamma_{\xi}\big|_{E^{\oplus_p 2^n}} = \delta\theta_{u_i}^{\oplus 2^n}) \\
& = \underleftarrow{\lim}\ \delta(\theta^{\oplus 2^n}(e^{\oplus 2^n}_{u_i}(\pmb{f}_i)))
    &&
        (
        \theta_u^{\oplus 2^n} = \theta^{\oplus 2^n}e^{\oplus 2^n}_{u_i}) \\
& = \delta(\theta^{\oplus 2^n}(\underleftarrow{\lim}\ e^{\oplus 2^n}_{u_i+1}(\pmb{f}_i)))
  = \delta(\theta^{\oplus 2^n}(\pmb{f})),
    &&
       (\text{by } (\ref{formula:comm}))
\end{align*}
the assertion follows.
\end{proof}

\subsection{\texorpdfstring{The uniqueness of morphism from $(\w{\bfS_{\tau}(\II_{\itLamb})}, \id, \w{\gamma}_{\xi})$}{}}

Now, we show that, for any object $(V, v, \delta)$ in $\scrA^p$, if the morphism in the category $\scrA^p$
from $(\w{\bfS_{\tau}(\II_{\itLamb})}, \id, \w{\gamma}_{\xi})$ exists, then it is unique.

\begin{lemma} \label{lemm:uniqueness}
Let $(V, v, \delta) \in \scrA^p$ be an object in $\scrA^p$.
If \[\Hom_{\scrA^p}((\w{\bfS_{\tau}(\II_{\itLamb})}, \id, \w{\gamma}_{\xi}),(V, v, \delta))\ne \varnothing,\]
then $\Hom_{\scrA^p}((\w{\bfS_{\tau}(\II_{\itLamb})}, \id, \w{\gamma}_{\xi}),(V, v, \delta))$ contains a unique morphism.
\end{lemma}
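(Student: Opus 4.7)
The plan is to exploit two structural facts established earlier in the paper: the direct-limit decomposition $\w{\bfS_{\tau}(\II_{\itLamb})} \cong \underrightarrow{\lim} E_u$ from Lemma \ref{lemm:limE}, and the $\itLamb$-isomorphism $\gamma_{\xi}\big|_{E_u^{\oplus_p 2^n}}: E_u^{\oplus_p 2^n} \xrightarrow{\cong} E_{u+1}$ from Lemma \ref{lemm:inverse gamma}. Together, these reduce uniqueness to an induction over $u \in \NN$. Concretely, suppose $\theta, \theta' \in \Hom_{\scrA^p}((\w{\bfS_{\tau}(\II_{\itLamb})}, \id, \w{\gamma}_{\xi}),(V, v, \delta))$, and write $\theta_u := \theta|_{E_u}$ and $\theta'_u := \theta'|_{E_u}$. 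I would prove $\theta_u = \theta'_u$ for every $u$ and then glue.

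For the base case $u=0$, note that $E_0 \cong \kk$ with distinguished generator $\id = \id_{\II_{\itLamb}}$, where $\itLamb$ acts via $\tau$. The morphism axiom in $\scrN^p$ forces $\theta(\id) = v = \theta'(\id)$, and since both $\theta_0$ and $\theta'_0$ are $\itLamb$-homomorphisms from the cyclic module $E_0 = \kk \cdot \id$, we conclude $\theta_0 = \theta'_0$. For the inductive step, assume $\theta_u = \theta'_u$. The commutativity square defining morphisms in $\scrN^p$, restricted along the embedding $E_u^{\oplus_p 2^n} \hookrightarrow \w{\bfS_{\tau}(\II_{\itLamb})}^{\oplus_p 2^n}$, yields
\[ \theta'_{u+1} \circ \gamma_{\xi}\big|_{E_u^{\oplus_p 2^n}} = \delta \circ (\theta'_u)^{\oplus 2^n} = \delta \circ (\theta_u)^{\oplus 2^n} = \theta_{u+1} \circ \gamma_{\xi}\big|_{E_u^{\oplus_p 2^n}}. \]
Since $\gamma_{\xi}\big|_{E_u^{\oplus_p 2^n}}$ is a $\itLamb$-isomorphism by Lemma \ref{lemm:inverse gamma}, I can right-cancel it to obtain $\theta_{u+1} = \theta'_{u+1}$, closing the induction.

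Finally, to pass from $\bigcup_u E_u$ to its completion $\w{\bfS_{\tau}(\II_{\itLamb})}$, I would invoke the universal property of the direct limit established in Lemma \ref{lemm:limE}. Both $\theta$ and $\theta'$ are continuous $\itLamb$-homomorphisms out of $\underrightarrow{\lim} E_u$ whose restrictions to each $E_u$ now coincide, so by the uniqueness clause of the universal property of $\underrightarrow{\lim} E_u$ in $\Ban(\itLamb)$, we conclude $\theta = \theta'$. Combined with Lemma \ref{lemm:existence}, this yields $|\Hom_{\scrA^p}((\w{\bfS_{\tau}(\II_{\itLamb})}, \id, \w{\gamma}_{\xi}),(V, v, \delta))| = 1$.

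The main obstacle is largely anticipated: the key non-trivial input is Lemma \ref{lemm:inverse gamma}, which converts the compatibility with $\w{\gamma}_{\xi}$ and $\delta$ into an \emph{inductive formula} that rigidly propagates $\theta_0$ up the tower $E_0 \subseteq E_1 \subseteq \cdots$. The subtle point to verify cleanly is that the restriction of the square (\ref{mainresult:diagram}) to the finite-dimensional submodule $E_u^{\oplus_p 2^n}$ is legitimate, i.e.\ that $\gamma_{\xi}$ on $\w{\bfS_{\tau}(\II_{\itLamb})}^{\oplus_p 2^n}$ genuinely restricts to $\gamma_{\xi}\big|_{E_u^{\oplus_p 2^n}}$ with image in $E_{u+1}$; this is immediate from the definition of $\gamma_{\xi}$ and the nesting $E_u \subseteq E_{u+1}$, but it is the step on which the whole rigidity argument hinges.
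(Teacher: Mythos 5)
Your proposal is correct and follows essentially the same route as the paper's proof: the paper also uses the base case $\theta|_{E_0}(\id)=v=\theta'|_{E_0}(\id)$, propagates agreement up the tower $E_0\subseteq E_1\subseteq\cdots$ via the commuting square and the invertibility of $\gamma_{\xi}\big|_{E_u^{\oplus_p 2^n}}$ from Lemma \ref{lemm:inverse gamma}, and then concludes on $\w{\bfS_{\tau}(\II_{\itLamb})}\cong\underrightarrow{\lim}E_i$ by the uniqueness clause of the direct-limit universal property. The only cosmetic difference is that the paper phrases the induction in terms of the difference $\theta-\theta'$ being zero, whereas you cancel the isomorphism directly.
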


\begin{proof}
Let $\theta$ and $\theta'$ be two $\itLamb$-homomorphisms
from $(\bfS_{\tau}(\II_{\itLamb}), \id, \gamma_{\xi})$ to $(V,v,\delta)$ in $\scrA^p$.
Then $\theta(\id) = v = \theta'(\id)$.
Since $\theta$ and $\theta'$ are maps in $\scrA^p$, the square
\[\xymatrix@C=1.5cm{
  E_u^{\oplus_p 2^n}
  \ar[r]^{\gamma_{\xi}|_{E_u^{\oplus 2^n}}}_{\cong}
  \ar[d]_{(\theta|_{E_u}-\theta'|_{E_u})^{\oplus 2^n}}
& E_{u+1}
  \ar[d]^{\theta|_{E_{u+1}}-\theta'|_{E_{u+1}}} \\
  V^{\oplus_p 2^n}
  \ar[r]_{\delta}
& V
}\]
commutes. Then for any $f\in E_{u+1}$, we have
\[ (\theta|_{E_{u+1}}-\theta'|_{E_{u+1}})(f)
= (\delta\circ (\theta|_{E_u}-\theta'|_{E_u})^{\oplus 2^n}\circ(\gamma_{\xi}|_{E_u^{\oplus 2^n}})^{-1})(f), \]
\checks{i.e.,} $\theta|_{E_{u+1}}-\theta'|_{E_{u+1}}$ is determined by $\theta|_{E_u}-\theta'|_{E_u}$.
Consider the case for $u=0$, since $\theta|_{E_0}$ and $\theta'|_{E_0}: E_0\to V$ are defined by $\theta_0(\id_{E_0})=v$,
we have \[(\theta|_{E_0}-\theta'|_{E_0})(k\id_{E_0}) = k(\theta|_{E_0}(\id_{E_0})-\theta'|_{E_0}(\id_{E_0})) = k(v-v)=0.\]
Therefore $\theta|_{E_u}-\theta'|_{E_u}=0$ for all $u\in\NN$ by induction.

On the other hand, consider the embeddings $e_u: E_u\to \w{\bfS_{\tau}(\II_{\itLamb})}$ and $e_{uv}:E_u\to E_v$ ($u\le v$)
induced by $\subseteq$ and the direct system
\[\big((E_u^{\oplus_p 2^n})_{u\in\NN}, (e_u^{\oplus 2^n} :E_u^{\oplus_p 2^n}\to \w{\bfS_{\tau}(\II_{\itLamb})}{}^{\oplus_p 2^n})_{u\in\NN}\big),\]
we have the following commutative diagram
\[\xymatrix{
   \w{\bfS_{\tau}(\II_{\itLamb})}{}^{\oplus_p 2^n}  & & V \\
 & E_{i}^{\oplus_p 2^n}
   \ar@{^(->}[lu]_{e_i^{\oplus 2^n}}
   \ar[ru] \ar@{}[ru]|
       {\rotatebox{40}{\tiny$\begin{smallmatrix}
                             \theta|_{E_i}-\theta'|_{E_i}
                             \\ \
                             \\ (=0)
                             \\ \ \end{smallmatrix}$}}
   \ar@{^(->}[d]^{e_{ij}^{\oplus 2^n}}
 & \\
 & E_{ij}^{\oplus_p 2^n}
   \ar@{^(->}@/^1pc/[luu]^{e_j^{\oplus 2^n}}
   \ar@/_1pc/[ruu]_{\theta|_{E_j}-\theta'|_{E_j}=0. }&
}\]
Since
\[ \underrightarrow{\lim} E_i^{\oplus_p 2^n} \cong (\underrightarrow{\lim} E_i)^{\oplus_p 2^n}
\cong \w{\bfS_{\tau}(\II_{\itLamb})}{}^{\oplus_p 2^n}, \]
there is a unique $\itLamb$-homomorphism
$\phi: \w{\bfS_{\tau}(\II_{\itLamb})}{}^{\oplus_p 2^n}\to V$ such that the following diagram
\[\xymatrix{
   \w{\bfS_{\tau}(\II_{\itLamb})}{}^{\oplus_p 2^n} \ar@{-->}[rr]^{\phi} & & V \\
 & E_{i}^{\oplus_p 2^n}
   \ar@{^(->}[lu]_{e_i^{\oplus 2^n}}
   \ar[ru] \ar@{}[ru]|
       {\rotatebox{40}{\tiny$\begin{smallmatrix}
                             \theta|_{E_i}-\theta'|_{E_i}
                             \\ \
                             \\ (=0)
                             \\ \ \end{smallmatrix}$}}
   \ar@{^(->}[d]^{e_{ij}^{\oplus 2^n}}
 & \\
 & E_{ij}^{\oplus_p 2^n}
   \ar@{^(->}@/^1pc/[luu]^{e_j^{\oplus 2^n}}
   \ar@/_1pc/[ruu]_{\theta|_{E_j}-\theta'|_{E_j}=0 }&
}\]
commutes. Since $(\theta-\theta')e_u^{\oplus 2^n} = \theta|_{E_i}-\theta'|_{E_j}$,
we know that the case for $\phi=\theta-\theta'$ makes the above diagram commute.
On the other hand, the case for $\phi=0$ makes the above diagram commute.
Thus $\theta-\theta'=0$ and $\theta=\theta'$.
\end{proof}

\subsection{\texorpdfstring{The $\scrA^p$-initial object in $\scrN^p$}{}}

\begin{lemma} \label{lemm:initial}
Let $\mathcal{C}$ be a category and $\mathcal{D}$ a subcategory of $\mathcal{C}$, and let $D'$ be an initial object in $\mathcal{D}$.
If an object $C$ is a subobject of $D'$ in $\mathcal{C}$, then $C$ is a $\mathcal{D}$-initial object.
\end{lemma}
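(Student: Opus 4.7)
The plan is to unpack the definitions and use the initial-object property of $D'$ in $\mathcal{D}$ to transport the unique morphism from $D'$ across the inclusion $\iota: C \hookrightarrow D'$. Concretely, I would fix an arbitrary object $D \in \mathcal{D}$. Since $D'$ is an initial object in $\mathcal{D}$, there exists a unique morphism $h' \in \Hom_{\mathcal{D}}(D', D)$; because $\mathcal{D}$ is a subcategory of $\mathcal{C}$, this $h'$ also lies in $\Hom_{\mathcal{C}}(D', D)$. Setting $h := h' \circ \iota$ produces a morphism in $\Hom_{\mathcal{C}}(C, D)$, and by construction the triangle
\[
\xymatrix@C=1.5cm{
 C \ar[r]^{h} \ar[d]_{\iota} & D \\
 D' \ar[ru]_{h'} &
}
\]
commutes. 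This establishes the existence portion of the $\mathcal{D}$-initial property.

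For uniqueness, suppose $\tilde{h} \in \Hom_{\mathcal{C}}(C, D)$ also makes the triangle commute via some morphism $\tilde{h}' \in \Hom_{\mathcal{D}}(D', D)$, i.e.\ $\tilde{h} = \tilde{h}' \circ \iota$. The initiality of $D'$ in $\mathcal{D}$ forces $\tilde{h}' = h'$, hence $\tilde{h} = h' \circ \iota = h$. Putting existence and uniqueness together yields that $C$ is a $\mathcal{D}$-initial object in the sense of the definition recalled just before the lemma.

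The main step to articulate carefully is the reading of ``$C$ is a subobject of $D'$ in $\mathcal{C}$'' as providing a distinguished morphism $\iota: C \to D'$ in $\mathcal{C}$ (the one pictured as ``$\subseteq$'' in the defining diagram of a $\mathcal{D}$-initial object), since everything else is a direct appeal to definitions. There is no genuine obstacle: the lemma is essentially a translation exercise, and in the paper's applications one will invoke it with $\mathcal{C} = \scrN^p$, $\mathcal{D} = \scrA^p$, $D' = (\w{\bfS_{\tau}(\II_{\itLamb})}, \id, \w{\gamma}_{\xi})$, $C = (\bfS_{\tau}(\II_{\itLamb}), \id, \gamma_{\xi})$, and $\iota$ the embedding morphism $\emb$ introduced just before Notation~\ref{notation}, so that the existence and uniqueness results from Lemmas~\ref{lemm:existence} and~\ref{lemm:uniqueness} supply the required initiality of $D'$ in $\scrA^p$.
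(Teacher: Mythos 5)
Your proposal is correct and follows essentially the same argument as the paper's own proof: both take the unique morphism $h'\colon D'\to D$ supplied by initiality of $D'$ in $\mathcal{D}$, precompose with the subobject inclusion $C\to D'$ to get existence, and derive uniqueness by noting that any competing $h_0 = h_0'\circ\iota$ must have $h_0' = h'$ by initiality. No substantive differences.
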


\begin{proof}
For any object $D$ in $\mathcal{D}$, there is a unique morphism $h' \in \Hom_{\mathcal{D}}(D',D)$
since $D'$ an initial object in $\mathcal{D}$.
Let $e$ be the embedding $C \to D'$ obtained by $C$ being a subobject of $D'$.
Then we obtain a morphism $h'e\in\Hom_{\mathcal{C}}(C,D)$.
Next, assume that $h_0$ is any morphism in $\Hom_{\mathcal{C}}(C, D)$ such that the following diagram
\[
\xymatrix@C=1.5cm{
 C \ar[r]^{h_0} \ar[d]_{e} & D \\
 D' \ar[ru]_{h_0'} &
}
\]
commutes, where $h_0'$ is a morphism in $\mathcal{D}$.
Since $D'$ an initial object in $\mathcal{D}$,
we have $h_0'=h'$, and thus $h_0 = h_0' e = h'e$.
\end{proof}

Now, we can prove the following main result of this paper.

\begin{theorem} \label{thm:main1}
The triple $(\bfS_{\tau}(\II_{\itLamb}), \id, \gamma_{\xi})$ is an $\scrA^p$-initial object in $\scrN^p$.
\end{theorem}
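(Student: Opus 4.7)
The proof is essentially a direct assembly of the three preceding lemmas, so my plan is structural rather than computational. First I would combine Lemma~\ref{lemm:existence} and Lemma~\ref{lemm:uniqueness}: for every object $(V,v,\delta)\in\scrA^p$, there exists one and only one morphism
\[
\w{h}\in\Hom_{\scrA^p}\bigl((\w{\bfS_{\tau}(\II_{\itLamb})},\id,\w{\gamma}_{\xi}),(V,v,\delta)\bigr),
\]
which shows that $(\w{\bfS_{\tau}(\II_{\itLamb})},\id,\w{\gamma}_{\xi})$ is an initial object of $\scrA^p$.

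Next I would verify that $(\bfS_{\tau}(\II_{\itLamb}),\id,\gamma_{\xi})$ is a subobject of $(\w{\bfS_{\tau}(\II_{\itLamb})},\id,\w{\gamma}_{\xi})$ in $\scrN^p$. The natural inclusion $\bfS_{\tau}(\II_{\itLamb})\hookrightarrow\w{\bfS_{\tau}(\II_{\itLamb})}$ is a $\itLamb$-homomorphism sending $\id$ to $\id$, and by construction of $\w{\gamma}_{\xi}$ as the completion of $\gamma_{\xi}$, the required square with $\gamma_{\xi}$ and $\w{\gamma}_{\xi}$ commutes; together with the embedding morphism $\emb$ already noted after Lemma~\ref{lemm:S in Np}, this supplies the subobject data. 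Since $\scrA^p$ is a full subcategory of $\scrN^p$, Lemma~\ref{lemm:initial} applies with $\mathcal{C}=\scrN^p$, $\mathcal{D}=\scrA^p$, $D'=(\w{\bfS_{\tau}(\II_{\itLamb})},\id,\w{\gamma}_{\xi})$, and $C=(\bfS_{\tau}(\II_{\itLamb}),\id,\gamma_{\xi})$, yielding precisely that $C$ is an $\scrA^p$-initial object of $\scrN^p$.

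The only step that requires a little care is confirming that the unique morphism $h=\w{h}\circ\emb$ obtained from Lemma~\ref{lemm:initial} actually lies in $\Hom_{\scrN^p}((\bfS_{\tau}(\II_{\itLamb}),\id,\gamma_{\xi}),(V,v,\delta))$, i.e.\ that the compatibility condition $h\,\gamma_{\xi}=\delta\,h^{\oplus 2^n}$ holds on $\bfS_{\tau}(\II_{\itLamb})^{\oplus_p 2^n}$ (not only on the completion). This is immediate once one restricts the corresponding diagram (\ref{mainresult:diagram}) for $\w{h}$ along the inclusion $\bfS_{\tau}(\II_{\itLamb})^{\oplus_p 2^n}\subseteq\w{\bfS_{\tau}(\II_{\itLamb})}{}^{\oplus_p 2^n}$ and uses that $\w{\gamma}_{\xi}$ restricts to $\gamma_{\xi}$ there. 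I expect no genuine obstacle beyond this bookkeeping, since all of the analytic content has already been absorbed into Lemmas~\ref{lemm:existence} and \ref{lemm:uniqueness}.
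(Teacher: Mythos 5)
Your proposal is correct and follows essentially the same route as the paper: establish that $(\w{\bfS_{\tau}(\II_{\itLamb})},\id,\w{\gamma}_{\xi})$ is initial in $\scrA^p$ by combining Lemma \ref{lemm:existence} (existence) with Lemma \ref{lemm:uniqueness} (uniqueness), then invoke Lemma \ref{lemm:initial} with the embedding $\emb$ exhibiting $(\bfS_{\tau}(\II_{\itLamb}),\id,\gamma_{\xi})$ as a subobject of its completion. Your additional bookkeeping about the inclusion being a morphism in $\scrN^p$ and the restriction of diagram (\ref{mainresult:diagram}) is sound but is left implicit in the paper's own argument.
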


\begin{proof}
For any object $(V, v, \delta)$ in $\scrA^p$, the existence of morphisms in
$\Hom_{\scrA^p}((\w{\bfS_{\tau}(\II_{\itLamb})}, \id, \w{\gamma}_{\xi}),$ $(V, v, \delta))$
is proved in Lemma \ref{lemm:existence}\point, and its uniqueness is proved in Lemma \ref{lemm:uniqueness}.
Thus, the triple $(\w{\bfS_{\tau}(\II_{\itLamb})}, \id, \w{\gamma}_{\xi})$,
as an object in $\scrA^p$, is an initial object in $\scrA^p$.
It follows that $(\bfS_{\tau}(\II_{\itLamb}), \id, \w{\gamma}_{\xi})$ is an $\scrA^p$-initial object in $\scrN^p$ by Lemma \ref{lemm:initial}.
\end{proof}

We give a remark for Theorem \ref{thm:main1}.

\begin{remark} \label{rmk:main1} \rm
For any object $(V,v,\delta)$ in $\scrA^p$, there is a unique morphism
\[h: (\bfS_{\tau}(\II_{\itLamb}), \id, \gamma_{\xi}) \to (V,v,\delta)\]
in $\scrN^p$, which can be extended to $\w{h}: (\w{\bfS_{\tau}(\II_{\itLamb})}, \id, \w{\gamma}_{\xi}) \to (V,v,\delta)$.
In other words, if there exists a morphism $h$ making the diagram
\[\xymatrix@C=2cm{
  (\bfS_{\tau}(\II_{\itLamb}), \id, \gamma_{\xi}) \ar[r]^h \ar[d]_{\subseteq}
& (V,v,\delta) \\
  (\w{\bfS_{\tau}(\II_{\itLamb})}, \id, \w{\gamma}_{\xi}) \ar[ru]_{\w{h}}
& }\]
commute, then the existence of $h$ is guaranteed to be unique.
\end{remark}

\section{\texorpdfstring{The categorification of integration}{}} \label{sect:7}

Take $\kk=(\kk,|\cdot|,\preceq)$ to be an extension of $\RR$ and $p=1$.
Recall the symbols given in Notation \ref{notation}\point, any step function $f$ in $E_u$ can be written as
\[f=\sum_{(u_iv_i)_i} k_{(u_iv_i)_i}\id_{(u_iv_i)_i}.  \]
We define the map $T_u:E_u \to \kk$ by
\begin{align}\label{formula:Tu(1)}
  T_u(f)= \sum_{(u_iv_i)_i} k_{(u_iv_i)}\mu\bigg(\prod\nolimits_{i}\II^{(u_iv_i)}\bigg)
\end{align}
\begin{center}
 (note that if all coefficients $k_{(u_iv_i)}$ are equal to $1$, then $T_u(f)=\mu(E_u)$).
\end{center}

Then the $\itLamb$-isomorphism $\gamma_{\xi}$ shown in Lemma \ref{lemm:inverse gamma}\point~points out the following fact:
there is a map $m_u: \kk^{\oplus_p 2^n}\to\kk$ such that the following diagram
\begin{align}\label{diagram:m}
\xymatrix{
  E_u^{\oplus_p 2^n} \ar[r]^{\gamma_{\xi}} \ar[d]_{T_u^{\oplus 2^n}}
& E_{u+1} \ar[d]^{T_{u+1}} \\
  \kk^{\oplus_p 2^n} \ar[r]_{m_u}
& \kk
}
\end{align}
commutes.
Indeed, for the function $f_k = \frac{k}{\mu(\II_{\itLamb})}\id_{\II_{\itLamb}}$ with $k\in\kk$, we have
\begin{center}
  $ T_u(f_k)=T_u(\frac{k}{\mu(\II_{\itLamb})} \id_{\II_{\itLamb}})
    = \frac{k}{\mu(\II_{\itLamb})} T_u(\id_{\II_{\itLamb}}) = k$
\end{center}
by (\ref{formula:Tu(1)}).
Then for any $\pmb{k}=(k_1,\ldots, k_{2^n})\in \kk^{\oplus_p 2^n}$,
$\pmb{f}_{\pmb{k}} = (f_{k_1},\ldots,f_{k_{2^n}})\in E_u^{\oplus_p 2^n}$
is a preimage of $\pmb{k}$ under the $\kk$-linear map $T_u^{\oplus 2^n}$.
We define $\mu_u$ as follows:
\[
m_u(\pmb{k}) = T_{u+1}(\gamma_{\xi}(\pmb{f}_{\pmb{k}})).
\]
It is easy to see that $m_u$ is a $\kk$-linear map.
In particular, for the constant function $\id_{\II_{\itLamb}}$ given by the measure $\mu(\II_{\itLamb})$ of $\II_{\itLamb}$,
we obtain that $f_{\mu(E_u)}$ is a preimage of $\mu(\II_{\itLamb})\in\kk$, and then
\[ m_u(\mu(\II_{\itLamb}),\ldots,\mu(\II_{\itLamb})) = T_{u+1}\gamma_{\xi}(\id_{\II_{\itLamb}},\ldots,\id_{\II_{\itLamb}})
= \sum_{(u_iv_i)_i} 1 \cdot \mu\bigg(\prod\nolimits_{i}\II^{(u_iv_i)}\bigg)
= \mu(\II_{\itLamb}). \]

\begin{lemma} \label{lemm:Tu}
Let $\kk=(\kk,|\cdot|,\preceq)$ be an extension of $\RR$.
Then $T_u: E_u \to \kk$ is a $\itLamb$-homomorphism.
\end{lemma}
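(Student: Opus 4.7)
The plan is to verify directly from the defining formulas that $T_u$ preserves addition and the left $\itLamb$-action, these being the two conditions required for a $\itLamb$-homomorphism. Every element $f\in E_u$ is uniquely represented (up to the measure-zero identification introduced in Subsection~\ref{subsect:normed mod.3}) by its coefficient tuple on the finite partition $\{\prod_{i=1}^n\II^{(u_iv_i)}\}$. Additivity of $T_u$ then follows at once from the visible linearity of \eqref{formula:Tu(1)} in each coefficient, once two elements of $E_u$ have been rewritten with respect to their common partition; since both belong to $E_u$, no refinement is actually necessary.

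For $\itLamb$-linearity, I would first unpack the two module structures involved. On $E_u\subseteq\bfS_{\tau}(\II_{\itLamb})$ the action of $\itLamb$ is the one given in Lemma~\ref{lemm:normed mod}, namely $a\cdot f = \sum_{(u_iv_i)_i}\tau(a)\,k_{(u_iv_i)_i}\,\id_{(u_iv_i)_i}$. On the codomain $\kk$, the natural left $\itLamb$-module structure is obtained through the algebra homomorphism $\tau\colon\itLamb\to\kk$ via $a\cdot k:=\tau(a)k$; this is exactly the structure in which the verification is supposed to take place. Substituting the action formula into \eqref{formula:Tu(1)} and pulling out the scalar $\tau(a)$ then gives
\[
T_u(a\cdot f)=\sum_{(u_iv_i)_i}\tau(a)\,k_{(u_iv_i)_i}\,\mu\Bigl(\prod_i\II^{(u_iv_i)}\Bigr)=\tau(a)\,T_u(f)=a\cdot T_u(f),
\]
as required.

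The only conceptual point worth highlighting, rather than a genuine obstacle, is the role of the hypothesis that $\kk$ is an extension of $\RR$: it is precisely what lets us interpret each measure value $\mu(\prod_i\II^{(u_iv_i)})\in\RR^{\ge 0}$ as an element of $\kk$ so that \eqref{formula:Tu(1)} actually lands in the stated codomain. I expect no technical obstacle, because once the two parallel $\itLamb$-actions have been identified as being routed through the same homomorphism $\tau$, the lemma reduces to a one-line comparison of coefficients, and well-definedness with respect to the measure-zero equivalence relation is automatic since cells of positive measure determine $f$ uniquely while cells of measure zero contribute $0$ to the sum.
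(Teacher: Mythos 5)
Your proposal is correct and follows essentially the same route as the paper's proof: identify the $\itLamb$-module structure on $\kk$ as $a\cdot k:=\tau(a)k$ and verify $\itLamb$-linearity by direct substitution into the defining formula for $T_u$, pulling $\tau(a)$ out of the sum. The paper merely packages additivity and the action-preservation into a single computation of $T_u(\lambda_1\cdot f+\lambda_2\cdot g)$; your extra remarks on well-definedness and on why the measure values land in $\kk$ are harmless elaborations of the same argument.
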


\begin{proof}
Note that $\kk$ is a $\itLamb$-module given by
\[\itLamb\times\kk\to \kk, (\lambda, k)\mapsto \lambda\cdot k:=\tau(\lambda)k. \]
For arbitrary two elements $\lambda_1,\lambda_2 \in\itLamb$ and arbitrary two functions
$f=\sum_ik_i\id_{I_i}$, $g=\sum_j k_j'\id_{I_j'}\in E_u$, we have
\begin{align*}
  T_u(\lambda_1\cdot f+\lambda_2\cdot g)
& = T_u\bigg(\sum\nolimits_i\tau(\lambda_1)k_i\id_{I_i}+\sum\nolimits_j \tau(\lambda_2)k_j'\id_{I_j'}\bigg) \\
& = \tau(\lambda_1)T_u\bigg(\sum\nolimits_ik_i\id_{I_i}\bigg)+\tau(\lambda_2)T_u\bigg(\sum\nolimits_jk_j'\id_{I_j'}\bigg) \\
& = \tau(\lambda_1)T_u(f)+\tau(\lambda_2)T_u(g) \\
& = \lambda_1\cdot T_u(f) + \lambda_2\cdot T_u(g).
\end{align*}
\end{proof}

\begin{lemma} \label{lemm:mu}
Let $\kk=(\kk,|\cdot|,\preceq)$ be an extension of $\RR$
and let $m_u$ be the $\kk$-linear map given in the diagram (\ref{diagram:m}).
Then $m_u$ is a $\itLamb$-homomorphism.
\end{lemma}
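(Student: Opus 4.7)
The plan is to leverage the commutative square (\ref{diagram:m}) together with the already established $\itLamb$-linearity of the three other maps in the square: $T_u$ and $T_{u+1}$ are $\itLamb$-homomorphisms by Lemma \ref{lemm:Tu}, and $\gamma_{\xi}$ is a $\itLamb$-homomorphism by Lemma \ref{lemm:S in Np pre}(b). Since $m_u$ is defined by pulling back through the specific preimage $\pmb{f}_{\pmb{k}} = (f_{k_1},\ldots,f_{k_{2^n}})$ with $f_{k_i} = \tfrac{k_i}{\mu(\II_{\itLamb})}\id_{\II_{\itLamb}}$, the key observation is that this assignment $\pmb{k}\mapsto \pmb{f}_{\pmb{k}}$ is itself a $\itLamb$-homomorphism from $\kk^{\oplus_p 2^n}$ into $E_u^{\oplus_p 2^n}$.

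First, I would verify this assignment is $\itLamb$-linear. Recall the $\itLamb$-action on $\kk$ is $\lambda\cdot k=\tau(\lambda)k$ and on $E_u$ is $\lambda\cdot f=\tau(\lambda)f$. Hence for $\lambda\in\itLamb$ and $k\in\kk$,
\[ f_{\lambda\cdot k} \;=\; \frac{\tau(\lambda)k}{\mu(\II_{\itLamb})}\id_{\II_{\itLamb}} \;=\; \tau(\lambda)\cdot\frac{k}{\mu(\II_{\itLamb})}\id_{\II_{\itLamb}} \;=\; \lambda\cdot f_{k}, \]
and additivity in $k$ is immediate. Applying this componentwise yields $\pmb{f}_{\lambda_1\cdot\pmb{k}_1+\lambda_2\cdot\pmb{k}_2}=\lambda_1\cdot\pmb{f}_{\pmb{k}_1}+\lambda_2\cdot\pmb{f}_{\pmb{k}_2}$ for all $\lambda_1,\lambda_2\in\itLamb$ and $\pmb{k}_1,\pmb{k}_2\in\kk^{\oplus_p 2^n}$.

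Now the proof of the lemma is a short computation. For arbitrary $\lambda_1,\lambda_2\in\itLamb$ and $\pmb{k}_1,\pmb{k}_2\in\kk^{\oplus_p 2^n}$,
\begin{align*}
 m_u(\lambda_1\cdot\pmb{k}_1+\lambda_2\cdot\pmb{k}_2)
&= T_{u+1}\bigl(\gamma_{\xi}(\pmb{f}_{\lambda_1\cdot\pmb{k}_1+\lambda_2\cdot\pmb{k}_2})\bigr) \\
&= T_{u+1}\bigl(\gamma_{\xi}(\lambda_1\cdot\pmb{f}_{\pmb{k}_1}+\lambda_2\cdot\pmb{f}_{\pmb{k}_2})\bigr) \\
&= T_{u+1}\bigl(\lambda_1\cdot\gamma_{\xi}(\pmb{f}_{\pmb{k}_1})+\lambda_2\cdot\gamma_{\xi}(\pmb{f}_{\pmb{k}_2})\bigr) \\
&= \lambda_1\cdot T_{u+1}\bigl(\gamma_{\xi}(\pmb{f}_{\pmb{k}_1})\bigr)+\lambda_2\cdot T_{u+1}\bigl(\gamma_{\xi}(\pmb{f}_{\pmb{k}_2})\bigr) \\
&= \lambda_1\cdot m_u(\pmb{k}_1)+\lambda_2\cdot m_u(\pmb{k}_2),
\end{align*}
where the third equality uses Lemma \ref{lemm:S in Np pre}(b) and the fourth uses Lemma \ref{lemm:Tu}.

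There is essentially no obstacle here; the only thing to be slightly careful about is the bookkeeping of where $\tau$ enters the two different $\itLamb$-actions (on $\kk$ versus on $\bfS_{\tau}(\II_{\itLamb})$), which is what makes $\pmb{k}\mapsto\pmb{f}_{\pmb{k}}$ actually respect the $\itLamb$-structure rather than merely being $\kk$-linear. Once that is noted, the lemma reduces to composing three $\itLamb$-homomorphisms.
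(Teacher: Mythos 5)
Your proof is correct. The paper itself gives essentially no argument here -- it merely remarks that one proceeds ``by an argument similar to'' the direct linearity computation in Lemma \ref{lemm:Tu}, i.e., by expanding $m_u(\lambda_1\cdot\pmb{k}_1+\lambda_2\cdot\pmb{k}_2)$ explicitly from the defining formula. You instead exploit the compositional structure: $m_u$ is by definition the composite of the section $\pmb{k}\mapsto\pmb{f}_{\pmb{k}}$, the juxtaposition map $\gamma_{\xi}$, and $T_{u+1}$, and you check that each factor is a $\itLamb$-homomorphism (citing Lemma \ref{lemm:S in Np pre}(b) for $\gamma_{\xi}$ -- Lemma \ref{lemm:inverse gamma} would serve equally well -- and Lemma \ref{lemm:Tu} for $T_{u+1}$). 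The one genuinely non-trivial ingredient, which you correctly isolate, is that the section $\pmb{k}\mapsto\pmb{f}_{\pmb{k}}$ respects the two different $\tau$-twisted $\itLamb$-actions on $\kk$ and on $E_u$; once that is observed the lemma is immediate. Your route is cleaner and more informative than the paper's sketch, since it makes explicit exactly which previously established facts are being reused, while the paper's intended computation would re-derive the same identities by hand. Both arguments are routine and there is no gap in yours.
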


\begin{proof}
We can prove that $m_u$ is a $\itLamb$-homomorphism by using an argument similar to
proving that $T_u$ is a $\kk$-linear mapping as in Lemma \ref{lemm:Tu}.
\end{proof}

\begin{remark} \label{rmk:m} \rm
Since $E_0\subseteq E_1\subseteq \cdots \subseteq E_u \subseteq \cdots \subseteq \bfS_{\tau}(\II_{\itLamb})
\subseteq \w{\bfS_{\tau}(\II_{\itLamb})} = \underrightarrow{\lim}E_i$, we have that $\mu$ is independent on $u$.
Thus, we can use $m$ to present all maps $m_u$ ($u\in \NN$) because $m_0=m_1=m_2=\ldots$.
\end{remark}

\begin{proposition}
Let $\kk=(\kk,|\cdot|,\preceq)$ be an extension of $\RR$.
Then the triple $(\kk, \mu(\II_{\itLamb}), m)$ is an object in $\scrN^p$.
Furthermore, since $\itLamb$ is complete, so is $\kk$. Then $\kk^{\oplus_p 2n}$ is a Banach $\itLamb$-module,
and so $(\kk, \mu(\II_{\itLamb}), m)$ is an object in $\scrA^p$.
\end{proposition}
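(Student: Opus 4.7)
The plan is to verify the defining axioms of an object in $\scrN^p$ for the triple $(\kk,\mu(\II_{\itLamb}),m)$ one at a time, noting that most of the work has already been done in the preceding lemmas, and then to bootstrap to $\scrA^p$ using the completeness hypothesis on $\itLamb$.

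First I would observe that $\kk$ is a normed $\itLamb$-module via the action $\lambda\cdot k := \tau(\lambda)k$ of Lemma \ref{lemm:Tu}, with norm $|\cdot|$; the compatibility $|\lambda\cdot k|=|\tau(\lambda)|\cdot|k|$ required by Definition \ref{def:normed mod} is immediate since $\tau$ is a $\kk$-algebra homomorphism and $|\cdot|$ is multiplicative. The distinguished element $v=\mu(\II_{\itLamb})$ lies in $\RR^{\ge 0}\subseteq\kk$, so $|v|=\mu(\II_{\itLamb})$ and the bound $|v|\le\mu(\II_{\itLamb})$ holds with equality. That $m\colon \kk^{\oplus_p 2^n}\to\kk$ is a $\itLamb$-homomorphism is precisely Lemma \ref{lemm:mu}, and the fixed-point identity $m(\mu(\II_{\itLamb}),\ldots,\mu(\II_{\itLamb}))=\mu(\II_{\itLamb})$ is already established by the explicit computation in the paragraph following diagram (\ref{diagram:m}).

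The only non-routine axiom is the compatibility (\ref{formula:comm}) of $m$ with inverse limits of Cauchy sequences. Unfolding the definition via the commutative diagram (\ref{diagram:m}), the map $m$ is in fact a $\kk$-linear combination of the coordinates with nonnegative real coefficients determined by the measures of the $2^n$ pieces of the partition of $\II_{\itLamb}$ arising from $\xi$ and the order-preserving bijections $\kappa_a,\kappa_b$; more concretely, $m(k_1,\ldots,k_{2^n}) = \sum_{j=1}^{2^n} c_j\,k_j$ with $c_j\in\RR^{\ge 0}$. Any such map is continuous from $\kk^{\oplus_p 2^n}$ to $\kk$ with respect to the norm topologies, because for a Cauchy sequence $\{\pmb{x}_i=(x_{i,1},\ldots,x_{i,2^n})\}_i$ one has
\[
  \bigl|m(\pmb{x}_i)-m(\pmb{x}_j)\bigr|
  \;\le\; \Bigl(\sum_{l=1}^{2^n}c_l\Bigr)\max_{l}|x_{i,l}-x_{j,l}|,
\]
which vanishes in the limit. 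Continuity is exactly what is needed to interchange $m$ with $\underleftarrow{\lim}$, giving the required commutativity and finishing the verification that $(\kk,\mu(\II_{\itLamb}),m)\in\scrN^p$.

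Finally, to promote this triple to an object of $\scrA^p$, I would invoke Proposition \ref{prop:compl1}: under the standing assumption that $\itLamb$ is complete, $\kk$ itself is complete, so $\kk$ is a Banach $\itLamb$-module in the sense of Definition \ref{def:Banach mods}, and the product $\kk^{\oplus_p 2^n}$ inherits completeness. Since $\scrA^p$ is the full subcategory of $\scrN^p$ cut out by requiring the underlying module to be Banach, the conclusion $(\kk,\mu(\II_{\itLamb}),m)\in\scrA^p$ follows at once. The main obstacle, such as it is, is purely bookkeeping: one must unwind the normalization factor $(\mu(\II)/\mu(\II_{\itLamb}))^n$ built into the $\oplus_p$-norm to confirm that the coefficients $c_j$ are indeed real and bounded, so that continuity in the above explicit form really holds; there is no conceptual difficulty beyond this.
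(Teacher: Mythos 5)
Your proposal is correct and follows essentially the same route as the paper, which proves this proposition simply by citing Lemma \ref{lemm:Tu}, Lemma \ref{lemm:mu} and Remark \ref{rmk:m} (i.e., that $m$ is a well-defined $\itLamb$-homomorphism) together with Proposition \ref{prop:compl1} for completeness. You go further than the paper's one-line argument by explicitly verifying the inverse-limit compatibility (\ref{formula:comm}) via the observation that $m(\pmb{k})=\sum_j c_j k_j$ with $c_j=\mu(\text{piece}_j)/\mu(\II_{\itLamb})\ge 0$ summing to $1$, which is a genuine detail the paper leaves implicit and is worth recording.
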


\begin{proof}
It follows from Lemmas \ref{lemm:Tu}\point~and \ref{lemm:mu}\point~and Remark \ref{rmk:m}.
\end{proof}

The following proposition shows that $T_u$ satisfies the triangle inequality.

\begin{proposition} \label{prop:tri inq}
If $\kk=(\kk,|\cdot|,\preceq)$ is an extension of $\RR$,
then for any $f\in E_u$, the following inequality holds for all $u\in\NN$.
\begin{align}\label{formula:tri inq}
  |T_u(f)|  \le T_u(|f|).
\end{align}
\end{proposition}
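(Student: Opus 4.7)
The plan is to exploit the essentially disjoint structure of the supports $\prod_{i=1}^n \II^{(u_iv_i)}$ used to represent elements of $E_u$, together with the triangle inequality for $|\cdot|$ on $\kk$.

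First I would write any $f \in E_u$ in its canonical form
\[ f = \sum_{(u_iv_i)_i} k_{(u_iv_i)_i}\, \id_{(u_iv_i)_i}, \qquad k_{(u_iv_i)_i}\in\kk, \]
and observe that the indexing Cartesian products $\prod_{i=1}^n \II^{(u_iv_i)}$ are pairwise disjoint up to boundary points, which by the assumption $\mu(\{k\})=0$ are $\mu$-negligible. Consequently, for $x\in\prod_i \II^{(u_iv_i)}$ all but one summand vanishes, and one has the pointwise identification $|f|(x) = |k_{(u_iv_i)_i}|$, so that
\[ |f| \;=\; \sum_{(u_iv_i)_i} |k_{(u_iv_i)_i}|\, \id_{(u_iv_i)_i}, \]
which is again an element of $E_u$ (the defining feature is the piecewise-constant structure over the same dissection).

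Next I would substitute both expressions into the defining formula \eqref{formula:Tu(1)} for $T_u$. This yields
\[ T_u(|f|) \;=\; \sum_{(u_iv_i)_i} |k_{(u_iv_i)_i}|\, \mu\!\Big(\prod\nolimits_i \II^{(u_iv_i)}\Big), \]
while
\[ T_u(f) \;=\; \sum_{(u_iv_i)_i} k_{(u_iv_i)_i}\, \mu\!\Big(\prod\nolimits_i \II^{(u_iv_i)}\Big). \]
Since $\mu$ takes values in $\RR^{\ge 0}$ and $\kk$ is an extension of $\RR$ equipped with the norm $|\cdot|$, each quantity $\mu(\prod_i \II^{(u_iv_i)})$ is a non-negative real scalar and in particular $|\mu(\cdot)|=\mu(\cdot)$. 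Applying the triangle inequality of $|\cdot|$ on $\kk$ finite-term-wise together with multiplicativity $|k\,r|=|k|\,r$ for $r\ge 0$ gives
\[ |T_u(f)| \;\le\; \sum_{(u_iv_i)_i} |k_{(u_iv_i)_i}|\, \mu\!\Big(\prod\nolimits_i \II^{(u_iv_i)}\Big) \;=\; T_u(|f|), \]
which is \eqref{formula:tri inq}.

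The proof is essentially mechanical; the only non-routine point is the justification that $|f|$ admits the canonical representation with coefficients $|k_{(u_iv_i)_i}|$ on the same dissection, which requires the disjointness of the sets $\prod_i\II^{(u_iv_i)}$ modulo $\mu$-null sets and the convention from Subsection \ref{subsect:normed mod.3} that $\mu$-a.e.\ equal functions are identified in $\bfS_\tau(\II_\itLamb)$. Once that is in place, the inequality is just the finite triangle inequality in $\kk$ with non-negative real weights, so no further obstacle arises.
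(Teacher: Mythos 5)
Your proof is correct and follows essentially the same route as the paper's: both identify $|f|$ with the step function $\sum_i |k_i|\,\id_{I_i}$ on the same dissection using disjointness of the supports, and then conclude by the finite triangle inequality for $|\cdot|$ applied to the sum $\sum_i k_i\,\mu(\cdot)$ with non-negative real weights. Your additional remark about boundary points being $\mu$-null and the a.e.\ identification in $\bfS_\tau(\II_\itLamb)$ is a slightly more careful justification of the step the paper labels $(\star)$, but it is the same argument.
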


\begin{proof}
Assume that $f=\sum_i k_i\id_{I_i} \in E_u$, where $I_i\cap I_j = \varnothing$ for all $i\ne j$.
Then $|f| = |\sum_i k_i\id_{I_i}|$ is also a step function in $E_u$,
and we have
\begin{align*}
  T_u(|f|)
& = T_u \Big(\Big| \sum\nolimits_i k_i\id_{I_i} \Big|\Big)
  \mathop{=\!=}\limits^{(\star)} T_u\Big( \sum\nolimits_i |k_i| \id_{I_i}\Big) \\
& = \sum\nolimits_i |k_i|\mu\Big(\prod\nolimits_i\II^{(u_iv_i)_i}\Big) \\
& \ge \Big| \sum\nolimits_i k_i \mu\Big(\prod\nolimits_i\II^{(u_iv_i)_i}\Big) \Big|
  = |T_u(f)|,
\end{align*}
where $(\star)$ is given by $I_i\cap I_j = \varnothing$.
\end{proof}

\begin{theorem} \label{thm:main2}
If $\kk=(\kk,|\cdot|,\preceq)$ is an extension of $\RR$, then there exists a unique morphism
\[T: (\bfS_{\tau}(\II_{\itLamb}), \id, \gamma_{\xi}) \to (\kk, \mu(\II_{\itLamb}), m)\]
in $\Hom_{\scrN^p}((\bfS_{\tau}(\II_{\itLamb}), \id, \gamma_{\xi}), (\kk, \mu(\II_{\itLamb}), m))$ such that the diagram
\[\xymatrix@C=2cm{
  (\bfS_{\tau}(\II_{\itLamb}), \id, \gamma_{\xi}) \ar[r]^{T} \ar[d]_{\subseteq}
& (\kk, \mu(\II_{\itLamb}),m) \\
  (\w{\bfS_{\tau}(\II_{\itLamb})}, \id, \w{\gamma}_{\xi}) \ar[ru]_{\w{T}}
& }\]
commutes, where $\w{T}$ is the is the unique extension of $T$ lying in
$\Hom_{\scrA^p}((\w{\bfS_{\tau}(\II_{\itLamb})}, \id, \w{\gamma}_{\xi}),$ $(\kk, \mu(\II_{\itLamb}), m))$.
Furthermore, $\w{T}$ is given by the direct limit $\underrightarrow{\lim}T_i: \underrightarrow{\lim} E_i \to \kk$.
\end{theorem}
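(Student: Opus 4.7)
The plan is to read the statement as a direct application of Theorem \ref{thm:main1} to the distinguished object $(\kk, \mu(\II_{\itLamb}), m)$. The proposition immediately preceding the theorem already establishes that $(\kk, \mu(\II_{\itLamb}), m)$ is an object in $\scrA^p$ (using that $\kk$ is complete as $\itLamb$ is, and that $m$ is a $\itLamb$-homomorphism satisfying $m(\mu(\II_{\itLamb}), \ldots, \mu(\II_{\itLamb})) = \mu(\II_{\itLamb})$). So by Theorem \ref{thm:main1}, which says $(\bfS_{\tau}(\II_{\itLamb}), \id, \gamma_{\xi})$ is an $\scrA^p$-initial object in $\scrN^p$, there is a unique $\w{T} \in \Hom_{\scrA^p}((\w{\bfS_{\tau}(\II_{\itLamb})}, \id, \w{\gamma}_{\xi}), (\kk, \mu(\II_{\itLamb}), m))$; the composition $T := \w{T} \circ (\subseteq)$ is the asserted morphism in $\scrN^p$, and its uniqueness follows from Remark \ref{rmk:main1}.

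The second and more substantial task is to identify $\w{T}$ with the direct limit $\underrightarrow{\lim} T_i$. First I would check that $(T_u: E_u \to \kk)_{u \in \NN}$ is a compatible family over the direct system $((E_i)_{i\in\NN}, (\varphi_{uv})_{u\le v})$ of Subsection \ref{sssect:S=direclim}: given $i\le j$ and $f = \sum k_{(u_iv_i)_i}\id_{(u_iv_i)_i} \in E_i$, re-expressing $f$ on the finer partition indexed by $j$ subdivides each $\prod_i \II^{(i v_i)}$ into the disjoint union of its refinements, and finite additivity of $\mu$ gives $T_j(\varphi_{ij}(f)) = T_i(f)$. Since $\kk$ is a complete (hence Banach) $\itLamb$-module and each $T_u$ is a $\itLamb$-homomorphism by Lemma \ref{lemm:Tu}, the universal property of $\underrightarrow{\lim} E_i$ in $\Ban(\itLamb)$ (cf.\ the proof of Lemma \ref{lemm:limE}) produces a unique $\itLamb$-homomorphism $\underrightarrow{\lim} T_i : \underrightarrow{\lim} E_i \to \kk$ with $(\underrightarrow{\lim} T_i)\alpha_i = T_i$ for all $i$.

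Using the isomorphism $\underrightarrow{\lim} E_i \cong \w{\bfS_{\tau}(\II_{\itLamb})}$ of Lemma \ref{lemm:limE}, it remains to verify that $\underrightarrow{\lim} T_i$ is a morphism in $\scrA^p$ into $(\kk, \mu(\II_{\itLamb}), m)$; uniqueness in Theorem \ref{thm:main1} then forces $\underrightarrow{\lim} T_i = \w{T}$. The two conditions to check are: (i) $(\underrightarrow{\lim} T_i)(\id) = \mu(\II_{\itLamb})$, which on $E_0$ reduces to formula (\ref{formula:Tu(1)}); and (ii) the square
\[
\xymatrix@C=1.5cm{
\w{\bfS_{\tau}(\II_{\itLamb})}{}^{\oplus_p 2^n} \ar[r]^-{\w{\gamma}_{\xi}} \ar[d]_{(\underrightarrow{\lim} T_i)^{\oplus 2^n}} & \w{\bfS_{\tau}(\II_{\itLamb})} \ar[d]^{\underrightarrow{\lim} T_i} \\
\kk^{\oplus_p 2^n} \ar[r]_-{m} & \kk
}
\]
commutes. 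On each finite-level subspace $E_u^{\oplus_p 2^n}$ this is precisely diagram (\ref{diagram:m}), and the continuity condition (\ref{formula:comm}) built into the objects of $\scrN^p$ transports the identity across the inverse limit that presents an arbitrary element of $\w{\bfS_{\tau}(\II_{\itLamb})}{}^{\oplus_p 2^n}$, exactly as in the proof of Lemma \ref{lemm:existence}.

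The main obstacle is therefore not the existence/uniqueness half (which is a mechanical application of Theorem \ref{thm:main1}), but the verification that the levelwise maps $T_u$ assemble into $\w{T}$. The delicate points are the compatibility $T_j\varphi_{ij} = T_i$, which relies on the $\sigma$-additivity of $\mu$, and the promotion of the commutativity of (\ref{diagram:m}) from each $E_u$ to the completion, which uses (\ref{formula:comm}); everything else is routine.
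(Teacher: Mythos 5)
Your proposal is correct and follows essentially the same route as the paper: both obtain $\underrightarrow{\lim}T_i$ from the universal property of the direct system $((E_i),(\varphi_{uv}))$ via Lemma \ref{lemm:limE}, identify it as a morphism in $\scrA^p$ using diagram (\ref{diagram:m}), and invoke Theorem \ref{thm:main1} with Remark \ref{rmk:main1} to conclude $\underrightarrow{\lim}T_i=\w{T}$ and $T=\w{T}|_{\bfS_{\tau}(\II_{\itLamb})}$. The only difference is that you explicitly verify the compatibility $T_j\varphi_{ij}=T_i$ and the passage of (\ref{diagram:m}) to the completion via (\ref{formula:comm}), details the paper's proof leaves implicit.
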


\begin{proof}
Denote by $\alpha_{ij}:E_i\to E_j$ ($i\le j$) and $\alpha_i:E_i\to  \underrightarrow{\lim}E_i$ the monomorphism induced by
$E_i\subseteq E_j\subseteq \underrightarrow{\lim}E_i$. Then there is a unique morphism $\underrightarrow{\lim}T_i: \underrightarrow{\lim} E_i \to \kk$
such that the following diagram
\[\xymatrix{
   \underrightarrow{\lim}E_i \ar@{-->}[rr]^{\underrightarrow{\lim}T_i} & & \kk \\
 & E_{i} \ar@{^(->}[lu]^(0.35){\alpha_i} \ar[ru]_{T_i} \ar@{^(->}[d]|{\alpha_{ij}}  & \\
 & E_{j} \ar@{^(->}@/^1pc/[luu]^{\alpha_j} \ar@/_1pc/[ruu]_{T_j}  &
}\]
commutes. By Lemma \ref{lemm:limE}\point, we have $\underrightarrow{\lim}E_i \cong \w{\bfS_{\tau}(\II_{\itLamb})}$,
then $\underrightarrow{\lim}T_i$ induces a morphism in $\scrA^p$ from
$(\bfS_{\tau}(\II_{\itLamb}), \id, \gamma_{\xi})$ to $(\kk, \mu(\II_{\itLamb}), m)$.
Theorem \ref{thm:main1}\point~and Remark \ref{rmk:main1} show that $\underrightarrow{\lim}T_i=\w{T}$ and $T=\w{T}|_{\bfS_{\tau}(\II_{\itLamb})}$.
\end{proof}

\begin{definition} \rm \label{def:cat-int}
Let $\kk$ be a field with the norm $|\cdot|:\kk\to\RR^{\ge0}$ and the total ordered $\preceq$,
and let $f: \II_{\itLamb}\to\kk$ be a function in $\w{\bfS_{\tau}(\II_{\itLamb})}$.
We call that $f$ is an {\defines integrable function} on $\II_{\itLamb}$
and {its} integral, denoted by $\displaystyle (\scrA^1)\int_{\II_{\itLamb}}f\mathrm{d}\mu$, is defined as follows:
\[ (\scrA^1)\int_{\II_{\itLamb}} f \mathrm{d}\mu := \w{T}(f). \]
\end{definition}

By using the limit $\underrightarrow{\lim}T_i: \underrightarrow{\lim} E_i \to \kk$ given in Theorem \ref{thm:main2}\point,
the formula (\ref{formula:Tu(1)}), Lemma \ref{lemm:Tu}\point~and Proposition \ref{prop:tri inq}\point~show that
\[ (\scrA^1) \int_{\II_{\itLamb}} 1\dd\mu = \mu(\II_{\itLamb}) , \]
\[ (\scrA^1) \int_{\II_{\itLamb}} (\lambda_1 \cdot f_1 + \lambda_2 \cdot f_2) \mu
= \lambda_1 \cdot (\scrA^1)\int_{\II_{\itLamb}} f_1 \mu  + \lambda_2 \cdot (\scrA^1)\int_{\II_{\itLamb}} f_2 \mu \ \ (\lambda_1,\lambda_2\in \itLamb)  \]
and
\[ \bigg|(\scrA^1)\int_{\II_{\itLamb}} f \dd\mu \bigg| \le (\scrA^1)\int_{\II_{\itLamb}} |f| \dd\mu, \]
respectively.

In Subsection \ref{subsect:app1}\point, we point out that Theorem \ref{thm:main2}\point~
and Definition \ref{def:cat-int}\point~provide a categorification of Lebesgue integration.

\section{Series expansions of functions} \label{sect:series}

Set $n:=\dim_{\kk}\itLamb$, and define the {\defines $n$ variables polynomial ring} $\kk[X_1, \cdots, X_N]$ ($=\kk[\pmb{X}]$ for short)
over $\kk$ to be the set of all $N$ variables polynomial rings ($N\ge n$).
Then $\kk[\pmb{X}]$ is a left $\itLamb$-module whose left $\itLamb$-action is defined as
\[ \itLamb \times \kk[\pmb{X}] \to \kk[\pmb{X}], (a,P(x)) \mapsto \tau(a)P(x). \]

\subsection{Realizing power series expansions of functions as morphisms in $\scrA^p$} \label{subsubsect:power series}
Take $N=n$. In this part, we define the map
\begin{align}\label{formula: poly norm 1}
  \Vert \cdot \Vert: \kk[\pmb{X}] \to \RR^{\ge 0}, \
   P \mapsto \left((\scrA^p)\int_{\II_{\itLamb}}|P|^p\dd\mu \right)^{\frac{1}{p}},
\end{align}
where $|P|$ is defined by the norm $|\cdot|: \kk \to \RR^{\ge 0}$ defined on $\kk$ and $|P(\pmb{X})|$ for any $\pmb{X}\in\II_{\itLamb} \subseteq \itLamb$.

\begin{lemma} \label{lemm:power series}
The polynomial ring $\kk[\pmb{X}]$ with the map (\ref{formula: poly norm 1}) is a normed left $\itLamb$-module.
\end{lemma}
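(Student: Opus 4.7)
The plan is to verify directly the three defining axioms of a normed left $\itLamb$-module in Definition \ref{def:normed mod}, after first confirming that the formula (\ref{formula: poly norm 1}) is well-defined. The module structure on $\kk[\pmb{X}]$ is already given by $a\cdot P = \tau(a)P$, so the work lies in the norm axioms. Well-definedness requires that $|P|^p$ belong to $\w{\bfS_{\tau}(\II_{\itLamb})}$ so that the categorified integral $(\scrA^p)\int_{\II_{\itLamb}}|P|^p\dd\mu$ makes sense. Since $P$ is a polynomial in $n$ variables restricted to the bounded subset $\II_{\itLamb}=[a,b]^n_{\kk}$, the function $|P|^p$ is continuous and bounded, so I would construct an explicit sequence of step functions in $\bigcup_u E_u$ converging to $|P|^p$ via dyadic subdivisions of $\II_{\itLamb}$, putting $|P|^p$ into $\underrightarrow{\lim}E_i\cong\w{\bfS_{\tau}(\II_{\itLamb})}$ thanks to Lemma \ref{lemm:limE}.

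Next I would address the three norm axioms in order. Non-negativity follows because each step-function approximation $f_k$ of $|P|^p$ is pointwise non-negative, hence $T_u(f_k)\ge 0$ from formula (\ref{formula:Tu(1)}), and positivity is preserved in the direct limit. For the implication $\Vert P\Vert=0\Rightarrow P=0$, I would argue that if $(\scrA^p)\int_{\II_{\itLamb}}|P|^p\dd\mu=0$, then the continuous non-negative function $|P|^p$ must vanish on a $\mu$-conull subset of $\II_{\itLamb}$; since $\II_{\itLamb}$ is an infinite set and a non-zero polynomial in $n$ variables cannot vanish on a conull set of the product of infinite intervals, $P\equiv0$. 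The module homogeneity $\Vert aP\Vert=|\tau(a)|\cdot\Vert P\Vert$ follows from the pointwise identity $|aP|^p=|\tau(a)|^p\,|P|^p$ combined with the $\itLamb$-linearity of the integral (Lemma \ref{lemm:Tu} and formula (\ref{Lambda-linear})), after taking the $p$-th root.

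The main obstacle is the triangle inequality, which is the Minkowski-type estimate
\[
\Big((\scrA^p)\!\int_{\II_{\itLamb}}\!|P+Q|^p\dd\mu\Big)^{1/p}
\le
\Big((\scrA^p)\!\int_{\II_{\itLamb}}\!|P|^p\dd\mu\Big)^{1/p}
+
\Big((\scrA^p)\!\int_{\II_{\itLamb}}\!|Q|^p\dd\mu\Big)^{1/p}.
\]
My strategy is to reduce this to the discrete Minkowski inequality already used in the proofs of Propositions \ref{prop:triple} and Lemma \ref{lemm:normed mod}. Concretely, I would pick step-function approximations $f_i\to|P|^p$, $g_i\to|Q|^p$, $h_i\to|P+Q|^p$ in the appropriate $E_{u_i}$'s, apply the discrete Minkowski inequality on each $E_{u_i}$ (whose elements are finite $\kk$-linear combinations of characteristic functions on pairwise disjoint products of intervals), and then pass to the inverse limit using the continuity of $\w{T}$ and the commutation with limits established in (\ref{formula:comm}) and Lemma \ref{lemm:S in Np}. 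The continuity of $\w{T}$ is crucial here since the $L^p$-style norm arises only after taking a $p$-th root, so I would additionally invoke the fact that $x\mapsto x^{1/p}$ is continuous on $\RR^{\ge0}$ to guarantee the inequality survives the limit.

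Together these verifications show that $(\kk[\pmb{X}],\Vert\cdot\Vert)$ satisfies the requirements of Definition \ref{def:normed mod}, giving the asserted normed left $\itLamb$-module structure.
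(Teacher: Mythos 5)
Your proof is correct and starts from the same key observation as the paper's (two-line) proof, namely that each polynomial restricted to $\II_{\itLamb}$ can be realized as an element of $\w{\bfS_{\tau}(\II_{\itLamb})}$; but from there the two arguments diverge in how much they actually verify. The paper checks only the homogeneity $\Vert a\cdot P\Vert=|\tau(a)|\cdot\Vert P\Vert$ and asserts that the remaining axioms are ``induced,'' implicitly inheriting them from the norm of Lemma \ref{lemm:normed mod}; you instead verify well-definedness, definiteness, and the triangle inequality directly. Two of your additions are genuinely needed and not mere pedantry. First, definiteness on the polynomial ring itself (as opposed to its image in $\w{\bfS_{\tau}(\II_{\itLamb})}$, where functions are identified up to $\mu$-null sets) requires exactly your observation that a non-zero polynomial cannot vanish on a $\mu$-conull subset of $\II_{\itLamb}$ --- though note this uses a property of $\mu$ (null zero-sets of non-zero polynomials) that holds for Lebesgue-type measures but is not guaranteed by the paper's axioms for a general $\mu$. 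Second, for $p\neq 1$ the quantity $\bigl(\w{T}(|f|^p)\bigr)^{1/p}=\bigl(\sum_i|k_i|^p\mu(I_i)\bigr)^{1/p}$ on a step function is \emph{not} the restriction of the norm $\bigl(\sum_i(|k_i|\mu(I_i))^p\bigr)^{1/p}$ of Lemma \ref{lemm:normed mod}, so the triangle inequality is not literally inherited and your separate Minkowski-plus-limit argument fills a real gap in the paper's reasoning (harmless in the application, since Corollary \ref{coro:C} takes $p=1$, where the two coincide). In short: same entry point, but your route is the more complete one.
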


\begin{proof}
Each polynomial can be seen as a function lying in $\w{\bfS_{\tau}(\II_{\itLamb})}$.
Then, by using the norm $|\cdot|: \kk \to \RR^{\ge 0}$, the map (\ref{formula: poly norm 1})
induces a norm as required since $\Vert a\cdot P\Vert = \Vert \tau(a)P\Vert = |\tau(a)|\cdot \Vert P\Vert$.
\end{proof}

By using Lemma \ref{lemm:power series}\point, the Banach left $\itLamb$-module $\w{\kk[\pmb{X}]}$, as the completion of $\kk[\pmb{X}]$,
provides a triple $(\w{\kk[\pmb{X}]}, \id, \w{\gamma_{\xi}}|_{\w{\kk[\pmb{X}]}})$ which is an object of $\scrA^1$.
Thus, by Theorem \ref{thm:main2}\point, the following result holds.

\begin{corollary}[Weierstrass Approximation Theorem] \label{coro:power series}
There exists a unique morphism
\[ \expaP: (\bfS_{\tau}(\II_{\itLamb}), \id, \gamma_{\xi})
\to (\w{\kk[\pmb{X}]}, \id, \w{\gamma_{\xi}}|_{\w{\kk[\pmb{X}]}}) \]
in $\Hom_{\scrN^1}((\bfS_{\tau}(\II_{\itLamb}), \id, \w{\gamma}_{\xi}),
(\w{\kk[\pmb{X}]}, \id, \w{\gamma_{\xi}}|_{\w{\kk[\pmb{X}]}}) )$ such that the following diagram
\[\xymatrix@C=2cm{
  (\bfS_{\tau}(\II_{\itLamb}), \id, \gamma_{\xi}) \ar[r]^{\expaP} \ar[d]_{\subseteq}
& (\w{\kk[\pmb{X}]}, \id, \w{\gamma_{\xi}}|_{\w{\kk[\pmb{X}]}}) \\
  (\w{\bfS_{\tau}(\II_{\itLamb})}, \id, \w{\gamma}_{\xi}) \ar[ru]_{\w{\expaP}}
& }\]
commutes, where $\w{\expaP}$ is the unique extension of $\expaP$ lying in
$\Hom_{\scrA^1}((\w{\bfS_{\tau}(\II_{\itLamb})}, \id, \w{\gamma}_{\xi}),$ $(\w{\kk[\pmb{X}]},$ $\id, \w{\gamma_{\xi}}|_{\w{\kk[\pmb{X}]}}))$.
\end{corollary}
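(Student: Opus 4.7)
The plan is to apply Theorem \ref{thm:main1}\point~(together with Remark \ref{rmk:main1}\point) to the target triple $(\w{\kk[\pmb{X}]}, \id, \w{\gamma_{\xi}}|_{\w{\kk[\pmb{X}]}})$. Once this triple is confirmed to be an object of $\scrA^1$, the $\scrA^1$-initial-object status of $(\bfS_{\tau}(\II_{\itLamb}), \id, \gamma_{\xi})$ in $\scrN^1$ immediately supplies both the existence and the uniqueness of $\expaP$, its $\scrA^1$-extension $\w{\expaP}$, and the commutativity of the triangle, leaving only the interpretation of $\w{\expaP}(f)$ as an inverse limit of polynomials.

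First I would check that $\w{\kk[\pmb{X}]}$ is a Banach $\itLamb$-module: Lemma \ref{lemm:power series}\point~endows $\kk[\pmb{X}]$ with the norm (\ref{formula: poly norm 1}), and the construction of Subsection \ref{subsect:normed mod.2}\point~promotes its completion to a Banach $\itLamb$-module. The norm bound on the distinguished element is immediate, since under $p=1$ we have $\Vert \id \Vert = (\scrA^1)\int_{\II_{\itLamb}} |\id|\dd\mu = \mu(\II_{\itLamb})$.

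The substantive step---and the one I expect to be the main obstacle---is showing that $\w{\gamma_{\xi}}|_{\w{\kk[\pmb{X}]}}$ actually defines a $\itLamb$-homomorphism from $\w{\kk[\pmb{X}]}^{\oplus_1 2^n}$ to $\w{\kk[\pmb{X}]}$, for the simple reason that for polynomials $P_1, \ldots, P_{2^n}$ the function $\gamma_{\xi}(P_1, \ldots, P_{2^n})$ is only piecewise polynomial on the cells $\prod_i \kappa_{\delta_i}(\II)$, not a genuine polynomial. My plan is to exhibit, for each such piecewise polynomial, a Cauchy sequence of genuine polynomials converging in the norm (\ref{formula: poly norm 1}); this reduces to approximating the cell indicators by polynomials in the $L^1$-style norm, which can be achieved by Bernstein-type approximations (or classical Weierstrass density applied to continuous cutoffs of the indicators). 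Combined with continuity of $\w{\gamma_{\xi}}$ on $\w{\bfS_{\tau}(\II_{\itLamb})}$, this closes $\w{\kk[\pmb{X}]}$ under $\w{\gamma_{\xi}}$, and the identity $\w{\gamma_{\xi}}(\id, \ldots, \id) = \id$ then follows from Lemma \ref{lemm:S in Np pre}\point~(a).

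With the target confirmed to lie in $\scrA^1$, Theorem \ref{thm:main1}\point~produces a unique $\w{\expaP}$ in $\Hom_{\scrA^1}((\w{\bfS_{\tau}(\II_{\itLamb})}, \id, \w{\gamma}_{\xi}), (\w{\kk[\pmb{X}]}, \id, \w{\gamma_{\xi}}|_{\w{\kk[\pmb{X}]}}))$; its restriction to $\bfS_{\tau}(\II_{\itLamb})$ gives $\expaP$ in $\scrN^1$, and Remark \ref{rmk:main1}\point~ensures the triangle commutes. For any $f \in \w{\bfS_{\tau}(\II_{\itLamb})}$ we have $\w{\expaP}(f) \in \w{\kk[\pmb{X}]}$, and unfolding the construction of the completion in Subsection \ref{subsect:normed mod.2}\point~represents this element as $\underleftarrow{\lim} P_i$ for some Cauchy sequence $\{P_i\}_{i \in \NN}$ of polynomials, which is precisely the approximation statement.
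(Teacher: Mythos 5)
Your proposal follows essentially the same route as the paper: confirm that $(\w{\kk[\pmb{X}]}, \id, \w{\gamma_{\xi}}|_{\w{\kk[\pmb{X}]}})$ is an object of $\scrA^1$ via Lemma \ref{lemm:power series}\point~and the completion construction, then invoke Theorem \ref{thm:main1}\point~and Remark \ref{rmk:main1}\point~to get existence, uniqueness, and the commuting triangle. In fact you are more careful than the paper on the one substantive point --- that $\gamma_{\xi}$ of a tuple of polynomials is only piecewise polynomial, so closure of $\w{\kk[\pmb{X}]}$ under $\w{\gamma_{\xi}}$ requires a density argument in the norm (\ref{formula: poly norm 1}) --- which the paper asserts without comment; your Bernstein/Weierstrass approximation step correctly fills that in.
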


The above corollary shows that for any function $f \in \w{\bfS_{\tau}(\II_{\itLamb})}$,
there exists a sequence $\{P_i\}_{i\in\NN}$ of polynomials such that
\[ \w{\expaP}(f) = \underleftarrow{\lim} P_i \in \w{\kk[\pmb{X}]} \subseteq \kk[[\pmb{X}]]. \]
This formula is called a {\defines power series expansion} of $f$.

\begin{remark} \rm
In the case for $N=2n$, if $\kk[\pmb{X}] = \kk[Y_j, Y_j^{-1} \mid 1\le j \le n]$,
where $X_u = Y_u$ holds for any $1\le u\le n$, and $X_{n+v}=Y_v^{-1}$ holds for any $1\le v\le n$,
then we can obtain the Laurent series of analytic functions.
\end{remark}

\subsection{Realizing Fourier series expansions of functions as morphisms in $\scrA^p$} \label{subsubsect:Fourier series}
Consider the case for $N=2n$ and $\kk=\CC$ in this part.
Let $\tri$ be the $\CC$-linear map
\begin{center}
  $\tri: \CC[\pmb{X}] \to \CC[\power^{\pm2\pi\rmi\pmb{X}}]
    := \CC[\power^{\pm 2\pi \rmi X_j}\mid 1\le j\le n]$
\end{center}
induced by
\begin{align}\label{formula:Fourier}
  X_j \mapsto
\begin{cases}
 \power^{ 2\pi \rmi X_j}, & \text{if  } 1\le j\le n; \\
 \power^{-2\pi \rmi X_j}, & \text{if  } n+1\le j\le 2n,
\end{cases}
\end{align}
and define the map
\begin{align}\label{formula: poly norm 2}
  \Vert \cdot \Vert: \CC[\pmb{X}] \to \RR^{\ge 0}, \
   P \mapsto \left((\scrA^p)\int_{\II_{\itLamb}}|\tri(P)|^p\dd\Leb \right)^{\frac{1}{p}}.
\end{align}

\begin{lemma} \label{lemm:Fourier series}
The $\CC$-linear map $\tri$ is a $\itLamb$-isomorphism,
and $\CC[\pmb{X}] \cong \CC[\power^{\pm2\pi\rmi\pmb{X}}]$ with the map (\ref{formula: poly norm 2}) is a normed left $\itLamb$-module.
\end{lemma}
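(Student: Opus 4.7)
The plan is to verify the two assertions separately. Recall from the remark at the end of Subsection \ref{subsubsect:power series} that, in the case $N=2n$, the source $\CC[\pmb{X}]$ is to be interpreted as the Laurent polynomial ring $\CC[Y_1^{\pm 1},\ldots,Y_n^{\pm 1}]$ with $X_j = Y_j$ for $1\le j\le n$ and $X_{n+j}=Y_j^{-1}$ for $1\le j\le n$. Both $\CC[\pmb{X}]$ and $\CC[\power^{\pm 2\pi\rmi\pmb{X}}]$ carry the $\itLamb$-action $a\cdot P = \tau(a)P$ inherited from scalar multiplication in $\CC$.

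For $\tri$ to be a $\itLamb$-isomorphism, the $\itLamb$-linearity is automatic: any $\CC$-linear map commutes with multiplication by $\tau(a)\in\CC$, so $\tri(a\cdot P)=\tau(a)\tri(P)=a\cdot\tri(P)$. Surjectivity follows directly from (\ref{formula:Fourier}), since the algebra generators $\power^{\pm 2\pi\rmi X_j}$ of the target are exactly the images of $X_j$ and $X_{n+j}$. For injectivity, observe that $\tri$ respects the defining relation $Y_jY_j^{-1}=1$ of the Laurent source, mapping it to the identity $\power^{2\pi\rmi X_j}\cdot\power^{-2\pi\rmi X_j}=1$; thus $\tri$ reduces to the standard identification of Laurent polynomial rings sending the basis monomial $Y_1^{m_1}\cdots Y_n^{m_n}$ (with $m\in\ZZ^n$) to the character $\pmb{X}\mapsto\power^{2\pi\rmi\langle m,\pmb{X}\rangle}$, which is a $\CC$-linear bijection.

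To show that (\ref{formula: poly norm 2}) is a $\tau$-norm, non-negativity and positive homogeneity are inherited from $|\cdot|$ and the Lebesgue integral, while the triangle inequality follows from Minkowski's inequality applied to $P\mapsto(\int|\tri(P)|^p\dd\Leb)^{1/p}$ together with the $\CC$-linearity of $\tri$. The compatibility $\Vert a\cdot P\Vert = \Vert\tau(a)P\Vert = |\tau(a)|\cdot\Vert P\Vert$ then follows by pulling $|\tau(a)|^p$ outside the integral. For positive-definiteness one must show $\Vert P\Vert = 0$ implies $P=0$: if $\int_{\II_{\itLamb}}|\tri(P)|^p\dd\Leb=0$ then $\tri(P)=0$ almost everywhere on $\II_{\itLamb}$, and since $\tri(P)$ is a finite $\CC$-linear combination of the characters $\pmb{X}\mapsto\power^{2\pi\rmi\langle m,\pmb{X}\rangle}$ and hence real-analytic in the coordinates, almost-everywhere vanishing forces $\tri(P)\equiv 0$, whence $P=0$ by the injectivity established above.

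The main obstacle is the positive-definiteness step. Once the Laurent interpretation of the source is granted, the algebraic pieces are formal, but one needs the analytic input that a nonzero exponential polynomial in $\power^{\pm 2\pi\rmi X_j}$ cannot vanish on a set of positive Lebesgue measure in $\II_{\itLamb}$. This is equivalent to the classical linear independence of distinct characters $\pmb{X}\mapsto\power^{2\pi\rmi\langle m,\pmb{X}\rangle}$ ($m\in\ZZ^n$) in $L^p(\II_{\itLamb})$, and it is precisely this fact that links the algebraic isomorphism $\tri$ to the Fourier-analytic application in Corollary \ref{coro:Fourier series}.
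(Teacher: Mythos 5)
Your proof is correct and follows essentially the same route as the paper's (verify $\itLamb$-linearity via $\tau$, check bijectivity of $\tri$ on generators, and transport the integral norm), but you additionally supply two points the paper leaves implicit: that $\CC[\pmb{X}]$ must be read as the Laurent ring $\CC[Y_j^{\pm1}]$ for $\tri$ to be injective, and that positive-definiteness of (\ref{formula: poly norm 2}) rests on the linear independence of the characters $\power^{2\pi\rmi\langle m,\pmb{X}\rangle}$ (equivalently, that a nonzero trigonometric polynomial cannot vanish on a set of positive measure). Both additions are accurate and genuinely needed, since the paper's own proof dismisses the isomorphism as ``trivial'' and defers the norm axioms to Lemma \ref{lemm:power series}.
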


\begin{proof}
It is trivial that $\tri$ is a $\CC$-linear isomorphism by (\ref{formula:Fourier}).
Thus, the assertion that $\tri$ is a $\itLamb$-isomorphism follows from the fact that the following formula
\[ \tri(a\cdot P) = \tri(\tau(a)P) = \tau(a)\tri(P) = a\cdot \tri(P) \]
holds for any $a\in \itLamb$.
Furthermore, we can prove that the polynomial ring $\kk[\pmb{X}]$ with
the map (\ref{formula: poly norm 2}) is a normed left $\itLamb$-module
by the way similar to that in Lemma \ref{lemm:power series}.
\end{proof}

Next, by Lemma \ref{lemm:Fourier series}\point, we obtain that
\[(\w{\CC[\pmb{X}]}, \id,
    \w{\gamma_{\xi}}|_{\w{\CC[\power^{\pm2\pi\rmi\pmb{X}}] }}
         )
  \cong
  (\w{\CC[\power^{\pm2\pi\rmi\pmb{X}}] }, \id,
    \w{\gamma_{\xi}}|_{\w{\CC[\power^{\pm2\pi\rmi\pmb{X}}] }}
          )
  \]
is an object in $\scrA^p$. Then the following corollary follows from Theorem \ref{thm:main2}.

\begin{corollary} \label{coro:Fourier series}
There exists a unique morphism
\[ \expaF: (\bfS_{\tau}(\II_{\itLamb}), \id, \gamma_{\xi})
\to (\w{\CC[\power^{\pm2\pi\rmi\pmb{X}}]}, \id,
     \w{\gamma_{\xi}}|_{\w{\CC[\power^{\pm2\pi\rmi\pmb{X}}] }}) \]
in $\Hom_{\scrN^1}((\bfS_{\tau}(\II_{\itLamb}), \id, \w{\gamma}_{\xi}),
(\w{\CC[\power^{\pm2\pi\rmi\pmb{X}}] }, \id, \w{\gamma_{\xi}}|_{\w{\CC[\power^{\pm2\pi\rmi\pmb{X}}] }}) )$
such that the following diagram
\[\xymatrix@C=2cm{
  (\bfS_{\tau}(\II_{\itLamb}), \id, \gamma_{\xi}) \ar[r]^{\expaF} \ar[d]_{\subseteq}
& (\w{\CC[\power^{\pm2\pi\rmi\pmb{X}}] }, \id, \w{\gamma_{\xi}}|_{\w{\CC[\power^{\pm2\pi\rmi\pmb{X}}] }}) \\
  (\w{\bfS_{\tau}(\II_{\itLamb})}, \id, \w{\gamma}_{\xi}) \ar[ru]_{\w{\expaF}}
& }\]
commutes, where $\w{\expaF}$ is the unique extension of $\expaF$ lying in
$\Hom_{\scrA^1}((\w{\bfS_{\tau}(\II_{\itLamb})}, \id, \w{\gamma}_{\xi}),$
$(\w{\CC[\power^{\pm2\pi\rmi\pmb{X}}] },$ $\id, \w{\gamma_{\xi}}|_{\w{\CC[\power^{\pm2\pi\rmi\pmb{X}}] }}))$.
\end{corollary}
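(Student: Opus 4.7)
The plan is to obtain this corollary as a direct instantiation of Theorem \ref{thm:main1} (together with Remark \ref{rmk:main1}), applied to the specific object
\[ (N,v,\delta) \;=\; \bigl(\w{\CC[\power^{\pm2\pi\rmi\pmb{X}}]},\,\id,\,\w{\gamma_{\xi}}|_{\w{\CC[\power^{\pm2\pi\rmi\pmb{X}}]}}\bigr). \]
Once I show this triple lies in $\scrA^1$, Theorem \ref{thm:main1} hands me a unique morphism $\w{\expaF}$ from the $\scrA^1$-initial object $(\w{\bfS_{\tau}(\II_{\itLamb})}, \id, \w{\gamma}_{\xi})$, and setting $\expaF := \w{\expaF}|_{\bfS_{\tau}(\II_{\itLamb})}$ gives the unique morphism in $\scrN^1$ whose completion is $\w{\expaF}$, which is exactly what the diagram in the statement asserts.

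First I would verify that $\w{\CC[\power^{\pm2\pi\rmi\pmb{X}}]}$ is a Banach $\itLamb$-module in the sense of Definition \ref{def:Banach mods}. This is immediate: Lemma \ref{lemm:Fourier series} gives that $\CC[\power^{\pm2\pi\rmi\pmb{X}}]$ is a normed $\itLamb$-module under the norm (\ref{formula: poly norm 2}) transported through the $\itLamb$-isomorphism $\tri$, and completing yields a Banach $\itLamb$-module (here $\itLamb$ is tacitly taken complete, so Subsection \ref{subsect:normed mod.2} applies). Next I would check the two data of the triple: that $\Vert\id\Vert \le \mu(\II_{\itLamb})$, and that $\w{\gamma_{\xi}}|_{\w{\CC[\power^{\pm2\pi\rmi\pmb{X}}]}}$ sends the diagonal tuple $(\id,\ldots,\id)$ to $\id$. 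For $p=1$, the norm $(\ref{formula: poly norm 2})$ applied to the constant polynomial $1$ gives exactly $\mu(\II_{\itLamb})$ by item (1) of Theorem \ref{thm:B}; and the fixed-point condition $\w{\gamma_{\xi}}(\id,\ldots,\id)=\id$ is inherited from Lemma \ref{lemm:S in Np pre}(a).

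With the triple confirmed as an object of $\scrA^1$, Theorem \ref{thm:main1} yields existence and uniqueness of $\w{\expaF}$ in $\Hom_{\scrA^1}$. Defining $\expaF := \w{\expaF}\circ\emb$ with $\emb:\bfS_{\tau}(\II_{\itLamb})\hookrightarrow\w{\bfS_{\tau}(\II_{\itLamb})}$, the commutativity of the triangle
\[ \xymatrix@C=1.6cm{
  (\bfS_{\tau}(\II_{\itLamb}), \id, \gamma_{\xi}) \ar[r]^{\expaF} \ar[d]_{\subseteq}
& (\w{\CC[\power^{\pm2\pi\rmi\pmb{X}}] }, \id, \w{\gamma_{\xi}}|_{\w{\CC[\power^{\pm2\pi\rmi\pmb{X}}] }}) \\
  (\w{\bfS_{\tau}(\II_{\itLamb})}, \id, \w{\gamma}_{\xi}) \ar[ru]_{\w{\expaF}}
& }\]
is tautological, and uniqueness of $\expaF$ in $\Hom_{\scrN^1}$ (among morphisms admitting such an extension) follows from the uniqueness clause of Remark \ref{rmk:main1}. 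Analogously to Corollary \ref{coro:power series}, the conclusion on Fourier-series approximation follows: any $f\in\w{\bfS_{\tau}(\II_{\itLamb})}$ is sent by $\w{\expaF}$ to a limit $\underleftarrow{\lim}P_i$ of trigonometric polynomials $P_i\in\CC[\power^{\pm2\pi\rmi\pmb{X}}]$, since $\w{\CC[\power^{\pm2\pi\rmi\pmb{X}}]}$ is by construction the completion of such polynomials.

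The main obstacle I foresee is not the initial-object machinery, which runs identically to the proof of Corollary \ref{coro:power series}, but rather the verification that $\w{\gamma_{\xi}}$ actually restricts to an $\itLamb$-endomorphism of $\w{\CC[\power^{\pm2\pi\rmi\pmb{X}}]}{}^{\oplus_1 2^n}\to\w{\CC[\power^{\pm2\pi\rmi\pmb{X}}]}$. A juxtaposition of trigonometric polynomials is certainly not a trigonometric polynomial; what saves the argument is that the image lies inside $\w{\bfS_{\tau}(\II_{\itLamb})}$, and the density of trigonometric polynomials in $L^1$-type completions (a classical approximation result, also reflected in the Stone--Weierstrass statement of Subsection \ref{subsect:S-W thm}) identifies $\w{\CC[\power^{\pm2\pi\rmi\pmb{X}}]}$ with a $\w{\gamma_{\xi}}$-closed subspace of $\w{\bfS_{\tau}(\II_{\itLamb})}$. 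Carefully justifying this closedness---equivalently, that juxtaposing Cauchy sequences of trigonometric polynomials produces a Cauchy sequence still in the closure of trigonometric polynomials---is the only non-formal step; everything else is a mechanical application of the universal property established in Theorem \ref{thm:main1}.
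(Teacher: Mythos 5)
Your proposal is correct and follows essentially the same route as the paper: Lemma \ref{lemm:Fourier series} shows the target triple is an object of $\scrA^1$, and the unique morphism then comes from the universal property of the $\scrA^p$-initial object (Theorem \ref{thm:main1} and Remark \ref{rmk:main1}, which is how the paper's Theorem \ref{thm:main2} is obtained). Your extra care about why $\w{\gamma_{\xi}}$ restricts to $\w{\CC[\power^{\pm2\pi\rmi\pmb{X}}]}$ --- resolved by density of trigonometric polynomials, so that this completion coincides with $\w{\bfS_{\tau}(\II_{\itLamb})}$ --- is a point the paper leaves implicit and only makes explicit later in Example \ref{examp:Fourier}.
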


The above corollary shows that for any function $f \in \w{\bfS_{\tau}(\II_{\itLamb})}$,
there exists a sequence $\{P_i\}_{i\in\NN}$ of triangulated polynomials such that
\[ \w{\expaF}(f) = \underleftarrow{\lim} P_i \in \w{\CC[\power^{\pm2\pi\rmi\pmb{X}}] }. \]
This formula is called a {\defines Fourier series expansion} of $f$.

\subsection{\texorpdfstring{Stone$-$Weierstrass Theorem in $\scrA^p$}{}} \label{subsect:S-W thm}

Let $\bfW_0$ be a normed left $\itLamb$-module generated by some functions lying in $\w{\bfS_{\tau}(\II_{\itLamb})}$
such that $\w{\bfW_0}$ and $\w{\bfS_{\tau}(\II_{\itLamb})}$, as left $\itLamb$-modules, are isomorphic preserving $\id$.
For any $u\in \NN$, define
\[ \bfW_u = \{ \w{\gamma}_{\xi}|_{\bfW_{u-1}}(\pmb{f})
   \mid \pmb{f}=(f_1,\cdots, f_{2^n})
      \in \bfW_{u-1}^{\oplus_p 2^n} \}. \]
Then we obtain a family of canonical embedding
\[ \bfW_0 \To{\subseteq} \bfW_1 \To{\subseteq} \cdots \To{\subseteq} \bfW_u \To{\subseteq} \cdots \ (\subseteq \w{\bfS_{\tau}(\II_{\itLamb})}), \]
which induced a direct limits
\[ \underrightarrow{\lim} \bfW_u =: \bfW. \]

\begin{lemma} \label{lemma:SW1}
For any complete extension $\bfW_{\ext}$ of $\bfW$, i.e., the Banach $\itLamb$-module satisfying $\bfW\subseteq \bfW_{\ext}$,
there exists a $\itLamb$-monomorphism
\[ \w{\expaSW}: \w{\bfS_{\tau}(\II_{\itLamb})} \to \bfW_{\ext} \]
between two left $\itLamb$-modules $\w{\bfS_{\tau}(\II_{\itLamb})}$ and $\bfW$
such that $\expaSW(\id) = \id$ holds in the case for $\id\in\bfW$.
\end{lemma}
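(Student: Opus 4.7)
The plan is to construct $\w{\expaSW}$ as the composition of the given $\itLamb$-isomorphism $\w{\bfS_{\tau}(\II_{\itLamb})} \cong \w{\bfW_0}$ with a canonical isometric embedding $\w{\bfW_0} \hookrightarrow \bfW_{\ext}$. Intuitively, everything in sight embeds into $\bfW_{\ext}$, so once we realize the completion $\w{\bfW_0}$ as the closure of $\bfW_0$ inside $\bfW_{\ext}$, the hypothesized isomorphism $\w{\bfW_0} \cong \w{\bfS_{\tau}(\II_{\itLamb})}$ transports $\w{\bfS_{\tau}(\II_{\itLamb})}$ into $\bfW_{\ext}$.

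First, invert the given isomorphism to obtain a $\itLamb$-isomorphism $\phi: \w{\bfS_{\tau}(\II_{\itLamb})} \to \w{\bfW_0}$ sending $\id$ to $\id$. Next, note that $\bfW_0 \subseteq \bfW \subseteq \bfW_{\ext}$ by construction, and by the intended reading of ``complete extension'' the inclusion $\bfW_0 \hookrightarrow \bfW_{\ext}$ is an isometric $\itLamb$-embedding of normed $\itLamb$-modules into a Banach $\itLamb$-module. The universal property of completion then yields a unique isometric $\itLamb$-monomorphism $\iota: \w{\bfW_0} \to \bfW_{\ext}$ extending this inclusion, whose image is the closure of $\bfW_0$ in $\bfW_{\ext}$.

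Define $\w{\expaSW} := \iota \circ \phi$. As the composition of a $\itLamb$-isomorphism with an injective $\itLamb$-homomorphism it is a $\itLamb$-monomorphism $\w{\bfS_{\tau}(\II_{\itLamb})} \to \bfW_{\ext}$. For the compatibility with $\id$, we have $\phi(\id) = \id \in \w{\bfW_0}$ by the hypothesis on $\phi$, and $\iota$ sends this class to the limit in $\bfW_{\ext}$ of any representative Cauchy sequence $\{f_n\} \subseteq \bfW_0$ of $\id \in \w{\bfW_0}$. Whenever $\id \in \bfW$, the same sequence $\{f_n\}$ converges to $\id$ inside $\bfW_{\ext}$ (because the inclusion is isometric), so $\iota(\id_{\w{\bfW_0}}) = \id$ and thus $\w{\expaSW}(\id) = \id$.

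The main technical obstacle is the compatibility of norms: one must ensure that the norm on $\bfW_0$ used to form $\w{\bfW_0}$ agrees with the norm it inherits as a subset of $\bfW_{\ext}$. This is precisely what ``$\bfW_{\ext}$ is a complete extension of $\bfW$'' should mean in the paper's framework, namely an isometric $\itLamb$-module inclusion into a Banach $\itLamb$-module; once this is fixed, the universal property of completion supplies $\iota$ canonically and the remaining verifications (injectivity, $\itLamb$-linearity, and tracking of $\id$) are routine.
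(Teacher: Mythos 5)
Your proposal is correct, but it reaches the conclusion by a genuinely different route than the paper. The paper's proof builds the chain $\w{\bfS_{\tau}(\II_{\itLamb})} \cong \w{\bfW}_0 \subseteq \w{\bfW}_u \subseteq \w{\bfW} \subseteq \w{\bfS_{\tau}(\II_{\itLamb})}$, uses this sandwich to identify $\w{\bfS_{\tau}(\II_{\itLamb})} \cong \bfW$ (with $\bfW = \w{\bfW}$), and then takes the composition $\w{\bfS_{\tau}(\II_{\itLamb})} \To{\cong} \bfW \To{\subseteq} \bfW_{\ext}$. You instead bypass the identification of $\bfW$ itself: you compose the hypothesized $\id$-preserving isomorphism $\w{\bfS_{\tau}(\II_{\itLamb})} \cong \w{\bfW_0}$ with the canonical extension $\iota: \w{\bfW_0} \to \bfW_{\ext}$ of the isometric inclusion $\bfW_0 \hookrightarrow \bfW_{\ext}$, obtained from the universal property of completion. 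This is more economical and, notably, avoids the Schr\"oder--Bernstein-style step in the paper (concluding $\w{\bfW} \cong \w{\bfS_{\tau}(\II_{\itLamb})}$ from a sandwich of inclusions), which is the least self-evident part of the paper's argument. What your route gives up is the stronger identification $\bfW \cong \w{\bfS_{\tau}(\II_{\itLamb})}$, which the paper implicitly leans on afterwards (e.g., the invertibility of $\expaSW^{\oplus 2^n}$ in the construction of $\w{\gamma}_{\xi\ext}$ and the formulation of the Stone--Weierstrass corollary with target $\bfW$); your map's image is only the closure of $\bfW_0$ in $\bfW_{\ext}$, which suffices for the lemma as stated but not automatically for those later uses. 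Your attention to the isometry requirement on the inclusion $\bfW_0 \hookrightarrow \bfW_{\ext}$ is a point the paper glosses over and is worth keeping.
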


\begin{proof}
Since $\bfW_i \subseteq \bfW_j \subseteq \bfW$ for any $i, j\in\NN$ with $i\le j$, we have
$\w{\bfW}_i \subseteq \w{\bfW}_j \subseteq \w{\bfW}$.
On the other hand, $\bfW \subseteq \w{\bfS_{\tau}(\II_{\itLamb})}$ yields $\w{\bfW} \subseteq \w{\bfS_{\tau}(\II_{\itLamb})}.$
It follows that
\[ \w{\bfS_{\tau}(\II_{\itLamb})} \cong \w{\bfW}_0 \subseteq \w{\bfW}_u \subseteq \w{\bfW} \subseteq  \w{\bfS_{\tau}(\II_{\itLamb})}.\]
Therefore, we get a $\itLamb$-isomorphism $\w{\bfS_{\tau}(\II_{\itLamb})} \cong \bfW$ ($=\w{\bfW}$)
since the isomorphism $\w{\bfS_{\tau}(\II_{\itLamb})} \cong \w{\bfW}_0$ preserves $\id$.
The composition
\[ \w{\bfS_{\tau}(\II_{\itLamb})} \To{\cong} \bfW \To{\subseteq} \bfW_{\ext} \]
is the desired $\itLamb$-monomorphism.
\end{proof}

\begin{lemma} \label{lemma:SW2}
There exists a $\itLamb$-homomorphism $\w{\gamma}_{\xi\ext}: \bfW_{\ext}^{\oplus_p 2^n} \to \bfW_{\ext}$
such that the following diagram
\[
\xymatrix{
   \w{\bfS_{\tau}(\II_{\itLamb})}^{\oplus_p 2^n}
   \ar[r]^{\w{\gamma}_{\xi}}
   \ar[d]_{\expaSW^{\oplus 2^n}}
 & \w{\bfS_{\tau}(\II_{\itLamb})}
   \ar[d]^{\expaSW}
 \\
   \bfW_{\ext}^{\oplus_p 2^n}
   \ar[r]_{\w{\gamma}_{\xi\ext}}
 & \bfW_{\ext}
}
\]
commutes and $\w{\gamma}_{\xi\ext}(\id, \ldots, \id) = \id$ holds.
\end{lemma}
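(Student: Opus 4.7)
The plan is to build $\w{\gamma}_{\xi\ext}$ in two stages: first transport $\w{\gamma}_{\xi}$ across the embedding $\w{\expaSW}$ from Lemma \ref{lemma:SW1}\point~to obtain a $\itLamb$-homomorphism on $\bfW^{\oplus_p 2^n}$, and then extend this to $\bfW_{\ext}^{\oplus_p 2^n}$ as a (not necessarily continuous) $\itLamb$-homomorphism.

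Writing $\iota := \w{\expaSW}$, the proof of Lemma \ref{lemma:SW1}\point~shows that $\iota$ factors as a $\itLamb$-isomorphism $\w{\bfS_{\tau}(\II_{\itLamb})} \cong \bfW$ composed with the inclusion $\bfW \hookrightarrow \bfW_{\ext}$, so $\iota^{-1}$ is well-defined on $\bfW$. I will set
\[
\gamma_{\bfW} := \iota \circ \w{\gamma}_{\xi} \circ (\iota^{-1})^{\oplus 2^n} : \bfW^{\oplus_p 2^n} \longrightarrow \bfW,
\]
which is a $\itLamb$-homomorphism that, by construction, makes the required square of the lemma commute once the codomain is corestricted to $\bfW$; moreover $\gamma_{\bfW}(\id, \ldots, \id) = \iota(\w{\gamma}_{\xi}(\id, \ldots, \id)) = \iota(\id) = \id$, using Lemma \ref{lemm:S in Np pre}\point~(a) together with the identity-preservation clause of Lemma \ref{lemma:SW1}\point.

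To extend $\gamma_{\bfW}$ to $\bfW_{\ext}^{\oplus_p 2^n}$ as a $\itLamb$-homomorphism, I will run a Zorn's lemma argument on the poset of pairs $(N, g)$ in which $\bfW \subseteq N \subseteq \bfW_{\ext}$ is an intermediate $\itLamb$-submodule and $g: N^{\oplus 2^n} \to \bfW_{\ext}$ is a $\itLamb$-homomorphism extending $\gamma_{\bfW}$, ordered by componentwise extension. Chains have upper bounds given by the union. A maximal element must have $N = \bfW_{\ext}$: otherwise, picking $x \in \bfW_{\ext} \setminus N$, I enlarge $N$ to $N + \itLamb x$ and extend $g$ by sending the new generator to $0$ in each of the $2^n$ coordinates, contradicting maximality.

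I expect the main technical point to be the single-step enlargement: one must verify that sending the new generator to $0$ is consistent with the annihilator relations that present the cyclic extension $N + \itLamb x$. Since $0 \in \bfW_{\ext}$ is annihilated by every element of $\itLamb$, this reduces to the trivial identity $\mathrm{Ann}(x) \cdot 0 = 0$, so the Zorn argument goes through and yields $\w{\gamma}_{\xi\ext}$. Commutativity of the square is then inherited from $\gamma_{\bfW}$, and $\w{\gamma}_{\xi\ext}(\id, \ldots, \id) = \id$ holds because $(\id, \ldots, \id) \in \bfW^{\oplus_p 2^n}$ and the extension restricts to $\gamma_{\bfW}$ there.
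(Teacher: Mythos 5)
Your first stage is exactly the paper's proof: the paper defines $\w{\gamma}_{\xi\ext} := \expaSW \compos \w{\gamma}_{\xi} \compos (\expaSW^{\oplus 2^n})^{-1}$ and stops there, leaving implicit that $(\expaSW^{\oplus 2^n})^{-1}$ is defined only on the image $\bfW^{\oplus_p 2^n}$ rather than on all of $\bfW_{\ext}^{\oplus_p 2^n}$. You correctly identify this domain issue and try to repair it with a Zorn extension; note, though, that in Corollary \ref{coro:S-W approx} the target object is $(\bfW, \id, \w{\gamma_{\xi\dagger}})$, so only the restriction to $\bfW^{\oplus_p 2^n}$ is ever used, which is presumably why the paper is content with the bare conjugation formula.

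The Zorn step itself has a gap as written. When you enlarge $N$ to $N+\itLamb x$ and send the new generator of each coordinate to $0$, well-definedness is governed not by $\mathrm{Ann}(x)$ but by the conductor $\mathfrak{c}=\{\lambda\in\itLamb \mid \lambda x\in N\}$: for $\lambda\in\mathfrak{c}$ the element $(0,\ldots,\lambda x,\ldots,0)$ already lies in $N^{\oplus 2^n}$ and has the value $g(0,\ldots,\lambda x,\ldots,0)$, while your extension forces this value to be $\lambda\cdot 0=0$. So you need $g$ to vanish on $N^{\oplus 2^n}\cap(\itLamb x)^{\oplus 2^n}$, which is not automatic; for instance at the first step $g=\gamma_{\bfW}$ is injective on $\bfW^{\oplus_p 2^n}$, so any nonzero $\lambda x\in N$ with $x\notin N$ defeats the extension by zero. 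This is the classical obstruction to extending module homomorphisms (as in Baer's criterion), and in general one must instead choose the image of the new generator to be a suitably divisible element, which need not exist. The repair available in this paper's setting is that the $\itLamb$-action on every module in sight factors through $\tau:\itLamb\to\kk$, so $\itLamb x=\kk x$ and $N\cap\kk x=0$ because $N$ is a $\kk$-subspace not containing $x$; then $\mathfrak{c}x=0$ and your ``annihilator only'' claim becomes correct. You should either impose this hypothesis on $\bfW_{\ext}$ explicitly, or weaken the conclusion to a homomorphism defined on $\bfW^{\oplus_p 2^n}$, as the paper implicitly does.
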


\begin{proof}
The composition $\w{\gamma}_{\xi\ext} := \expaSW \compos \w{\gamma}_{\xi} \compos (\expaSW^{\oplus 2^n})^{-1}$ is the desired $\itLamb$-homomorphism.
\end{proof}

\begin{corollary}[Stone$-$Weierstrass Approximation Theorem] \label{coro:S-W approx}
There exists a unique morphism
\[ \expaSW: (\bfS_{\tau}(\II_{\itLamb}), \id, \gamma_{\xi})
\to (\bfW, \id, \w{\gamma_{\xi\dagger}}) \]
in $\Hom_{\scrN^1}((\bfS_{\tau}(\II_{\itLamb}), \id, \w{\gamma}_{\xi}),
(\bfW, \id, \w{\gamma_{\xi\dagger}}) )$ such that the diagram
\[\xymatrix@C=2cm{
  (\bfS_{\tau}(\II_{\itLamb}), \id, \gamma_{\xi}) \ar[r]^{\expaSW} \ar[d]_{\subseteq}
& (\bfW, \id, \w{\gamma_{\xi\dagger}}) \\
  (\w{\bfS_{\tau}(\II_{\itLamb})}, \id, \w{\gamma}_{\xi}) \ar[ru]_{\w{\expaSW}}
& }\]
commutes, where $\w{\expaSW}$ is the unique extension of $\expaSW$ lying in $\Hom_{\scrA^1}((\w{\bfS_{\tau}(\II_{\itLamb})}, \id, \w{\gamma}_{\xi}),$
$(\bfW, \id, \w{\gamma_{\xi\dagger}}) )$.
\end{corollary}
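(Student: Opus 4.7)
The plan is to verify that the triple $(\bfW, \id, \w{\gamma_{\xi\dagger}})$ is an object of $\scrA^1$ and then invoke Theorem \ref{thm:main1} together with Remark \ref{rmk:main1} to extract the unique morphism. First, I would check the three structural conditions needed for membership in $\scrA^1$. By Lemma \ref{lemma:SW1}, there is a $\itLamb$-isomorphism $\w{\bfS_{\tau}(\II_{\itLamb})} \cong \bfW$ (taking $\bfW_{\ext} = \bfW$, which is legitimate because $\bfW$ equals its own completion by the string of inclusions $\w{\bfS_{\tau}(\II_{\itLamb})} \cong \w{\bfW}_0 \subseteq \w{\bfW} \subseteq \w{\bfS_{\tau}(\II_{\itLamb})}$), so $\bfW$ is a Banach $\itLamb$-module. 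Since the isomorphism preserves $\id$, we have $\id \in \bfW$ with $\Vert \id \Vert = \mu(\II_{\itLamb})$. Finally, Lemma \ref{lemma:SW2} supplies a $\itLamb$-homomorphism $\w{\gamma_{\xi\dagger}}: \bfW^{\oplus_p 2^n} \to \bfW$ satisfying $\w{\gamma_{\xi\dagger}}(\id, \ldots, \id) = \id$.

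Second, I would verify the commutativity condition (\ref{formula:comm}) for $\w{\gamma_{\xi\dagger}}$ with inverse limits. This is where the most care is required: by construction $\w{\gamma_{\xi\dagger}} = \expaSW \compos \w{\gamma}_{\xi} \compos (\expaSW^{\oplus 2^n})^{-1}$, and each factor is a morphism between Banach $\itLamb$-modules preserving inverse limits; in particular $\w{\gamma}_{\xi}$ enjoys (\ref{formula:comm}) by Lemma \ref{lemm:S in Np}, and the $\itLamb$-isomorphism $\expaSW$ commutes with $\underleftarrow{\lim}$ trivially. Composing these gives the required compatibility.

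Third, with $(\bfW, \id, \w{\gamma_{\xi\dagger}})$ established as an object of $\scrA^1$, I would apply Theorem \ref{thm:main1} with $(V, v, \delta) := (\bfW, \id, \w{\gamma_{\xi\dagger}})$. This furnishes a unique morphism
\[
\w{\expaSW} \in \Hom_{\scrA^1}\bigl((\w{\bfS_{\tau}(\II_{\itLamb})}, \id, \w{\gamma}_{\xi}), (\bfW, \id, \w{\gamma_{\xi\dagger}})\bigr),
\]
and Remark \ref{rmk:main1} guarantees that its restriction $\expaSW := \w{\expaSW}|_{\bfS_{\tau}(\II_{\itLamb})}$ is the unique morphism in $\scrN^1$ making the required triangle commute.

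The main obstacle will be the verification in Step~2: ensuring that $\w{\gamma_{\xi\dagger}}$ really satisfies the continuity/inverse-limit condition (\ref{formula:comm}) required by the definition of $\scrN^p$, not merely that it is a $\itLamb$-homomorphism preserving $\id$. Once this is handled, the rest is a direct appeal to the universal property already proved in Theorem \ref{thm:main1}; the construction of $\expaSW$ itself is formal, and its uniqueness follows from Lemma \ref{lemm:uniqueness} applied to $(\bfW, \id, \w{\gamma_{\xi\dagger}})$.
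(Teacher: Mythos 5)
Your proposal is correct and follows essentially the same route the paper intends: Lemmas \ref{lemma:SW1} and \ref{lemma:SW2} (with $\bfW_{\ext}=\bfW=\w{\bfW}$) establish that $(\bfW, \id, \w{\gamma_{\xi\dagger}})$ is an object of $\scrA^1$, and the corollary then follows by applying Theorem \ref{thm:main1} and Remark \ref{rmk:main1} to this object. Your extra care in checking the inverse-limit condition (\ref{formula:comm}) for the conjugated map $\w{\gamma_{\xi\dagger}}$ is a point the paper leaves implicit, but it is handled correctly by your factorization argument.
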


\section{Differentiations} \label{sect:deriva}
In this section, let $\scrA^p$ satisfy $\itLamb = \kk$ which is an extension of $\RR$,
and take $\tau = \ident$, $\xi = \frac{1}{2}$, $\mu=\Leb$, $\II_{\itLamb}=[0,1]$, and $\xi=\frac{1}{2}$.
In this case, the initial object of $\scrA^p$ is $(\w{\bfS}, \id, \gamma_{\frac{1}{2}})$,
where $\w{\bfS}=\w{\bfS_{\ident}([0,1])}$.

\subsection{Realizing variable upper limit integration as a morphism in $\scrA^1$}
\label{subsect:u l int}

We recall some works of Leinster in \cite[Section 2]{Lei2023FA}.
Let $C_*([0,1])$ be the set of all continuous functions $F:[0,1]\to \kk$ such that $F(0)=0$, with the sup norm
\[\Vert\cdot\Vert: C_*([0,1]) \to \RR^{\ge 0}, f \mapsto \sup_{x\in[0,1]} |f(x)|. \]
Then the triple $(C_*([0,1]), \ident, \kappa)$ of the $\kk$-module $C_*([0,1])$,
the identity function $\ident(x)=x$, and
the $\kk$-homomorphism $\kappa: C_*([0,1])^{\oplus 2} \to C_*([0,1])$ defined by
\[\kappa(F_1, F_2) =
\begin{cases}
 \frac{1}{2}F_1(2x), & 0\le x\le \frac{1}{2}; \\
 \frac{1}{2}(F_1(1)+F_2(2x-1)) & \frac{1}{2}\le x\le 1
\end{cases}\]
is an object in $\scrA^1$. Then the following proposition, first proved by Mark Meckes, holds.

\begin{proposition}[{\!\!\cite[Proposition 2.4]{Lei2023FA}}]\label{prop-9.1}
There exists a unique morphism
\[\w{T}_{[0,x]} : (\w{\bfS}, \id, \gamma_{\frac{1}{2}}) \to (\w{C_*([0,1])}, \ident, \w{\kappa}) \]
in $\Hom_{\scrA^1}((\w{\bfS}, \id, \gamma_{\frac{1}{2}}), (\w{C_*([0,1])}, \ident, \w{\kappa}))$
sending each function $f \in \w{\bfS}$ to the variable upper limit integration
$\displaystyle F(x) = (\rmL)\int_0^x f\dd\Leb$.
\end{proposition}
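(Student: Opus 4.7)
The plan is to leverage the initiality result (Theorem \ref{thm:main1}) so that existence and uniqueness come for free, and then only verify that the explicit formula $f \mapsto \bigl(x \mapsto (\rmL)\int_0^x f\dd\Leb\bigr)$ describes a morphism in $\scrA^1$ with the stated domain and codomain. First I would note that $(C_*([0,1]), \ident, \kappa)$ is already exhibited as an object of $\scrA^1$ in the paragraph preceding the proposition, so that $(\w{C_*([0,1])}, \ident, \w{\kappa})$ is a legitimate target. Since $(\w{\bfS}, \id, \gamma_{\tfrac{1}{2}})$ is the initial object of $\scrA^1$ (the special case $\itLamb=\kk=\RR$, $\tau=\ident$, $\II_{\itLamb}=[0,1]$, $\xi=\tfrac{1}{2}$ of Theorem \ref{thm:main1}), the $\Hom$-set $\Hom_{\scrA^1}((\w{\bfS}, \id, \gamma_{\tfrac{1}{2}}), (\w{C_*([0,1])}, \ident, \w{\kappa}))$ contains exactly one element. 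All that remains is to identify this element with the variable upper limit integration.

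Next I would define, on the dense subspace $\bfS \subseteq \w{\bfS}$ of elementary simple functions, the map
\[
 T_{[0,x]} : \bfS \to C_*([0,1]), \qquad T_{[0,x]}(f)(x) := (\rmL)\int_0^x f \dd\Leb.
\]
For a step function $f$, the right-hand side is a piecewise-linear continuous function vanishing at $0$, so $T_{[0,x]}(f) \in C_*([0,1])$. Linearity over $\RR = \kk = \itLamb$ is immediate from the linearity of the Lebesgue integral, and $T_{[0,x]}(\id)(x) = \int_0^x 1 \dd\Leb = x = \ident(x)$, which matches the distinguished elements of the two triples.

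The main computation is to verify the commutativity square
\[
\xymatrix@C=1.5cm{
 \bfS^{\oplus_1 2} \ar[r]^{\gamma_{\tfrac{1}{2}}} \ar[d]_{T_{[0,x]}^{\oplus 2}}
& \bfS \ar[d]^{T_{[0,x]}} \\
 C_*([0,1])^{\oplus 2} \ar[r]_{\kappa}
& C_*([0,1]).
}
\]
For $(f_1,f_2) \in \bfS^{\oplus_1 2}$ and $x \in [0,\tfrac{1}{2}]$, the substitution $u=2t$ gives
\[
 T_{[0,x]}(\gamma_{\tfrac{1}{2}}(f_1,f_2))(x)
 = \int_0^{x} f_1(2t)\dd t
 = \tfrac{1}{2} T_{[0,x]}(f_1)(2x),
\]
while for $x \in [\tfrac{1}{2},1]$ a splitting at $\tfrac{1}{2}$ together with the substitution $u=2t-1$ yields $\tfrac{1}{2}\bigl(T_{[0,x]}(f_1)(1) + T_{[0,x]}(f_2)(2x-1)\bigr)$; both cases agree with $\kappa(T_{[0,x]}(f_1), T_{[0,x]}(f_2))(x)$. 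Combined with Proposition \ref{prop:tri inq} applied to $\w T$, one sees that $T_{[0,x]}$ is bounded with respect to the norm $\Vert\cdot\Vert_1$ on $\bfS$ and the sup norm on $C_*([0,1])$ (indeed $\Vert T_{[0,x]}(f)\Vert_\infty \le \Vert f\Vert_1$), so it extends uniquely to a continuous $\itLamb$-homomorphism $\w{T}_{[0,x]}: \w{\bfS} \to \w{C_*([0,1])}$ satisfying $\w{T}_{[0,x]}\circ \w{\gamma}_{\tfrac{1}{2}} = \w{\kappa}\circ \w{T}_{[0,x]}^{\oplus 2}$ by passage to inverse limits as in (\ref{formula:comm}).

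The step I expect to require the most care is not the change-of-variables computation, which is straightforward on elementary simple functions, but rather the verification that the extension to the completion $\w{\bfS}$ remains compatible with $\w\gamma_{\tfrac{1}{2}}$ and still lands in the closed subspace $\w{C_*([0,1])}$; this is where one needs the boundedness estimate and the commutativity of $\w{\gamma}_{\tfrac{1}{2}}$ with inverse limits established in Lemma \ref{lemm:S in Np}. Once this is done, $\w{T}_{[0,x]}$ lies in $\Hom_{\scrA^1}((\w{\bfS}, \id, \gamma_{\tfrac{1}{2}}), (\w{C_*([0,1])}, \ident, \w{\kappa}))$, and by the uniqueness afforded by initiality it must coincide with the morphism provided by Theorem \ref{thm:main1}.
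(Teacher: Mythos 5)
Your proof is correct. Note that the paper itself does not prove this proposition --- it simply imports it from Leinster's paper (Proposition 2.4 there, credited to Meckes), so your argument fills in a proof rather than paralleling one; the route you take (initiality of $(\w{\bfS}, \id, \w{\gamma}_{\frac{1}{2}})$ from Theorem \ref{thm:main1} for existence and uniqueness, then the change-of-variables check that $f\mapsto\int_0^x f\,\dd\Leb$ intertwines $\gamma_{\frac{1}{2}}$ with $\kappa$, preserves the distinguished elements, and satisfies $\Vert T_{[0,x]}(f)\Vert_\infty\le\Vert f\Vert_1$ so that it extends to the completions) is exactly the standard argument used in the cited source.
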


\subsection{Realizing differentiation as a preimage of a morphism in $\scrA^1$}
\label{subsect:deri 1}

It follows from Proposition \ref{prop-9.1}\point~that for any function $F\in \im(\w{T}_{[0,x]})$,
there exists an element $f \in \w{\bfS}$ such that
\begin{itemize}
  \item[(1)] If $F$ is a differentiable function (in the classical sense), then
    \begin{center}
      $\displaystyle \frac{\dd F}{\dd x} = f$ holds for all $x\in [0,1]$.
    \end{center}
    Here, $f$ is seen as a function in some equivalence class lying in $\w{\bfS}$, and, strictly speaking, $\frac{\dd F}{\dd x}$ is an element lying in the equivalence class containing $f$.

  \item[(2)] Otherwise, there exists a function $f$ such that
    \[ \int_0^1 F(x) \phi(x) \dd\Leb =- \int_0^1 f(x)\itPhi(x) \dd\Leb \]
    holds for any differentiable function $\itPhi: [0,1]\to\kk$ (in the classical sense)
    satisfying $\itPhi(0) = \itPhi(1) = 0$.
\end{itemize}
Thus, we can define the weak derivatives for functions lying in $\im(\w{T}_{[0,x]})$
by using the preimage of the $\kk$-homomorphism $\w{T}_{[0,x]}$ as follows.

\begin{definition} \rm
All functions lying in the preimage $\w{T}_{[0,x]}^{-1}(F)$ of $F \in \im(\w{T}_{[0,x]})$
are called {\defines weak derivatives} of $F$, and written $\w{T}_{[0,x]}^{-1}(F(x))$ as $\frac{\dd F}{\dd x}$.
\end{definition}

The following theorem shows that we cannot define the weak derivatives of
a function by using the morphism in $\scrA^1$ starting with $(\w{\bfS},\id,\w{\gamma}_{\frac{1}{2}})$.

\begin{theorem} \label{thm:diff} \
\begin{itemize}
  \item[{\rm(1)}] A morphism in $\Hom_{\scrA^1}((\w{\bfS},\id, \w{\gamma}_{\frac{1}{2}}), (N,v,\delta))$ is zero if and only if $v=0$.
  \item[{\rm(2)}] Furthermore, there is no morphism $D$ in $\scrA^1$ starting with $(\w{\bfS},\id, \w{\gamma}_{\frac{1}{2}})$
  such that $D$ sends any almost everywhere differentiable function $f(x)$ to its weak derivative $\frac{\dd f}{\dd x}$.
\end{itemize}
\end{theorem}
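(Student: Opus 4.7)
My plan is to establish part (1) first by exploiting the initial object property of $(\w{\bfS},\id,\w{\gamma}_{\frac{1}{2}})$, and then deduce part (2) as an immediate consequence.

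For part (1), the forward implication is immediate: any morphism $h$ in $\scrN^p$ satisfies $h(\id)=v$ by the definition of morphisms in $\scrN^p$, so $h=0$ forces $v=0$. For the converse, suppose $v=0$. I would check that the zero $\kk$-linear map $0\colon \w{\bfS}\to N$ is itself a morphism from $(\w{\bfS},\id,\w{\gamma}_{\frac{1}{2}})$ to $(N,0,\delta)$ in $\scrA^1$: indeed $0(\id)=0=v$, and the required square commutes because $\delta\circ 0^{\oplus 2}=\delta(0,\ldots,0)=0$ by $\kk$-linearity of $\delta$ (noting $\delta(v,\ldots,v)=v=0$). The continuity condition (\ref{formula:comm}) is also trivially preserved by the zero map. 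Theorem \ref{thm:main1} together with Remark \ref{rmk:main1} identifies $(\w{\bfS},\id,\w{\gamma}_{\frac{1}{2}})$ as the initial object of $\scrA^1$, so the morphism into $(N,0,\delta)$ is unique and must coincide with this zero map; hence $h=0$.

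For part (2), I would argue by contradiction. Assume there is a morphism $D\colon(\w{\bfS},\id,\w{\gamma}_{\frac{1}{2}})\to(N,v,\delta)$ in $\scrA^1$ sending every a.e.\ differentiable $f\in\w{\bfS}$ to its weak derivative $\frac{\dd f}{\dd x}$. The distinguished element $\id=\id_{[0,1]}$ is the constant function $1$ on $[0,1]$, whose weak derivative is the zero function; therefore $D(\id)=0$, and from $D(\id)=v$ we obtain $v=0$. Applying part (1) forces $D=0$. However, the function $g(x)=x$ belongs to $\w{\bfS}$ (being a uniform limit of step functions) and has weak derivative equal to the constant function $1$, i.e.\ $\id\in\w{\bfS}$, which is a nonzero element. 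The requirement $D(g)=\tfrac{\dd g}{\dd x}\neq 0$ therefore contradicts $D=0$, so no such morphism $D$ exists.

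The principal obstacle is the $\Leftarrow$ direction of part (1), where one must verify that the zero map genuinely qualifies as a morphism in $\scrA^1$ (including the commutativity with inverse limits expressed in (\ref{formula:comm})) before invoking the initial object property. Once part (1) is in place, part (2) reduces to the short chain ``$D$ annihilates $\id$ $\Rightarrow$ $v=0$ $\Rightarrow$ $D=0$'', which collides with the nontriviality of differentiation on the non-constant function $x\mapsto x$.
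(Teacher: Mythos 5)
Your proposal is correct and follows essentially the same route as the paper: both prove (1) by observing that when $v=0$ the zero map is a morphism and then invoking the uniqueness from the initial-object property (Theorem \ref{thm:main1}), and both deduce (2) from $v=D(\id)=\frac{\dd\,\id}{\dd x}=0$ followed by (1). Your only addition is making the final contradiction explicit via the function $x\mapsto x$, which the paper leaves implicit.
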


\begin{proof}
(1) For any $h\in \Hom_{\scrA^1}((\w{\bfS},\id, \w{\gamma}_{\frac{1}{2}}), (N,v,\delta))$,
the following diagram
\[\xymatrix{
  \w{\bfS}^{\oplus 2} \ar[r]^{\w{\gamma}}
  \ar[d]_{h^{\oplus 2}}
& \w{\bfS} \ar[d]^{h} \\
  N^{\oplus 2} \ar[r]_{\delta}
& N
}\]
commutes.

If $v=0$, then $h(\id)=v=0$, and the map $0:\w{\bfS} \to N, f\mapsto 0$ is a $\kk$-homomorphism such that the above diagram commutes.
By using $\Hom_{\scrA^1}((\w{\bfS},\id, \w{\gamma}_{\frac{1}{2}}), (N,v,\delta))$ to be a set containing only one morphism
(see Theorem \ref{thm:main1}\point), we obtain $h=0$.

Conversely, if $h=0$, then by the definition of morphism in $\scrA^1$, we have $v=h(\id)=0$.

(2) If there is an object $(N,v,\delta)$ such that $D:(\w{\bfS},\id, \w{\gamma}_{\frac{1}{2}})$ $\to$ $(N,v,\delta)$
is a morphism in $\scrA^1$ sending each almost everywhere differentiable function $f(x)$ to its weak derivative $\frac{\dd f}{\dd x}$,
then, by the definition of morphism in $\scrA^1$, we have $v=D(\id)=\frac{\dd\id}{\dd x}=0$.
It follows from (1) that $D=0$, which is a contradiction.
\end{proof}

\subsection{Realizing differentiation as a morphism in $\scrA^1$}
\label{subsect:deri 2}

In this subsection, we provide a description of differentiation by another morphism in $\scrA^1$.

Consider the triple $(\w{\bfS}, \ident, \w{\kappa})$,
where $\ident: [0,1] \to \kk, x\mapsto x$ is the function given in Subsection \ref{subsect:u l int}\point,
and $\w{\kappa}: \w{\bfS}^{\oplus 2} \to \w{\bfS}$ is a $\kk$-homomorphism defined as
\[\w{\kappa}(F_1, F_2) =
\begin{cases}
 \frac{1}{2}F_1(2x), & 0\le x\le \frac{1}{2}; \\
 \frac{1}{2}(F_1(1)+F_2(2x-1)), & \frac{1}{2}\le x\le 1
\end{cases}\]
which is a natural extension of the $\kk$-homomorphism $\kappa$
(the definition of $\kappa$ is given in Subsection \ref{subsect:u l int}\point).

\begin{lemma}
The triple $(\w{\bfS}, \ident, \w{\kappa})$ is an object in $\scrA^1$.
\end{lemma}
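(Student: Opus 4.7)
The plan is to verify the three defining properties of an object of $\scrA^1$ for the triple $(\w{\bfS}, \ident, \w{\kappa})$, namely: $\w{\bfS}$ is a Banach $\kk$-module, $\ident \in \w{\bfS}$ satisfies $\Vert\ident\Vert \le \mu(\II_{\itLamb}) = 1$, and $\w{\kappa}$ is a continuous $\kk$-homomorphism with $\w{\kappa}(\ident,\ident) = \ident$. The first property is immediate: by construction $\w{\bfS} = \w{\bfS_{\ident}([0,1])}$ is the completion of a normed $\kk$-module, so it is Banach by Definition \ref{def:Banach mods}. The bound on $\ident$ is a one-line computation, $\Vert\ident\Vert_1 = \int_0^1 x\,\dd\Leb = \tfrac{1}{2} \le 1$. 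The fixed-point identity splits into two piecewise verifications: on $[0,\tfrac{1}{2}]$ one has $\tfrac{1}{2}\ident(2x) = x$, and on $[\tfrac{1}{2},1]$ one has $\tfrac{1}{2}(\ident(1)+\ident(2x-1)) = \tfrac{1}{2}(1 + 2x-1) = x$, so both pieces recover $\ident$.

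Next I would check $\kk$-linearity of $\w{\kappa}$, which is transparent from its piecewise formula, since on each subinterval the output is a $\kk$-linear combination of $F_1$ and $F_2$ precomposed with affine rescalings (and $\itLamb = \kk$ so $\kk$-linearity coincides with $\itLamb$-linearity). To establish the commutativity $\underleftarrow{\lim}\w{\kappa}(\pmb{F}_i) = \w{\kappa}(\underleftarrow{\lim}\pmb{F}_i)$ required by the definition of $\scrN^p$, it suffices to prove that $\w{\kappa}$ is a bounded (hence continuous) operator on $\w{\bfS}^{\oplus_1 2}$. I would first work on the dense subspace $\bfS^{\oplus_1 2}$ where the formula is pointwise meaningful, estimate
\[ \Vert\w{\kappa}(F_1,F_2)\Vert_1
 \le \tfrac{1}{4}\Vert F_1\Vert_1 + \tfrac{1}{4}|F_1(1)| + \tfrac{1}{4}\Vert F_2\Vert_1 \]
via the substitutions $u = 2x$ and $u = 2x - 1$, and then pass to the completion. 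Once the bounded-ness is in hand, the desired limit exchange is automatic from the general continuity principle for bounded operators on Banach spaces.

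The main obstacle will be controlling the boundary term $|F_1(1)|$: the $L^1$-type norm $\Vert\cdot\Vert_1$ from Lemma \ref{lemm:normed mod} does not dominate pointwise evaluation, so $\w{\kappa}$ cannot literally be a bounded map on arbitrary equivalence classes $F_1\in\w{\bfS}$ defined by the naive formula. My preferred route is to exploit the direct-limit presentation $\w{\bfS} \cong \underrightarrow{\lim} E_u$ from Lemma \ref{lemm:limE}. On each $E_u$, a step-function representative has a canonical value at $x=1$ (the constant on its rightmost interval), so $\kappa$ restricts to a well-defined $\kk$-linear map $E_u^{\oplus_1 2} \to E_{u+1}$ whose operator norm is bounded uniformly in $u$, exactly as in Lemma \ref{lemm:inverse gamma} for $\gamma_{\xi}$. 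Compatibility with the transition maps $E_u \hookrightarrow E_{u+1}$ is then a routine check on generators $\id_{(u_iv_i)_i}$, and universality of the direct limit yields a unique continuous $\kk$-homomorphism $\w{\kappa}:\w{\bfS}^{\oplus_1 2}\to\w{\bfS}$ agreeing with $\kappa$ on continuous representatives; this $\w{\kappa}$ is the ``natural extension'' referred to in the statement, and by continuity it inherits the fixed-point identity $\w{\kappa}(\ident,\ident) = \ident$ already checked above, completing the verification.
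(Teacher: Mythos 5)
Your verification of the fixed-point identity $\w{\kappa}(\ident,\ident)=\ident$ and the preliminary checks (completeness of $\w{\bfS}$, the bound on $\Vert\ident\Vert$) match what the paper does; its proof consists of exactly these checks plus a formal exchange of the inverse limit with the piecewise formula defining $\kappa$. The divergence is in how you handle continuity, and there your argument has a genuine gap. You correctly observe that the boundary term $F_1(1)$ is not dominated by the $L^1$-type norm of Lemma \ref{lemm:normed mod}, but the repair you propose --- restricting $\kappa$ to a map $E_u^{\oplus_1 2}\to E_{u+1}$ and claiming its operator norm is bounded uniformly in $u$ --- fails. Take $F_1=2^u\,\id_{[1-2^{-u},1]}\in E_u$ and $F_2=0$: then $\Vert F_1\Vert_1=1$ while $F_1(1)=2^u$, so the second branch of $\kappa(F_1,0)$ is the constant $2^{u-1}$ on $[\tfrac12,1]$ and $\Vert\kappa(F_1,0)\Vert_1=2^{u-2}+\tfrac14$, which blows up with $u$. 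Consequently the universal property of $\underrightarrow{\lim}E_u$ does not hand you a continuous extension to $\w{\bfS}^{\oplus_1 2}$. Worse, essentially the same example ($F_{1,n}=n\,\id_{[1-n^{-2},1]}\to 0$ in norm while $F_{1,n}(1)=n\to\infty$) shows the naive formula is not even well defined on equivalence classes in $\w{\bfS}$, so no bounded extension of $\kappa$ from a dense subspace exists at all; the obstacle you flagged cannot be removed by a denser choice of representatives.

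For comparison, the paper's own proof does not confront this issue: it writes $\underleftarrow{\lim}\kappa(F_{1,n},F_{2,n})$ as the piecewise expression, pushes the limit inside each branch, and concludes, without addressing whether evaluation at $1$ is meaningful on $\w{\bfS}$ or whether the resulting operator is continuous. So you have located a real subtlety that the published argument glosses over, but your proposed resolution does not close it --- the uniform-boundedness claim is precisely the step that would fail. A correct treatment would have to either equip the source copy of $\w{\bfS}$ with a stronger norm under which evaluation at $1$ is continuous (e.g.\ the sup norm, as in the $C_*([0,1])$ setting of Proposition \ref{prop-9.1}), or restrict the domain of $\w{\kappa}$ accordingly.
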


\begin{proof}
It is clear that $\w{\kappa}$ sends $(\ident, \ident)$ to $\ident$ by using the definition of $\w{\kappa}$.
Now, let $\{(F_{1,n}, F_{2,n})\}_{n\in\NN}$ be any Cauchy sequence in $\w{\bfS}^{\oplus 2}$ whose limits is $(F_1, F_2)$.
We need to prove $\underleftarrow{\lim} \kappa(F_{1,n}, F_{2,n}) =
\kappa (\underleftarrow{\lim} (F_{1,n}, F_{2,n}) )$.
Indeed, we have
\begin{align*}
  \underleftarrow{\lim} \kappa(F_{1,n}, F_{2,n})
& = \begin{cases}
      \frac{1}{2} \underleftarrow{\lim} F_{1,n}(2x), & 0\le x\le \frac{1}{2}; \\
      \frac{1}{2}(F_{2,n}(1) + \underleftarrow{\lim} F_{2,n}(2x-1)), & \frac{1}{2}\le x\le 1
    \end{cases} \\
& = \begin{cases}
      \frac{1}{2} F_1(2x), & 0\le x\le \frac{1}{2}; \\
      \frac{1}{2}(F_2(1) + F_2(2x-1)), & \frac{1}{2}\le x\le 1
    \end{cases} \\
& = \kappa(F_1, F_2) \\
& = \kappa (\underleftarrow{\lim} (F_{1,n}, F_{2,n}) ),
\end{align*}
as required.
\end{proof}

\begin{theorem} \label{thm:diff-non initial}
There exists a morphism
\[D \in \Hom_{\scrA^1}( (\w{\bfS}, \ident, \w{\kappa}), (\w{\bfS}, \id, \w{\gamma}_{\frac{1}{2}}) )\]
in $\scrA^1$ sending each element $f \in \w{\bfS}$ to its weak derivative.
\end{theorem}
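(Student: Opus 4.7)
The plan is to define $D$ as the weak-derivative operator and verify directly that it satisfies the $\scrA^1$-morphism axioms, with Proposition~\ref{prop-9.1} serving as the crucial companion since its $\w{T}_{[0,x]}$ goes in the opposite direction and is essentially an inverse to $D$.

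First I would record that Proposition~\ref{prop-9.1} already supplies the identities $\w{T}_{[0,x]}(\id) = \ident$ and $\w{\kappa} \circ (\w{T}_{[0,x]})^{\oplus 2} = \w{T}_{[0,x]} \circ \w{\gamma}_{\frac{1}{2}}$; these are exactly the dual constraints that a putative inverse morphism $D$ must satisfy. Injectivity of $\w{T}_{[0,x]}$ is the classical fact that an $L^1$-function whose indefinite integral vanishes identically must be zero almost everywhere, so $D := (\w{T}_{[0,x]})^{-1}$ is unambiguously defined on $\im(\w{T}_{[0,x]})$, where it coincides with the weak derivative via the fundamental theorem of calculus for Lebesgue integrals.

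Next I would verify the morphism axioms. The identity $D(\ident) = \id$ is immediate from $\w{T}_{[0,x]}(\id) = \ident$. The commuting square
\[ D \circ \w{\kappa} = \w{\gamma}_{\frac{1}{2}} \circ D^{\oplus 2} \]
can be checked either by a short chain-rule computation on $[0,\tfrac{1}{2}]$ and $[\tfrac{1}{2},1]$ using the piecewise formulas for $\w{\kappa}$ and $\w{\gamma}_{\frac{1}{2}}$, or more conceptually by applying the injective map $\w{T}_{[0,x]}$ to both sides and invoking the intertwining identity above, cancelling $\w{T}_{[0,x]}$ by injectivity. The continuity axiom (\ref{formula:comm}) for $D$ then follows from the continuity of $\w{T}_{[0,x]}$ together with the same injectivity argument applied to Cauchy sequences in the target.

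The principal obstacle is the global extension of $D$ to all of $\w{\bfS}$: since $\im(\w{T}_{[0,x]})$ is only the proper subspace of absolutely continuous functions vanishing at the origin, the construction above does not automatically define $D$ on every element of $\w{\bfS}$. I would address this either by restricting the underlying module of the source to the subspace of absolutely continuous functions (which one first verifies is closed under $\w{\kappa}$ and contains $\ident$, so that $(\w{\bfS}, \ident, \w{\kappa})$ remains an $\scrA^1$-object when interpreted thus), or by building $D$ layer by layer along a filtration dual to the $(E_u)$ of Lemma~\ref{lemm:limE} and invoking the universal property of the resulting direct limit together with completeness of the target to extend $D$ globally while preserving both the $\kk$-module structure and the commuting square.
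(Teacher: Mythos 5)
Your core verification is exactly the paper's proof: the paper simply computes $D\compos\w{\kappa}(F_1,F_2)$ piecewise on $[0,\tfrac12]$ and $[\tfrac12,1]$ via the chain rule, obtains $\w{\gamma}_{\frac12}(f_1,f_2)=\w{\gamma}_{\frac12}\compos D^{\oplus 2}(F_1,F_2)$, and checks $D(\ident)=1=\id$; your ``short chain-rule computation'' and the equivalent trick of applying the injective $\w{T}_{[0,x]}$ to both sides both land on the same identity, and the reformulation $D=(\w{T}_{[0,x]})^{-1}$ on $\im(\w{T}_{[0,x]})$ is a legitimate and arguably cleaner way to say what the weak derivative is. You are also right that the domain of $D$ is a real issue which the paper's own proof silently ignores.

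However, the apparatus you add to repair that issue contains steps that would fail. First, the claim that the continuity of $D$ (or the commutation with inverse limits in (\ref{formula:comm})) ``follows from the continuity of $\w{T}_{[0,x]}$ together with injectivity'' is false: the inverse of a bounded injective operator is bounded only when the image is closed, and $\im(\w{T}_{[0,x]})$ is a dense, non-closed subspace; weak differentiation is the standard example of an unbounded operator on $L^1$ (take $F_n$ with $\Vert F_n\Vert$ small but $\Vert F_n'\Vert_{L^1}$ large). Second, and for the same reason, the proposed ``layer by layer'' extension along a filtration with a direct-limit/completeness argument cannot produce a globally defined $D$ on $\w{\bfS}$: extending a densely defined map to the completion requires uniform continuity on the dense part, which $D$ lacks. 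Third, restricting the source module to the absolutely continuous functions does not keep you inside $\scrA^1$ under the norm inherited from $\w{\bfS}$, since that subspace is not complete in the $L^1$ (or sup) norm; one would have to change the norm on the source (e.g.\ to $\Vert F\Vert:=\Vert F'\Vert$), which changes the object. The paper avoids all of this only by not addressing it; if you want a rigorous version, the honest route is to fix the source object with a norm making differentiation bounded, not to extend $D$ by density.
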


\begin{proof}
First of all, the following diagram
\[\xymatrix{
  \w{\bfS}^{\oplus 2} \ar[r]^{\w{\kappa}}
  \ar[d]_{D^{\oplus 2}}
& \w{\bfS} \ar[d]^{D} \\
  \w{\bfS}^{\oplus 2} \ar[r]_{\w{\gamma}}
& \w{\bfS}
}\]
commutes, since for any $F_1(x), F_2(x) \in \w{\bfS}$, we have
\begin{align*}
    D \compos \w{\kappa}(F_1,F_2)
& = {
     \begin{cases}
      \displaystyle \frac{1}{2}\frac{\dd}{\dd x}F_1(2x), & 0\le x\le \frac{1}{2}; \\
      \displaystyle \frac{1}{2}\frac{\dd}{\dd x}(F_1(1)+F_2(2x-1)), & \frac{1}{2}\le x\le 1
     \end{cases}
    } \\
& = {
     \begin{cases}
      \displaystyle f_1(2x), & 0\le x\le \frac{1}{2}; \\
      \displaystyle f_2(2x-1), & \frac{1}{2}\le x\le 1
     \end{cases}
    } \\
& = \w{\gamma}(f_1, f_2) = \w{\gamma}\compos D^{\oplus 2}(F_1, F_2),
\end{align*}
where $\frac{\dd}{\dd x} F_i(x) = f_i(x)$ and $i\in\{1,2\}$.
Moreover, it is obvious that $D(\ident) = \frac{\dd}{\dd x}\ident = 1$,
thus $D$ is a morphism in $\scrA^1$.
\end{proof}

\section{Applications and examples} \label{sect:9}

\subsection{Lebesgue integration} \label{subsect:app1}

We assume the following assumptions hold in this subsection.

\begin{assumption} \label{assumption} \rm
Take $\kk=\RR$, $(\itLamb,\prec,\Vert\cdot\Vert_{\itLamb})=(\RR,\le,\Vert\cdot\Vert_{\RR})$, $B_{\RR}=\{1\}$
and $\norm{}: B_{\RR}\to \{1\}\subseteq\RR^{\ge 0}$.
Then $\dim\RR=1$, $\RR$ is a normed $\RR$-algebra with the norm $\Vert\cdot\Vert_{\RR}=|\cdot|: \RR\to\RR^{\ge 0}$
sending each real number $r$ to its absolute value $|r|$,
and any normed $\RR$-module is a normed $\kk$-vector space.
Take $\II_{\RR}=[0,1]$, $\xi=\frac{1}{2}$, $\kappa_0(x)=\frac{x}{2}$, $\kappa_1(x)=\frac{x+1}{2}$
and $\tau=\text{id}_{\RR}:\RR\to \RR$.
Then any object $(N, v, \delta)$ in $\scrN^p$ is a triple of a
normed $\kk$-module $N=(N,h_N,\Vert\cdot\Vert)$, an element $v\in N$ with $\Vert v\Vert_1$ and
the $\kk$-linear map $\delta: N\oplus_1 N \to N$,
where the norm $\Vert\cdot\Vert$ satisfies
$\Vert rx\Vert = |\tau(r)| \cdot\Vert x\Vert  = |r|\cdot\Vert x\Vert$
for any $r\in\itLamb=\RR$ and $x\in N$.
\end{assumption}

Under the above Assumption, we have the following properties for $\scrN^p$.

\begin{itemize}
  \item[\rm(L1)]
    The normed $\kk$-module $\bfS_{\tau}(\II_{\itLamb}) = \bfS_{\id_{\RR}}([0,1])$ ($=\bfS$ for short)
    is a $\kk$-vector space of all elementary simple functions which are of the form
    \[ f = \sum_{x=i}^t k_i\id_{[x_i,y_i]}, \]
    where $[x_i,y_i] \cap [x_j,y_j] = \varnothing$ for any $i\ne j$,
    and for any $f(r), g(r)\in \bfS$, it holds that
    \[\gamma_{\frac{1}{2}}(f, g)
      = \begin{cases}
          f(2r),   & 1\le r< \frac{1}{2}, \\
          g(2r-1), & \frac{1}{2}< r\le 1,
        \end{cases} \]
    by the definition of $\gamma_{\xi}$, see (\ref{map:gamma}).

  \item[\rm(L2)] $\scrA^p$ is a full subcategory,
    $(\bfS, \id_{[0,1]}, \gamma_{\frac{1}{2}})$ is an object in $\scrN^p$,
    but is not an object in $\scrA^p$ because $\bfS$ is not complete.

  \item[\rm(L3)] Let $\w{\bfS}$ be the completion of $\bfS$,
    and let $\w{\gamma}_{\frac{1}{2}}$ be the map $\w{\bfS}\oplus_1 \w{\bfS} \to \w{\bfS}$
    induced by $\gamma_{\frac{1}{2}}$.
    Then $(\w{\bfS}, \id_{[0,1]}, \w{\gamma}_{\frac{1}{2}})$ is an object in $\scrA^p$.
\end{itemize}

By Theorem \ref{thm:main1}\point, we obtain the following result directly.

\begin{corollary} \label{coro:main1}
  The triple $(\w{\bfS}, \id_{[0,1]}, \w{\gamma}_{\frac{1}{2}})$ is an $\scrA^p$-initial object in $\scrN^p$.
\end{corollary}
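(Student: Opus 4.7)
The plan is to obtain Corollary \ref{coro:main1} as a direct specialization of Theorem \ref{thm:main1} to the parameters fixed in Assumption \ref{assumption}. Under this assumption, $\dim_{\kk}\itLamb = 1$ so $2^n = 2$; the set $\II_{\itLamb}$ becomes the unit interval $[0,1]$; the chosen $\xi = \frac{1}{2}$ together with the order-preserving bijections $\kappa_0(x) = x/2$ and $\kappa_1(x) = (x+1)/2$ produces precisely the juxtaposition map $\gamma_{\frac{1}{2}}$; and $\kk = \RR$ is complete, which by Proposition \ref{prop:compl1} forces $\itLamb = \RR$ to be complete as well. Hence the running completeness hypothesis needed for Section \ref{sect:Ap-init obj} is satisfied, and every intermediate construction (the tower $\kk \cong E_0 \subseteq E_1 \subseteq \cdots$, the direct-limit identification $\w{\bfS} \cong \underrightarrow{\lim} E_i$ of Lemma \ref{lemm:limE}, and the $\itLamb$-isomorphism $\gamma_{\frac{1}{2}}\colon E_u^{\oplus_1 2} \to E_{u+1}$ of Lemma \ref{lemm:inverse gamma}) specializes without modification.

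The key observation I would then extract from the proof of Theorem \ref{thm:main1} is that Lemmas \ref{lemm:existence} and \ref{lemm:uniqueness} together establish something slightly stronger than its statement: they show that $(\w{\bfS_{\tau}(\II_{\itLamb})}, \id, \w{\gamma}_{\xi})$ is already an \emph{initial} object of $\scrA^p$, and $(\bfS_{\tau}(\II_{\itLamb}), \id, \gamma_{\xi})$ is then deduced to be $\scrA^p$-initial in $\scrN^p$ only via Lemma \ref{lemm:initial}. Under the specialization, this intermediate conclusion reads: $(\w{\bfS}, \id_{[0,1]}, \w{\gamma}_{\frac{1}{2}})$ is an initial object in $\scrA^p$.

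To finish, I would invoke Lemma \ref{lemm:initial} with $\mathcal{C} = \scrN^p$, $\mathcal{D} = \scrA^p$, and both $C$ and $D'$ taken to be $(\w{\bfS}, \id_{[0,1]}, \w{\gamma}_{\frac{1}{2}})$, with the identity morphism playing the role of the subobject embedding $e$. By property (L3) the triple belongs to $\scrA^p$, so it is trivially a subobject of itself in $\scrN^p$, and Lemma \ref{lemm:initial} immediately yields that it is $\scrA^p$-initial in $\scrN^p$. The argument is essentially bookkeeping: no new analytic content is needed beyond Theorem \ref{thm:main1}. The only mild obstacle to spell out is checking that the $\itLamb$-module structures and the inverse-limit compatibility (\ref{formula:comm}) coincide with the general framework when $\itLamb = \RR$ and $\tau = \mathrm{id}_{\RR}$, but this is transparent because the $\itLamb$-action on any $\RR$-module then reduces to ordinary scalar multiplication.
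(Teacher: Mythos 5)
Your proposal is correct and follows essentially the same route as the paper: the paper's proof is simply ``By Theorem \ref{thm:main1} we obtain the result directly,'' with the accompanying remark spelling out exactly your reasoning --- that the proof of Theorem \ref{thm:main1} already establishes $(\w{\bfS},\id_{[0,1]},\w{\gamma}_{\frac{1}{2}})$ as an initial object of $\scrA^p$ under the specialization of Assumption \ref{assumption}, whence it is trivially $\scrA^p$-initial in $\scrN^p$. Your additional bookkeeping (checking completeness of $\RR$, $2^n=2$, and invoking Lemma \ref{lemm:initial} with $C=D'$ and $e=\mathrm{id}$) is a more explicit version of what the paper leaves implicit.
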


\begin{remark} \rm
It follows from Theorem \ref{thm:main1}\point~that $(\w{\bfS}, \id[0,1], \w{\gamma}_{\frac{1}{2}})$
is an initial object in $\scrA^p$, and then Corollary \ref{coro:main1}\point~holds.
In \cite{Lei2023FA}, Leinster showed that the initial object in $\scrA^p$ is
$(L^p([0,1]), \id_{[0,1]}, \gamma_{\frac{1}{2}})$. Then we obtain $L^p([0,1])\cong\w{\bfS}$
by uniqueness (up to isomorphism) of initial objects in arbitrary categories.
\end{remark}

Consider the triple $(\RR, 1, m)$ of the normed $\RR$-module $\RR$, the constant function and the map
\[ m: \RR \oplus_p \RR \to \RR \]
sending $(x,y)$ to $\frac{1}{2}(x+y)$. Then $(\RR, 1, m)$ is an object in $\scrA^p$,
and there is a family of $\RR$-linear maps $(L_i: E_i \to \kk)_{i\in\NN}$ such that the diagram
\[ \xymatrix{
  E_i\oplus_p E_i \ar[r]^{\gamma_{\frac{1}{2}}} \ar[d]_{\left(\begin{smallmatrix} L_i & 0 \\ 0 & L_i \end{smallmatrix}\right)}
& E_{i+1} \ar[d]^{L_{i+1}} \\
  \kk\oplus_p \kk \ar[r]_{m_i}
& \kk
}\]
commutes, where $E_i$ is the set of all step function constants on each $(\frac{t-1}{2^i}, \frac{t}{2^i})$,
$L_i$ sends $f=\sum_i k_i\id_{[a_i,b_i]}$ to $\sum_i k_i|b_i-a_i|$,
and $m = \underrightarrow{\lim} m_i$. Furthermore, we have the following result.

\begin{corollary}
There exists a unique morphism
\[L: (\bfS, \id_{[0,1]}, \gamma_{\frac{1}{2}}) \to (\RR, 1, m)\]
in $\Hom_{\scrN^1}((\bfS, \id_{[0,1]}, \gamma_{\frac{1}{2}}), (\RR, 1, m))$ such that the diagram
\[\xymatrix@C=2cm{
  (\bfS, \id_{[0,1]}, \gamma_{\frac{1}{2}}) \ar[r]^{L} \ar[d]_{\subseteq}
& (\RR, 1, m) \\
  (\w{\bfS}, \id_{[0,1]}, \w{\gamma}_{\frac{1}{2}}) \ar[ru]_{\w{L}}
& }\]
commutes, where $\w{L}$ is the unique extension of $L$ lying in $\Hom_{\scrA^p}((\w{\bfS}, \id_{[0,1]}, \w{\gamma}_{\frac{1}{2}}), (\RR, 1, m))$.
Furthermore, $\w{L}$ is given by the direct limit $\underrightarrow{\lim}L_i: \underrightarrow{\lim} E_i \to \kk$.
\end{corollary}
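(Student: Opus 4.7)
The plan is to obtain this corollary as a direct specialization of Theorem \ref{thm:main2}\point. Under Assumption \ref{assumption}\point{} we have $\kk=\itLamb=\RR$, $\tau=\id_{\RR}$, $\II_{\itLamb}=[0,1]$, $\xi=\tfrac{1}{2}$, and with $\mu=\Leb$ the Lebesgue measure we get $\mu(\II_{\itLamb})=\Leb([0,1])=1$ and $n=\dim_{\RR}\RR=1$. Consequently the target triple $(\kk,\mu(\II_{\itLamb}),m)$ appearing in Theorem \ref{thm:main2}\point{} becomes literally $(\RR,1,m)$, provided the map $m$ constructed there agrees with the averaging map $(x,y)\mapsto\tfrac{1}{2}(x+y)$ stated in the corollary.

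First I would verify this identification. By Remark \ref{rmk:m}\point{} the map $m=m_u$ is independent of $u$, so it suffices to compute $m_0$. On the generators $f_k=k\id_{[0,1]}\in E_0$ (which satisfy $T_0(f_k)=k$ by (\ref{formula:Tu(1)}) since $\mu(\II_{\itLamb})=1$) one has $\gamma_{\frac{1}{2}}(f_{k_1},f_{k_2})=k_1\id_{[0,\frac{1}{2})}+k_2\id_{(\frac{1}{2},1]}\in E_1$, and then (\ref{formula:Tu(1)}) gives
\[m(k_1,k_2)=T_1(\gamma_{\frac{1}{2}}(f_{k_1},f_{k_2}))=\tfrac{1}{2}k_1+\tfrac{1}{2}k_2,\]
so indeed $m(1,1)=1=\mu(\II_{\itLamb})$ and $(\RR,1,m)$ is the object of $\scrA^1$ described in the statement.

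Once this identification is in place, Theorem \ref{thm:main2}\point{} immediately supplies a unique morphism $L:=T\colon(\bfS,\id_{[0,1]},\gamma_{\frac{1}{2}})\to(\RR,1,m)$ in $\scrN^1$, together with a unique extension $\w L:=\w T\in\Hom_{\scrA^1}((\w\bfS,\id_{[0,1]},\w\gamma_{\frac{1}{2}}),(\RR,1,m))$ making the required triangle commute, and the direct-limit description $\w L=\underrightarrow{\lim}L_i$ is exactly the one furnished in the proof of Theorem \ref{thm:main2}\point. The concrete formula $L_i(\sum_j k_j\id_{[a_j,b_j]})=\sum_j k_j|b_j-a_j|$ is nothing but (\ref{formula:Tu(1)}) rewritten with $\mu=\Leb$ and $n=1$. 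Since the entire argument is a specialization of an already-proved theorem, no step presents a substantive obstacle; the only piece deserving attention is the short unwinding above identifying the categorically-produced map $m$ with the averaging map $(x,y)\mapsto\tfrac{1}{2}(x+y)$, which I would include as a one-line computation in the proof.
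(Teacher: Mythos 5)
Your proposal is correct and follows exactly the paper's route: the paper proves this corollary by the single line ``It is an immediate consequence of Theorem \ref{thm:main2}'', with the identification of $m$ as the averaging map $(x,y)\mapsto\tfrac{1}{2}(x+y)$ and the description of the maps $L_i$ carried out in the discussion immediately preceding the corollary, just as in your unwinding.
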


\begin{proof}
It is an immediate consequence of Theorem \ref{thm:main2}.
\end{proof}

The morphism $\w{L}$ induces a $\kk$-linear map sending $f$ to $\w{L}(f)$.
Furthermore, if $\mu=\Leb$ is a Lebesgue measure,
then $\w{L}(f)$ is Lebesgue integration $\displaystyle (\rmL)\int$ of $f$, \checks{i.e.,}
\[ \w{L}(f) = (\rmL)\int_0^1 f \dd\Leb,\]
where $\Leb$ is the Lebesgue measure in this case, see \cite[Proposition 2.2]{Lei2023FA}.

\vspace{0.25cm}

Next, as an application, we establish the Cauchy-Schwarz inequality for the morphism $\w{T}$ in $\scrN^1$.
We need the following lemma for arbitrary complete finite-dimensional $\RR$-algebras.

\begin{lemma} \label{lemm:positive}
If $f \in \w{\bfS_{\tau}(\II_{\itLamb})}$ is non-negative, then so is $\w{T}(f)$, i.e., $f\ge 0$ yields
\[(\scrA^1)\int_{\II_{\itLamb}} f\dd\mu \ge 0. \]
\end{lemma}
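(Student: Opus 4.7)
The plan is to reduce the statement to the case of non-negative step functions by density, where it is immediate from the defining formula for $T_u$.

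First, I would verify the step-function case. For any $f \in E_u$ with $f \ge 0$, write $f = \sum_{(u_iv_i)_i} k_{(u_iv_i)_i} \id_{(u_iv_i)_i}$, where the indicator functions have pairwise disjoint supports. Non-negativity of $f$ forces $k_{(u_iv_i)_i} \ge 0$ whenever $\mu\bigl(\prod_i \II^{(u_iv_i)}\bigr) > 0$; the remaining coefficients contribute nothing. Since $\mu$ takes values in $\RR^{\ge 0}$, formula (\ref{formula:Tu(1)}) then presents $T_u(f)$ as a sum of non-negative terms, so $T_u(f) \ge 0$.

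Next I would promote this to $\w{\bfS_{\tau}(\II_{\itLamb})}$ by density. Given $f \ge 0$ in the completion, pick a sequence $\{f_t\}_{t\in\NN}$ with $f_t \in E_{u_t}$ and $\Vert f_t - f\Vert_1 \to 0$. The key observation is that $|f_t|$ is again an elementary step function in $E_{u_t}$ (since $|\cdot|$ acts coefficient-wise on the disjoint decomposition), and
\[
\Vert |f_t| - f\Vert_1 \;=\; \Vert |f_t| - |f|\Vert_1 \;\le\; \Vert f_t - f\Vert_1 \;\longrightarrow\; 0,
\]
where I use $f = |f|$ and the pointwise inequality $||a|-|b|| \le |a-b|$, integrated against $\mu$. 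Thus $\{|f_t|\}$ is a sequence of non-negative step functions converging to $f$ in $\w{\bfS_{\tau}(\II_{\itLamb})}$.

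Finally, since $\w{T} = \underrightarrow{\lim} T_i$ is a continuous $\itLamb$-homomorphism (continuity being built into the definition of morphisms in $\scrA^1$, and confirmed by Lemma \ref{lemm:Tu} together with Proposition \ref{prop:tri inq}), we obtain $\w{T}(f) = \underleftarrow{\lim} T_{u_t}(|f_t|)$. By the first step, each $T_{u_t}(|f_t|)$ is non-negative, so $\w{T}(f)$ is the limit in $\kk$ of non-negative elements of $\RR \subseteq \kk$, hence belongs to the closed positive cone of $\kk$.

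The main obstacle is a conceptual one rather than a computational one: one must know that the positive cone $\{a \in \kk \mid 0 \preceq a\}$ is closed in the norm topology, i.e., that the total order $\preceq$ on $\kk$ is compatible with $|\cdot|$ in such a way that limits of non-negative sequences are non-negative. For $\kk$ an ordered extension of $\RR$ as stipulated, this is either part of the structure or follows from the compatibility axioms for ordered fields recalled in Subsection 2.3; granted this, the argument above yields $\w{T}(f) \succeq 0$, which is the conclusion.
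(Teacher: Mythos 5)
Your proposal is correct and follows essentially the same route as the paper: reduce to non-negative step functions, where formula (\ref{formula:Tu(1)}) exhibits $T_u(f)$ as a sum of non-negative terms, and then pass to the limit using that limits of non-negative reals are non-negative. The only difference is in producing the non-negative approximating sequence --- the paper simply asserts that a monotonically increasing sequence of non-negative step functions converging to $f$ exists, whereas you manufacture one from an arbitrary approximating sequence via $f_t \mapsto |f_t|$ and the estimate $\Vert\,|f_t|-|f|\,\Vert_1 \le \Vert f_t - f\Vert_1$, which is a slightly more self-contained justification of the same step.
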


\begin{proof}
By $\w{\bfS_{\tau}(\II_{\itLamb})} = \underrightarrow{\lim} E_u$, there is a monotonically increasing sequence $\{f_t\}_{t\in\NN^+}$
of non-negative functions with $f_t = \sum_{i=1}^{2^{u_t}} k_{ti} \id_{I_{ti}} \in E_{u_t}$,
such that $I_{ti} \cap I_{tj} = \varnothing$ for any $i\ne j$;
$t_1<t_2$ yields $u_{t_1}<u_{t_2}$ and $f_{t_1}\le f_{t_2}$;
and $f = \underrightarrow{\lim} f_t$.
Thus, for any $1\le i \le 2^{u_t}$ and $t\in\NN^+$, we have $k_{ti\ge 0}$, and then the following inequality
\[\w{T}(f_t) = T_{u_t}(f_t) = \sum_{i=1}^{2^{u_t}} k_{ti} \mu(I_{ti}) \ge 0\]
holds. Furthermore, we obtain
\[\w{T}(f) = \underrightarrow{\lim} T_{u_t}(f_t) = \underrightarrow{\lim} T|_{E_{u_t}}(f_t) =  \underrightarrow{\lim} T(f_t) \ge 0\]
as required, where $\underrightarrow{\lim} T(f_t) = \lim\limits_{t\to+\infty} T(f_t)$ is the usual limit in $\RR$ in analysis.
\end{proof}

\begin{proposition}[Cauchy-Schwarz inequality]
Let $f$ and $g$ be two functions lying in $\w{\bfS_{\tau}(\II_{\itLamb})}$. Then
\begin{align}\label{formula:C-S ineq}
  \bigg((\scrA^1)\int_{\II_{\itLamb}} fg \dd\mu \bigg)^2
\le \bigg((\scrA^1)\int_{\II_{\itLamb}}f^2\dd\mu\bigg) \bigg((\scrA^1)\int_{\II_{\itLamb}}g^2\dd\mu\bigg).
\end{align}
\end{proposition}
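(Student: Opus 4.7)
The plan is to run the classical quadratic-discriminant argument, lifted into the categorical setup of Theorem \ref{thm:main2}. For each $t\in\RR$, set
\[\phi(t) \;=\; (\scrA^1)\int_{\II_{\itLamb}}(f-tg)^2\,\dd\mu \;=\; \w{T}\bigl((f-tg)^2\bigr).\]
The first task is to verify $\phi(t)\ge 0$ for every $t\in\RR$. Since $\kk$ extends $\RR$ and the values of $f,g$ are real, $(f-tg)^2$ is pointwise non-negative, hence Lemma \ref{lemm:positive}\point~gives $\phi(t)\ge 0$. Here I will need to check that $f^2$, $g^2$, $fg$ actually lie in $\w{\bfS_{\tau}(\II_{\itLamb})}$; for $f,g\in\bfS_{\tau}(\II_{\itLamb})$ this is immediate because products of step functions are again step functions, and the general case is handled by taking an approximating sequence $\{f_i\},\{g_i\}\subseteq\bigcup_u E_u$ of step functions with $f_i\to f$, $g_i\to g$ and passing to the direct limit, using that $\w{\bfS_{\tau}(\II_{\itLamb})}=\underrightarrow{\lim}E_i$ (Lemma \ref{lemm:limE}\point).

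Next, I would expand $\phi(t)$ using the $\itLamb$-linearity of $\w{T}$ given by formula (\ref{Lambda-linear}) (Theorem \ref{thm:main2}\point~(2)): since $\RR\subseteq\itLamb$, both the coefficients $-2t$ and $t^2$ are admissible scalars, so
\[\phi(t)\;=\;C t^{2}-2Bt+A,\qquad A:=\w{T}(f^{2}),\;\; B:=\w{T}(fg),\;\; C:=\w{T}(g^{2}).\]
By Lemma \ref{lemm:positive}\point, $A,C\ge 0$, and by the previous paragraph the whole quadratic is non-negative for every $t\in\RR$.

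The final step is the standard discriminant split. If $C>0$, I evaluate at $t_{0}=B/C$ to obtain $A-B^{2}/C\ge 0$, which rearranges to $B^{2}\le AC$, exactly the inequality (\ref{formula:C-S ineq}). If $C=0$, then $\phi(t)=A-2Bt$ is an affine function of $t\in\RR$ that remains non-negative for all $t$, which forces $B=0$; the right-hand side of (\ref{formula:C-S ineq}) equals $0$ and the left-hand side also equals $0$, so the inequality holds trivially.

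The routine parts are the expansion and the quadratic analysis; the one point requiring genuine care is the measurability/integrability of the pointwise products $f^{2}$, $g^{2}$, $fg$ in $\w{\bfS_{\tau}(\II_{\itLamb})}$, since $\w{\bfS_{\tau}(\II_{\itLamb})}$ is presented in this paper only as a Banach $\itLamb$-module (not as an algebra). I expect this to be the main obstacle; I plan to resolve it by first proving (\ref{formula:C-S ineq}) for $f,g$ in the dense subspace $\bigcup_{u}E_u$, where products are again step functions and all three integrals are honest finite sums from (\ref{formula:Tu(1)}), and then passing to the limit using the continuity statement (\ref{formula:comm}) together with $\w{T}=\underrightarrow{\lim}T_i$ from Theorem \ref{thm:main2}.
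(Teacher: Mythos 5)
Your proposal is correct and follows essentially the same route as the paper's proof: both form the non-negative quadratic $t \mapsto \w{T}\bigl((tf+g)^2\bigr)$ (you use $(f-tg)^2$, which is the same up to sign), invoke Lemma \ref{lemm:positive} for non-negativity, expand by the $\RR$-linearity of $\w{T}$, and conclude via the discriminant. The only differences are that you explicitly handle the degenerate case $\w{T}(g^2)=0$ and you flag the question of whether $f^2$, $g^2$, $fg$ actually lie in $\w{\bfS_{\tau}(\II_{\itLamb})}$ for general $f,g$ in the completion --- two points the paper's proof passes over in silence (it simply asserts that $(tf+g)^2$ lies in $\bfS_{\tau}(\II_{\itLamb})$), so your added care is a genuine improvement rather than a deviation.
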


\begin{proof}
Indeed, consider the quadratic function
\begin{align*}
  \varphi(t)
& = \w{T}(f^2) \cdot t^2 + 2\w{T}(fg) \cdot t + \w{T}(g^2) \ (t\in\RR).
\end{align*}
Notice that $\w{T}$ is a $\itLamb$-homomorphism, thus it is also an $\RR$-linear map.
Then
\begin{align*}
  \varphi(t) & = \w{T}(f^2 \cdot (t\id_{\RR})^2+2fg \cdot (t\id_{\RR}) + g^2)  = \w{T}((f \cdot (t\id_{\RR}) + g )^2).
\end{align*}
Notice that $(f \cdot (t\id_{\RR}) + g )^2$, written as $h$, is also a function defined
on $\II_{\itLamb}$ lying in $\bfS_{\tau}(\II_{\itLamb})$,
thus for any $x\in\II_{\itLamb}$, we have $h(x) = (tf(x)+g(x))^2 \ge 0$.
Then $\varphi(t) \ge 0$ by Lemma \ref{lemm:positive}. It follows that the discriminant
$(2\w{T}(fg))^2-4\w{T}(f^2)\w{T}(g^2)$ of $\varphi(x)$ is at most zero,
\checks{i.e.,} (\ref{formula:C-S ineq}) holds.
\end{proof}

The above inequality yields that if $\scrN^p$ satisfies the conditions (L1)--(L3) given in the subsection, then
the Cauchy-Schwarz inequality
\[  \bigg((\rmL)\int_0^1 f g \dd\Leb \bigg)^2
\le \bigg((\rmL)\int_0^1 f^2 \dd\Leb \bigg)
    \bigg((\rmL)\int_0^1 g^2 \dd\Leb \bigg) \]
holds.

\subsection{Series expansions of functions} \label{subsect:app2}

We provide two examples for Corollaries \ref{coro:power series}\point~
and \ref{coro:Fourier series}\point~in this subsection.

\begin{example}[{Taylor series}] \rm \label{examp:Taylor}
Assume that $\scrA^1$ satisfies Assumption \ref{assumption}. Then the $\itLamb$-homomorphism $\w{\expaP}$ in Corollary \ref{coro:power series}\point~is
\[ \w{\expaP}: (\w{\bfS}, \id, \w{\gamma}_{\frac{1}{2}})
\to (\w{\kk[x]}, \id, \w{\gamma_{\frac{1}{2}}}|_{\w{\kk[x]}}). \]
Now we show that for any analytic function $f(x) \in \w{\bfS}$, we have
\[\w{\expaP}: f(x) \mapsto \sum_{k=0}^{+\infty} \frac{1}{k!}
\left.\frac{\dd^k f}{\dd x^k}\right|_{x=0} x^k. \]
To do this, let $\mathbf{A}_0$ be the set of all analytic functions defined on $[0,1]$,
and define
\[\mathbf{A}_u = \{ \w{\gamma}_{\frac{1}{2}}(f,g) \mid (f,g)\in \mathbf{A}_{u-1}^{\oplus 2} \} \]
for any $u\in \NN$. Then we have
\[\kk[x] \subseteq \mathbf{A}_0 \subseteq \mathbf{A}_1 \subseteq \cdots \subseteq \w{\bfS} \cong L^1([0,1]). \]
Let $\mathfrak{E}_0:\mathbf{A}_0 \to \w{\bfS}$ be the map sending each analytic function $f(x)$ to its Taylor series $\displaystyle \sum_{k=0}^{+\infty} \frac{1}{k!}
\left.\frac{\dd^k f}{\dd x^k}\right|_{x=0} x^k \in \w{\kk[x]}$.
Then $\mathfrak{E}_0$ is a $\kk$-linear map since for any $a,b\in\itLamb=\RR$ and $f,g\in\w{\bfS}$,
the $\RR$-linear formula $\mathfrak{E}_0(a\cdot f + b\cdot g) = a\mathfrak{E}_0(f) + b \mathfrak{E}_0(g)$ holds.
Furthermore, one can check that $\mathfrak{E}_0$ is a $\itLamb$-homomorphism.
For any $u\in\NN$, any function $f$ in $\mathbf{A}_u$ can be seen as two functions $f_1$ and $f_2$ lying in $\mathbf{A}_{u-1}$ such that
\[ f = \w{\gamma}_{\frac{1}{2}}(f_1, f_2)
= \begin{cases}
f_1(2x), & 0\le x < \frac{1}{2};  \\
f_2(2x-1), & \frac{1}{2}< x \le 1.
\end{cases} \]
Thus, we can inductively define
\[\mathfrak{E}_u: \mathbf{A}_u \to \w{\kk[x]},
f \mapsto \w{\gamma}_{\frac{1}{2}}(\mathfrak{E}_{u-1}(f_1), \mathfrak{E}_{u-1}(f_2)). \]
Let $\mathbf{A}$ be the direct limit $\underrightarrow{\lim}{\mathbf{A}_u}$ given by $\mathbf{A}_0 \subseteq \mathbf{A}_1 \subseteq \cdots$.
The following statements (a) and (b) show that $\mathfrak{E} := \underrightarrow{\lim}\mathfrak{E}_u: \mathbf{A} \to \w{\kk[x]}$, induced by $\underrightarrow{\lim}\mathbf{A}_u = \mathbf{A}$, is a homomorphism in $\scrN^1$.
\begin{itemize}
  \item[(a)] First of all, it is obvious that $\mathfrak{E}(\id) =
    \underrightarrow{\lim}\mathfrak{E}_u(\id) = \underrightarrow{\lim} \id = \id$.
  \item[(b)] Next, for any two functions $f_1(x),f_2(x)$ in $\mathbf{A}$,
    the following diagram
    \[\xymatrix@R=1.25cm@C=1.25cm{
        \mathbf{A}^{\oplus 2}
        \ar[r]^{\w{\gamma}_{\frac{1}{2}}\big|_{\mathbf{A}}}
        \ar[d]_{ \left(\begin{smallmatrix}
          \mathfrak{E} & 0 \\
          0 & \mathfrak{E}
         \end{smallmatrix}\right) }
      & \mathbf{A}
        \ar[d]^{\mathfrak{E}} \\
        \w{\kk[x]}^{\oplus 2}
        \ar[r]_{\w{\gamma}_{\frac{1}{2}}}
      & \w{\kk[x]}
    }\]
    commutes since
    \begin{align*}
          \mathfrak{E}(\w{\gamma}_{\frac{1}{2}}|_{\mathbf{A}}(f(x),g(x)))
      & = \begin{cases}
            \mathfrak{E}(f(2x)), & 0\le x< \frac{1}{2}; \\
            \mathfrak{E}(g(2x-1)), & \frac{1}{2}<x\le 1
          \end{cases} \\
      & = \w{\gamma}_{\frac{1}{2}}(\mathfrak{E}(f(x)),\mathfrak{E}(g(x))).
    \end{align*}
\end{itemize}
Thus, the completion $\w{\mathbf{A}}$ of $\mathbf{A}$ induces a $\itLamb$-homomorphism
$\w{\mathfrak{E}}: \w{\mathbf{A}} \to \w{\kk[x]}$ which provides a morphism
\[ \w{\mathfrak{E}} \in \Hom_{\scrA^1}
    (
      (\w{\mathbf{A}}, \id, \w{\gamma}_{\frac{1}{2}}|_{\mathbf{A}}),
      (\w{\kk[x]}, \id, \w{\gamma}_{\frac{1}{2}}|_{\kk[x]})
    ) \]
in the category $\scrA^1$.

On the other hand,
for any polynomial $P(x) \in \kk[x]$, there exists a monotonically increasing sequence $\{s_i(x)\}_{i=0}^{+\infty}$
of elementary simple functions such that $\underleftarrow{\lim}s_i(x) = P(x)$.
Then we obtain that $\kk[x]$ is dense in $\w{\bfS}$.
It follows that $\w{\mathbf{A}}$ is dense in $\w{\bfS}$ by $\kk[x]\subseteq \w{\mathbf{A}}$.
Thus, we have an isomorphism
$\eta: (\w{\mathbf{A}}, \id, \w{\gamma}_{\frac{1}{2}}|_{\mathbf{A}})
  \mathop{\longrightarrow}\limits^{\cong}
  (\w{\bfS}, \id, \w{\gamma}_{\frac{1}{2}})$
and an isomorphism
\[ \w{\mathfrak{E}}\eta^{-1}:
   (\w{\mathbf{A}}, \id, \w{\gamma}_{\frac{1}{2}}|_{\mathbf{A}})
   \mathop{\longrightarrow}\limits^{\cong}
   (\w{\kk[x]}, \id, \w{\gamma}_{\frac{1}{2}}|_{\kk[x]})
   \]
in the category $\scrA^1$ such that
\[\w{\expaP}(f) = (\w{\mathfrak{E}}\eta^{-1})|_{\mathbf{A}_0}(f)
= \mathfrak{E}_0(f) =
\sum_{k=0}^{+\infty} \frac{1}{k!}
     \left.
       \frac{\dd^k f}{\dd x^k}
     \right|_{x=0} x^k \]
holds for any analytic function $f$ by using $(\w{\bfS}, \id, \w{\gamma}_{\frac{1}{2}})$ to be an initial object of $\scrA^1$ (see Theorem \ref{thm:main1}\point).
\end{example}

\begin{example}[{Fourier series}] \rm \label{examp:Fourier}
Assume that $\scrA^1$ satisfies Assumption \ref{assumption}. Then the $\itLamb$-homomorphism $\w{\expaF}$ in Corollary \ref{coro:Fourier series}\point~is
\[ \w{\expaF}: (\w{\bfS}, \id, \w{\gamma}_{\frac{1}{2}})
\to (\w{\CC[\power^{\pm 2\pi\rmi x}]}, \id,
  \w{\gamma_{\frac{1}{2}}}|_{\w{\CC[\power^{\pm 2\pi\rmi x}]}}), \]
which sends each function $f$ satisfying the Dirichlet condition to its Fourier series.
The proof of the above statement is similar to that of Example \ref{examp:Taylor}\point~by using $\CC[\power^{\pm 2\pi\rmi x}]$ to be a dense subspace of $\w{\bfS}$.
In particular, $\w{\expaF}$ induces an isomorphism in $\scrA^1$.
\end{example}

\section{Conclusions}

In this paper, we have significantly expanded the theoretical landscape of mathematical analysis
by extending the domain of classical Lebesgue integration beyond the real numbers and establishing
a robust framework for the major branches of analysis--differentiation, integration,
and series--over finite-dimensional $\kk$-algebras. By developing the categories $\scrN^p$ and $\scrA^p$,
we have introduced a structured methodology for examining norms and integration within an algebraic context.
This approach not only enhances our understanding of these processes but also provides
a unified perspective across various analytical branches.

Our study has not only reinforced existing mathematical theories within a generalized algebraic setting,
but has also paved the way for exploring how these concepts interact within the realms of category theory.
The categorification of key analytical operations such as differentiation and integration through normed modules
and their morphisms in $\scrA^p$ illustrates a significant theoretical advance,
bridging various analytical disciplines through a common categorical framework.

The implications of this work extend beyond the theoretical, suggesting applications in fields that
benefit from a deep understanding of the algebraic underpinnings of analysis, such as
computational mathematics and theoretical physics. Looking forward, the exploration of
higher-dimensional normed modules within this categorical framework promises to
open new research avenues in areas such as quantum field theory and numerical methods
for differential equations.

In summary, our research not only deepens the mathematical understanding of the interplay
between algebra and analysis, but also lays a solid foundation for further explorations.
Future work can extend these methods to more complex algebraic structures and
explore their practical applications in science and engineering, thereby continuing to
bridge the gap between abstract theory and real-world problem-solving.




\section*{Acknowledgements}
{
Yu-Zhe Liu is supported by the National Natural Science Foundation of China (Grant No. 12401042),
Guizhou Provincial Basic Research Program (Natural Science) (Grant Nos. ZD[2025]085 and ZK[2024]YiBan066)
and Scientific Research Foundation of Guizhou University (Grant Nos. [2022]53, [2022]65).
Shengda Liu is supported by the National Natural Science Foundation of China (Grant No. 62203442).
Zhaoyong Huang is supported by the National Natural Science Foundation of China (Grant Nos. 12371038, 12171207).
Panyue Zhou is supported by the National Natural Science Foundation of China (Grant No. 12371034)
and Scientific Research Fund of Hunan Provincial Education Department (Grant No. 24A0221).
We are greatly indebted to Tiwei Zhao, Yucheng Wang, Jian He, and Yuan Yuan for helpful suggestions; and are extremely grateful to the referees for his/her meticulous review and valuable comments and suggestions, which greatly improved the quality of our paper.
}




\def\cprime{$'$}

\end{document}